\numberwithin{equation}{section}
\newtheorem{theorem}{Theorem}[section]
\newtheorem{lemma}[theorem]{Lemma}
\newtheorem{proposition}[theorem]{Proposition}
\theoremstyle{definition}
\newtheorem{remark}[theorem]{Remark}
\theoremstyle{definition}
\newtheorem{definition}[theorem]{Definition}
\theoremstyle{definition}
\newcommand{\dv}{\operatorname{div}}
\newcommand{\diam}{\operatorname{diam}}
\newcommand{\tr}{\operatorname{tr}}
\newcommand*{\tran}{^{\mkern-1.5mu\mathsf{T}}}
\newcommand{\bN}{\mathbb{N}}
\newcommand{\bR}{\mathbb{R}}
\newcommand{\bH}{\mathbb{H}}
\newcommand{\overbar}[1]{\mkern 1.5mu\overline{\mkern-1.5mu#1\mkern-1.5mu}\mkern 1.5mu}
\newcommand\cD{\mathcal{D}}
\newcommand\cH{\mathcal{H}}
\newcommand\cP{\mathcal{P}}
\newcommand\rC{\mathring{C}}
\providecommand{\set}[1]{\{#1\}}
\providecommand{\abs}[1]{\lvert#1\rvert}
\providecommand{\Abs}[1]{\left\lvert#1\right\rvert}
\providecommand{\norm}[1]{\lVert#1\rVert}
\renewcommand{\vec}[1]{\boldsymbol{#1}}
\def\dashint{\operatorname%
{\,\,\text{\bf-}\kern-.98em\DOTSI\intop\ilimits@\!\!}}
\DeclareMathOperator*{\esssup}{ess\,sup}
\DeclareMathOperator*{\osc}{osc}
\begin{document}
\title[Estimates for parabolic operators]
{On $C^{1/2,1}$, $C^{1,2}$, and $C^{0,0}$ estimates for linear parabolic operators}

\author[H. Dong]{Hongjie Dong}
\address[H. Dong]{Division of Applied Mathematics, Brown University,
182 George Street, Providence, RI 02912, United States of America}
\email{Hongjie\_Dong@brown.edu}
\thanks{H. Dong was partially supported by the NSF under agreement DMS-1600593.}

\author[L. Escauriaza]{Luis Escauriaza}
\address[L. Escauriaza]{UPV/EHU, Dpto. Matem\'aticas, Barrio Sarriena s/n 48940 Leioa, Spain}
\email{luis.escauriaza@ehu.eus}
\thanks{L. Escauriaza is supported by Basque Government grant IT1247-19 and MICINN grant PGC2018-094522-B-I00.}

\author[S. Kim]{Seick Kim}
\address[S. Kim]{Department of Mathematics, Yonsei University, 50 Yonsei-ro, Seodaemun-gu, Seoul 03722, Republic of Korea}
\email{kimseick@yonsei.ac.kr}
\thanks{S. Kim is  supported by NRF Grant No. NRF-20151009350 and No. NRF-2019R1A2C2002724.}

\subjclass[2020]{Primary 35B45, 35B65 ; Secondary 35K10}

\keywords{Parabolic Dini mean oscillation, $C^{1/2,1}$ estimates, $C^{1,2}$ estimates, $C^{0,0}$ estimates.}

\begin{abstract}
We show that weak solutions to parabolic equations in divergence form with zero Dirichlet boundary conditions are continuously differentiable up to the boundary when the leading coefficients have Dini mean oscillation and the lower order coefficients verify certain conditions.  Similar results are obtained for non-divergence form parabolic operators and their adjoint operators. Under similar conditions, we also establish a Harnack inequality for nonnegative adjoint solutions, together with upper and lower Gaussian bounds for the global fundamental solution.
\end{abstract}
\maketitle

\section{Introduction}
This paper is a continuation of our previous work \cite{DK17, DEK18} in which we obtained interior and boundary $C^1$, $C^2$, and $C^0$ estimates for divergence form, non-divergence form, and the corresponding adjoint elliptic operators under the assumption that the coefficients and data have Dini mean oscillation. These work were motivated by a question raised by Yanyan Li \cite{Y.Li2016} about divergence form elliptic equations. In this paper, we consider the corresponding parabolic operators under the condition that the coefficients and data have Dini mean oscillation with respect to either the space variables or all the variables, and establish both interior and boundary estimates.

Let $\Omega_T=(0,T)\times \Omega \subset \bR^{n+1}$ be a cylindrical domain, where $\Omega$ is a bounded domain in $\bR^n$, with $n \ge 1$.
We consider a second order parabolic operator $P$ in divergence form
\begin{equation}\label{master-d}
P u= \partial_t u - \sum_{i,j=1}^n D_i(a^{ij}(t,x) D_j u + b^i(t,x) u) + \sum_{i=1}^n c^i(t,x) D_iu + d(t,x)u
\end{equation}
and also consider parabolic operators $\cP$ in non-divergence form
\begin{equation}\label{master-nd}
\cP u= \partial_t u-\sum_{i,j=1}^n a^{ij}(t,x) D_{ij} u+ \sum_{i=1}^n b^i(t,x) D_i u + c(t,x)u.
\end{equation}
Here, the coefficients $\mathbf{A}=(a^{ij})_{i,j=1}^n$, $\vec b=(b^1,\ldots, b^n)$, $\vec c=(c^1,\ldots, c^n)$, $d$  and $c$ are measurable functions defined on $\overline \Omega_T$.
We assume that the principal coefficients $\mathbf{A}=(a^{ij})$ are defined on $\bR^{n+1}$ and satisfy the uniform parabolicity condition
\begin{multline}\label{parabolicity}
\lambda \abs{\vec \xi}^2 \le \sum_{i,j=1}^n a^{ij}(t,x) \xi^i \xi^j, \quad \Abs{\sum_{i,j=1}^n a^{ij}(t,x) \xi^i \eta^j }\le \lambda^{-1}\abs{\vec \xi} \abs{\vec \eta},\\
\forall  \vec \xi=(\xi^1,\ldots, \xi^n), \;\;\forall \vec \eta =(\eta^1,\ldots, \eta^n) \in \bR^n,\quad\forall (t,x) \in \bR^{n+1}
\end{multline}
for some positive constant $\lambda$.

For the non-divergence form operator, we shall further assume that  the coefficients $\mathbf{A}$ are symmetric, i.e. $a^{ij}=a^{ji}$.

Throughout the paper, we shall use $X = (t, x)$ to denote a point in $\bR^{n+1} = \bR\times \bR^n$; $x = (x^1,\ldots, x^n)$ will always be a point in $\bR^n$.
We also write $Y = (s, y)$, $X_0 = (t_0, x_0)$, $Z = (\tau,z)$, etc.
We define the parabolic distance between the points $X = (t,x)$ and $Y = (s,y)$ in $\bR^{n+1}$ as (by abuse of notation)
\[
\abs{X-Y}=\max(\sqrt{\abs{t-s}}, \abs{x-y}).
\]
We say that a non-negative measurable function $\omega: (0,1] \to \bR$ is a Dini function provided that there are constants $c_1, c_2 >0$ such that
\begin{equation}
                                \label{eq9.36}
c_1 \omega(t) \le \omega(s) \le c_2 \omega(t)
\end{equation}
whenever $\tfrac12 t \le s \le  t$ and $0<t<1$ and
\[
\int_0^1 \frac{\omega(s)}s \,ds <+\infty,\quad \forall t \in (0,1].
\]
For a domain $\Omega$ in $\bR^n$, we shall write
\begin{equation*}
\Omega_r(x)=\Omega \cap B_r(x)
\end{equation*}
and
\begin{equation*}	
Q_r^-(X)=Q_r^-(t,x)= (t-r^2,t]\times \Omega_r(x).
\end{equation*}
For a locally integrable function $g$ on $\Omega_T=(0,T)\times \Omega$, we shall say that $g$ is uniformly Dini continuous if the function $\varrho_g: \bR_+ \to \bR$ defined by
\[
\varrho_g(r):=\sup\set{\abs{g(Y)-g(Y')}: Y, Y' \in Q_r^-(X) \subset \Omega_T}
\]
is a Dini function, while we shall say that $g$ is of \emph{Dini mean oscillation} over $\Omega_T$ and write $g \in \mathsf{DMO}$ if the function $\omega_g: \bR_+ \to \bR$ defined by
\begin{equation*}
\omega_g(r):=\sup_{Q_r^-(X)\subset \overline \Omega_T} \fint_{Q_r^-(X)} \,\abs{g(Y)- g_{X,r}}, \quad\ \left(\; g_{X,r} :=\fint_{Q_r^{-}(X)} g\;\right)
\end{equation*}
is a Dini function.
We shall say that $g$ is of \emph{Dini mean oscillation in $x$} over $\Omega_T$ and write $g \in \mathsf{DMO_x}$ if the function $\omega_g^{\textsf x}: \bR_+ \to \bR$ defined by
\begin{equation}\label{13.24f}
\omega_g^{\textsf x}(r):=\sup_{Q_r^-(X) \subset \overbar \Omega_T }\fint_{Q_r^-(X)} \,\Abs{g(s,y)- \bar g^{\textsf x}_{x,r}(s)},\quad\left(\;\bar g^{\textsf x}_{x,r}(s) :=\fint_{\Omega_r(x)} g(s,\cdot)\;\right)
\end{equation}
is a Dini function.
In view of the proof on \cite[p. 495]{Y.Li2016}, we know that both $\omega_g$ and $\omega_g^{\textsf x}$ satisfy \eqref{eq9.36}.

The main theorems of this paper are as follows.
We ask reader to refer to Section \ref{sec2} for the notations related  to the sets $\partial_p^-\Omega_T$, $\partial_p ^+\Omega_T$ and the function spaces such as $\cH^1_2$, $\mathring{\mathcal H}^1_{p}$, $W^{1,2}_2$, $\rC^{1/2,1}$, etc.

\begin{theorem}\label{thm-main-d}
Let $q>n+2$, $\Omega$ have $C^{1, Dini}$ boundary, and the coefficients of $P$ in \eqref{master-d} satisfy the following conditions in addition to \eqref{parabolicity}:
$\mathbf{A} \in \mathsf{DMO_x}$ and $\vec b \in \mathsf{DMO_x} \cap  L^\infty$ over $\Omega_T$, $\vec c \in L^{q}(\Omega_T)$, and $d \in L^{q} (\Omega_T)$.
Let $u \in \cH^1_2(\Omega_T)$ be a weak solution of
\[
P u=\dv  \vec g + f\;\mbox{ in } \; \Omega_T,\quad u=0\;\mbox{ on }\;\partial_p ^-\Omega_T,
\]
where $\vec g=(g^1,\ldots, g^n) \in \mathsf{DMO_x} \cap L^\infty(\Omega_T)$  and $f \in L^q(\Omega_T)$.
Then, we have $u\in \rC^{1/2,1}(\overbar{\Omega_T})$.
\end{theorem}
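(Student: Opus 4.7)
My plan is to mimic the authors' elliptic strategy from \cite{DK17, DEK18}, adapted to parabolic cylinders and to spatial Dini mean oscillation. The core is a Campanato-type iteration: for each $X_0=(t_0,x_0)\in\overbar{\Omega_T}$ and each small $r>0$, produce a constant vector $\vec q_{X_0,r}\in\bR^n$ so that
\[
\phi(X_0,r) := \fint_{Q_r^-(X_0)}\abs{Du(Y)-\vec q_{X_0,r}}\,dY \;\le\; C\,\tilde\omega(r),
\]
where $\tilde\omega$ is a Dini function built from $\omega_{\mathbf A}^{\textsf x}$, $\omega_{\vec b}^{\textsf x}$, $\omega_{\vec g}^{\textsf x}$, and the $L^q$ norms of $\vec c$, $d$, $f$. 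Once this is established, the Dini condition $\int_0^1 \tilde\omega(s)/s\,ds<\infty$ upgrades mean-oscillation control to continuity of $Du$ on $\overbar{\Omega_T}$ with a quantitative modulus, and hence $u\in\rC^{0,1}$ in the space variable up to the boundary.

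For the interior case, I freeze the principal coefficients in $x$ along the time-line through $x_0$: set $\bar a^{ij}(t) := \fint_{B_r(x_0)}a^{ij}(t,\cdot)$, and compare $u$ to the solution $v\in\cH^1_2(Q_r^-(X_0))$ of $\partial_t v - D_i(\bar a^{ij}(t)D_j v)=0$ agreeing with $u$ on the parabolic boundary of $Q_r^-(X_0)$. A standard energy estimate gives an $L^2$-type bound for $D(u-v)$ controlled by $\omega_{\mathbf A}^{\textsf x}(r)\|Du\|_{L^\infty}$ plus analogous contributions from $\omega_{\vec b}^{\textsf x}$, $\omega_{\vec g}^{\textsf x}$, and from the $L^q$ terms $\vec c D_i u+du-f$ (which contribute an $r^{1-(n+2)/q}$ factor that is Dini because $q>n+2$). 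Meanwhile, the comparison function $v$ has $x$-independent leading coefficients, so by classical parabolic $C^{1,\alpha}$ theory applied to $D_k v$ for $k<n$ (and to $v$ itself), one gets
\[
\fint_{Q_{\kappa r}^-(X_0)}\bigabs{Dv - (Dv)_{X_0,\kappa r}}\,dY \;\le\; C\kappa^\alpha \fint_{Q_r^-(X_0)}\abs{Dv-\vec q}\,dY
\]
for any $\vec q\in\bR^n$. Choosing $\kappa$ small produces the contractive recurrence $\phi(X_0,\kappa r)\le \tfrac12\phi(X_0,r)+C\,\tilde\omega(r)$, which iterates to the desired Dini bound on $\phi$.

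The boundary case is the main technical obstacle. Using the $C^{1,\mathsf{Dini}}$ regularity of $\partial\Omega$, I flatten the boundary near a point $x_0\in\partial\Omega$ by a diffeomorphism of the same regularity; one has to verify (this is an important but routine check) that the transformed principal coefficients still belong to $\mathsf{DMO_x}$ with a modulus controlled by $\omega_{\mathbf A}^{\textsf x}$ together with the boundary modulus, that $\vec b$ remains in $\mathsf{DMO_x}\cap L^\infty$, and that the zero Dirichlet condition is preserved on the flattened face. The comparison problem is then $\partial_t v - D_i(\bar a^{ij}(t)D_j v)=0$ on a half-cylinder $Q_r^-(X_0)\cap\{x^n>0\}$ with $v=0$ on $\{x^n=0\}$; since the leading coefficients depend only on $t$, tangential differentiation and standard boundary $C^{1,\alpha}$ estimates yield the same Campanato decay as in the interior, and the perturbation argument above goes through unchanged.

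Finally, the $C^{1/2}$ regularity in $t$ is extracted by testing the equation against a spatial mollifier: for $X_0\in\overbar{\Omega_T}$ and $0<\rho\le 1$, the identity
\[
\int u(t,x)\varphi_\rho(x)\,dx\;-\;\int u(t',x)\varphi_\rho(x)\,dx \;=\; \int_{t'}^t\!\!\int \bigl(a^{ij}D_j u+b^i u\bigr)D_i\varphi_\rho \;-\;(\vec c\cdot Du+du-\dv\vec g-f)\varphi_\rho\,dx\,ds
\]
combined with $Du\in L^\infty$, the $L^\infty$ bound on $\vec b,\vec g$ and the $L^q$ control on the remaining terms, gives $|u(t,x_0)-u(t',x_0)|\le C(\rho\|Du\|_\infty+\rho^{-1}|t-t'|+\text{lower order})$; optimizing $\rho=|t-t'|^{1/2}$ yields the $C^{1/2}$ bound in time. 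Together these produce $u\in\rC^{1/2,1}(\overbar{\Omega_T})$.
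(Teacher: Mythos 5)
There is a genuine gap at the heart of your perturbation step, and it is precisely the obstruction the paper's argument is designed to overcome. You write that ``a standard energy estimate gives an $L^2$-type bound for $D(u-v)$ controlled by $\omega_{\mathbf A}^{\textsf x}(r)\|Du\|_{L^\infty}$.'' An energy estimate in fact gives
\[
\fint_{C_r^-(X_0)}|D(u-v)|^2 \;\lesssim\; \|Du\|_{L^\infty}^2\,\fint_{C_r^-(X_0)}|\mathbf A-\bar{\mathbf A}|^2 \;\lesssim\; \|Du\|_{L^\infty}^2\,\omega_{\mathbf A}^{\textsf x}(r),
\]
using boundedness of $\mathbf A$ to reduce the $L^2$-oscillation to the $L^1$-oscillation. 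Taking a square root, the correction to $\phi(X_0,\kappa r)$ is controlled by $\bigl(\omega_{\mathbf A}^{\textsf x}(r)\bigr)^{1/2}\|Du\|_{L^\infty}$, and the square root of a Dini function need not be Dini (e.g.\ $\omega(r)=(\log 1/r)^{-2}$). The Campanato iteration then fails to close: the summed error $\sum_j \bigl(\omega^{\textsf x}_{\mathbf A}(\kappa^j r)\bigr)^{1/2}$ can diverge. This is exactly why the paper works with an $L^{1/2}$-quasinorm quantity $\phi(X_0,r)=\inf_{\vec q}\bigl(\fint_{C_r^-(X_0)}|Du-\vec q|^{1/2}\bigr)^2$ rather than an $L^1$ or $L^2$ mean, and proves a \emph{weak type-$(1,1)$} estimate for the map $\vec f\mapsto Dw$ with $t$-measurable leading coefficients (Lemma~\ref{lem02-weak11}, proved by duality against the adjoint problem and the Stein/Christ dyadic covering Lemma~\ref{lem01-stein}). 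Only the weak $(1,1)$ estimate, combined with the layer-cake formula for $\fint|Dw|^{1/2}$, delivers the error $\omega_{\mathbf A}^{\textsf x}(r)\|Du\|_{L^\infty}$ linearly in $\omega_{\mathbf A}^{\textsf x}(r)$ with no loss of a power, as in~\eqref{eq15.50ab}. Without this ingredient your recurrence $\phi(X_0,\kappa r)\le\tfrac12\phi(X_0,r)+C\tilde\omega(r)$ does not hold with a Dini $\tilde\omega$, and the subsequent telescoping argument for continuity of $Du$ collapses.

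A secondary gap: your mollifier argument for H\"older-$\tfrac12$ continuity in $t$ produces a bound $|u(t,x_0)-u(t',x_0)|\lesssim\rho\|Du\|_{L^\infty}+\rho^{-1}|t-t'|(\cdots)$, which after optimization gives $O(|t-t'|^{1/2})$, i.e.\ $u\in C^{1/2,1}$. The theorem asserts the stronger $\rC^{1/2,1}$, namely the little-$o$ condition~\eqref{eq1121may9}. To get this, the paper uses a symmetric mollifier and a Taylor-expansion identity so that the leading term is $r\,\osc_{C_r^-(X)}Du$ rather than $r\,\|Du\|_{L^\infty}$; since $Du$ has already been shown to be uniformly continuous, this oscillation tends to zero, which is what gives~\eqref{eq1121may9}. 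Your argument, as stated, misses this refinement. The remaining structure of your sketch (freezing coefficients in $x$, tangential differentiation of the frozen-coefficient solution, boundary flattening and preservation of $\mathsf{DMO_x}$, absorbing the $L^q$ lower-order terms via Sobolev embedding and bootstrap) matches the paper's route.
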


\begin{theorem}					\label{thm-main-nd}
Let $\Omega$ have $C^{2, Dini}$ boundary and the coefficients of $\cP$ in \eqref{master-nd} satisfy the following conditions in addition to \eqref{parabolicity}:
$\mathbf{A}\in \mathsf{DMO_x}$, $\vec b \in \mathsf{DMO_x} \cap L^\infty$, and $c \in \mathsf{DMO_x}\cap L^\infty$ over $\Omega_T$.
Let $u \in W^{1,2}_2(\Omega_T)$ be the strong solution of
\[
\cP u= g\;\mbox{ in } \; \Omega_T,\quad u=0\;\mbox{ on }\;\partial_p^-\Omega_T,
\]
where $g \in \mathsf{DMO_x}\cap L^\infty(\Omega_T)$.
Then, $D^2 u\in C^{0,0}(\overbar{\Omega'_{T}})$ for any $\Omega'\subset\subset \Omega$.
Moreover, if the even extensions of $\mathbf A$, $\vec b$, and $c$ and the $0$ extension of $g$ are in $\mathsf{DMO}$ over $(-T,T)\times\Omega$  , then $u\in C^{1,2}(\overbar {\Omega_T})$.
\end{theorem}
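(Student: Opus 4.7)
The plan is to split the theorem into two parts: (i) the interior conclusion $D^2 u\in C^{0,0}(\overline{\Omega_T'})$ under the base hypotheses, and (ii) the global conclusion $u\in C^{1,2}(\overline{\Omega_T})$ under the stronger hypothesis on the even and zero extensions. For (i), I would carry out a parabolic $L^2$--Campanato iteration in the spirit of the elliptic argument in \cite{DEK18}. At each interior point $X_0=(t_0,x_0)$ and each radius $r$, compare $u$ on $Q_r^-(X_0)$ with the solution $v$ of the frozen-in-$x$ auxiliary problem
\[
\partial_t v-a^{ij}(t,x_0)D_{ij}v=0\quad\text{in } Q_r^-(X_0),
\]
with $v=u$ on the parabolic boundary. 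Because the auxiliary operator has coefficients depending only on $t$, $D^2 v$ admits a scale-invariant $C^{0,0}$ estimate from the classical parabolic theory, so $D^2 v$ is close to its own mean with quantitative control. A $W^{2,p}$ estimate for $w=u-v$, combined with the $\mathsf{DMO_x}$ moduli of $\mathbf A,\vec b,c,g$, then yields an iteration of the form
\[
\phi(\kappa r)\le C\kappa\,\phi(r)+C\bigl(\omega_{\mathbf A}^{\textsf x}+\omega_{\vec b}^{\textsf x}+\omega_c^{\textsf x}+\omega_g^{\textsf x}\bigr)(r),
\]
where $\phi(r)$ measures the $L^2$ mean oscillation of $D^2 u$ over $Q_r^-(X_0)$. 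For $\kappa$ sufficiently small the right-hand side is Dini-summable, and summing produces a continuous representative of $D^2 u$ on $\overline{\Omega_T'}$.

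For (ii), the strengthened hypothesis is designed precisely to make boundary flattening and reflection both compatible with Dini mean oscillation. At a point on the lateral boundary $(0,T)\times\partial\Omega$, I would flatten $\partial\Omega$ by a $C^{2,Dini}$ diffeomorphism; multiplication and composition by $C^{1,Dini}$ or $C^{2,Dini}$ factors preserve Dini mean oscillation, so the transformed operator has principal part in $\mathsf{DMO}$ with explicit modulus. I would then reflect $u$ oddly in $x^n$ and reflect the coefficients with the parity dictated by the transformed equation: even for $a^{ij}$ with $i,j\ne n$, for $a^{nn}$, and for $c$, and odd for the cross terms $a^{in}$ ($i\ne n$) and for $b^n$. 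Since even and odd reflections across a hyperplane preserve the DMO class (after splitting off the continuous trace on the hyperplane, when needed), the reflected problem is a non-divergence parabolic equation on a full cylinder to which the full-$\mathsf{DMO}$ version of the interior estimate from (i) applies, yielding continuity of $D^2 u$ up to the lateral boundary. The analogous argument at the initial boundary $\{0\}\times\Omega$ consists of extending $u$ and $g$ by zero and $\mathbf A,\vec b,c$ evenly in $t$ across $t=0$: the extended $u$ weakly solves the extended equation on $(-T,T)\times\Omega$ thanks to $u(0,\cdot)=0$, the extended coefficients lie in $\mathsf{DMO}$ by hypothesis, and the interior estimate on the doubled cylinder gives continuity of $D^2 u$ up to $t=0$. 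The pointwise identity $\partial_t u=a^{ij}D_{ij}u-b^iD_iu-cu+g$ then delivers $\partial_t u$ continuous with a Dini modulus, completing $u\in C^{1,2}(\overline{\Omega_T})$.

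The main obstacle will be verifying, with explicit quantitative moduli, that both the boundary flattening and the subsequent reflections keep every coefficient (and the source term) in the DMO class. The odd $x^n$-reflections of $a^{in}$ ($i\ne n$) and of $b^n$ are particularly subtle, since these coefficients need not vanish on the flattened boundary; I expect to split off their continuous trace on $\{x^n=0\}$ and absorb the resulting jump as an extra Dini-controlled perturbation, in parallel with the elliptic treatment in \cite{DK17,DEK18}. The temporal extension is equally delicate because $\mathsf{DMO_x}$ alone does not suffice---the hypothesis that the even and zero extensions belong to $\mathsf{DMO}$ over $(-T,T)\times\Omega$ is precisely what controls the coefficients and $g$ in all variables at $t=0$, and it implicitly constrains $g(0,\cdot)$ in an averaged sense. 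Once these DMO accountings are set up, the Campanato iteration in (i) closes in both the $\mathsf{DMO_x}$ and the full-$\mathsf{DMO}$ variants, and the boundary $C^{1,2}$ estimate follows.
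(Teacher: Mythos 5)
Your outline captures the overall Campanato strategy and the role of the time-extensions, but it has two substantive gaps relative to what actually makes the proof work.

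\textbf{The $L^2$-Campanato iteration does not close under the $\mathsf{DMO_x}$ hypothesis.} You propose to measure the $L^2$ mean oscillation of $D^2u$ and to use a $W^{2,p}$ estimate on $w=u-v$. But $w$ solves (after freezing $x$-averages) $\partial_t w - \bar a^{ij}(t)D_{ij}w = -(a^{ij}-\bar a^{ij})D_{ij}u + (g-\bar g)$, and the only available control on the right-hand side is in $L^1$: $\int_{C_r^-}|\mathbf A-\bar{\mathbf A}^{\textsf x}| \lesssim |C_r^-|\,\omega_{\mathbf A}^{\textsf x}(r)$. There is no corresponding $L^2$ (or any $L^p$, $p>1$) bound quantified by $\omega_{\mathbf A}^{\textsf x}$; see Remark~\ref{rmk1.5}, where the authors note it is not even clear whether $\mathsf{DMO}$ implies the $L^p$-Dini mean oscillation condition. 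Consequently the parabolic $W^{1,2}_p$ estimate for $w$ produces a bound you cannot close. This is exactly why the paper replaces $L^p$ estimates on $D^2w$ by the weak-type $(1,1)$ bound of Lemma~\ref{lem-weak11-nd} (proved by the duality/Stein machinery of Lemma~\ref{lem01-stein}) and runs the Campanato iteration in the $L^{1/2}$-quasinorm, i.e.\ with $\phi(X_0,r)=\inf_{\mathbf q}(\fint|D^2u-\mathbf q|^{1/2})^2$. The same remark applies to the elliptic antecedent \cite{DEK18}, which already uses weak-$(1,1)$ rather than $L^2$. Also note the iteration is self-referential: the perturbation term carries the factor $\|D^2u\|_{L^\infty(C_r^-(X_0))}$, and one needs an absorption argument with $r_0$ small (so the coefficient of $\|D^2u\|_{L^\infty}$ is $\le 3^{-n-2}$) before the Dini-sum closes; your sketch omits this step.

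\textbf{Odd reflection of the cross terms does not preserve the DMO class.} For the lateral boundary you propose odd $x^n$-reflection of $a^{in}$ ($i\ne n$) and $b^n$. But after flattening, the trace of $a^{in}$ on $\{x^n=0\}$ is a bounded, generally nonzero, $x'$-dependent function; odd reflection therefore introduces a jump of magnitude $2|a^{in}(t,x',0)|$, which is $O(1)$ and cannot be absorbed as a Dini-controlled perturbation in the Campanato iteration. ``Splitting off the continuous trace'' amounts to a further change of variables that diagonalizes $\mathbf A$ only at a single boundary point, and one would need to track carefully how that interacts with the rescaling at every Campanato scale. The paper sidesteps all of this: it works directly on the half-cylinder $Q^-_r$ without reflection. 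The comparison function $v$ solves the frozen equation with $v=0$ on the flat piece $\Delta_r^-$, so every tangential derivative $D_k v$ ($k\ge 2$) and $\partial_t v$ solves the same homogeneous problem with zero Dirichlet data; Lemma~\ref{lemma1252bm} then gives a $C^{\mu/2,\mu}$ bound up to the boundary for the tangential Hessian $D_{xx'}v$ and for $\partial_t v$. The Campanato quantity $\varphi(\bar X,r)$ is built only from $D_{xx'}u$ and $\partial_t u$ (see the proof of Proposition~\ref{prop01nd}), and the remaining normal-normal derivative $D_{11}u$ is recovered pointwise from the equation $D_{11}u=(a^{11})^{-1}(\partial_t u - \sum_{(i,j)\ne(1,1)}a^{ij}D_{ij}u - g)$. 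That structural trick is what makes the boundary estimate go through without any reflection of coefficients; your proposal does not supply an argument that would close in its place.

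The time-extension across $t=0$ and the recovery of $\partial_t u$ from the equation are the same as in the paper, and the observation that the strengthened hypothesis (even/zero extensions in $\mathsf{DMO}$) is exactly what is needed after flattening is correct. But without the weak-$(1,1)$/$L^{1/2}$ iteration and without the half-cylinder Dirichlet mechanism (in place of odd reflection), the two central steps of the proof are missing.
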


We also deal with the adjoint boundary value problem
\begin{equation}\label{master-adj-prob}
\mathcal P^\ast u=\nabla^2\mathbf g +f\; \text{ in }\; \Omega_T,\quad u= -\frac{\mathbf{g}\nu\cdot \nu}{\mathbf{A}\nu\cdot \nu}+\psi \; \text{ on }\; (0,T) \times \partial \Omega, \quad u(T)=\varphi\; \text{on}\;  \Omega,
\end{equation}
where $\nu$ is the exterior unit normal vector to $\partial\Omega$, $\mathbf g = (g^{kl})_{k,l=1}^n$ is a symmetric matrix
\[
\nabla^2\mathbf g=\sum_{k,l=1}^n D_{kl}g^{kl}
\]
and $\mathcal P$ is the formal adjoint operator of $\mathcal P$, i.e.,
\begin{equation*}		
\mathcal P^\ast u = -\partial_t u- \sum_{i,j=1}^n D_{ij}(a^{ij}(t,x) u)-D_i(b^i(t,x) u)+c(t,x)u.
\end{equation*}
The appearance of the term $\mathbf{g}\nu\cdot \nu/ \mathbf{A}\nu\cdot \nu$ as a part of boundary values helps to make $\mathbf{g}$ to disappear from the boundary integral in the identity \eqref{eq13.53pde}, which formally defines a ``weak'' adjoint solution to \eqref{master-adj-prob}.

\begin{definition}
Let $\Omega \subset \bR^n$ be a bounded $C^{1,1}$ domain.
Assume that  $\mathbf{g} \in L^p(\Omega_T)$, $f \in L^p(\Omega_T)$, $\varphi\in L^p(\Omega)$ and $\psi\in L^p((0,T)\times\partial\Omega)$, where $1<p<\infty$.
We say that $u \in L^p(\Omega_T)$ is an adjoint solution to \eqref{master-adj-prob} if $u$ satisfies
\begin{equation}\label{eq13.53pde}
\int_{\Omega_T} u\,\mathcal P v=\int_{\Omega_T} f v+\tr(\mathbf{g}\,D^2 v)+\int_\Omega \varphi v(T)-\int_{[0,T]\times\partial\Omega}\mathbf A\nabla v\cdot\nu\psi
\end{equation}
for any $v \in W^{1,2}_{p'}(\Omega_T)\cap \mathring{\mathcal H}^1_{p'}(\Omega_T)$, with $\frac{1}{p}+\frac{1}{p'}=1$.
\end{definition}
The existence and uniqueness of the weak adjoint solution to \eqref{master-adj-prob} is simple to derive by transposition from the unique existence of a solution $v \in W^{1,2}_{p'}(\Omega_T)\cap \mathring{\mathcal H}^1_{p'}(\Omega_T)$ to the direct problem
\begin{equation*}
\begin{cases}
\mathcal Pv = F\ &\text{in}\ \Omega_T,\\
u = 0\ &\text{in}\ \partial_p^-\Omega_T,\\
\end{cases}
\end{equation*}
and the $L^{p'}(\Omega_T)$-estimate
\begin{equation*}
\|D^2v\|_{L^{p'}(\Omega_T)}+\|\partial_tv\|_{L^{p'}(\Omega_T)}+\|v\|_{L^\infty_tL^{p'}_x(\Omega_T)}\lesssim\|F\|_{L^{p'}(\Omega_T)},
\end{equation*}
which holds when the matrix of coefficients $\mathbf A$ is sufficiently regular (the continuity of $\mathbf A$ in $\bR^{n+1}$ or other weaker conditions suffice). See the analog construction for non-divergence form elliptic equations in \cite[Lemma 2]{EM2016}.

\begin{theorem}\label{thm-main-adj}
Let $q>n+2$, $\Omega$ have a $C^{1,1}$ boundary, the coefficients of $\cP$ in \eqref{master-nd} satisfy the following conditions in addition to \eqref{parabolicity}:
$\mathbf{A} \in \mathsf{DMO_x}$ over $\bR^{n+1}$, $\vec b \in L^q(\Omega_T)$, $c \in L^{q/2}(\Omega_T)$.
Let $u \in L^2(\Omega_T)$ be the adjoint solution of the problem \eqref{master-adj-prob},
where $\mathbf{g} \in \mathsf{DMO_x}\cap L^\infty(\Omega_T)$,  $f\in L^{q/2}(\Omega_T)$, $\varphi\in L^q(\Omega)$ and $\psi\in L^q((0,T)\times\partial\Omega)$.
Then, $u\in C^{0,0}(\overbar{\Omega'_{T'}})$ for any $\Omega'\subset\subset \Omega$ and $0<T'<T$.
 Moreover, if $\mathbf{A}$ is $\mathsf{DMO}$ over $\bR^{n+1}$, the zero extension of $\mathbf{g}$ for $t\ge T$ is in  $\mathsf{DMO}$ over $(0,2T)\times\Omega$ and the pair $\varphi$ and $\psi$ defines a continuous function over $\partial_p^+\Omega$, then $u\in C^{0,0}(\overbar{\Omega_T})$.
\end{theorem}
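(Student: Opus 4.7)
The plan is to mirror the adjoint elliptic argument of \cite{DEK18} in the parabolic setting: I will show that the parabolic mean oscillation
\[
\phi(X_0,r) := \fint_{Q_r^-(X_0)} \abs{u - (u)_{X_0,r}}
\]
decays in $r$ at a Dini-summable rate, which yields continuity of $u$ via Campanato's characterisation. The backbone is a freezing/perturbation step together with $L^p$ theory for the adjoint of the frozen-coefficient operator, obtained by duality against the direct problem $\cP v = F$, in the spirit of \cite[Lemma 2]{EM2016}. A preliminary local $L^\infty$-bound for $u$, needed to control the perturbation errors below, follows from a De~Giorgi/Moser-type argument applied through the dual identity, exploiting the integrability exponents $q/2>(n+2)/2$.

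For the interior statement, fix $X_0 \in \overbar{\Omega'_{T'}}$ and $r$ with $Q_{2r}^-(X_0)\subset\Omega_T$. Freeze $\mathbf A$ and $\mathbf g$ at their $x$-averages $\bar{\mathbf A}^{\textsf x}_{x_0,r}(s)$ and $\bar{\mathbf g}^{\textsf x}_{x_0,r}(s)$, call the resulting constant-in-$x$ non-divergence operator $\cP_0$, and let $v$ be the adjoint solution of $\cP_0^* v = 0$ on $Q_r^-(X_0)$ matching $u$ on the parabolic top and lateral boundary. Parabolic $L^p$ estimates for $\cP_0$, applied dually, give
\[
\phi_v(X_0, \rho r) \lesssim \rho^{\alpha}\, \phi(X_0, r), \qquad 0 < \rho \le \tfrac12,
\]
for some $\alpha \in (0,1)$. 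The remainder $w = u - v$ solves an adjoint equation whose right-hand side collects four errors: $\nabla^2\bigl((\mathbf A - \bar{\mathbf A}^{\textsf x}) u\bigr)$, $\nabla^2(\mathbf g - \bar{\mathbf g}^{\textsf x})$, the lower-order contributions from $\vec b$ and $c$, and $f$; the same $L^p$-by-duality bound gives
\[
\fint_{Q_r^-(X_0)}\abs{w} \lesssim \bigl(\omega^{\textsf x}_{\mathbf A}(r) + \omega^{\textsf x}_{\mathbf g}(r)\bigr)\norm{u}_{L^\infty(Q_{2r}^-)} + r^{2 - \frac{n+2}{q}} M,
\]
where $M$ absorbs $\norm{f}_{L^{q/2}}$, $\norm{\vec b}_{L^q}\norm{u}_{L^\infty}$, and $\norm{c}_{L^{q/2}}\norm{u}_{L^\infty}$. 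Combining these estimates and iterating in $r$ via a Campanato-type lemma (the parabolic analogue of \cite[Lemma 2.7]{DK17}) produces a Dini modulus of continuity for $u$ on $\overbar{\Omega'_{T'}}$.

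For the global statement, I would flatten $\partial\Omega$ with a $C^{1,1}$ chart, extend $\mathbf A, \vec b, c$ by even reflection across the flattened boundary, and extend $\mathbf g$ by zero across $\{t=T\}$. The moreover-hypotheses -- $\mathbf A \in \mathsf{DMO}$ on $\bR^{n+1}$, and the zero extension of $\mathbf g$ in $\mathsf{DMO}$ over $(0,2T)\times\Omega$ -- translate into $\mathsf{DMO}$ of the extended objects, so the freezing may now be carried out with respect to the full parabolic average $\bar g_{X,r}$ rather than its $x$-section, which recovers continuity also at $\{t=T\}$ and along the lateral boundary. The particular datum $-\mathbf g \nu\cdot\nu/\mathbf A\nu\cdot\nu + \psi$ is precisely what kills the $\mathbf g$-dependent boundary integral in \eqref{eq13.53pde}, so $\varphi$ and $\psi$ combine into a single continuous datum on $\partial_p^+\Omega_T$, and the interior freezing argument applied on half-cylinders delivers continuity on $\overbar{\Omega_T}$.

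The main obstacle I anticipate is the precise $L^p$ theory for $\cP_0^*$ on parabolic (half-)cylinders with prescribed lateral Dirichlet data: it must convert each of the four error sources above into quantitative $\phi$-decay with constants uniform in the freezing point, and must remain valid after the flattening and reflection. Making this rigorous requires a careful dual pairing against the direct problem $\cP_0 \tilde v = G$ of the precise form built in \cite{EM2016}, together with a sharp use of $q > n+2$ to absorb the lower-order terms during the Dini iteration.
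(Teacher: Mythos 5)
Your proposal mirrors the Campanato/freezing backbone of the interior arguments, but for the boundary half of the theorem there is a genuine gap. You claim that once the $\mathbf g$-contribution to the boundary integral is killed by the chosen datum, ``$\varphi$ and $\psi$ combine into a single continuous datum on $\partial_p^+\Omega_T$, and the interior freezing argument applied on half-cylinders delivers continuity on $\overbar{\Omega_T}$.'' This does not work: the freezing/weak type-$(1,1)$ iteration controls oscillation coming from \emph{sources} (terms entering through $L^1$ norms of $\nabla^2(\mathbf g - \bar{\mathbf g})$, $\nabla^2((\mathbf A - \bar{\mathbf A})u)$, etc.\ on small parabolic cubes), and it supplies no mechanism for propagating prescribed, merely continuous terminal/lateral data into the interior for a double-divergence adjoint operator with rough coefficients. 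The paper first decomposes off the boundary-data part of the solution and handles it separately in Lemma~\ref{thm-main-adj2}, whose proof requires the entire normalized-adjoint-solution machinery of Section~\ref{sec5}: the global non-negative adjoint solution $W$, the backward Harnack inequality (Lemma~\ref{T:1 Theorem 3.7}), interior and boundary H\"older continuity of normalized adjoint solutions (Lemmas~\ref{T: 3 Theorem 3.8} and~\ref{T: 4 Theorem 3.9}), and the new pointwise two-sided bounds on $W$ under $\mathsf{DMO_x}$ (Lemmas~\ref{lem8.06} and~\ref{lem8.0671}, which in turn rely on a new reverse H\"older/$\mathsf{VMO}$ result for $\log W$). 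That is the crux of the boundary statement and your sketch bypasses it entirely. Only after this reduction to $\varphi=\psi=0$ does the paper run a freezing argument near the (flattened) boundary, using the explicit barrier Lemma~\ref{lem-bdry-Lip} for the frozen-coefficient adjoint half-cylinder problem.

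Two smaller divergences are worth flagging because they affect rigor. First, you propose to absorb the lower-order terms $D_i(b^i u)$, $cu$ and $f$ directly into the freezing error at each scale, but the weak type-$(1,1)$ Lemma~\ref{lem-weak11-adj} only covers $\nabla^2$-type sources; $\dv$-form and scalar sources need separate statements with different $r$-scalings, and their local $L^1$ norms involve $\|u\|_{L^{q'}}$ which must first be pinned down. The paper instead sets $u=v+w$ where $v$ solves a constant-coefficient Dirichlet problem with right-hand side $D_i(b^iu)+f-cu$; after an $L^p$-bootstrap $v$ is H\"older, and $w$ is an adjoint solution of a double-divergence equation with source $\nabla^2\mathbf g_1$, $\mathbf g_1=\mathbf g+(\mathbf A-\mathbf I)v\in\mathsf{DMO_x}\cap L^\infty$, so the iteration only ever sees $\nabla^2$-sources. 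Second, your preliminary ``De~Giorgi/Moser through the dual identity'' $L^\infty$-bound is not how the paper proceeds and is not readily available for a double-divergence operator with merely $\mathsf{DMO_x}$ coefficients; the local boundedness in the paper emerges from the $L^p$-bootstrap plus the a priori interior Campanato estimate (Theorem~\ref{thm:dd-wolt}). (Also, adjoint problems should be localized on forward cylinders $C_r^+$, not $Q_r^-$.)
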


As in \cite{DK17,DEK18}, the proofs of Theorems \ref{thm-main-d}, \ref{thm-main-nd}, and \ref{thm-main-adj} are based on Campanato's approach, which was used previously, for instance, in \cite{Giaq83, Lieb87}. The main step of Campanato's approach is to show the mean oscillations of $Du$ (or $D^2u$, or $u$, respectively) in balls (or cylinders) vanishes to a certain order as the radii of the balls (or cylinders) go to zero.
The main difficulty is that because we only impose the assumption on the $L^1$-mean oscillation of the coefficients and data with respect to either $x$ or $(t,x)$, the usual argument based on the $L^2$ (or $L^p$ for $p>1$) estimates does not work in our case. To this end, we exploit weak type-$(1,1)$ estimates, the proof of which involves a duality argument, as well as the Sobolev estimates for parabolic equations with coefficients measurable in time; see \cite{DK11,DK11b}. We then adapt Campanato's idea in the $L^p$ setting for some $p\in (0,1)$.
In order to derive Theorem \ref{thm-main-adj} we must also establish new results for non-negative adjoint solutions to homogeneous non-divergence form elliptic equations, as Lemmas \ref{lem8.06} and \ref{lem8.0671}. See also the Remark \ref{R:remrk10} for an application of those results to derive upper and lower Gaussian bounds for the global fundamental solution of such operators.

\begin{remark}\label{rmk1.5}
Elliptic and parabolic equations with uniformly Dini continuous coefficients have been well studied; see, for instance, \cite{ME65,Lieb87,MM11, BS17}.
In particular, in \cite{BS17} the authors obtained the continuity of adjoint solutions and the Harnack inequality for nonnegative adjoint solutions under the assumption that the leading coefficients are uniformly Dini continuous. See also \cite{S1975,Mamedov92, BKR2001} for related results.
Elliptic equations with Dini mean oscillation coefficients were recently studied in \cite{DK17}.
By H\"older's inequality, the $L^1$-Dini mean oscillation (or  $\mathsf{DMO}$) condition is weaker than the $L^p$-Dini mean oscillation condition for any $p>1$, i.e., the function
\[
\omega_{(p)}(r):=\sup_{Q_r^-(X)\subset \overline \Omega_T} \left( \fint_{Q_r^-(X)} \,\abs{f(Y)- f_{X,r}}^p\right)^{\frac1 p}, \quad\text{where }\; f_{X,r} :=\fint_{Q_r^{-}(X)} f.
\]
is a Dini function.
For example, the $L^2$-Dini mean oscillation condition was used in \cite{Y.Li2016}\footnote{This paper was written and first submitted in 2008.} and also in \cite{KM2012, Dong2012}.
These conditions are in fact strictly weaker than the uniform Dini continuity condition; see an example in \cite[p. 418]{DK17}.
On the other hand, the situation is different if we instead consider the functions
\[
\widehat\omega_{(p)}(r):= \sup_{0<s\le r} \omega_{(p)}(s).
\]
It follows from the proof of \cite[Proposition~1.13]{Ac92} that if $\widehat\omega_{(1)}$ is a Dini function, then $\widehat \omega_{(p)}$ are also Dini functions for all $p \in [1, \infty)$.
It is clear that if $\widehat \omega_{(1)}$ is a Dini function, then $\omega_{(1)}$ is also a Dini function.
However, as it is mentioned in \cite[Remark~2.2]{DL20} that the converse is not always true as there is a function $f$ for which $\omega_{(1)}$ is a Dini condition, while $\widehat \omega_{(1)}$ is not.
From this perspective, it is not clear to us whether our $\mathsf{DMO}$ condition implies that $\omega_{(p)}$ is a Dini function for $p \in (1,\infty)$.
\end{remark}

The paper is organized as follows.
In Section~\ref{sec2} we introduce some notation, and function spaces, and provide some preliminary lemmas. Section \ref{sec:int} is devoted to interior estimates for three types of equations.
Boundary estimates for three types of equations are established in Section~\ref{sec:bdry}, where we complete the proofs of Theorems~\ref{thm-main-d}, \ref{thm-main-nd}, and \ref{thm-main-adj}.
Section~\ref{sec5} is devoted to the study of adjoint and normalized adjoint solutions.
We present some useful properties of adjoint solutions to equations with $\mathsf{DMO_x}$ coefficients, some of which are used  to derive Theorem \ref{thm-main-adj}.
See also Remark \ref{R:remrk10} for an application to get upper and lower Gaussian bounds for the fundamental solution of parabolic equations in non-divergence form with $\mathsf{DMO_x}$ leading coefficients over $\bR^{n+1}$.

Finally, we mention that most of our main results are readily extended to parabolic systems of second order with the operators  $P$ and $\mathcal P$ replaced respectively by
\begin{equation*}
P \vec u= \vec u_t-D_\alpha(\mathbf{A}^{\alpha\beta}D_\beta \vec u+\hat{\mathbf B}^\alpha \vec u)+\mathbf{B}^\alpha D_\alpha \vec u+\mathbf{C}\vec u
\end{equation*}
and
\begin{equation*}
\cP \vec u = \vec u_t - \mathbf{A}^{\alpha \beta}
D_{\alpha\beta}\vec u + \mathbf{B}^{\alpha} D_\alpha \vec u + \mathbf{C} \vec u.
\end{equation*}
Here the coefficients $\mathbf{A}^{\alpha \beta}$, $\mathbf{B}^\alpha$,
$\hat {\mathbf B}^{\alpha}$, and $\mathbf C$ are $m \times m$ matrix-valued functions given on $\bR^{n+1}$, i.e.,
$\mathbf{A}^{\alpha\beta}=(A^{\alpha\beta}_{ij}(t,x))_{i,j=1}^m$, etc., and the leading coefficients satisfies the Legendre-Hadamard condition
\[
A_{ij}^{\alpha \beta}(t,x) \xi_{\alpha} \xi_{\beta} \vartheta^{i} \vartheta^{j}
\ge \lambda \abs{\xi}^2 \abs{\vartheta}^2
\]
for all $(t,x) \in \bR^{n+1}$, $\xi\in \bR^n$, $\vartheta\in \bR^m$, for some constant $\lambda>0$.
We remark that the above condition is also satisfied by the linear systems of elasticity.
More precisely, Theorems~\ref{thm-main-d} and \ref{thm-main-nd} remain valid for the corresponding parabolic systems of second order, while Theorem~\ref{thm-main-adj} can be extended to the strongly parabolic systems in the case when $\psi$ and $\varphi$ are identically zero.
This is because we only use the scalar structure of the solutions in the proof of Proposition~\ref{thm-main-adj2}, which uses the properties of scalar adjoint solutions developed in Section~\ref{sec5}.

\section{Notation and preliminaries}					\label{sec2}
\subsection{Basic notation}
We denote the standard forward parabolic cylinder as
\begin{equation*}
C_r^-(X)=C_r^-(t,x)= (t-r^2, t] \times B_r(x),
\end{equation*}
where $B_r(x)$ denotes the standard Euclidean ball in $\bR^n$.
When we deal with adjoint  equations, we use instead backward cylinders
\begin{equation*}
C_r^+(X)=C_r^+(t,x)= [t, t+r^2) \times B_r(x).
\end{equation*}
We also denote the double centered cylinder as
\begin{equation*}
C_r(X)=(t-r^2, t+r^2) \times B_r(x).
\end{equation*}
For a cylindrical domain $\mathcal D=(a,b)\times \Omega$, its forward parabolic boundary is defined by
\[
\partial_p^-\mathcal D= (a,b) \times \partial \Omega \cup \set{a}\times \overbar\Omega,
\]
while its backward parabolic boundary as
\[
\partial_p^+\mathcal D= (a,b) \times \partial \Omega \cup \set{b}\times \overbar\Omega.
\]
Finally, in analogy with previous notation, we denote
\begin{equation*}
Q_r^+(X)=Q_r^+(t,x)= [t,t+r^2)\times \Omega_r(x),\quad Q_r(X)=Q_r(t,x)= (t-r^2,t+r^2)\times \Omega_r(x)
\end{equation*}
and $C_r^-$, $C_r^+$, $C_r$, $Q_r^-$, $Q_r^+$ and $Q_r$ denote respectively the same sets but when their center is $(0,0)$.
\subsection{Function spaces}

For any domain $\mathcal D\subset \bR^{n+1}$ and $p\in [1,\infty]$, we shall denote by
$L^p(\mathcal D)$ the standard Lebesgue class, i.e., the set of all functions for which
\[
\norm{f}_{L^p(\mathcal D)}=\left(\int_\mathcal D \abs{f}^p\right)^{1/p}<\infty,\quad \norm{f}_{L^\infty(\mathcal D)}=\esssup_\mathcal D \,\abs{f}<\infty.
\]
We define the function spaces
\begin{align*}
W_{p}^{1,2}(\mathcal D)&=
\set{u:\,u, \, \partial_t u,\, Du,\, D^2u\in L^p(\mathcal D)},\\
\bH^{-1}_{p}(\mathcal D)&=
\set{u:\,u=\dv \vec f+g,\;\text{ for some }\;\vec f,\,g\in L^p(\mathcal D)},\\
\cH_{p}^{1}(\mathcal D)&=
\set{u:\,u, \,Du\in L^p(\mathcal D),\;\partial_t u \in \bH^{-1}_{p}(\mathcal D)},
\end{align*}
which are equipped with norms
\begin{align*}
\|u\|_{W_{p}^{1,2}(\mathcal D)}&=\norm{u}_{L^p(\mathcal D)}+\norm{Du}_{L^p(\mathcal D)}+\norm{D^2u}_{L^p(\mathcal D)}+\norm{\partial_t u}_{L^p(\mathcal D)},\\
\norm{u}_{\bH^{-1}_{p}(\mathcal D)}&=\inf\set{ \norm{\vec f}_{L^p(\mathcal D)}+\norm{g}_{L^p(\mathcal D)}:\,
u=\dv \vec f+g,\; \vec f,\,g \in L^p(\mathcal D)},\\
\norm{u}_{\cH_{p}^{1}(\mathcal D)}&=\norm{u}_{L^p(\mathcal D)}+\norm{Du}_{L^p(\mathcal D)}+\norm{\partial_t u}_{\bH^{-1}_p(\mathcal D)}.
\end{align*}
We denote by $C^{0,0}(\overbar{\mathcal D})$ the space of all continuous functions over $\overbar{\mathcal D}$ and define
\[
C^{1,2}(\overbar{\mathcal D})= \set{u \in C^{0,0}(\overbar{\mathcal D}):  \partial_t u,\, Du, \,D^2u \in C^{0,0}(\overbar{\mathcal D})}.
\]
For a constant $\delta \in (0,1]$, we denote
\[
[u]_{C^{\delta/2, \delta}(\mathcal D)}=\sup_{\substack{X, Y \in \mathcal D\\X \neq Y}} \frac{\abs{u(X)-u(Y)}}{\abs{X-Y}^\delta};\quad \norm{u}_{C^{\delta/2, \delta}(\mathcal D)}=[u]_{C^{\delta/2, \delta}(\mathcal D)}+\sup_{\mathcal D}\,\abs{u}.
\]
By $C^{\delta/2, \delta}(\mathcal D)$ we denote the space of all functions $u$ for which $\norm{u}_{C^{\delta/2, \delta}(\mathcal D)}<\infty$.
$\mathring C^{1/2,1}(\overbar{\mathcal D})$ is the set of all functions $u \in  C^{1/2,1}(\overbar{\mathcal D})$ for which $Du \in C^{0,0}(\overbar{\mathcal D})$ and
\begin{equation}\label{eq1121may9}
\frac{\abs{u(t,x)-u(s,x)}}{\abs{t-s}^{1/2}} \to 0\;\text{ as }\;\abs{t-s}\to 0\;\text{ for }\;(t,x), \,(s,x) \in \mathcal{\overbar{\mathcal D}}.
\end{equation}

Finally, $\mathring{\mathcal H}^1_{p}(\Omega_T)$ and $\mathring{W}^{1,2}_{p}(\Omega_T)$ denote respectively the closure of $ C^{1,2}(\overbar{\Omega}_T)$ functions in $\mathcal H^1_{p}(\Omega_T)$ and $W^{1,2}_p(\Omega_T)$, which vanish over $\partial_p^{-}\Omega_T$.

\subsection{Preliminary lemmas}
First, we state Sobolev-Morrey embedding theorems in the parabolic setting.
The first one is a special case of \cite[Lemma~3.3, \S II.3, p. 80]{LSU}.
For the second lemma, we refer the reader to \cite[Lemma 8.1]{Kr07b}.
\begin{lemma}				\label{psobolev}
Let $\Omega\subset \bR^n$ be a bounded Lipschitz domain.
Assume $u \in W^{1,2}_q(\Omega_T)$.
\begin{enumerate}[(i)]
\item
If $1\le q<\frac{n+2}{2}$, then $u \in L^p(\Omega_T)$, where $\frac{1}{p}=\frac{1}{q}-\frac{2}{n+2}$, and we have the estimate
\[
\norm{u}_{L^p(\Omega_T)} \le C \norm{u}_{W^{1,2}_q(\Omega_T)}.
\]
\item
If $q>\frac{n+2}{2}$, then $u \in C^{\alpha/2,\alpha}(\Omega_T)$, where
\[
\alpha=
\begin{cases}
2-\frac{n+2}{q}, &\text{ if }\; q<n+2\\
1-\epsilon \;\text{ for any }\;\epsilon\in (0,1), &\text{ if }\;q \ge n+2.
\end{cases}
\]
We have in addition the estimate
\[
\norm{u}_{C^{\alpha/2,\alpha}(\Omega_T)} \le C \norm{u}_{W^{1,2}_q(\Omega_T)}.
\]
\item
If $1\le q<n+2$, then $Du \in L^p(\Omega_T)$, where $\frac{1}{p}=\frac{1}{q}-\frac{1}{n+2}$, and we have the estimate
\[
\norm{Du}_{L^p(\Omega_T)} \le C \norm{u}_{W^{1,2}_q(\Omega_T)}.
\]
\item
If $q>n+2$, then $Du \in C^{\alpha/2,\alpha}(\Omega_T)$, where $\alpha=1-\frac{n+2}{q}$, and
we have  the estimate
\[
\norm{Du}_{C^{\alpha/2,\alpha}(\Omega_T)} \le C \norm{u}_{W^{1,2}_q(\Omega_T)}.
\]
\end{enumerate}
Here, $C$ is a constant (varying line from line) depending only on $n$, $q$, $\Omega$, and $T$.
\end{lemma}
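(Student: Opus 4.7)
The result is a parabolic analogue of the classical Sobolev--Morrey embeddings: $W^{1,2}_q$ should be viewed as carrying \emph{two} units of regularity in the homogeneous dimension $n+2$ of the parabolic metric $|X|_p := \sqrt{|t|}+|x|$, so (i)--(iv) match the Euclidean Sobolev/Morrey exponents with $n$ replaced by $n+2$ and ``two derivatives'' meaning $D^2$ or $\partial_t$. I would organize the proof into extension, a kernel representation, and Morrey-type estimates.

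\textbf{Step 1 (Extension to $\bR^{n+1}$).} First I would construct a bounded extension $E: W^{1,2}_q(\Omega_T) \to W^{1,2}_q(\bR^{n+1})$ supported in a fixed neighborhood of $\overbar{\Omega_T}$. Using a finite Lipschitz atlas of $\Omega$, one flattens each boundary patch and applies a Stein--Whitney higher-order reflection in the normal variable, while across $t=0$ and $t=T$ one uses an even/odd-in-time reflection designed to keep both $\partial_t u$ and $D^2 u$ in $L^q$. This reduces matters to $u\in W^{1,2}_q(\bR^{n+1})$ of compact support, at the cost of a multiplicative constant depending on $\Omega$, $T$, $q$.

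\textbf{Step 2 (Heat kernel and parabolic HLS).} Set $f := \partial_t u - \Delta u \in L^q$ with $\|f\|_{L^q}\lesssim \|u\|_{W^{1,2}_q}$, so $u = \Gamma * f$ with $\Gamma$ the Gauss--Weierstrass kernel. The pointwise bounds
\[
|\Gamma(X)|\lesssim |X|_p^{-n}, \qquad |D_x\Gamma(X)|\lesssim |X|_p^{-(n+1)}
\]
say that $\Gamma*\,\cdot\,$ and $D_x\Gamma*\,\cdot\,$ are parabolic Riesz potentials of order $2$ and $1$ on $(\bR^{n+1},|\cdot|_p,dX)$, a space of homogeneous dimension $n+2$. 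The Hardy--Littlewood--Sobolev inequality on this homogeneous space then yields parts (i) and (iii) with the advertised exponents $\tfrac1p=\tfrac1q-\tfrac{2}{n+2}$ and $\tfrac1p=\tfrac1q-\tfrac{1}{n+2}$, the zeroth-order contributions $\|u\|_{L^q}$ and $\|Du\|_{L^q}$ being built into the $W^{1,2}_q$ norm.

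\textbf{Step 3 (Morrey regime and main obstacle).} For (ii) and (iv), when $q$ exceeds the critical exponent for the relevant Riesz potential, I would argue directly from
\[
u(X)-u(Y) = \int \bigl[\Gamma(X-Z)-\Gamma(Y-Z)\bigr] f(Z)\,dZ
\]
(and its analogue with $D_x\Gamma$ for $Du$), splitting the integral over the parabolic balls of radius $2|X-Y|_p$ centered at $X$ and $Y$ and over their complement, and using Hölder together with the explicit decay of $\Gamma$ and $D_x\Gamma$; the Hölder exponents $2-(n+2)/q$, $1-\epsilon$, and $1-(n+2)/q$ then fall out of the resulting integrals. The hard part is really Step~1: engineering a boundary-preserving extension that is simultaneously bounded on $L^q(\partial_t u)$ and $L^q(D^2 u)$. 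Away from $\partial\Omega$ both time and space reflections are textbook, but along $[0,T]\times\partial\Omega$ one has to couple the Stein-type higher-order reflection with the parabolic scaling in a way that does not destroy the time derivative---exactly the construction carried out in \cite[\S II]{LSU}. Once the extension is in hand, Steps~2--3 are standard parabolic potential theory.
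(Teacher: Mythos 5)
The paper does not actually prove this lemma: it records it as a special case of Lemma~3.3, \S II.3 of Ladyzhenskaya--Solonnikov--Ural'ceva, where the argument runs through anisotropic multiplicative (Gagliardo--Nirenberg type) interpolation inequalities together with an extension. Your route --- extension, then $u=\Gamma*f$ with $f=\partial_t u-\Delta u$, then parabolic Riesz-potential/Morrey estimates --- is a genuinely different and cleaner approach, at the cost of invoking the Hardy--Littlewood--Sobolev inequality on the parabolic homogeneous space of dimension $n+2$. Three caveats you should close. (1) HLS gives only weak type at $q=1$, an endpoint the statement allows in parts (i) and (iii); at $q=1$ you need the Gagliardo ``integrate along coordinate lines'' argument or the LSU multiplicative inequalities instead. (2) In part (ii) for $q\ge n+2$, the spatial tail of your Step~3 does not simply ``fall out'': the term $\int_{|Z-X|>2r}|x-y|\,|X-Z|^{-(n+1)}|f(Z)|\,dZ$ is controlled by $\bigl\|\,|Z|^{-(n+1)}\bigr\|_{L^{q'}(\{|Z|>2r\})}$, and that integral diverges precisely when $q\ge n+2$. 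To recover $\alpha=1-\epsilon$ one should instead use part (iv) to get $Du\in L^\infty$ (hence Lipschitz in $x$), and handle the time increment $|u(t,x)-u(s,x)|$ by the spatial-mollification trick, exactly as the paper does in deriving \eqref{eq13.19fs}; the exponent $1-(n+2)/(2q)\ge 1/2$ then dominates $(1-\epsilon)/2$. (3) Your concern about the extension along $[0,T]\times\partial\Omega$ is overstated: Stein's spatial extension operator, applied fiberwise in $t$, is independent of $t$, so it commutes with $\partial_t$ and is simultaneously bounded on $L^q$ and $W^2_q$ in the spatial variables; composing it with an even reflection across $t=0$ and $t=T$ already yields the bounded extension $W^{1,2}_q(\Omega_T)\to W^{1,2}_q(\bR^{n+1})$ for a Lipschitz cylinder, with no additional coupling needed.
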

\begin{lemma}				\label{psobolev2}
Let $\Omega\subset \bR^d$ be a bounded Lipschitz domain.
Assume $u \in \cH^1_q(\Omega_T)$.
\begin{enumerate}[(i)]
\item
If $1\le q<n+2$, then $u \in L^p(\Omega_T)$, where $\frac{1}{p}=\frac{1}{q}-\frac{1}{n+2}$, and we have the estimate
\[
\norm{u}_{L^p(\Omega_T)} \le C \norm{u}_{\cH^1_q(\Omega_T)}.
\]
\item
If $q>n+2$, then $u \in C^{\alpha/2,\alpha}(\Omega_T)$, where $\alpha=1-\frac{n+2}{q}$, and
we have  the estimate
\[
\norm{u}_{C^{\alpha/2,\alpha}(\Omega_T)} \le C \norm{u}_{\cH^1_q(\Omega_T)}.
\]
\end{enumerate}
Here, $C$ is a constant (varying line from line) depending only on $n$, $q$, $\Omega$, and $T$.
\end{lemma}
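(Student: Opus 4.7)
The plan is to reduce the assertion to the whole-space setting and then invoke standard estimates for parabolic Riesz-type potentials.

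First, I would extend $u$ from $\Omega_T$ to a function $\tilde u \in \cH^1_q(\bR^{n+1})$ whose norm is controlled by $\|u\|_{\cH^1_q(\Omega_T)}$. Since $\Omega$ is Lipschitz, a standard spatial extension (Stein's theorem) combined with a reflection-plus-cutoff in the time variable suffices. The subtlety is that the same extension operator must be applied to the pair $(\vec f, g) \in L^q \times L^q$ coming from the decomposition $\partial_t u = \dv \vec f + g$ in the definition of $\bH^{-1}_q(\Omega_T)$, so that the extended function inherits the same $L^q$ decomposition of its time derivative on $\bR^{n+1}$.

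Next, adding $\tilde u$ to both sides of $\partial_t \tilde u = \dv \vec f + g$ rewrites $\tilde u$ as a solution of
$$\partial_t \tilde u - \Delta \tilde u + \tilde u = \dv(\vec f - D\tilde u) + (g + \tilde u) =: \dv \vec F + G,$$
with $\vec F, G \in L^q(\bR^{n+1})$ controlled by $\|u\|_{\cH^1_q(\Omega_T)}$. Letting $K$ denote the fundamental solution of $\partial_t - \Delta + 1$ on $\bR^{n+1}$, one writes $\tilde u = (DK) \ast \vec F + K \ast G$ and checks that, in the parabolic scaling $|(t,x)| = \max(\sqrt{|t|}, |x|)$, both $|DK|$ and $|K|$ are controlled by a parabolic Riesz kernel of order $1$ in dimension $n+2$ (i.e.\ one comparable to $|(t,x)|^{-(n+1)}$), so that $|\tilde u|$ is dominated pointwise by a parabolic Riesz potential of order $1$ of $|\vec F| + |G|$.

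From this representation, part (i) follows from the parabolic Hardy-Littlewood-Sobolev inequality for the Riesz potential of order $1$, giving $\tilde u \in L^p(\bR^{n+1})$ with $\frac1p = \frac1q - \frac{1}{n+2}$ and the claimed bound. Part (ii), for $q > n+2$, follows from the corresponding parabolic Morrey estimate: the Riesz potential of order $1$ then maps $L^q$ boundedly into $C^{\alpha/2,\alpha}$ with $\alpha = 1 - \frac{n+2}{q}$, by a standard argument exploiting the homogeneity of the kernel under parabolic dilations. Restricting $\tilde u$ back to $\Omega_T$ yields the lemma.

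The main obstacle is the extension step: unlike in the setting of Lemma \ref{psobolev} for $W^{1,2}_q$, here $\cH^1_q$ only records distributional information on $\partial_t u$ through $\bH^{-1}_q$, so one must take care that the extension preserves the divergence-plus-function decomposition of $\partial_t u$, not merely the $L^q$ norms of $u$ and $Du$. Once that is in place, the rest is a routine parabolic potential-theoretic estimate.
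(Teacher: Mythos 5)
The paper does not actually prove Lemma~\ref{psobolev2}; it simply defers to \cite[Lemma 8.1]{Kr07b}, which is a whole-space (cylinder over $\bR^n$) statement. Your overall strategy -- extend from $\Omega_T$ to $\bR^{n+1}$ and then invoke a parabolic-potential representation -- is a natural way to reduce to that setting, and the potential-theoretic half of your argument is correct as written: once one has $\tilde u\in\cH^1_q(\bR^{n+1})$ compactly supported in time, rewriting $\partial_t\tilde u-\Delta\tilde u+\tilde u=\dv(\vec f-D\tilde u)+(g+\tilde u)$, representing $\tilde u$ via the fundamental solution $K$ of $\partial_t-\Delta+1$, and using $|K|+|DK|\lesssim|(t,x)|^{-(n+1)}$ (together with the exponential decay of $K$ at infinity), the parabolic Hardy--Littlewood--Sobolev inequality gives (i) and the corresponding parabolic Morrey estimate gives (ii).

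The genuine gap is in the extension step, which you rightly flag as the only non-routine point but then close too quickly. Applying ``the same extension operator'' $E$ to $u$ and to the pair $(\vec f,g)$ does not produce the decomposition $\partial_t(Eu)=\dv(E\vec f)+Eg$: a Stein-type spatial extension does not commute with $\dv$, and even in the model case of even reflection across a flattened boundary piece $\{x^1=0\}$, the normal component $F^1$ (the odd reflection of $f^1$) develops a jump at $x^1=0$, so that $D_1F^1$ acquires a surface measure $2\,f^1(t,0,x')\,\delta_{\{x^1=0\}}$. This term is not controlled by $\|u\|_{\cH^1_q}$, since $f^1\in L^q$ has no $L^q$ trace; concretely $\partial_t\tilde u$ then differs from $\dv\tilde{\vec f}+\tilde g$ by a distribution supported on $\{x^1=0\}$ that one cannot absorb. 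The repair is not a black-box Stein operator but a Hestenes-type higher-order reflection: after localization and flattening, set $\tilde u(t,x^1,x')=-3u(t,-x^1,x')+4u(t,-2x^1,x')$ for $x^1<0$. The coefficients $c_1=-3,\ c_2=4$ satisfy both $\sum_k c_k=1$ (continuity of $\tilde u$, so $\tilde u\in W^{1,q}$) and $\sum_k c_k/k=-1$, which is exactly the condition ensuring the corresponding reflected $F^1(t,x^1,x')=-\sum_k(c_k/k)f^1(t,-kx^1,x')$ has matching trace at $x^1=0$ and hence no surface delta; the tangential components $f^j$ and $g$ pose no such problem. With such a reflection (and an analogous reflection-plus-cutoff in $t$, which is unproblematic since only spatial derivatives of $\vec f$ cause jumps), one does get a bounded extension $\cH^1_q(\Omega_T)\to\cH^1_q(\bR^{n+1})$, and the rest of your argument goes through. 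Without a construction of this kind the extension step is not actually established.
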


Throughout the rest of paper, the usual summation convention over repeated indices are assumed. For non-negative (variable) quantities $A$ and $B$,
we denote $A\lesssim B$ if there exists a generic positive constant C such that $A \le CB$.
We may add subscript letters like $A\lesssim_{a,b} B$ to indicate the dependence of the implicit constant $C$ on the parameters $a$ and $b$.

\section{Interior estimates}					\label{sec:int}
In this section, we consider parabolic equations without lower order coefficients and develop interior estimates.
More specifically, we shall prove the following theorems, where the usual summation convention with repeated indices are assumed.
\begin{theorem}\label{thm:d-wolt}
Let $a^{ij}$ satisfy \eqref{parabolicity}.
Let $u \in \cH^1_2(C_4^-)$ be a weak solution of the divergence equation
\begin{equation*}				
u_t- D_i(a^{ij}(t,x) D_j u)= D_i g^i \;\text{ in }\; C_4^-.
\end{equation*}
If $a^{ij} \in \mathsf{DMO_x}$ and  $g^i \in \mathsf{DMO_x} \cap L^\infty$, then we have $u\in \mathring{C}^{1/2,1}(\overbar{C_1^-})$.
\end{theorem}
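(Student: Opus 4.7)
The plan is to follow Campanato's mean-oscillation scheme, carried out in $L^p$ with $p \in (0,1)$ so that the $L^1$-type $\mathsf{DMO_x}$ hypothesis on $\mathbf A$ and $\vec g$ can be used directly, rather than through an $L^p$-Dini condition for $p>1$. Fix $X_0 = (t_0, x_0) \in C_1^-$ and $0 < r \le 1/2$, set
\[
\bar a^{ij}(t) := \fint_{B_r(x_0)} a^{ij}(t, y)\,dy, \qquad \bar g^i(t) := \fint_{B_r(x_0)} g^i(t, y)\,dy,
\]
and observe the key algebraic point that $D_i \bar g^i \equiv 0$, since $\bar g^i$ depends only on $t$. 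Consequently, on $C_r^-(X_0)$ the equation reads
\[
u_t - D_i\bigl(\bar a^{ij}(t)D_j u\bigr) = D_i F^i, \qquad F^i := (a^{ij} - \bar a^{ij})D_j u + (g^i - \bar g^i),
\]
and I would split $u = v + w$, with $v$ solving $v_t - D_i(\bar a^{ij}(t) D_j v) = 0$ in $C_r^-(X_0)$ and $v = u$ on $\partial_p C_r^-(X_0)$, while $w$ satisfies the same operator equation with right-hand side $D_i F^i$ and zero parabolic data.

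Because the coefficients of the frozen operator depend only on $t$, classical theory for parabolic equations with $x$-independent leading coefficients furnishes pointwise H\"older estimates for $Dv$, which imply that for any $\theta \in (0,1/2]$ and any $p \in (0,2]$
\[
\left(\fint_{C_{\theta r}^-(X_0)} \abs{Dv - (Dv)_{X_0,\theta r}}^p\right)^{1/p} \lesssim \theta^{\alpha}\left(\fint_{C_{r}^-(X_0)} \abs{Dv - (Dv)_{X_0, r}}^p\right)^{1/p}
\]
for some (in fact any) $\alpha \in (0,1)$. The central ingredient for $w$ is a weak-type $(1,1)$ bound for the map $F \mapsto Dw$, obtained via a duality argument from the $L^{p'}$-Sobolev estimates of \cite{DK11, DK11b} for parabolic equations whose leading coefficients are merely measurable in $t$. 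This gives, for each $p \in (0,1)$,
\[
\left(\fint_{C_r^-(X_0)} \abs{Dw}^p\right)^{1/p} \lesssim_p \fint_{C_r^-(X_0)} \abs{F} \lesssim \omega^{\textsf x}_{\mathbf A}(r)\,\Psi(X_0, r) + \omega^{\textsf x}_{\vec g}(r),
\]
where $\Psi(X_0, r)$ is a suitable $L^p$-type mean of $\abs{Du}$ on $C_r^-(X_0)$; here the uniform boundedness of $\mathbf A$ (from \eqref{parabolicity}) and of $\vec g$ is combined with the $L^1$-$\mathsf{DMO_x}$ control to estimate $\fint \abs{(a - \bar a)Du}$ without raising the Dini modulus to a fractional power.

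Combining the two estimates and passing to the $L^p$-mean oscillation
\[
\varphi(X_0, \rho) := \left(\fint_{C_\rho^-(X_0)} \abs{Du - (Du)_{X_0,\rho}}^p\right)^{1/p}
\]
yields an inequality of the form $\varphi(X_0, \theta r) \le C\theta^\alpha \varphi(X_0, r) + C[\omega^{\textsf x}_{\mathbf A}(r)\,\Psi(X_0, r) + \omega^{\textsf x}_{\vec g}(r)]$. Choosing $\theta$ with $C\theta^\alpha \le 1/2$ and iterating along $r_k = \theta^k r_0$, the Dini summability of $\omega^{\textsf x}_{\mathbf A}$ and $\omega^{\textsf x}_{\vec g}$ produces a Dini modulus for the averages $(Du)_{X_0, r_k}$, uniform in $X_0 \in C_1^-$, whence the Campanato characterization adapted to $L^p$ gives $Du \in C^{0,0}(\overline{C_1^-})$. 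The little-$o$ condition \eqref{eq1121may9} defining $\mathring{C}^{1/2,1}$ is then extracted from the equation: testing against a spatial bump centered at a fixed $x$ and using the now-continuous flux $a^{ij}D_j u + g^i$ together with a standard parabolic duality yields $\abs{u(t,x) - u(s,x)} \le C\abs{t-s}^{1/2}\eta(\abs{t-s}^{1/2})$ with $\eta(\rho) \to 0$ as $\rho \to 0$.

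The decisive difficulty is that the $L^1$-type $\mathsf{DMO_x}$ hypothesis rules out the usual $L^2$-based Campanato iteration and forces working in $L^p$ with $p < 1$. This requires both the weak-type $(1,1)$ estimate for the frozen-coefficient operator, whose proof rests on a nontrivial duality with the $L^{p'}$-estimates of \cite{DK11, DK11b}, and a careful handling of $\Psi(X_0, r)$ in the iteration to avoid a circular dependence on $\norm{Du}_{L^\infty}$; the latter I would resolve by propagating an initial $L^p$-bound on $Du$ self-consistently through the geometric iteration, exploiting the summability of the Dini error terms.
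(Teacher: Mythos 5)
Your proposal follows the same Campanato-in-$L^{p}$-with-$p<1$ architecture as the paper: freeze $\bar a^{ij}(t)=\fint_{B_r(x_0)}a^{ij}(t,\cdot)$, note $D_i\bar g^i\equiv 0$, split $u=v+w$ with $w$ the zero-data solution sourced by $D_i\bigl((a^{ij}-\bar a^{ij})D_j u + g^i-\bar g^i\bigr)$, apply a weak-type $(1,1)$ bound for $\vec f\mapsto Dw$ obtained by duality from the $t$-measurable $L^{p'}$ theory, combine with the interior $C^{1/2,1}$ estimate for the frozen operator, iterate geometrically, and treat the little-$o$ condition \eqref{eq1121may9} by a spatial mollification/flux argument. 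This is precisely the route the paper takes in Section~3.2 (using exponent $p=\tfrac12$), so the overall plan is correct.

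One step in your description is imprecise and, taken literally, would not close. You write that the weak-type estimate yields
$\bigl(\fint_{C_r^-(X_0)}|Dw|^p\bigr)^{1/p}\lesssim\omega_{\mathbf A}^{\textsf x}(r)\,\Psi(X_0,r)+\omega_{\vec g}^{\textsf x}(r)$
with $\Psi$ ``a suitable $L^p$-type mean of $|Du|$'' for $p<1$. The Calder\'on--Zygmund/weak-$(1,1)$ bound converts an $L^1$ source into an $L^p$ ($p<1$) output, so the source $\fint_{C_r^-(X_0)}|(a^{ij}-\bar a^{ij})D_j u|$ must be estimated in $L^1$; pairing $\fint|a-\bar a|=\omega_{\mathbf A}^{\textsf x}(r)$ against anything weaker than $\|Du\|_{L^\infty(C_r^-(X_0))}$ does not give an $L^1$-bound. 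So $\Psi$ has to be the sup norm, not an $L^p$ mean for $p<1$, and the resulting circularity (the modulus-of-oscillation estimate for $Du$ depends on $\|Du\|_\infty$) is not avoided but resolved: one first works a priori under $u\in C^{1/2,1}(\overline{C_3^-})$, then derives from the iteration the absorption inequality
$\|Du\|_{L^\infty(C_r^-(Y_0))}\le 3^{-n-2}\|Du\|_{L^\infty(C_{2r}^-(Y_0))}+Cr^{-n-2}\|Du\|_{L^1}+C\int_0^r \tilde\omega_{\vec g}^{\textsf x}(t)\,t^{-1}\,dt$
for $r\le r_0$ with $r_0$ chosen so that $C\int_0^{r_0}\tilde\omega_{\mathbf A}^{\textsf x}(t)\,t^{-1}\,dt\le 3^{-n-2}$, sums over dyadic annuli $C_{r_k}^-$ with weights $3^{-k(n+2)}$, and concludes $\|Du\|_{L^\infty(C_2^-)}\lesssim\|Du\|_{L^1(C_4^-)}+\int_0^1\tilde\omega_{\vec g}^{\textsf x}(t)\,t^{-1}\,dt$. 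You gesture at this (``propagating an initial $L^p$-bound \ldots self-consistently''), but the initial a priori assumption and the $L^\infty$-absorption step are what actually make the iteration legal; a purely $L^p$ ($p<1$) bound on $Du$ would not feed the error term. Spell these two points out and the proof matches the paper's.
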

\begin{theorem}\label{thm:nd-wolt}
Let $a^{ij}$ be symmetric and satisfy \eqref{parabolicity}.
Let $u \in W^{1,2}_2(C_4^-)$ be a strong solution of the non-divergence equation
\begin{equation*}				
u_t- a^{ij}(t,x) D_{ij}u = g \;\text{ in }\;C_4^-.
\end{equation*}
If $a^{ij} \in \mathsf{DMO_x}$ and  $g \in \mathsf{DMO_x} \cap L^\infty$, then we have $D^2u \in C^{0,0}(\overbar{C_1^-})$ and $\partial_t u \in L^\infty({C_1^-})$.
Moreover, if $a^{ij}$ and  $g$ are continuous, then we have $u \in C^{1,2}(\overbar{C_1^-})$.
\end{theorem}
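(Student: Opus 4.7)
The plan is to adapt Campanato's iterative scheme at the $L^p$ scale with $p\in(0,1)$, anchored on a weak type-$(1,1)$ estimate for an $x$-frozen operator and on the $W^{1,2}_q$ theory of parabolic operators with coefficients measurable in $t$ from \cite{DK11,DK11b}. Concretely, I would show that
\[
\phi(X_0,r):=\inf_{\mathbf M\in\bR^{n\times n}}\left(\fint_{C_r^-(X_0)}|D^2 u-\mathbf M|^p\right)^{1/p}
\]
carries a Dini modulus uniform in $X_0\in C_2^-$. Campanato's characterization then gives $D^2 u\in C^{0,0}(\overbar{C_1^-})$; reading $\partial_t u=a^{ij}D_{ij}u+g$ back from the equation yields $\partial_t u\in L^\infty(C_1^-)$, and under the additional continuity of $a^{ij}$ and $g$ the right-hand side becomes continuous, so that $u\in C^{1,2}(\overbar{C_1^-})$.

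To estimate $\phi$ at scale $\rho\le r/2$, I freeze $a^{ij}$ only in the spatial variable via $\bar a^{ij}(t):=\fint_{B_r(x_0)}a^{ij}(t,\cdot)$, likewise $\bar g(t)$, and decompose $u=v+w$ on $C_r^-(X_0)$, where $w$ solves
\[
w_t-\bar a^{ij}(t)D_{ij}w=(a^{ij}-\bar a^{ij})D_{ij}u+(g-\bar g)
\]
with zero lateral and initial data on $\partial_p^-C_r^-(X_0)$, and $v:=u-w$ satisfies $v_t-\bar a^{ij}(t)D_{ij}v=\bar g(t)$. Since $L_0:=\partial_t-\bar a^{ij}(t)D_{ij}$ has coefficients depending only on $t$, spatial derivatives of $v$ solve the homogeneous version of the same equation, so the $W^{1,2}_q$ estimates of \cite{DK11,DK11b} produce a Campanato-type decay
\[
\fint_{C_\rho^-(X_0)}|D^2 v-(D^2 v)_{X_0,\rho}|\lesssim (\rho/r)^{\alpha}\left(\fint_{C_r^-(X_0)}|D^2 v|^p\right)^{1/p}
\]
for some $\alpha\in(0,1)$ and any $p\in(0,1)$. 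For the corrector, the weak type-$(1,1)$ estimate $\|D^2 w\|_{L^{1,\infty}(C_r^-(X_0))}\lesssim \|F\|_{L^1(C_r^-(X_0))}$ for $L_0$---derived by duality against $L_0^\ast$, whose $W^{1,2}_\infty$ solvability again comes from \cite{DK11,DK11b}---combined with a standard distribution-function computation gives $(\fint_{C_r^-}|D^2 w|^p)^{1/p}\lesssim \fint_{C_r^-}|F|$ for every $p<1$.

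The principal perturbation $\fint_{C_r^-}|(\mathbf A-\bar{\mathbf A})D^2 u|$ is then controlled by H\"older's inequality with an exponent $p_0>1$ just above one: on the one hand $(\fint|\mathbf A-\bar{\mathbf A}|^{p_0'})^{1/p_0'}\lesssim \omega_{\mathbf A}^{\textsf x}(r)^{1/p_0'}$, using $\mathbf A\in L^\infty$ together with the $\mathsf{DMO_x}$ hypothesis, and on the other hand $\|D^2 u\|_{L^{p_0}(C_2^-)}$ is controlled a priori by the data via the Calder\'on--Zygmund theory for parabolic equations with $\mathsf{DMO_x}\subset\mathrm{VMO}_x$ leading coefficients. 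Assembling all pieces yields a recursion of the form
\[
\phi(X_0,\kappa r)\le C\kappa^{\alpha}\phi(X_0,r)+C\bigl(\omega_{\mathbf A}^{\textsf x}(r)^{1/p_0'}+\omega_g^{\textsf x}(r)\bigr),
\]
and iterating on $r_k=\kappa^k r_0$ with $\kappa$ small transfers the Dini summability of the right-hand side onto $\phi$. The main obstacle---and the place where the $p<1$ viewpoint of \cite{DK17} is indispensable---is running the perturbation estimate with $p<1$ on the left yet $p_0>1$ on the right without letting the constants blow up as $p_0\to 1^+$, while simultaneously closing the iteration with a constant that does not inherit a growing dependence on the unknown local $L^{p_0}$ norm of $D^2 u$.
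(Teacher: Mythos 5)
Your outline shares the paper's skeleton (freeze $\mathbf A$ in $x$ only, decompose $u=v+w$, run Campanato at an $L^p$ scale with $p<1$, convert the weak-$(1,1)$ bound for $L_0$ into an $L^p$ bound for $D^2w$), but the way you handle the perturbation $(\mathbf A-\bar{\mathbf A})D_{ij}u$ contains a genuine gap. You use H\"older with some $p_0>1$ to write
\[
\fint_{C_r^-}\abs{\mathbf A-\bar{\mathbf A}}\abs{D^2u}\le\Bigl(\fint\abs{\mathbf A-\bar{\mathbf A}}^{p_0'}\Bigr)^{1/p_0'}\Bigl(\fint\abs{D^2u}^{p_0}\Bigr)^{1/p_0}\lesssim\omega_{\mathbf A}^{\textsf x}(r)^{1/p_0'}\norm{D^2u}_{L^{p_0}},
\]
using boundedness of $\mathbf A$ to interpolate the first factor. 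The problem is that a Dini function raised to a power $1/p_0'<1$ need not be Dini: take $\omega(r)=1/\log^2(1/r)$, which satisfies $\int_0^1\omega(r)/r\,dr<\infty$, while $\omega(r)^{1/2}=1/\log(1/r)$ does not. So the error term in your recursion need not be summable along $r_k=\kappa^k r_0$, and the Campanato iteration fails to yield a modulus of continuity. This is precisely the phenomenon Remark~\ref{rmk1.5} warns about: the $L^1$-$\mathsf{DMO}$ hypothesis is not known to upgrade to an $L^{p_0'}$-Dini mean oscillation. You flagged ``the main obstacle'' as constants blowing up as $p_0\to 1^+$; the real obstruction is this loss of the Dini property, which no choice of $p_0>1$ avoids.

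The paper sidesteps this entirely. Instead of H\"older, it simply pulls the $L^\infty$ norm of $D^2u$ outside (assuming a priori $u\in C^{1,2}(\overbar{C_3^-})$, recovered later by approximation), so the error term is $\omega_{\mathbf A}^{\textsf x}(r)\,\norm{D^2u}_{L^\infty(C_r^-(X_0))}+\omega_g^{\textsf x}(r)$, with $\omega_{\mathbf A}^{\textsf x}$ kept intact. The price is that the recursion now contains the unknown $\norm{D^2u}_{L^\infty}$, and a separate bootstrap step is required: one iterates the resulting pointwise bound on $\abs{D^2u(X_0)}$ over a telescoping family of cylinders $C^-_{r_k}$ with $r_k=3-2^{1-k}$, multiplies by $3^{-k(n+2)}$, and sums, absorbing the $L^\infty$ term using the smallness of $\int_0^{r_0}\tilde\omega_{\mathbf A}^{\textsf x}(t)/t\,dt$ for $r_0$ small. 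This gives $\norm{D^2u}_{L^\infty(C_2^-)}\lesssim\norm{D^2u}_{L^1(C_4^-)}+\int_0^1\tilde\omega_g^{\textsf x}(t)/t\,dt$, which then closes the modulus-of-continuity estimate. Your proposal is missing both the a priori $L^\infty$ structure and this absorption step; the CZ control of a global $\norm{D^2u}_{L^{p_0}}$ that you invoke does not play the same role and does not repair the non-Dini error term.

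A smaller inaccuracy: the duality underlying the weak-$(1,1)$ estimate does not rest on ``$W^{1,2}_\infty$ solvability'' of $L_0^*$, but on testing against the adjoint problem $\cP_0^*v=\nabla^2\mathbf g$ with $\mathbf g$ supported in a dyadic annulus, and then using that $v$ solves the homogeneous adjoint equation where the source is supported, so that the interior $C^{1/2,1}$ estimate (Lemma~\ref{lem:c_half}) applies and produces the required decay in the annular $L^1$ norms of $D^2w$.
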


\begin{theorem}\label{thm:dd-wolt}
Let $a^{ij}$ be symmetric and satisfy \eqref{parabolicity}.
Let $u \in L^2(C_4^+)$ be an adjoint solution of the double divergence form equation
\begin{equation*}				
-u_t- D_{ij}(a^{ij}(t,x) u) = D_{ij}g^{ij}\;\text{ in }\; C_4^+.
\end{equation*}
Assume $a^{ij} \in \mathsf{DMO_x}$ and  $g^{ij} \in \mathsf{DMO_x} \cap L^\infty$.
Then, we have $u\in C^{0,0}(\overbar{C_1^+})$.
\end{theorem}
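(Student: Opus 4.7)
\textbf{The plan} is to show continuity of $u$ on $\overline{C_1^+}$ by running a Campanato-type iteration at the $L^p$ scale for some $p\in(0,1)$, establishing that the $L^p$-mean oscillation of $u$ on forward cylinders $C_\rho^+(X_0)\subset C_2^+$ decays with a modulus vanishing as $\rho\to 0$. Because only the $L^1$-mean oscillation of $\mathbf A$ and $\mathbf g$ is available, we cannot work in $L^2$; instead we use a weak-type $(1,1)$ estimate for the adjoint double-divergence operator with coefficients measurable in $t$, combined with Kolmogorov's inequality, in the spirit of the elliptic arguments in \cite{DK17,DEK18}.

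\textbf{Decomposition at a point.} Fix $X_0=(t_0,x_0)\in C_1^+$ and small $r>0$. Let $\bar a^{ij}(t):=\fint_{B_r(x_0)}a^{ij}(t,\cdot)$ and $\bar g^{ij}(t):=\fint_{B_r(x_0)}g^{ij}(t,\cdot)$, and split $u=w+v$ on $C_r^+(X_0)$, where $w$ solves the frozen-in-$x$ adjoint problem
\[
-w_t-D_{ij}(\bar a^{ij}(t)\,w)=0\ \text{ in }\ C_r^+(X_0),\qquad w=u\ \text{ on }\ \partial_p^+ C_r^+(X_0),
\]
while $v=u-w$ then satisfies
\[
-v_t-D_{ij}(\bar a^{ij}(t)\,v)=D_{ij}\bigl((a^{ij}-\bar a^{ij})u+(g^{ij}-\bar g^{ij})\bigr),\qquad v=0\ \text{on}\ \partial_p^+ C_r^+(X_0).
\]
Since $\bar a^{ij}$ is $x$-independent, $w$ solves an equation with $t$-measurable coefficients only; after a time reversal this is a direct non-divergence equation, to which the $W^{1,2}_q$-solvability of \cite{DK11,DK11b} applies. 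Differentiating in $x$ (which commutes with the operator) and invoking Lemma \ref{psobolev} yield uniform estimates on as many spatial derivatives of $w$ as desired, giving the Hölder-type decay
\[
\Bigl(\fint_{C_{\theta r}^+(X_0)}|w-(w)_{\theta r}|^p\Bigr)^{1/p}\le C_0\,\theta^{\alpha}\Bigl(\fint_{C_{r/2}^+(X_0)}|w-(w)_{r/2}|^p\Bigr)^{1/p}
\]
for some $\alpha\in(0,1)$, any $\theta\in(0,\tfrac14]$, with $C_0$ independent of $X_0$ and $r$. For $v$, the same $t$-measurable-coefficient adjoint operator admits a weak-type $(1,1)$ estimate obtained by dualizing against the direct problem and invoking \cite{DK11,DK11b}; Kolmogorov's inequality then converts it into
\[
\Bigl(\fint_{C_r^+(X_0)}|v|^p\Bigr)^{1/p}\lesssim \omega_{\mathbf A}^{\textsf x}(r)\,\|u\|_{L^\infty(C_r^+(X_0))}+\omega_{\mathbf g}^{\textsf x}(r).
\]

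\textbf{Iteration and main obstacles.} Setting $\phi(r):=\inf_{c\in\bR}\bigl(\fint_{C_r^+(X_0)}|u-c|^p\bigr)^{1/p}$, the good-part decay and the bad-part bound, together with $u=v+w$, give a recursion
\[
\phi(\theta r)\le C_0\,\theta^{\alpha}\,\phi(r)+C\bigl(\omega_{\mathbf A}^{\textsf x}(r)\|u\|_{L^\infty}+\omega_{\mathbf g}^{\textsf x}(r)\bigr).
\]
Picking $\theta$ so that $C_0\theta^{\alpha}\le\tfrac12$ and iterating on geometric radii, the Dini summability of $\omega_{\mathbf A}^{\textsf x}$ and $\omega_{\mathbf g}^{\textsf x}$ forces $\phi(r)\to 0$ uniformly in $X_0\in C_1^+$; combined with the a priori boundedness of $u$, this supplies a modulus of continuity of $u$ on $\overline{C_1^+}$. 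The principal technical hurdles will be: (i) proving the weak-type $(1,1)$ estimate for the double-divergence adjoint operator with coefficients measurable in $t$ and $x$-independent, which requires a careful duality argument against $W^{1,2}_q$-solutions ($q>n+2$) of the direct non-divergence problem; and (ii) upgrading the given $L^2$-bound on $u$ to an $L^\infty$-bound on a slightly smaller cylinder $C_2^+$ so that the term $\omega_{\mathbf A}^{\textsf x}(r)\|u\|_{L^\infty}$ in the recursion is under control, which can again be accomplished by a dual $L^p\to L^\infty$ argument, testing $u$ against bounded-data solutions of the direct equation.
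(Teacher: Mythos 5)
Your overall scheme matches the paper's: freeze $\mathbf A$ and $\mathbf g$ in $x$ by spatial averages, split $u$ into a homogeneous adjoint solution for the $t$-only operator $\cP_0^*$ (which, after time reversal, is a non-divergence operator with $t$-measurable coefficients and so inherits $C^{1/2,1}$ interior regularity with an $L^p$, $p<1$, quantitative bound from Lemma~\ref{lem:c_half}) and a correction solving an inhomogeneous adjoint problem, control the correction via a weak-type-$(1,1)$ estimate converted through Kolmogorov's inequality, and run a Campanato-style recursion at the $L^{1/2}$ scale. This is exactly the content of Lemma~\ref{lem-weak11-adj}, the decomposition following it, and the concluding estimate~\eqref{eq14.00adj}.

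There is, however, a genuine gap in how you set up the weak-type estimate. You solve the inhomogeneous problem for $v$ on $C_r^+(X_0)$ with zero data on $\partial_p^+ C_r^+(X_0)$ and a right-hand side supported in the \emph{same} cylinder, and then propose to dualize against the direct problem posed on that same cylinder. In the duality argument one tests the data against direct solutions with smooth source concentrated in annuli $C_{2R}(Y)\setminus C_R(Y)$, and must control $D^2$ of the direct solution in $C^{1/2,1}$ near the base point $Y$; when that annulus abuts the lateral boundary of $C_r^+(X_0)$, the required boundary regularity is not available. This is precisely the flaw the authors flag in the elliptic versions \cite[Lemma~2.23]{DK17} and \cite[Lemma~2.4]{DEK18} (see Remark~\ref{rmk4.15}). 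The paper avoids it by posing the adjoint problem on the \emph{larger} cylinder $C_{2r}^+(X_0)$ with the right-hand side cut off by $1_{C_r^+(X_0)}$ --- as in Lemma~\ref{lem-weak11-adj}, where the solution lives on $C_2^+$ but the data sits in $C_1^+$ --- so that the cylinders $C_{R/2}(Y)$ arising in the duality step stay strictly interior and only Lemma~\ref{lem:c_half} is needed. Your outline closes once this padding is inserted; as written, item (i) of your ``hurdles'' conceals a real obstruction rather than a routine verification. (Your proposed duality route to the $L^\infty$ upgrade in item (ii) is viable; the paper instead extracts it from the iteration itself under an a priori continuity assumption, removed afterward by approximation.)
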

\subsection{Preliminary lemmas}
It should be clear that if $g$ is uniformly Dini continuous, then it is of Dini mean oscillation and $\omega_g(r) \le \varrho_g(r)$.
It is worthwhile to note that if $\Omega$ is such that for any $x\in \overline \Omega$,
\begin{equation}					\label{cond_a}
\abs{\Omega_r(x)} \ge A_0 r^n,\quad \forall r\in (0, \diam \Omega]\quad (\text{$A_0$ is a positive constant})
\end{equation}
and if $g$ is of Dini mean oscillation, then $g$ is uniformly continuous with a modulus of continuity determined by $\omega_g$.
\begin{lemma}				\label{lem00}
Let $\Omega$ satisfy the condition \eqref{cond_a}.
If $f$ is uniformly Dini continuous and $g$ is $\mathsf{DMO_x} \cap L^\infty(\Omega_T)$,  then $fg$ is $\mathsf{DMO_x} \cap L^\infty$  in $\Omega_T$.
\end{lemma}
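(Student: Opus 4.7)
The plan is to bound $\omega_{fg}^{\textsf x}(r)$ by a positive linear combination of $\varrho_f(r)$ and $\omega_g^{\textsf x}(r)$. The $L^\infty$ part is immediate: a uniformly Dini continuous function on the bounded set $\Omega_T$ is itself bounded, and $g \in L^\infty$ by hypothesis, so $fg \in L^\infty$ with $\|fg\|_{L^\infty} \le \|f\|_{L^\infty}\|g\|_{L^\infty}$.

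For the mean oscillation part, I would fix a cylinder $Q_r^-(X) \subset \overline{\Omega_T}$ with $X=(t,x)$, and write, for $Y = (s,y) \in Q_r^-(X)$,
\[
(fg)(Y) - \fint_{\Omega_r(x)} (fg)(s,\cdot) = \fint_{\Omega_r(x)} \bigl[(fg)(s,y) - (fg)(s,y')\bigr]\,dy'.
\]
The key step is the Leibniz-type identity
\[
(fg)(s,y) - (fg)(s,y') = [f(s,y) - f(s,y')]\,g(s,y) + f(s,y')\,[g(s,y) - g(s,y')].
\]
Because $(s,y)$ and $(s,y')$ both lie in $Q_r^-(X)$, the first bracket is bounded pointwise by $\varrho_f(r)$, so the first contribution is at most $\varrho_f(r)\|g\|_{L^\infty}$, while the second contribution is at most $\|f\|_{L^\infty}|g(s,y) - g(s,y')|$.

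Averaging over $Y \in Q_r^-(X)$ and controlling the residual spatial difference by the triangle inequality
\[
|g(s,y) - g(s,y')| \le |g(s,y) - \bar g^{\textsf x}_{x,r}(s)| + |g(s,y') - \bar g^{\textsf x}_{x,r}(s)|,
\]
a routine Fubini swap reduces both halves to $\omega_g^{\textsf x}(r)$: the term with $y'$ collapses because its integrand no longer depends on the spatial variable $y$, so averaging over $Q_r^-(X)$ and then over $\Omega_r(x)$ in $y'$ just reproduces the $\mathsf{DMO_x}$ defining integral. Taking the supremum over admissible cylinders should yield
\[
\omega_{fg}^{\textsf x}(r) \le \|g\|_{L^\infty}\,\varrho_f(r) + 2\,\|f\|_{L^\infty}\,\omega_g^{\textsf x}(r),
\]
which is Dini as a positive combination of Dini functions.

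No serious obstacle appears. The one point requiring care is that the $\mathsf{DMO_x}$ modulus is defined with the spatial average at each time slice rather than a full space-time average, so all manipulations must be kept within a single time slice; the splitting above is arranged precisely so that this is automatic. The hypothesis \eqref{cond_a} does not actually seem to be invoked in the argument itself; it is the standing structural assumption on $\Omega$ in the surrounding discussion and ensures that the averages are over sets of comparable volume.
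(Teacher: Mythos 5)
Your argument is correct and follows essentially the same route as the paper: split the oscillation of $fg$ via a Leibniz-type manipulation, estimate one piece by $\|g\|_{L^\infty}\varrho_f(r)$ using the uniform Dini continuity of $f$, and the other by $\|f\|_{L^\infty}\omega_g^{\textsf x}(r)$ using the $\mathsf{DMO_x}$ property of $g$, all within a single time slice. The paper decomposes directly as $fg-\overbar{(fg)}^{\textsf x}=(fg-f\bar g^{\textsf x})+(f\bar g^{\textsf x}-\overbar{(fg)}^{\textsf x})$, which avoids the extra triangle inequality and hence the factor $2$ you pick up on the $\omega_g^{\textsf x}$ term, but this is immaterial since both bounds are positive linear combinations of Dini functions; your remark that \eqref{cond_a} is not actually invoked in the estimate is also consistent with the paper's proof.
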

\begin{proof}
For any $X=(t,x)\in \overbar\Omega_T$, $r>0$ and $Q_r^-(X)\subset\overbar\Omega_T$, we have, for $s \in (t-r^2,t)$ that
\begin{multline*}
\fint_{\Omega_r(x)} \Abs{fg(s,y)-\overbar{\left(fg\right)}^{\textsf x}_{x,r}(s)}dy
\le \fint_{\Omega_r(x)} \Abs{fg-f\,\bar g^{\textsf x}_{x,r}(s)} +\Abs{f \bar g^{\textsf x}_{x,r}(s)-\overbar{\left(fg\right)}^{\textsf x}_{x,r}(s)} dy\\
\le \sup_{Q_r^-(X)} \abs{f} \cdot \fint_{\Omega_r(x)} \abs{g(s,y)-\bar g^{\textsf x}_{x,r}(s)}dy + \varrho_f(r) \cdot \fint_{\Omega_r(x)} \abs{g(s,y)}dy,
\end{multline*}
where we used the fact that for $s \in(t-r^2,t)$, we have
\[
\fint_{\Omega_r(x)} \Abs{f(s,y) g(s,y)-f(s,y)\,\bar g^{\textsf x}_{x,r}(s)} \le \sup_{Q_r(X)} \abs{f} \cdot \fint_{\Omega_r(x)} \Abs{g(s,y)-\bar g^{\textsf x}_{x,r}(s)}dy
\]
and
\[
\Abs{f(s,y) \bar g^{\textsf x}_{x,r}(s)-\overbar{\left(fg\right)}^{\textsf x}_{x,r}(s)} \le  \Abs{\fint_{\Omega_r(x)} (f(s,y)-f(s,z))g(s,z)dz }
\le \varrho_f(r) \cdot \fint_{\Omega_r(x)} \abs{g(s,z)}dz.
\]
Therefore,
\begin{equation}				\label{eq11.39w}
\omega_{fg}^{\textsf x}(r) \le \norm{f}_{L^\infty(\Omega_T)}\, \omega_g^{\textsf x}(r)+\norm{g}_{L^\infty(\Omega_T)}\,\varrho_f(r)
\end{equation}
and thus $\omega_{fg}^{\textsf x}$ is a Dini function.
It is obvious that $fg \in L^\infty$.
\end{proof}
\begin{lemma}			\label{lem01-stein}
Let $T$ be a bounded linear operator from $L^2(C_1^-)$ to $L^2(C_1^-)$.
Suppose there are $c>1$ and $C>0$ such that for any $Y \in C_1^-$ and $0<r<\frac12$ we have
\[
\int_{C_1^- \setminus C_{cr}(Y)} \abs{T b} \le C \int_{C_r^-(Y)\cap C_1^-} \abs{b}
\]
whenever $b \in L^2(C_1^-)$ is supported in $C_r^-(Y)\cap C_1^-$, $\int_{C_1^-} b =0$.
Then, for $f \in L^2(C_1^-)$ and any $\alpha>0$, we have
\[
\Abs{\set{X \in C_1^- : \abs{T f(X)} > \alpha}}  \le \frac{C'}{\alpha} \int_{C_1^-} \abs{f},
\]
where $C'=C'(n,c,C)$ is a constant.
\end{lemma}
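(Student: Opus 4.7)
The plan is to run the Calderón–Zygmund argument in the parabolic setting. The hypothesis is a standard Hörmander-type cancellation condition on atoms (mean-zero functions supported on small parabolic cylinders), and together with $L^2$ boundedness of $T$ it should yield weak-type $(1,1)$ via a stopping-time decomposition. I will first reduce to $f\in L^1(C_1^-)$; the weak-type estimate is trivial for $\alpha \le \|f\|_{L^1(C_1^-)}$, so I may assume $\alpha > \fint_{C_1^-}\abs{f}$.

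The first main step is to build a Calderón–Zygmund decomposition of $f$ at height $\alpha$ adapted to parabolic dyadic cubes. I would use the standard forward-in-time parabolic dyadic grid on $\bR^{n+1}$, i.e., cubes of the form $[k_0 4^{-m},(k_0+1)4^{-m})\times\prod_{i=1}^{n}[k_i2^{-m},(k_i+1)2^{-m})$, restricted to a covering of $C_1^-$ by finitely many unit-scale such cubes. Extending $f$ by zero outside $C_1^-$, a stopping time argument produces a countable collection of disjoint maximal parabolic dyadic cubes $\{Q_k\}$ with side lengths $r_k$ in space (and $r_k^2$ in time) such that
\begin{equation*}
\alpha < \fint_{Q_k}\abs{f} \le 2^{n+2}\alpha,\qquad \abs{f} \le \alpha \ \text{a.e. on } C_1^- \setminus \bigcup_k Q_k,
\end{equation*}
and $\sum_k \abs{Q_k} \le \alpha^{-1}\norm{f}_{L^1(C_1^-)}$. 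Writing $f = g + \sum_k b_k$ in the usual way (with $b_k = (f - f_{Q_k})\chi_{Q_k}$) gives $\norm{g}_{L^\infty}\lesssim\alpha$, $\norm{g}_{L^1}\le \norm{f}_{L^1}$, each $b_k$ supported in $Q_k$ with $\int b_k = 0$ and $\int\abs{b_k}\lesssim \alpha\abs{Q_k}$.

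For the good part, Chebyshev and the $L^2\to L^2$ boundedness of $T$ yield
\begin{equation*}
\Abs{\set{X\in C_1^-:\,\abs{Tg(X)}>\alpha/2}} \lesssim \alpha^{-2}\norm{g}_{L^2}^2 \lesssim \alpha^{-2}\cdot \alpha\norm{g}_{L^1} \lesssim \alpha^{-1}\norm{f}_{L^1(C_1^-)}.
\end{equation*}
For the bad part, I choose centers $Y_k$ so that $Q_k\subset C_{r_k}^-(Y_k)$ (the top-right corner of the dyadic cube, shifted slightly if needed), and let $E=\bigcup_k C_{cr_k}(Y_k)\cap C_1^-$, where $c$ is the constant from the hypothesis. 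Since $\abs{C_{cr_k}(Y_k)}\lesssim c^{n+2}\abs{Q_k}$, we get $\abs{E}\lesssim \alpha^{-1}\norm{f}_{L^1(C_1^-)}$. Outside $E$ we may apply the cancellation hypothesis to each atom $b_k/\abs{Q_k}$ (after normalizing, or directly), giving
\begin{equation*}
\int_{C_1^-\setminus E}\Abs{T\!\left(\sum_k b_k\right)} \le \sum_k \int_{C_1^-\setminus C_{cr_k}(Y_k)}\abs{Tb_k} \lesssim \sum_k \int\abs{b_k} \lesssim \norm{f}_{L^1(C_1^-)},
\end{equation*}
and Chebyshev gives $\abs{\{X\in C_1^-\setminus E:\abs{T(\sum_k b_k)(X)}>\alpha/2\}}\lesssim \alpha^{-1}\norm{f}_{L^1}$. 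Combining the two parts with $\abs{E}$ completes the proof.

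The main technical point is matching the parabolic dyadic grid (which is geometric and rigid) to the geometry of the cylinders $C_r^-(Y)$ appearing in the hypothesis, so that each bad cube $Q_k$ is contained in some $C_{r_k}^-(Y_k)$ and the enlarged cylinder $C_{cr_k}(Y_k)$ (which is a double-sided parabolic cylinder) contains $Q_k$ together with an appropriate buffer — this only requires taking $c$ slightly larger than the fixed constant in the hypothesis, absorbed into $C'$. The rest is a routine, parabolic-flavored adaptation of the classical Calderón–Zygmund scheme, and no property of $T$ beyond the two stated hypotheses is needed.
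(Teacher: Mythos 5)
Your overall strategy --- a parabolic Calder\'on--Zygmund decomposition, Chebyshev plus $L^2$-boundedness for the good part, and the H\"ormander-type cancellation hypothesis for the bad part outside an enlarged exceptional set --- is exactly the route the paper takes; the paper merely cites Stein's whole-space argument and says to rerun it using the ``dyadic cube'' decomposition of $C_1^-$ from Christ (Theorem~11 in \cite{Ch90}). Your good-part and bad-part estimates are standard and correct as sketched. However, the specific way you build the decomposition has a genuine gap, and it is the one that Christ's construction is designed to remove. You extend $f$ by zero outside $C_1^-$ and run the stopping time on the ambient parabolic dyadic grid. A selected cube $Q_k$ abutting $\partial B_1$ or the bottom $\{t=-1\}$ will in general stick out of $C_1^-$. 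The atom $b_k=(f-f_{Q_k})\chi_{Q_k}$ then equals $-f_{Q_k}\neq 0$ on $Q_k\setminus C_1^-$, so it is not a function in $L^2(C_1^-)$; and if you restrict it, $\int_{C_1^-} b_k\chi_{C_1^-} = f_{Q_k}\bigl(|Q_k|-|Q_k\cap C_1^-|\bigr)\neq 0$, so the mean-zero hypothesis of the lemma is violated and the cancellation estimate cannot be applied. Subtracting $\fint_{Q_k\cap C_1^-}f$ instead restores the cancellation but ruins $\|g\|_{L^\infty}\lesssim\alpha$ unless one separately proves a thickness bound $|Q_k\cap C_1^-|\gtrsim |Q_k|$ for every selected $Q_k$.

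The technical point you flagged --- fitting a dyadic box inside a backward cylinder $C_{r_k}^-(Y_k)$ with $Y_k\in C_1^-$ and then enlarging by $c$ --- is indeed routine and absorbs into the constant, but it is not the real obstruction. The real obstruction is the boundary behavior of the stopping-time decomposition on the finite domain $C_1^-$, and this is what the intrinsic dyadic decomposition of Christ resolves cleanly: it produces a nested family of ``cubes'' that are \emph{subsets} of $C_1^-$, each sandwiched between parabolic balls centered at points of $C_1^-$, so the resulting atoms are automatically supported in $C_1^-$, have mean zero over $C_1^-$, and fit inside $C_{r_k}^-(Y_k)\cap C_1^-$ with $Y_k\in C_1^-$. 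Replace your grid-plus-extension construction with Christ's cubes (or, alternatively, prove the thickness bound for the Lipschitz domain $C_1^-$ and use the modified subtraction), and the rest of your argument goes through as written.
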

\begin{proof}
We refer to Stein \cite[p. 22]{Stein93}, where the proof is based on the Calder\'on-Zygmund decomposition and the domain is assumed to be the whole space.
In our case, we can modify the proof there by using the ``dyadic cubes'' decomposition of $C_1^-$; see Christ \cite[Theorem 11]{Ch90}.
\end{proof}
\begin{lemma}			\label{lem02-weak11}
Let $\bar a^{ij}=\bar a^{ij}(t)$ satisfy \eqref{parabolicity}.
Consider the operator $P_0$ defined by
\begin{equation}			\label{eq1225tu}
P_0 u := \partial _t u- D_i(\bar a^{ij}(t) D_j u).
\end{equation}
For $\vec f=(f^1,\ldots, f^n) \in L^2(C_1^-)$, let $u \in \cH^1_2(C_1^-)$ be the weak solution to
\[
P_0 u= \dv\vec f\;\mbox{in}\; C_1^-;\quad
u=0 \;\mbox{on} \; \partial_p^-C_1^-.
\]
Then for any $\alpha>0$, we have
\[
\Abs{\set{X \in C_1^- : \abs{D u(X)} > \alpha}}  \lesssim_{n, \lambda}\, \frac{1}{\alpha} \int_{C_1^-} \abs{\vec f}.
\]
\end{lemma}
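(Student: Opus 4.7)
The plan is to invoke Lemma \ref{lem01-stein} componentwise. For each pair $1\le i,k\le n$, define the scalar operator $T_{ik}:L^2(C_1^-)\to L^2(C_1^-)$ by $T_{ik}f:=D_k u$, where $u\in\cH^1_2(C_1^-)$ is the weak solution of $P_0 u=D_i f$ in $C_1^-$ with $u=0$ on $\partial_p^-C_1^-$. The stated weak-type bound on $|Du|$ will follow by applying the weak-type $(1,1)$ estimate for each $T_{ik}$ to the component $f^i$ of $\vec f$ and summing over $i,k$. The $L^2$ boundedness required by Lemma \ref{lem01-stein} is the standard energy inequality: testing $P_0u=D_if$ against $u$ and using \eqref{parabolicity} together with the vanishing of $u$ on $\partial_p^-C_1^-$ gives $\|Du\|_{L^2(C_1^-)}\le\lambda^{-1}\|f\|_{L^2(C_1^-)}$.

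For the cancellation hypothesis, fix $Y\in C_1^-$, $r\in(0,1/2)$, and a scalar $b\in L^2(C_1^-)$ supported in $C_r^-(Y)\cap C_1^-$ with $\int b=0$; we must show $\int_{C_1^-\setminus C_{cr}(Y)}|T_{ik}b|\,dX\lesssim_{n,\lambda}\|b\|_{L^1}$ for a suitable $c>1$. Letting $G(X;Z)$ denote the Green's function for $P_0$ on $C_1^-$ (vanishing on $\partial_p^-C_1^-$), one has
\[
T_{ik}b(X)=\int_{C_1^-}K(X;Z)\,b(Z)\,dZ,\qquad K(X;Z):=-D_{x_k}D_{z_i}G(X;Z).
\]
Because $\bar a^{ij}$ depends only on $t$, Fourier analysis in $x$ reduces the equation to an ODE in $t$ on the Fourier side, yielding Gaussian bounds of all orders on the spatial derivatives of the free-space fundamental solution $\Gamma$. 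These bounds transfer to $G$ by applying standard parabolic boundary regularity for $t$-measurable coefficients (as in \cite{DK11,DK11b}) to the regular part $G-\Gamma$. In particular, $K$ satisfies the parabolic H\"ormander-type smoothness condition
\[
\int_{|X-Y|>c|Z-Y|}|K(X;Z)-K(X;Y)|\,dX\le C(n,\lambda)
\]
for $|Z-Y|$ small. Using $\int b=0$, we write $T_{ik}b(X)=\int[K(X;Z)-K(X;Y)]b(Z)\,dZ$; integrating over $X\in C_1^-\setminus C_{cr}(Y)$ and applying Fubini with the H\"ormander estimate yields the required atom bound. Lemma \ref{lem01-stein} then delivers $|\{|T_{ik}f|>\alpha\}|\lesssim_{n,\lambda}\alpha^{-1}\|f\|_{L^1}$, and summing over $i,k$ concludes.

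The main obstacle is the H\"ormander estimate for the mixed second derivative of the Green's function. The Gaussian bounds on derivatives of the free-space fundamental solution $\Gamma$ follow straightforwardly from Fourier analysis once one notes that the leading coefficients are independent of $x$, but carrying these bounds over to the Green's function $G$ on the bounded cylinder $C_1^-$ requires controlling the regular part $G-\Gamma$ and its derivatives in a neighborhood of $\partial_p^-C_1^-$. This is classical when the coefficients are smooth, while for coefficients merely measurable in $t$ some care is needed, which is supplied by the parabolic regularity results in \cite{DK11,DK11b}.
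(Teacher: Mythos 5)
Your proposal takes a genuinely different route from the paper. The paper applies Lemma \ref{lem01-stein} via a duality argument that never touches pointwise kernel estimates: it tests $Du$ against $\vec g \in C^\infty_c$ supported in a dyadic annulus, solves the adjoint problem $P_0^* v = \dv \vec g$, rewrites the pairing using the mean-zero property of $\vec b$ as $\int \vec b\cdot (Dv - Dv(Y))$, and controls $[Dv]_{C^{\delta/2,\delta}}$ near $Y$ by $W^{1,2}_p$ estimates and Sobolev embedding, after which a dyadic sum closes the argument. You instead propose the classical Calder\'on--Zygmund route: represent $T_{ik}$ by the kernel $K(X;Z) = -D_{x_k} D_{z_i} G(X;Z)$ and verify a parabolic H\"ormander smoothness condition. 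These are different in character; the paper's argument is designed precisely to sidestep pointwise kernel estimates.

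The gap is in the transfer from $\Gamma$ to the bounded-domain Green's function $G$. You correctly obtain Gaussian bounds on all spatial derivatives of the free-space fundamental solution $\Gamma$ by Fourier analysis in $x$ (since $\bar a^{ij}$ depends only on $t$). But the H\"ormander estimate requires uniform control of the mixed second derivative $D_{x}D_{z}(G-\Gamma)$ \emph{up to the parabolic boundary} $\partial_p^- C_1^-$, uniformly as $Y$ (hence $Z$) approaches $\partial_p^- C_1^-$. You assert this is ``supplied by'' the parabolic regularity in \cite{DK11,DK11b}, but those references give $W^{1,2}_p$ a priori estimates for weak/strong solutions; they do not directly yield pointwise Gaussian-type bounds on second mixed derivatives of the Green's function on a bounded cylinder with curved lateral boundary, nor on the parameter derivative $D_z h(\cdot;Z)$ of the correction term $h = G - \Gamma$. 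Extracting these would require its own argument --- e.g., differentiating $h(\cdot;Z)$ in the parameter $z$, tracking the singular boundary data $D_{z}\Gamma(\cdot;Z)$ as $Z$ tends to the boundary, and then converting $W^{1,2}_p$ control into pointwise $C^{1,\delta}$ control via Sobolev embedding on shrinking cylinders. That is feasible but non-trivial, and once carried out it reproduces essentially the same regularity inputs that the paper's proof uses (the estimate \eqref{eq1917sat} together with interior/boundary Poincar\'e). So your outline is plausible but the central technical step is currently asserted rather than proved, and is not actually handed to you by the cited references.

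Two minor remarks. First, the componentwise reduction to scalar operators $T_{ik}$ is fine and matches how Lemma \ref{lem01-stein} is stated. Second, your energy estimate $\|Du\|_{L^2} \le \lambda^{-1}\|f\|_{L^2}$ is correct; it uses $\int \partial_t u\, u = \tfrac12 \|u(0)\|_{L^2}^2 \ge 0$ from the zero initial condition together with \eqref{parabolicity}.
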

\begin{proof}
The proof is a modification of \cite[Lemma 2.2]{DK17}.
Since the map $T: \vec f \mapsto D u$ is a bounded linear operator on $L^2(C_1^-)$, it suffices to show that $T$ satisfies the hypothesis of Lemma~\ref{lem01-stein}.
We set $c=2$.
Fix $Y \in C_1^-$ and $0<r<\frac1 2$, let $\vec b \in L^2(C_1^-)$ be supported in $C_r^-(Y)\cap C_1^-$ with mean zero.
Let $u \in \cH^{1}_2(C_1^-)$ be the unique weak solution of
\[
P_0 u= \dv \vec b\;\mbox{ in }\; C_1^-;\quad
u=0 \;\mbox{ on } \; \partial_p^-C_1^-.
\]
For any $R\ge 2r$ such that $C_1^- \setminus C_R(Y) \neq \emptyset$ and $\vec g \in C^\infty_c((C_{2R}(Y)\setminus C_R(Y))\cap C_1^-)$,
let  $v \in \cH^{1}_2(C_1^-)$ be a weak solution of
\[
P_0^* v=\dv \vec g\;\mbox{ in }\; C_1^-;\quad
v=0 \;\mbox{ on } \; \partial_p^+C_1^-,
\]
where $P_0^*$ is the adjoint operator of $P_0$.
By the definition of weak solutions, we have the identity
\[
\int_{C_1^-} Du \cdot \vec g = \int_{C_1^-} \vec b \cdot Dv = \int_{C_r^-(Y) \cap C_1^-} \vec b \cdot \left(Dv-Dv(Y) \right).	
\]
Therefore, for any $\delta \in (0,1)$,
\begin{align*}
\Abs{\int_{(C_{2R}(Y)\setminus C_R(Y))\cap C_1^-} Du \cdot \vec g\,} &\le \norm{\vec b}_{L^1(C_r^-(Y) \cap C_1^-)} \norm{Dv-Dv(Y)}_{L^\infty(C_r^-(Y)\cap C_1^-)}\\
&\lesssim r^\delta \norm{\vec b}_{L^1(C_r^-(Y) \cap C_1^-)} [Dv]_{C^{\delta/2,\delta}(C_r^-(Y)\cap C_1^-)}.
\end{align*}

Since $P_0^* v =0$ in $C_R(Y)\cap C_1^-$ and $v=0$ on $\partial_p^+ C_1^-$, we estimate
\begin{equation}				\label{eq1917sat}
[Dv]_{C^{\delta/2,\delta}(C_r^-(Y)\cap C_1^-)} \le [Dv]_{C^{\delta/2,\delta}(C_{R/2}^-(Y)\cap C_1^-)} \lesssim R^{-\delta-2-\frac{n}2} \norm{v}_{L^2(C_{7R/8}(Y)\cap C_1^-)}.
\end{equation}
Indeed, to see the second inequality, take a cut-off function $\eta \in C^\infty_c(C_{3R/4}(Y))$ satisfying $\eta=1$ on $C_{R/2}(Y)$, and $\abs{\partial_t \eta}+ \abs{D \eta}^2 +\abs{D^2 \eta}\lesssim R^{-2}$, choose $p>n+2$ such that $\delta=1-\frac{n+2}{p}$ and apply the Sobolev embedding (Lemma \ref{psobolev}) to get
\[
\norm{Dv}_{C^{\delta/2,\delta}(C_{R/2}^-(Y)\cap C_1^-)} \le \norm{D(\eta v)}_{C^{\delta/2,\delta}(C_1^-)} \lesssim \norm{\eta v}_{W^{1,2}_p(C_1^-)}.
\]
By the $W^{1,2}_p$ estimate for parabolic systems with coefficients depending only on $t$, the properties of $\eta$, and an iteration argument, we have (see the proof of \cite[Lemma~1]{DK11})
\[
\norm{\partial_t(\eta v)}_{L^p(C_1^-)} + \norm{D^2(\eta v)}_{L^p(C_1^-)} \lesssim R^{-2} \norm{v}_{L^p(C_{3R/4}(Y)\cap C_1^-)}.
\]
By the interpolation inequalities and using $R \le 2$, we have
\[
\norm{\eta v}_{W^{1,2}_p(C_1^-)} \lesssim R^{-2} \norm{v}_{L^p(C_{3R/4}(Y)\cap C_1^-)}.
\]
The above inequality together with the Sobolev embedding, H\"older's inequality, and an iteration argument yields
\[
\norm{v}_{L^p(C_{3R/4}(Y)\cap C_1^-)} \lesssim  R^{\frac{n+2}{p}-\frac{n+2}{2}}\norm{v}_{L^2(C_{7R/8}(Y)\cap C_1^-)}=R^{-\delta -\frac{n}{2}}\norm{v}_{L^2(C_{7R/8}(Y)\cap C_1^-)}.
\]
Combining all these together, we obtain \eqref{eq1917sat}.
Now we consider two cases.
When $B_{7R/8}(y)\cap \partial B_1\neq \emptyset$, we apply the boundary Poincar\'e inequality with respect to $x$ to get
\[
\norm{v}_{L^2(C_{7R/8}(Y)\cap C_1^-)} \le CR \norm{Dv}_{L^2(C_{R}(Y)\cap C_1^-)}.
\]
When $B_{7R/8}(y)\subset B_1$, the estimate \eqref{eq1917sat}  is available for $v-c$ with $c=\fint_{C_{7R/8}(Y) \cap C_1^- } v$, and thus, we apply \cite[Lemma 4.2.1]{Kr08} to get
\[
\norm{v-c}_{L^2(C_{7R/8}(Y)\cap C_1^-)} \le CR \norm{Dv}_{L^2(C_{R}(Y)\cap C_1^-)}.
\]
In any case, we have
\begin{align*}
[Dv]_{C^{\delta/2,\delta}(C_r^-(Y)\cap C_1^-)}
&\lesssim R^{-\delta-1-\frac{n}2} \norm{Dv}_{L^2(C_{R}(Y)\cap C_1^-)} \lesssim R^{-\delta-1-\frac{n}2} \norm{Dv}_{L^2(C_1^-)} \\
&\lesssim R^{-\delta-1-\frac{n}{2}} \norm{\vec g}_{L^2(C_1^-)}= R^{-\delta-1-\frac{n}{2}} \norm{\vec g}_{L^2((C_{2R}(Y)\setminus C_{R}(Y))\cap C_1^-)}.
\end{align*}
Therefore, by the duality, we get
\[
\norm{Du}_{L^2((C_{2R}(Y)\setminus C_R(Y))\cap C_1^-)}
\lesssim r^\delta R^{-\delta-1-\frac{n}2} \norm{\vec b}_{L^1(C_r^-(Y)\cap C_1^-)},
\]
and thus by H\"older's inequality, we have
\begin{equation}
                            \label{eq4.37}
\norm{Du}_{L^1((C_{2R}(Y)\setminus C_R(Y))\cap C_1^-)}
\lesssim r^\delta R^{-\delta}\norm{\vec b}_{L^1(C_r^-(Y) \cap C_1^-)}.
\end{equation}
Now let $N$ be the smallest positive integer such that $C_1^-\subset C_{2^{N+1}r}(Y)$.
By taking $R=2r,4r,\ldots,2^{N}r$ in \eqref{eq4.37}, we have
\begin{equation*}			
\int_{C_1^- \setminus C_{2r}(Y)} \abs{D u} \lesssim \sum_{k=1}^{N} 2^{-k\delta} \norm{\vec b}_{L^1(C_r^-(Y)\cap C_1^-)} \sim \int_{C_r^-(Y)\cap C_1^-} \abs{\vec b}.
\end{equation*}
Therefore, $T$ satisfies the hypothesis of Lemma~\ref{lem01-stein} and  the lemma is proved.
\end{proof}
The following lemma follows from the interior $W^{1,2}_p$ estimate for parabolic systems with coefficients depending only on $t$ (see, e.g., \cite{DK11}), the Sobolev embedding, and an iteration argument.

\begin{lemma}\label{lem:c_half}
Let $P_0$ be as in \eqref{eq1225tu}.
If $u$ is a weak solution of
\[
P_0 u=0\; \text{ in } \; C_r^-(X_0),
\]
then, $u \in C^{1/2,1}(C_{r/2}^-(X_0))$ and for any $p \in (0,\infty)$, we have the estimate
\[
[u]_{C^{1/2,1}(C_{r/2}^-(X_0))} \lesssim_{n,\lambda, p}\, r^{-1} \left( \fint_{C_r^-(X_0)} \abs{u}^p\right)^{\frac1p}.
\]
\end{lemma}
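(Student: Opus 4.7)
By scaling $X \mapsto X_0 + rX$ and translating, it suffices to prove the estimate on $C_1^-$ (centered at the origin) with $\| u\|_{L^p(C_1^-)}$ replacing the average. The decisive structural fact I would exploit is that since $\bar a^{ij}=\bar a^{ij}(t)$ depends only on $t$, difference quotients in $x$ solve the same equation, hence for any multiindex $\alpha$ the spatial derivative $\partial^\alpha_x u$ is again a weak solution of $P_0(\partial^\alpha_x u)=0$. This allows the $W^{1,2}_p$ theory with $t$-measurable coefficients from \cite{DK11} to be iterated in the spatial variable.

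Concretely, I would pick $p > n+2$ and a nested sequence of cylinders $C_{r_k}^-$ with $r_k \downarrow 1/2$. Using cut-off functions and the interior $W^{1,2}_p$ estimate for $P_0$ on each step (exactly as in the iteration that appears in the proof of Lemma~\ref{lem02-weak11}), one shows
\[
\|u\|_{W^{1,2}_p(C_{r_{k+1}}^-)} + \|Du\|_{W^{1,2}_p(C_{r_{k+1}}^-)} + \|D^2 u\|_{W^{1,2}_p(C_{r_{k+1}}^-)} \lesssim \|u\|_{L^2(C_{r_k}^-)}.
\]
Applying Lemma \ref{psobolev}(ii) with $p>n+2$ to $D^2u$ gives $D^2u\in C^{\alpha/2,\alpha}(C_{1/2}^-)$ with a bound, and in particular $\|D^2 u\|_{L^\infty(C_{1/2}^-)}\lesssim\|u\|_{L^2(C_1^-)}$. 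By integration in $x$ this makes $Du$ Lipschitz in the spatial variable on $C_{1/2}^-$. Time regularity is then free from the equation itself: since $\partial_t u = D_i(\bar a^{ij}(t) D_j u)$, we have $|\partial_t u| \le \lambda^{-1}|D^2 u|$, so $\partial_t u \in L^\infty(C_{1/2}^-)$ and $u$ is Lipschitz (hence $1/2$-Hölder in the parabolic sense) in $t$. Combined, this gives
\[
[u]_{C^{1/2,1}(C_{1/2}^-)} \lesssim \|u\|_{L^2(C_1^-)}.
\]

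To promote the right-hand side from $L^2$ to $L^p$ for arbitrary $p\in(0,\infty)$, I would insert the classical Moser-type local boundedness estimate for solutions of $P_0 u =0$, namely
\[
\Bigl(\fint_{C_{7/8}^-}|u|^2\Bigr)^{1/2} \lesssim \Bigl(\fint_{C_1^-} |u|^p\Bigr)^{1/p},
\]
valid for every $p>0$ (this is a standard consequence of the Moser iteration for parabolic equations and the self-improvement argument for sub- and super-solutions applied to $|u|$). After a further rescaling from $C_{7/8}^-$ down to $C_{1/2}^-$, this yields the claimed bound with $\bigl(\fint_{C_r^-(X_0)}|u|^p\bigr)^{1/p}$ on the right, picking up only a harmless power of $r$ which matches the stated $r^{-1}$ factor by the parabolic scaling of $[\,\cdot\,]_{C^{1/2,1}}$.

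There is no genuine obstacle here: every ingredient (interior $W^{1,2}_p$ theory with $t$-measurable coefficients, Sobolev embedding, Moser's local boundedness) is entirely standard in this regime, and the only care needed is the bookkeeping of constants during the finite iteration on the shrinking cylinders and the handling of the $L^p$-$L^2$ swap when $p<2$.
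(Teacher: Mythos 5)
Your argument is correct and reproduces the one-line proof sketch the paper gives for this lemma: differentiate the homogeneous equation in $x$ (valid because $\bar a^{ij}=\bar a^{ij}(t)$ depends only on $t$), apply the interior $W^{1,2}_p$ estimate with $t$-measurable coefficients and the Sobolev embedding to $u$, $Du$, $D^2u$ to get $\norm{Du}_{L^\infty}+\norm{D^2u}_{L^\infty}\lesssim\norm{u}_{L^2}$ on the inner cylinder, read $\norm{\partial_t u}_{L^\infty}=\norm{\bar a^{ij}D_{ij}u}_{L^\infty}$ off the equation, and finally iterate on nested cylinders to pass from $L^2$ to $L^p$ for $p\in(0,2)$. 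Two cosmetic points: the displayed $W^{1,2}_p$ bounds for $u$, $Du$, $D^2u$ should carry three successively shrinking radii rather than the same $C^-_{r_{k+1}}$; and for the $L^2$-to-$L^p$ swap the Moser invocation is an unnecessary detour — the interpolation $\norm{u}_{L^2(C^-_\rho)}\le\norm{u}_{L^\infty(C^-_\rho)}^{1-p/2}\norm{u}_{L^p(C^-_\rho)}^{p/2}$ together with Young's inequality and absorption on nested cylinders handles all $p\in(0,2)$ directly and is presumably the ``iteration argument'' the paper intends.
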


\subsection{Proof of Theorem~\ref{thm:d-wolt}}\label{sec:int-d}
We shall first derive an a priori estimate of the modulus of continuity of $Du$ by assuming that $u$ is in $C^{1/2,1}(C_3^-)$.
The general case follows from a standard approximation argument.

For $X_0 \in C_3^-$ and $0<r<\frac13$, we consider the quantity
\[
\phi(X_0,r):=  \inf_{\vec q \in \bR^n} \left( \fint_{C_r^-(X_0)}
\abs{D u - \vec q}^{\frac12} \right)^{2}.
\]
First of all, we note that
\begin{equation}				\label{eq12.14fa}
\phi(X_0, r)  \le  \left(\fint_{C_r^-(X_0)}
\abs{D u}^{\frac12} \right)^2 \lesssim r^{-n-2} \norm{Du}_{L^1(C_r^-(X_0))}.
\end{equation}
We want to control the quantity $\phi(X_0, r)$.
To this end, we decompose $u=v+w$, where $w$ is the weak solution of the problem
\[
P_0 w = D_i ((a^{ij}- \bar a^{ij}) D_ju) +D_i(g^i-\bar g^i),\;\mbox{ in }\;C_r^-(X_0);\quad
w=0 \;\mbox{ on }\;\partial_p^- C_r^-(X_0).
\]
Here and below, we use the simplified notation
\begin{equation}					\label{eq1754bth}
\bar a^{ij}=\bar a^{ij}(t)=\fint_{B_r(x_0)} a^{ij}(t,x)\,dx, \quad \bar g^i=\bar g^i(t) = \fint_{B_r(x_0)} g^i(t,x) \,dx,
\end{equation}
and $P_0$ is the parabolic operator as in \eqref{eq1225tu} with the coefficients $\bar a^{ij}(t)$.
We shall write
\[
\bar{\mathbf A}=\bar{\mathbf A}^{\textsf x}_{x_0, r}(t)=(\bar a^{ij}(t))_{i,j=1}^n\ \text{and}\  \bar{\vec g}=\bar{\vec g}^{\textsf x}_{x_0.r}(t)=(\bar g^1(t),\ldots, \bar g^n(t)).
\]
By Lemma~\ref{lem02-weak11} and scaling, we have
\begin{multline}
                                    \label{eq10.20}
\abs{\set{Y\in C_r^-(X_0): \abs{Dw(Y)} > \alpha}} \\
\lesssim \frac{1}{\alpha}\left(\norm{Du}_{L^\infty(C_r^-(X_0))} \int_{C_r^-(X_0)} \abs{\mathbf A-\bar{\mathbf A}} +  \int_{C_r^-(X_0)}  \abs{\vec g -\bar {\vec g}} \,\right).
\end{multline}
Recall the formula
\begin{align*}
\int_{C_r^-(X_0)} \abs{Dw}^{\frac12}
&=\int_0^\infty \frac12 \alpha^{-\frac12}\abs{\set{Y\in C_r^-(X_0): \abs{Dw(Y)} > \alpha}}\,d\alpha\\
&=\int_0^\tau + \int_\tau^\infty \frac12 \alpha^{-\frac12}\abs{\set{Y\in C_r^-(X_0): \abs{Dw(Y)} > \alpha}}\,d\alpha,
\end{align*}
where $\tau>0$ is to be determined.
When $0< \alpha \le \tau$, we bound
$$\abs{\set{Y\in C_r^-(X_0): \abs{Dw(Y)} > \alpha}}$$
 simply by $\abs{C_r^-(X_0)}$.
When $\alpha > \tau$, we use \eqref{eq10.20}.
It then follows that
\[
\int_{C_r^-(X_0)} \abs{Dw}^{\frac12}\lesssim \tau^{\frac12}\abs{C_r^-(X_0)}+\tau^{-\frac12}\left(\norm{Du}_{L^\infty(C_r^-(X_0))} \int_{C_r^-(X_0)} \abs{\mathbf{A}-\bar{\mathbf A}} +  \int_{C_r^-(X_0)}  \abs{\vec g -\bar {\vec g}} \,\right).
\]
By optimizing over $\tau$,
we get
\[
\int_{C_r^-(X_0)} \abs{Dw}^{\frac12} \lesssim \abs{C_r^-(X_0)}^{\frac12} \left(\norm{Du}_{L^\infty(C_r^-(X_0))} \int_{C_r^-(X_0)} \abs{\mathbf{A}-\bar{\mathbf A}} +  \int_{C_r^-(X_0)}  \abs{\vec g -\bar {\vec g}} \,\right)^{\frac12}.
\]
Therefore, according with \eqref{13.24f} we have
\begin{equation}				\label{eq15.50ab}
\left(\fint_{C_r^-(X_0)} \abs{Dw}^{\frac12} \right)^{2} \lesssim \omega_{\mathbf A}^{\textsf x}(r) \,\norm{Du}_{L^\infty(C_r^-(X_0))} +  \omega_{\vec g}^{\textsf x}(r).
\end{equation}
On the other hand, $v=u-w$ satisfies
\[
P_0 v =0 \;\mbox{ in }\;C_r^-(X_0),
\]
and for any constant $\vec q=(q^1,\ldots, q^n) \in \bR^n$, $D_i v - q^i$ satisfies the same equation for $i=1,\ldots, n$. By Lemma~\ref{lem:c_half}, we get
\begin{equation}			\label{int-reg}
[Dv]_{C^{1/2,1}(C_{r/2}^-(X_0))} \le N_0 r^{-1} \left(\fint_{C_r^-(X_0)} \abs{Dv - \vec q}^{\frac12}\,\right)^{2},
\end{equation}
where $N_0>0$ is an absolute constant.
Let $0< \kappa \le \frac12$ to be a number to be fixed later.
Then, from \eqref{int-reg}
\begin{equation}\label{eq15.50br}
\left(\fint_{C_{\kappa r}^-(X_0)} \abs{Dv - Dv_{X_0, \kappa r}}^{\frac12} \right)^{2} \le 2\kappa r [Dv]_{C^{1/2,1}(C_{r/2}^-(X_0))} \le 2N_0 \kappa \left(\fint_{C_r^-(X_0)} \abs{Dv - \vec q}^{\frac12} \, \right)^{2}.
\end{equation}
Here, we recall the facts that for all $a, b \ge 0$, we have
\[
(a+b)^{\frac12} \le a^{\frac12} + b^{\frac12}, \quad (a^{\frac12}+ b^{\frac12})^{2} \le 2(a+b),
\]
and
\[
\norm{f+g}_{L^{1/2}} \le 2 \left( \norm{f}_{L^{1/2}} + \norm{g}_{L^{1/2}}\right).
\]
Using the decomposition $u=v+w$, we obtain from \eqref{eq15.50br} that
\begin{align*}
&\left(\fint_{C_{\kappa r}^-(X_0)} \abs{Du - Dv_{X_0, \kappa r}}^{\frac12} \right)^{2} \\
&\qquad\qquad \le 2 \left(\fint_{C_{\kappa r}^-(X_0)} \abs{Dv - Dv_{X_0, \kappa r}}^{\frac12} \right)^{2}+ 2\left(\fint_{C_{\kappa r}^-(X_0)} \abs{Dw}^{\frac12} \right)^{2}\\
&\qquad \qquad \le  8N_0\kappa \left(\fint_{C_r^-(X_0)} \abs{Du - \vec q}^{\frac12} \right)^{2} +(8N_0 \kappa +2\kappa^{-2n-4}) \left(\fint_{C_r^-(X_0)} \abs{Dw}^{\frac12} \right)^{2}.
\end{align*}
Since $\vec q\in \bR^n$ is arbitrary, by using \eqref{eq15.50ab}, we thus obtain
\[
\phi(X_0,\kappa r) \le 8N_0 \kappa \, \phi(X_0, r)+ C(8N_0 \kappa +2\kappa^{-2n-4}) \left( \omega_{\mathbf A}^{\textsf x}(r) \,\norm{Du}_{L^\infty(C_r^-(X_0))} +  \omega_{\vec g}^{\textsf x}(r) \right).
\]

For any given $\beta \in (0,1)$, let $\kappa \in (0,\frac12)$ be sufficiently small so that $8N_0 \le \kappa^{\beta-1}$.
Then, we obtain
\[
\phi(X_0, \kappa r) \le \kappa^\beta \phi(X_0,r)+ C \left( \omega_{\mathbf A}^{\textsf x}(r) \,\norm{Du}_{L^\infty(C_r^-(X_0))} +  \omega_{\vec g}^{\textsf x}(r) \right).
\]
Note that $\kappa^\beta <1$.
By iterating, for $j=1,2,\ldots$, we get
\[
\phi(X_0, \kappa^j r) \le \kappa^{j \beta} \phi(X_0,r) +C \norm{Du}_{L^\infty(C_r^-(X_0))} \sum_{i=1}^{j} \kappa^{(i-1)\beta} \omega_{\mathbf A}^{\textsf x}(\kappa^{j-i} r) + C \sum_{i=1}^{j} \kappa^{(i-1)\beta} \omega_{\vec g}^{\textsf x}(\kappa^{j-i} r).
\]
Therefore, we have
\begin{equation}				\label{eq22.25fr}
\phi(X_0, \kappa^j r) \le \kappa^{j\beta} \phi(X_0,r) +C \norm{Du}_{L^\infty(C_r^-(X_0))}\,\tilde \omega_{\mathbf A}^{\textsf x}(\kappa^{j} r) + C \tilde \omega_{\vec g}^{\textsf x}(\kappa^{j} r),
\end{equation}
where we set
\begin{equation}				\label{eq14.27w}
\tilde \omega_{\bullet}^{\textsf x}(t)=\sum_{i=1}^{\infty} \kappa^{i\beta} \left(\omega_{\bullet}^{\textsf x}(\kappa^{-i}t)\, [\kappa^{-i}t \le 1] + \omega_{\bullet}^{\textsf x}(1)\, [\kappa^{-i}t >1] \right).
\end{equation}
Here, we used the Iverson bracket notation, i.e., $[P]=1$ if $P$ is true and $[P]=0$ otherwise.
We remark that $\tilde \omega_{\bullet}^{\textsf x}(t)$ is a Dini function; see \cite[Lemma~1]{Dong2012}.

Note that for any $\rho$ satisfying $0<\rho\le r$, if we set $j$ to be the integer satisfying $\kappa^{j+1} < \rho/r \le \kappa^j$.
Let us momentarily assume that $j\ge 1$.
Then by \eqref{eq22.25fr} we get
\begin{align}
				\nonumber
\phi(X_0, \rho) & \le \kappa^{-\beta} \left(\frac{\rho}r\right)^{\beta}\, \phi(X_0, \kappa^{-j} \rho) +C \norm{Du}_{L^\infty(C_r^-(X_0))}\,\tilde \omega_{\mathbf A}^{\textsf x}(\rho) + C \tilde \omega_{\vec g}^{\textsf x}(\rho)\\
				\label{eq0946fo}
&\le \kappa^{-\beta-2d-4} \left(\frac{\rho}r\right)^{\beta}\, \phi(X_0, r) +C \norm{Du}_{L^\infty(C_r^-(X_0))}\,\tilde \omega_{\mathbf A}^{\textsf x}(\rho) + C \tilde \omega_{\vec g}^{\textsf x}(\rho)
\end{align}
and thus we have
\begin{equation}				\label{eq0947fo}
\phi(X_0, \rho)  \lesssim  \left(\frac{\rho}r\right)^{\beta}\,\phi(X_0, r)+ \norm{Du}_{L^\infty(C_r^-(X_0))}\,\tilde \omega_{\mathbf A}^{\textsf x}(\rho) +  \tilde \omega_{\vec g}^{\textsf x}(\rho).
\end{equation}
Notice that the above estimate is still true when $j=0$.
Therefore, \eqref{eq0947fo} is valid for any $\rho$ and $r$ satisfying $0<\rho\le r <\frac{1}{3}$.

Now, let $\vec q_{X_0,r}$ be chosen so that
\[
\left(\fint_{C_r^-(X_0)} \abs{Du - \vec q_{X_0,r}}^{\frac12} \right)^2 =\inf_{\vec q \in \bR^n} \left( \fint_{C_r^-(X_0)} \abs{D u - \vec q}^{\frac12} \right)^2=\phi(X_0, r).
\]
Since we have
\[
\abs{\vec q_{X_0,r} - \vec q_{X_0, \frac{1}{2}r}}^{\frac12} \le
\abs{Du(Y)-\vec q_{X_0,r}}^{\frac12} + \abs{Du(Y) - \vec q_{X_0,\frac{1}{2}r}}^{\frac12},
\]
taking the average over $Y \in C_{r/2}^-(X_0)$ and then taking the square, we obtain
\[
\abs{\vec q_{X_0, r} -\vec q_{X_0,\frac{1}{2}r}}\le 2\phi(X_0, r) +2 \phi(X_0,\tfrac{1}{2} r).
\]
Then, by iterating and using the triangle inequality
\[
\abs{\vec q_{X_0,2^{-k} r} - \vec q_{X_0, r}}  \le 4\sum_{j=0}^k \phi(X_0, 2^{-j} r).
\]
Since the right-hand side of \eqref{eq0947fo} goes to zero as $\rho\to 0$ by the assumption that $u \in C^{1/2,1}(C_3^-)$, we find that
\[
\lim_{k\to \infty} \vec q_{X_0,2^{-k} r}=Du(X_0).
\]
Therefore, by taking $k\to \infty$, using \eqref{eq0947fo} and \cite[Lemma~2.7]{DK17}, we get
\begin{align}
					\nonumber
\abs{Du(X_0)-\vec q_{X_0,r}} &\lesssim \sum_{j=0}^\infty \phi(X_0, 2^{-j} r) \\
					\label{eq13.13}
& \lesssim \phi(X_0, r)+\norm{Du}_{L^\infty(C_r^-(X_0))} \int_0^r \frac{\tilde\omega_{\mathbf A}^{\textsf x}(t)}t \,dt+\int_0^r \frac{\tilde\omega_{\vec g}^{\textsf x}(t)}t \,dt.
\end{align}
By averaging the inequality
\[
\abs{\vec q_{X_0, r}}^{\frac12} \le \abs{Du(Y) -\vec q_{X_0,r}}^{\frac12} + \abs{Du(Y)}^{\frac12}
\]
over $Y \in C_r^-(X_0)$, taking the square, and using \eqref{eq12.14fa} we get
\[
\abs{\vec q_{X_0, r}} \le 2 \phi(X_0,r) + 2 \left(\fint_{C_r^-(X_0)} \abs{Du}^{\frac12}\right)^{2} \lesssim r^{-n-2} \norm{Du}_{L^1(C_r^-(X_0))},
\]
where we used \eqref{eq12.14fa}.
Therefore, by combining the above with \eqref{eq13.13}, we get
\[
\abs{Du(X_0)}  \lesssim r^{-n-2} \norm{Du}_{L^1(C_r^-(X_0))} + \norm{Du}_{L^\infty(C_r^-(X_0))} \int_0^r \frac{\tilde\omega_{\mathbf A}^{\textsf x}(t)}t \,dt+\int_0^r \frac{\tilde\omega_{\vec g}^{\textsf x}(t)}t \,dt.
\]
Now, taking supremum for $X_0\in C_r^-(Y_0)$, where $Y_0 \in C_2^-$ and $r<\frac 13$, we have
\begin{multline*}
\norm{Du}_{L^\infty(C_r^-(Y_0))}  \\
\le C \left( r^{-n-2} \norm{Du}_{L^1(C_{2r}^-(Y_0))} + \norm{Du}_{L^\infty(C_{2r}^-(Y_0))} \int_0^r \frac{\tilde\omega_{\mathbf A}^{\textsf x}(t)}t \,dt +\int_0^r \frac{\tilde\omega_{\vec g}^{\textsf x}(t)}t \,dt \right).
\end{multline*}
We fix $r_0<\frac13$ such that
\[
C \int_0^{r_0} \frac{\tilde\omega_{\mathbf A}^{\textsf x}(t)}t \,dt \le \frac1{3^{n+2}}.
\]
Then, we have for any $Y_0 \in C_2^-$ and $0<r\le r_0$ that
\[
\norm{Du}_{L^\infty(C_r^-(Y_0))} \le
3^{-n-2}\norm{Du}_{L^\infty(C_{2r}^-(Y_0))} + C r^{-n-2} \norm{Du}_{L^1(C_{2r}^-(Y_0))} + C \int_0^r \frac{\tilde\omega_{\vec g}^{\textsf x}(t)}t \,dt.
\]

For $k=1,2,\ldots$, denote $r_k=3-2^{1-k}$.
Note that $r_{k+1}-r_k=2^{-k}$ for $k\ge 1$ and $r_1=2$.
For $Y_0\in C_{r_k}^-$ and $r\le 2^{-k-2}$, we have $C_{2r}^-(Y_0) \subset C_{r_{k+1}}^-$. We take $k_0\ge 1$ sufficiently large such that $2^{-k_0-2}\le r_0$.
It then follows that for any $k\ge k_0$,
\[
\norm{Du}_{L^\infty(C_{r_k}^-)} \le 3^{-n-2} \norm{Du}_{L^\infty(C_{r_{k+1}}^-)}+C 2^{k(n+2)} \norm{Du}_{L^1(C_4^-)} + C \int_0^{1} \frac{\tilde\omega_{\vec g}^{\textsf x}(t)}t \,dt.
\]
By multiplying the above by $3^{-k(n+2)}$ and then summing over $k=k_0, k_0+1,\ldots$, we get
\begin{equation*}
\sum_{k=k_0}^\infty 3^{-k(n+2)}\norm{Du}_{L^\infty(C^-_{r_k})} \le \sum_{k=k_0}^\infty 3^{-(k+1)(n+2)} \norm{Du}_{L^\infty(C^-_{r_{k+1}})}
+C \norm{Du}_{L^1(C^-_4)} + C \int_0^{1} \frac{\tilde\omega_{\vec g}^{\textsf x}(t)}t \,dt.
\end{equation*}
Since we assume that $u\in C^{1/2,1}(C^-_3)$, the summations on both sides are convergent, and we get
\begin{equation}					\label{eq10.23m}
\norm{Du}_{L^\infty(C^-_2)} \lesssim  \norm{Du}_{L^1(C^-_4)} + \int_0^{1} \frac{\tilde\omega_{\vec g}^{\textsf x}(t)}t \,dt.
\end{equation}

Next, we estimate a modulus of continuity of $Du$.
For $X$, $Y \in C^-_1$ with $r:=2\abs{X-Y} \in (0,1)$, we have
\begin{multline*}
\abs{Du(X)-Du(Y)}^{\frac12} \le \abs{Du(X)- \vec q_{X,r}}^{\frac12} + \abs{\vec q_{X,r}-\vec q_{Y,r}}^{\frac12}+\abs{Du(Y)-\vec q_{Y,r}}^{\frac12}\\
\le  \abs{Du(X) - \vec q_{X,r}}^{\frac12}+\abs{Du(Y) - \vec q_{Y,r}}^{\frac12}+ \abs{Du(Z)-\vec q_{X,r}}^{\frac12}+\abs{Du(Z)-\vec q_{Y,r}}^{\frac12}.
\end{multline*}
By averaging over $Z \in C_r^-(X)\cap C_r^-(Y)$,  taking the square, and using \eqref{eq13.13}, we get
\begin{align}
							\nonumber
\abs{Du(X)-Du(Y)} &\lesssim \sup_{X_0 \in C^-_1}\, \abs{Du(X_0) - \vec q_{X_0,r}} + \phi(X,r) + \phi(Y,r) \lesssim \sup_{X_0 \in C^-_1}\, \sum_{j=0}^\infty \phi(X_0, 2^{-j} r) \\
							\nonumber
& \lesssim \sup_{X_0 \in C^-_1} \phi(X_0, r) +\norm{Du}_{L^\infty(C_2^-)} \int_0^r \frac{\tilde\omega_{\mathbf A}^{\textsf x}(t)}t \,dt+\int_0^r \frac{\tilde\omega_{\vec g}^{\textsf x}(t)}t \,dt\\
							\label{eq13.27m}
& \lesssim \sup_{X_0 \in C^-_1} r^\beta \phi(X_0, 1) +\norm{Du}_{L^\infty(C_2^-)} \int_0^r \frac{\tilde\omega_{\mathbf A}^{\textsf x}(t)}t \,dt+\int_0^r \frac{\tilde\omega_{\vec g}^{\textsf x}(t)}t \,dt,
\end{align}
where we used \eqref{eq0947fo} and the fact $\tilde \omega_\bullet^{\textsf x}(r) \lesssim \int_0^r \tilde \omega_\bullet^{\textsf x}(t)/t\,dt$ in the last inequality.
Therefore, we get from \eqref{eq12.14fa}, \eqref{eq13.27m}, and \eqref{eq10.23m} that
\begin{multline*}				
\abs{Du(X)-Du(Y)} \lesssim \norm{Du}_{L^1(C^-_4)} \,\abs{X-Y}^\beta\\
+\left(\norm{Du}_{L^1(C^-_4)} + \int_0^{1} \frac{\tilde\omega_{\vec g}^{\textsf x}(t)}t \,dt\right) \int_0^{2\abs{X-Y}} \frac{\tilde \omega_{\mathbf A}^{\textsf x}(t)}t \,dt + \int_0^{2\abs{X-Y}} \frac{\tilde \omega_{\vec g}^{\textsf x}(t)}t \,dt
\end{multline*}
for any $X$ ,$Y \in C_1^-$ with $\abs{X-Y} < \frac12$.

Now, we show that $u$ satisfies \eqref{eq1121may9} in $C_1^-$.
First, let us fix a non-negative smooth function $\eta$ that is compactly supported in $B_1 \subset \bR^n$ and satisfying $\int_{B_1} \eta= 1$.
We shall further assume that $\eta$ is an even function, i.e.,
\begin{equation}				\label{eq1745bt}
\eta(-y)=\eta(y).
\end{equation}
For a function $f$ on $C^-_4$, we define the partial modification $f^{(\epsilon)}$ of $f$ by
\begin{equation}				\label{eq1526bt}
f^{(\epsilon)}(t,x)=\int_{\bR^n} f(t, x-\epsilon y) \eta(y)\,dy=\frac{1}{\epsilon^n}\int_{\bR^n} f(t,y)\eta\left(\frac{x-y}{\epsilon}\right)dy
\end{equation}
provided the integral makes sense.

Let  us fix $X=(t,x)$ in $C^-_1$ and let $r>0$ be any number such that $(t-r^2, x) \in C^-_1$.
We shall estimate $\abs{u(t-r^2,x)-u(t,x)}$ as follows:
\begin{multline*}
\abs{u(t-r^2,x)-u(t,x)} \\
\le  \abs{u(t-r^2,x)-u^{(r)}(t-r^2,x)} +\abs{u^{(r)}(t-r^2,x)-u^{(r)}(t,x)} + \abs{u(t,x)-u^{(r)}(t,x)}.
\end{multline*}
We denote
\[
\osc_{C_r^-(X)}f= \sup_{Y, Y' \in C_r^-(X)} \abs{f(Y)-f(Y')}.
\]
By using the condition \eqref{eq1745bt}, we have
\[
u^{(r)}(s,x)-u(s,x)  = \frac12\int_{B_1(0)}  \bigl\{ u(s,x+ry)+u(s,x-ry) -2u(s,x)\bigr\} \,\eta(y)\,dy.
\]
Since
\begin{align*}
u(s,x+ry)+u(s,x-ry) - 2u(s,x) &= \int_0^r \frac{d}{d\tau} u(s, x+\tau y)\,d\tau+ \int_0^r \frac{d}{d\tau} u(s, x-\tau y)\,d\tau \\
&= \int_0^r \left\{Du(s,x+\tau y)-Du(s,x-\tau y) \right\}\cdot y\,d\tau
\end{align*}
for $\abs{y} \le 1$ and $s \in [t-r^2, t]$, we have
\[
\Abs{u(s, x-r y)+u(s, x+r y) - 2u(s,x)} \le  r \osc_{C_r^-(X)} Du.
\]
Therefore, for $s \in [t-r^2, t]$, we have
\[
\abs{u^{(r)}(s,x)-u(s,x)} \le \frac{r}{2}  \osc_{C_r^-(X)} Du
\]
and thus, it follows that
\[
\abs{u(t-r^2,x)-u^{(r)}(t-r^2,x)}+\abs{u(t,x)-u^{(r)}(t,x)} \le r \osc_{C_r^-(X)} Du.
\]
On the other hand, we have
\begin{align}			\nonumber
u^{(r)}(t,x)-u^{(r)}(t-r^2,x) &= \int_{t-r^2}^{t} \partial_s u^{(r)}(s,x)\,ds\\
					\label{eq1607bt}
&= \int_{t-r^2}^{t} \left\{ D_i(a^{ij} D_j u)+ D_i g^i \right\}^{(r)}(s,x)\,ds.
\end{align}
Observe that by \eqref{eq1526bt}, we find that for $i=1,\ldots, n$,
\begin{align*}
D_i f^{(r)}(s,x) &= \frac{1}{r^{n+1}} \int_{\bR^n} f(s,y)\, D_i\eta\left(\frac{x-y}{r}\right) \,dy\\
&=\frac{1}{r^{n+1}} \int_{B_r(x)} \bigl( f(s,y) - q \bigr)\,D_i\eta\left(\frac{x-y}{r}\right) \,dy,\quad \forall q \in \bR,
\end{align*}
and thus, by taking $q=\bar{f}_{x,r}^{\textsf x}(s)$, we get
\[
\Abs{\int_{t-r^2}^t D f^{(r)}(s,x) \,ds} \le C r  \norm{D\eta}_{L^\infty} \,  \omega_f^{\textsf x}(r),
\]
where $C=C(n)$.
Therefore, from \eqref{eq1607bt} and the above, we have
\[
\abs{u^{(r)}(t,x)-u^{(r)}(t-r^2,x)} \le C r \left( \omega_{\mathbf{A}D u}^{\textsf x}(r)+  \omega_{\vec g}^{\textsf x}(r)\right).
\]
Combining together, we have
\begin{equation}				\label{eq13.19fs}
\frac{\Abs{u(t-r^2,x)-u(t,x)}}{r} \le C  \left( \osc_{C_r^-(X)} Du+ \omega^{\textsf x}_{\mathbf{A}D u}(r)+  \omega_{\vec g}^{\textsf x}(r)\right),
\end{equation}
where $C=C(n)$.
Since the right-hand side of the above inequality goes to zero as $r \to 0$, we see that $u$ satisfies \eqref{eq1121may9} as desired.
The theorem is proved.

\subsection{Proof of Theorem~\ref{thm:nd-wolt}}
						\label{sec:int-nd}
The proof of theorem is similar to that of Theorem~\ref{thm:d-wolt}.
First, we present a lemma that plays the role of Lemma~\ref{lem02-weak11}.
\begin{lemma}			\label{lem-weak11-nd}
Let $\bar a^{ij}=\bar a^{ij}(t)$ satisfy \eqref{parabolicity}.
Assume that $a^{ij}$ are symmetric and consider the operator $\cP_0$ defined by
\[
\cP_0 u := \partial_t u- a^{ij}(t) D_{ij} u.
\]
For $f \in L^2(C_1^-)$ let $u \in W^{1,2}_2(C_1^-)$ be the unique solution to
\[
\cP_0 u= f\;\mbox{ in }\; C_1^-;\quad
u=0 \;\mbox{ on } \; \partial_p^-C_1^-.
\]
Then for any $\alpha>0$, we have
\[
\Abs{\set{X \in C_1^- : \abs{D^2 u(X)} > \alpha}}  \lesssim_{n,\lambda}\, \frac{1}{\alpha} \int_{C_1^-} \abs{f}.
\]
\end{lemma}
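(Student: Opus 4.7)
The plan is to mirror the strategy of Lemma~\ref{lem02-weak11}, adapting it to the non-divergence setting. By the standard $W^{1,2}_2$ estimate for parabolic equations with $t$-measurable coefficients, the map $T\colon f \mapsto D^2 u$ is bounded on $L^2(C_1^-)$, so it suffices to verify the Hörmander-type hypothesis of Lemma~\ref{lem01-stein} with $c=2$: for $Y \in C_1^-$, $0 < r < 1/2$, and $b \in L^2(C_1^-)$ supported in $C_r^-(Y) \cap C_1^-$ with $\int_{C_1^-} b = 0$, I need the bound $\int_{C_1^- \setminus C_{2r}(Y)} \abs{D^2 u} \lesssim \int_{C_r^-(Y) \cap C_1^-} \abs{b}$, where $u$ solves $\cP_0 u = b$ with zero data on $\partial_p^- C_1^-$.

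Given $R \ge 2r$ with $C_1^- \setminus C_R(Y) \neq \emptyset$ and a matrix-valued test function $\mathbf{g} = (g^{ij}) \in C_c^\infty((C_{2R}(Y) \setminus C_R(Y)) \cap C_1^-)$, I would let $v \in W^{1,2}_2(C_1^-)$ solve the backward problem
\[
\cP_0^\ast v = D_{ij} g^{ij} \;\mbox{ in }\; C_1^-, \qquad v = 0 \;\mbox{ on }\; \partial_p^+ C_1^-,
\]
with $\cP_0^\ast v = -\partial_t v - a^{ij}(t) D_{ij} v$ (using that $a^{ij}$ is symmetric and depends only on $t$). Integrating by parts twice in $x$ and once in $t$ and using the vanishing of $u$ and $v$ on the appropriate pieces of the parabolic boundary produces the identity
\[
\int_{C_1^-} g^{ij} D_{ij} u = \int_{C_1^-} u\,\cP_0^\ast v = \int_{C_1^-} v\,\cP_0 u = \int_{C_r^-(Y) \cap C_1^-} b\,(v - v(Y)),
\]
where the last step uses $\int b = 0$ and the pointwise definition of $v$ at $Y$ (since $v$ solves $\cP_0^\ast v = 0$ in $C_R(Y) \cap C_1^-$, where $\mathbf g$ vanishes).

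The main obstacle, and the key twist relative to Lemma~\ref{lem02-weak11}, is proving the $L^2$ bound $\norm{v}_{L^2(C_1^-)} \lesssim \norm{\mathbf g}_{L^2}$: the naive energy estimate only gives $\norm{v}_{W^{1,2}_2} \lesssim \norm{D^2 \mathbf g}_{L^2}$, which loses two derivatives on $\mathbf g$. I would circumvent this by a second duality: for arbitrary $h \in L^2(C_1^-)$, let $\phi \in W^{1,2}_2(C_1^-) \cap \mathring{\mathcal H}^1_2(C_1^-)$ solve $\cP_0 \phi = h$ with $\phi = 0$ on $\partial_p^- C_1^-$, for which the standard $W^{1,2}_2$ estimate gives $\norm{D^2\phi}_{L^2} \lesssim \norm{h}_{L^2}$. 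Then
\[
\int_{C_1^-} v h = \int_{C_1^-} \phi\,\cP_0^\ast v = \int_{C_1^-} g^{ij} D_{ij}\phi \le \norm{\mathbf g}_{L^2}\norm{D^2 \phi}_{L^2} \lesssim \norm{\mathbf g}_{L^2}\norm{h}_{L^2},
\]
and $L^2$-duality delivers the desired $\norm{v}_{L^2(C_1^-)} \lesssim \norm{\mathbf g}_{L^2}$.

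With the $L^2$ bound in hand, I would conclude as in Lemma~\ref{lem02-weak11}: since $\cP_0^\ast v = 0$ in $C_R(Y) \cap C_1^-$, a cutoff argument ($\eta$ equal to $1$ on $C_{R/2}(Y)$, supported in $C_{3R/4}(Y)$, with $\abs{\partial_t \eta} + \abs{D\eta}^2 + \abs{D^2\eta} \lesssim R^{-2}$) together with the $W^{1,2}_p$ estimate for $\cP_0^\ast$ and the Sobolev embedding of Lemma~\ref{psobolev}(ii) applied to $v$ itself (rather than to $Dv$, with $p > (n+2)/2$ chosen so that $\delta = 2 - (n+2)/p \in (0,1)$), followed by the standard bootstrap passing from $L^p$ down to $L^2$, yields
\[
[v]_{C^{\delta/2,\delta}(C_r^-(Y) \cap C_1^-)} \lesssim R^{-\delta - (n+2)/2} \norm{v}_{L^2(C_1^-)} \lesssim R^{-\delta - (n+2)/2} \norm{\mathbf g}_{L^2}.
\]
Combining with the duality identity and taking the supremum over admissible $\mathbf g$ gives $\norm{D^2 u}_{L^2((C_{2R}(Y)\setminus C_R(Y))\cap C_1^-)} \lesssim r^\delta R^{-\delta - (n+2)/2} \norm{b}_{L^1}$; Hölder's inequality converts this into $\norm{D^2 u}_{L^1((C_{2R}(Y)\setminus C_R(Y))\cap C_1^-)} \lesssim r^\delta R^{-\delta} \norm{b}_{L^1}$, and summing over dyadic $R = 2^k r$ ($k=1,\ldots,N$ with $N$ chosen so that $C_1^- \subset C_{2^{N+1}r}(Y)$) produces the convergent geometric series $\sum 2^{-k\delta}$ that completes the verification of the Hörmander hypothesis.
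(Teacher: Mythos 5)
Your proof is correct and follows essentially the same approach as the paper: you solve the same dual backward problem for $v$, derive the same duality identity $\int g^{ij}D_{ij}u = \int b\,(v-v(Y))$, estimate $v$ on $C_r^-(Y)$ by a local interior/boundary regularity estimate for the homogeneous equation, and sum dyadically over $R$. The two differences are cosmetic: you prove the needed $L^2$-to-$L^2$ bound $\norm{v}_{L^2}\lesssim\norm{\mathbf g}_{L^2}$ by a second duality with the forward Dirichlet problem, where the paper invokes the parabolic version of \cite[Lemma~2]{EM2016}; and you use a $C^{\delta/2,\delta}$ modulus with $\delta<1$ where the paper works with the Lipschitz $C^{1/2,1}$ seminorm ($\delta=1$), neither of which affects the geometric summation or the conclusion.
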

\begin{proof}
The proof is similar to that of Lemma~\ref{lem02-weak11}.
Since by the $W^{1,2}_2$ estimate (see e.g., \cite{DK11}), the map $T: f \mapsto D^2 u$ is a bounded linear operator on $L^2(C_1^-)$, it suffices to show that $T$ satisfies the hypothesis of Lemma~\ref{lem01-stein}.

We again set $c=2$.
Fix $Y \in C_1^-$ and $0<r<\frac1 4$, let $b \in L^2(C_1^-)$ be supported in $C_r^-(Y)\cap C_1^-$ with mean zero.
Let $u \in W^{1,2}_2(C_1^-)$ be the unique strong solution of
\[
\cP_0 u= b\;\mbox{ in }\; C_1^-;\quad
u=0 \;\mbox{ on } \; \partial_p^- C_1^-.
\]
For any $R\ge cr$ such that $C_1^-\setminus C_R(Y) \neq \emptyset$ and $\mathbf g=(g^{ij})_{i,j=1}^n \in C^\infty_c((C_{2R}(Y)\setminus C_R(Y))\cap C_1^-)$,
let  $v \in L^2(C_1^-)$ be the adjoint solution of
\[
\cP_0^* v= \nabla^2 \mathbf g\;\mbox{ in }\; C_1^-;\quad
v=-\frac{ \mathbf g \nu \cdot \nu}{\mathbf{A} \nu \cdot  \nu} \;\mbox{ on } \; (-1,0)\times \partial B_1(0);\quad
v(0,\cdot)=0 \;\mbox{ on } \; \overline B_1.
\]
where $\cP_0^*$ is the adjoint operator of $\cP_0$.
By the duality, we have the identity
\[
\int_{C_1^-} \tr(D^2u\,  \mathbf g) = \int_{C_1^-} b v = \int_{C_r^-(Y) \cap C_1^-} b \left(v-v(Y)\right).	
\]
Therefore,
\begin{align*}
\Abs{\int_{(C_{2R}(Y)\setminus C_R(Y))\cap C_1^-} \tr(D^2u\,  \mathbf g)}\, &\le \norm{b}_{L^1(C_r^-(Y) \cap C_1^-)} \, \norm{v-v(Y)}_{L^\infty(C_r^-(Y)\cap C_1^-)}\\
&\lesssim r \norm{b}_{L^1(C_r^-(Y) \cap C_1^-)}\, [v]_{C^{1/2,1}(C_r^-(Y)\cap C_1^-)}.
\end{align*}
Since $\cP_0^* v =0$ in $C_R(Y)\cap C_1^-$ and $v$ vanishes on $\partial_p^+ C_1^- \cap C_R(Y)$, by an analogy of Lemma~\ref{lem:c_half} up to the boundary and the parabolic version of \cite[Lemma~2]{EM2016}, we get
\begin{align*}
[v]_{C^{1/2,1}(C_r^-(Y)\cap C_1^-)}
&\le  [v]_{C^{1/2,1}(C_{R/2}^-(Y)\cap C_1^-)} \\
&\lesssim R^{-2-\frac{n}2} \norm{v}_{L^2(C_{R}(Y)\cap C_1^-)} \lesssim R^{-2-\frac{n}2} \norm{v}_{L^2(C_1^-)} \\
&\lesssim R^{-2-\frac{n}{2}} \norm{\mathbf g}_{L^2(C_1^-)}= R^{-2-\frac{n}2} \norm{\mathbf g}_{L^2((C_{2R}(Y)\setminus C_R(Y))\cap C_1^-)}.
\end{align*}
Therefore, by duality
\[
\norm{D^2 u}_{L^2((C_{2R}(Y)\setminus C_R(Y))\cap C_1^-)}
\lesssim r R^{-2-\frac{n}2} \norm{b}_{L^1(C_r^-(Y)\cap C_1^-)},
\]
and by H\"older's inequality
\begin{equation}
                            \label{eq4.37b}
\norm{D^2 u}_{L^1((C_{2R}(Y)\setminus C_R(Y))\cap C_1^-)}
\lesssim r R^{-1}\norm{b}_{L^1(C_r^-(Y) \cap C_1^-)}.
\end{equation}
Now let $N$ be the smallest positive integer such that $C_1^-\subset C_{2^{N}cr}(Y)$.
By taking $R=2r,4r,\ldots,2^{N} r$ in \eqref{eq4.37b}, we have
\[
\int_{C_1^-\setminus C_{cr}(Y)} \abs{D^2 u} \lesssim \sum_{k=1}^{N} 2^{-k} \norm{b}_{L^1(C_r^-(Y)\cap C_1^-)} \lesssim \int_{C_r^-(Y)\cap C_1^-} \abs{b}.
\]
Therefore, $T$ satisfies the hypothesis of Lemma~\ref{lem01-stein} and
Lemma \ref{lem-weak11-nd} is proved.
\end{proof}

For $X_0 \in C_3^-$ and $0<r<\frac13$, we decompose $u=v+w$, where $w \in W^{1,2}_2(C_r^-(X_0))$ is a unique solution of the problem
\[
\cP_0 w = -(a^{ij}- \bar a^{ij}) D_{ij} u+ g-\bar g \;\mbox{ in }\;C_r^-(X_0);\quad
w=0 \;\mbox{ on }\;\partial_p^- C_r^-(X_0),
\]
where $\bar a^{ij}=\bar a^{ij}(t)$ and $\bar g=\bar g(t)$ are similarly defined as in \eqref{eq1754bth} and
\[
\cP_0=\partial_t-\bar a^{ij} D_{ij}.
\]
By Lemma~\ref{lem-weak11-nd} with scaling, we have for any $\alpha>0$,
\[
\abs{\set{X\in C_r^-(X_0): \abs{D^2w(X)} > \alpha}}
\lesssim \frac{1}{\alpha}\left(\norm{D^2 u}_{L^\infty(C_r^-(X_0))} \int_{C_r^-(X_0)} \abs{\mathbf{A}-\bar{\mathbf A}^{\textsf x}} +  \int_{C_r^-(X_0)}  \abs{g -\bar g^{\textsf x}}\right),
\]
where we used the notation
\[
\bar{\mathbf A}^{\textsf x}=\bar{\mathbf A}^{\textsf x}(t)=(\bar a^{ij}(t))_{i,j=1}^n,\quad \bar g^{\textsf x}=\bar g^{\textsf x}(t)=\bar g(t).
\]
Therefore, we have
\[
\left(\fint_{C_r^-(X_0)} \abs{D^2w}^{\frac12} \right)^{2} \lesssim \omega_{\mathbf A}^{\textsf x}(r) \,\norm{D^2 u}_{L^\infty(C_r^-(X_0))} + \omega_{g}^{\textsf x}(r).
\]
Since $v=u-w$ satisfies
\[
\cP_0 v =\bar g(t)\;\text{ in }\;C_r^-(X_0),
\]
and $\cP_0$ is an operator with coefficients depending only on $t$, we observe that for any $\mathbf q=(q^{ij}) \in \mathrm{Sym}(n)$, the set of all $n\times n$ symmetric matrices, we have
\[
\cP_0 (D_{ij} v-q^{ij}) =0 \;\mbox{ in }\;C_r^-(X_0),\quad   \forall i, j=1,\ldots, n.
\]
Then, similar to \eqref{int-reg}, we have
\[
[D^2v]_{C^{1/2,1}(C_{r/2}^-(X_0))} \le  N_0r^{-1} \left(\fint_{C_r^-(X_0)} \abs{D^2 v - \mathbf q}^{\frac12} \right)^{2}
\]
and thus, similar to \eqref{eq15.50br}, we obtain (recall $0<\kappa<\frac12$)
\[
\left(\fint_{C_{\kappa r}^-(X_0)} \abs{D^2 v - D^2 v_{X_0, \kappa r}}^{\frac12} \right)^{2} \le 2\kappa r [D^2v]_{C^{1/2,1}(C_{r/2}^-(X_0))} \le 2N_0 \kappa \left(\fint_{C_r^-(X_0)} \abs{D^2 v -\mathbf q}^{\frac12} \right)^{2}.
\]
If we set
\[
\phi(X_0,r):=\inf_{\mathbf q\in \mathrm{Sym}(n)}\left( \fint_{C_r^-(X_0)} \abs{D^2u -\mathbf q}^{\frac12} \right)^{2},
\]
then by the same argument that led to \eqref{eq22.25fr}, for any given $\beta \in (0,1)$, we can find $\kappa \in (0,\frac12)$ so small that
\[
\phi(X_0, \kappa^j r) \le \kappa^{j\beta} \phi(X_0,r) +C \norm{D^2 u}_{L^\infty(C_r^-(X_0))}\,\tilde \omega_{\mathbf A}^{\textsf x}(\kappa^{j} r) + C \tilde \omega_{g}^{\textsf x}(\kappa^{j} r).
\]
Now, by repeating the same line of proof of Theorem~\ref{thm-main-d}, we reach the following estimate:
for $X$, $Y \in C_1^-$ such that $\abs{X-Y} <\frac12$, we have
\begin{multline*}				
\abs{D^2 u(X)-D^2u(Y)} \lesssim \norm{D^2 u}_{L^1(C_4^-)} \,\abs{X-Y}^\beta\\
+\left(\norm{D^2 u}_{L^1(C_4^-)} + \int_0^{1} \frac{\tilde\omega_{\vec g}^{\textsf x}(t)}t \,dt\right) \int_0^{2\abs{X-Y}} \frac{\tilde \omega_{\mathbf A}^{\textsf x}(t)}t \,dt + \int_0^{2\abs{X-Y}} \frac{\tilde \omega_{g}^{\textsf x}(t)}t \,dt.
\end{multline*}

Finally, we investigate the continuity of $\partial_t u$.
Since
\[
\partial_t u = a^{ij} D_{ij}u+g,
\]
it is clear that $\partial_t u \in L^\infty(C_1^-)$.
Moreover, if $\mathbf A$ and $g$ are both continuous over $\overbar{C_1^-}$, $\partial_t u$ is also continuous over $\overbar{C_1^-}$.
The theorem is proved.

\subsection{Proof of Theorem~\ref{thm:dd-wolt}}

The proof of this theorem is similar to that of Theorem~\ref{thm:d-wolt}.
We begin with a lemma that is an adjoint version of Lemma~\ref{lem-weak11-nd}.
\begin{lemma}			\label{lem-weak11-adj}
Let $\bar a^{ij}=\bar a^{ij}(t)$ satisfy \eqref{parabolicity}.
Assume that $a^{ij}$ are symmetric and consider the adjoint operator $\cP_0^*$ defined by
\begin{equation}				\label{eq1254bf}
\cP_0^* u := -\partial_t u- D_{ij}(\bar a^{ij}(t) u).
\end{equation}
For $\mathbf{f}=(f^{ij})_{i,j=1}^n\in L^2(C_1^+)$, let $u \in L^2(C^+_2)$ be the unique solution to the adjoint problem
\[
\cP_0^* u= \nabla^2 (\mathbf f 1_{C^+_1})\;\mbox{ in }\; C^+_2;\quad
u=0 \;\mbox{ on } \; \partial_p^+ C^+_2,
\]
where $1_{C^+_1}$ is the usual indicator function.
Then for any $\alpha>0$, we have
\[
\Abs{\set{X \in C_1^+ : \abs{u(X)} > \alpha}}  \lesssim_{n,\lambda}\, \frac{1}{\alpha} \int_{C_1^+} \abs{\mathbf f}.
\]
\end{lemma}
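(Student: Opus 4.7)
The plan is to follow the blueprint of Lemmas \ref{lem02-weak11} and \ref{lem-weak11-nd}: reduce to Stein's weak-type criterion (Lemma \ref{lem01-stein}, which transfers to $C_1^+$ by time reversal or by repeating the Christ dyadic-decomposition argument on the forward cylinder) and establish the requisite off-diagonal decay via duality with the direct problem. $L^2$-boundedness of the map $T: \mathbf{f} \mapsto u|_{C_1^+}$ comes from the $L^2$ theory for adjoint equations with coefficients depending only on $t$: solutions are obtained by transposition from the $W^{1,2}_2$ estimate for the direct problem $\cP_0 v = F$, yielding $\|u\|_{L^2(C_2^+)} \lesssim \|\mathbf{f}\|_{L^2(C_1^+)}$.

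For the off-diagonal estimate, set $c = 2$, fix $Y \in C_1^+$, $0 < r < \tfrac14$, and let $\mathbf{b}\in L^2(C_1^+)$ be supported in $C_r^+(Y)\cap C_1^+$ with $\int \mathbf{b} = 0$. For any $R \ge 2r$ with $C_1^+ \setminus C_R(Y)\neq \emptyset$ and any scalar $g \in C^\infty_c((C_{2R}(Y)\setminus C_R(Y))\cap C_1^+)$, let $v$ solve $\cP_0 v = g$ in $C_2^+$ with $v = 0$ on $\partial_p^- C_2^+$. Since $u = 0$ on $\partial_p^+ C_2^+$, $v = 0$ on $\partial_p^- C_2^+$, and $\mathbf{b}$ is supported strictly inside $B_2$, integration by parts yields
\[
\int_{C_1^+} u\, g \;=\; \int_{C_2^+} u\,\cP_0 v \;=\; \int_{C_1^+} \tr(\mathbf{b}\, D^2 v) \;=\; \int_{C_r^+(Y)\cap C_1^+}\tr\bigl(\mathbf{b}\,(D^2 v - D^2 v(Y))\bigr),
\]
the last step using the mean-zero condition. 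Hence for any $\delta \in (0,1)$,
\[
\Abs{\int_{C_1^+} u\, g}\;\lesssim\; r^\delta\,\|\mathbf{b}\|_{L^1}\, [D^2 v]_{C^{\delta/2,\delta}(C_r^+(Y)\cap C_2^+)}.
\]

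Since $g \equiv 0$ on $C_R(Y) \cap C_2^+$, $v$ solves $\cP_0 v = 0$ there; because the coefficients of $\cP_0$ depend only on $t$, each $D_k v$ also solves $\cP_0(D_k v) = 0$. Applying the $W^{1,2}_p$ estimate for $t$-measurable coefficients with $p > n+2$ to a suitable cutoff of $Dv$ (using the boundary version with homogeneous Dirichlet data near $[0,4)\times\partial B_2$, where $v = 0$), together with Lemma \ref{psobolev}(iv) with $\delta = 1 - (n+2)/p$ and the iteration scheme of Lemma \ref{lem02-weak11}, gives
\[
[D^2 v]_{C^{\delta/2,\delta}(C_r^+(Y)\cap C_2^+)} \;\lesssim\; R^{-1-\delta-n/2}\,\|D^2 v\|_{L^2(C_R(Y)\cap C_2^+)}.
\]
The $W^{1,2}_2$ estimate $\|D^2 v\|_{L^2(C_2^+)} \lesssim \|g\|_{L^2}$ and the localized support of $g$ then produce
\[
\Abs{\int u\, g} \;\lesssim\; r^\delta R^{-1-\delta-n/2}\,\|\mathbf{b}\|_{L^1}\, \|g\|_{L^2((C_{2R}(Y)\setminus C_R(Y))\cap C_1^+)}.
\]
By duality, $\|u\|_{L^2((C_{2R}(Y)\setminus C_R(Y))\cap C_1^+)} \lesssim r^\delta R^{-1-\delta-n/2}\|\mathbf{b}\|_{L^1}$, and H\"older's inequality on the annular region (volume $\sim R^{n+2}$) converts this into $\|u\|_{L^1((C_{2R}(Y)\setminus C_R(Y))\cap C_1^+)} \lesssim r^\delta R^{-\delta}\|\mathbf{b}\|_{L^1}$. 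Summing over the dyadic scales $R = 2r, 4r, \ldots, 2^N r$ (with $N$ the smallest integer for which $C_1^+ \subset C_{2^{N+1}r}(Y)$) and the geometric series $\sum_k 2^{-k\delta}$ gives $\int_{C_1^+\setminus C_{2r}(Y)}|u| \lesssim \|\mathbf{b}\|_{L^1}$, and Lemma \ref{lem01-stein} concludes the proof.

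The main obstacle is the H\"older estimate on $D^2 v$, which is one derivative higher than the analogous estimates in Lemmas \ref{lem02-weak11} and \ref{lem-weak11-nd}. We dispose of it by exploiting that $\cP_0$ has coefficients depending only on $t$: differentiating the equation shows $Dv$ solves $\cP_0(Dv) = 0$, so applying the $W^{1,2}_p$ iteration argument of Lemma \ref{lem02-weak11} to $Dv$ rather than to $v$ produces the required H\"older seminorm of $D^2 v$. The only subtlety is near the lateral boundary $\partial B_2$, where boundary $W^{1,2}_p$ estimates with homogeneous Dirichlet data replace the interior estimates; these are standard for smooth spatial domains and $t$-measurable coefficients.
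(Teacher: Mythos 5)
Your overall strategy is the same as the paper's—duality with the direct problem $\cP_0 v = g$ and Stein's criterion—but there is a genuine gap in the estimate for $[D^2 v]_{C^{\delta/2,\delta}}$ near the lateral boundary, and you have also chosen $c$ too small to avoid the issue.

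You propose to handle the lateral boundary by applying a "boundary $W^{1,2}_p$ estimate with homogeneous Dirichlet data" to $Dv$ near $[0,4)\times \partial B_2$. But $Dv$ does \emph{not} vanish on the lateral boundary: only $v$ does, so only the tangential derivatives of $v$ vanish there. The normal derivative $\partial_\nu v$ carries no prescribed Dirichlet data, and since $B_2$ is curved the decomposition of $Dv$ into normal and tangential parts is not globally adapted to the operator $\cP_0$. Establishing the required Lipschitz-in-$x$ estimate for $D^2 v$ up to $\partial B_2$ with only $t$-measurable coefficients would require a lemma of the flavor of Lemma~\ref{lem-bdry-Lip} (proved there for a flat boundary and constant coefficients), which is not available here.

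The paper avoids this obstacle entirely by a careful choice of geometry: it takes $c=4$ (so $r \le R/4$), then observes that since $Y=(s,y)\in C_1^+$ and $R<2$ (forced by $C_1^+\setminus C_R(Y)\neq\emptyset$), one has $B_{R/2}(y)\subset B_2$. The estimate
\[
[D^2 v]_{C^{1/2,1}(C_{R/4}(Y)\cap C_2^+)} \lesssim R^{-2-n/2}\|D^2 v\|_{L^2(C_{R/2}(Y)\cap C_2^+)}
\]
is therefore purely interior with respect to the spatial boundary (the time boundary at $t=0$ is harmless because $v$ vanishes there and can be extended by zero), and Lemma~\ref{lem:c_half} applies directly to $D^2 v$. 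Your choice $c=2$ gives only $r\le R/2$, so to apply the interior estimate you are forced out to $C_R(Y)$, and $B_R(y)$ can indeed poke through $\partial B_2$. In fact, the mismatch between the outer domain $C_2^+$ and the support $C_1^+$ of the data (together with $c=4$) is exactly the device the authors use to get around the boundary estimate for $v$; this is the fix for the flaw in \cite[Lemma~2.23]{DK17} and \cite[Lemma~2.4]{DEK18} that Remark~\ref{rmk4.15} describes. To repair your argument: take $c=4$, restrict the $C^{\delta/2,\delta}$ estimate for $D^2 v$ to $C_{R/4}(Y)$ with data on $C_{R/2}(Y)$, and note that this is interior in $B_2$; the boundary version is only needed for the modified statement in Remark~\ref{rmk4.15}(ii), where the boundary is flat and the trick you attempted can be carried out correctly.
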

\begin{proof}
By the backward parabolic version of \cite[Lemma~2]{EM2016}, the map $T: \mathbf{f} \mapsto u$ is a bounded linear operator on $L^2(C_1^+)$. Now we apply to this operator $T$ the backward parabolic version of Lemma \ref{lem01-stein}. For this porpoise we set $c=4$, fix $Y \in C^+_1$, $0<r<\frac1 4$ and let $\mathbf  b=(b^{ij})_{i,j=1}^n \in L^2(C^+_1)$ be supported in $C_r^+(Y)\cap C^+_1$ with mean zero.
Let $u \in W^{1,2}_2(C^+_2)$ be the unique adjoint solution of
\[
\cP^*_0 u=\nabla^2 (\mathbf b 1_{C^+_1})\;\mbox{ in }\; C^+_2;\quad
u=0 \;\mbox{ on } \; \partial_p^+ C^+_2.
\]
For any $R\ge 4r$ such that $C^+_1 \setminus C_R(Y) \neq \emptyset$ and $ g \in C^\infty_0((C_{2R}(Y)\setminus C_R(Y))\cap C^+_1)$,
let  $v \in W^{1,2}_2(C^+_2)$ be the strong solution of
\[
\cP_0 v=  g\;\mbox{ in }\; C^+_2;\quad
v=0 \;\mbox{ on } \; \partial_p^- C^+_2.
\]
By duality, we have the identity
\[
\int_{C^+_2} u g = \int_{C^+_1} \tr\left(\mathbf b D^2v\right) = \int_{C^+_r(Y) \cap C^+_1} \tr\left[\mathbf b\left(D^2v-D^2 v(Y)\right)\right].	
\]
Therefore,
\begin{align*}
\Abs{\int_{(C_{2R}(Y)\setminus C_R(Y))\cap C^+_1} u g\,} &\le \norm{\mathbf b}_{L^1(C^+_r(Y) \cap C^+_1)} \norm{D^2 v-D^2 v(Y)}_{L^\infty(C^+_r(Y)\cap C^+_1)}\\
&\lesssim r \norm{\mathbf b}_{L^1(C^+_r(Y) \cap C^+_1)} [D^2 v]_{C^{1/2,1}(C^+_r(Y)\cap C^+_1)}.
\end{align*}
Note that $\cP_0 v =0$ in $C_R(Y)\cap C_2^+$.
Since $Y \in C^+_1$ and $R<2$, the cylinder $C_{R/2}(Y)$ does not intersect with the lateral boundary of $C^+_2$.
Therefore, by Lemma~\ref{lem:c_half} applied to $D^2v$, we have (recall $r\le R/c=R/4$)
\[
[D^2 v]_{C^{1/2,1}(C^+_r(Y)\cap C^+_1)}
\le [D^2v]_{C^{1/2,1}(C_{R/4}(Y)\cap C^+_2)}
\lesssim R^{-2-\frac{n}2} \norm{D^2v}_{L^2(C_{R/2}(Y)\cap C^+_2)}.
\]
On the other hand, by $W^{1,2}_2$ estimate for parabolic systems with coefficients depending only on $t$ (see, for instance, \cite{DK11}), we have
\[
\norm{D^2v}_{L^2(C^+_2)} \lesssim \norm{g}_{L^2(C^+_2)}=  \norm{g}_{L^2((C_{2R}(Y)\setminus C_R(Y))\cap C_1^+)}.
\]
Combining these together, we obtain
\[
[D^2 v]_{C^{1/2,1}(C^+_r(Y)\cap C^+_1)} \lesssim R^{-2-\frac{n}2} \norm{g}_{L^2((C_{2R}(Y)\setminus C_r(Y))\cap C^1_+)}.
\]
Therefore, by the duality, we get
\[
\norm{u}_{L^2((C_{2R}(Y)\setminus C_R(Y))\cap C^+_1)}
\lesssim r R^{-2-\frac{n}2} \norm{\mathbf b}_{L^1(C^+_r(Y)\cap C^+_1)}.
\]
The rest of the proof is the same as that of Lemma~\ref{lem02-weak11} and Lemma \ref{lem-weak11-adj} is proved.
\end{proof}

For $X \in C_3^+$ and $0<r<\frac13$, we decompose $u=v+w$, where $w \in L^2(C_{2r}^+(X_0))$ is the unique solution to the problem (see \cite[Lemma~2]{EM2016})
\begin{equation*}
\left\{
\begin{aligned}
\cP_0^{*} w &= \dv^2 ((\mathbf{A}- \bar{\mathbf A}^{\textsf x}) u1_{C_r^+(X_0)}) + \dv^2 ((\mathbf{g}-\bar{\mathbf g}^{\textsf x}) 1_{C_r^+(X_0)})\;\text{ in }\; C_{2r}^+(X_0),\\
w &=0\;\text{ on }\; \partial_p^+ C_{2r}^+(X_0).
\end{aligned}
\right.
\end{equation*}
where $\bar{\mathbf A}^{\textsf x}=\bar{\mathbf A}^{\textsf x}(t)=(\bar a^{ij}(t))$ and $\bar{\mathbf g}^{\textsf x}=\bar{\mathbf g}^{\textsf x}(t)=(\bar g^{ij}(t))$ are defined as in \eqref{eq1754bth} and $\cP_0^*$ is as in \eqref{eq1254bf}.
By Lemma~\ref{lem-weak11-adj} and scaling, we have
\[
\abs{\set{Y\in C_r^+(X_0): \abs{w(Y)} > \alpha}} \lesssim \frac{1}{\alpha}\left(\norm{u}_{L^\infty(C_r^+(X_0))} \int_{C_r^+(X_0)} \abs{\mathbf{A}-\bar{\mathbf A}^{\textsf x}} +  \int_{C_r^+(X_0)}  \abs{\mathbf g -\bar{\mathbf g}^{\textsf x}}\,\right).
\]
Therefore, we have
\[
\left(\fint_{C_r^+(X_0)} \abs{w}^{\frac12} \right)^{2} \lesssim \omega_{\mathbf A}^{\textsf x}(r) \,\norm{u}_{L^\infty(C_r^+(X_0))} + \omega_{\mathbf g}^{\textsf x}(r).
\]
Note that $v$ is a solution of  $\cP_0^* v = 0$ in $C_r^+(X_0)$ and so is $v-q$ for any constant $q \in \bR$ because $\cP_0^*$ is an operator with coefficients depending only on $t$.
Then, $v-q$ satisfies an estimate corresponding to Lemma~\ref{lem:c_half}; namely,
\[
[v]_{C^{1/2,1}(C_{r/2}^+(X_0))}  \le N_0 r^{-1} \left(\fint_{C_r^+(X_0)} \abs{v-q}^{\frac12} \right)^{2}
\]
and thus, similar to \eqref{eq15.50br}, we obtain that for $\kappa \in (0,\frac12)$
\[
\left(\fint_{C_{\kappa r}^+(X_0)} \abs{v - v_{X_0, \kappa r}}^{\frac12} \right)^{2} \le 2\kappa r [v]_{C^{1/2,1}(C_{r/2}^+(X_0))}  \le 2N_0 \kappa \left(\fint_{C_r^+(X_0)} \abs{v - q}^{\frac12} \right)^{2}.
\]
If we set
\[
\phi(X_0,r):=\inf_{q\in \bR}\left( \fint_{C_r^+(X_0)} \abs{u - q}^{\frac12} \,\right)^{2},
\]
then by the same argument that led to \eqref{eq22.25fr},  for any given $\beta \in (0,1)$, we can find $\kappa \in (0,\frac12)$ so small that
\[
\phi(X_0, \kappa^j r) \le \kappa^{j\beta} \phi(X_0,r) +C \norm{u}_{L^\infty(C_r^+(X_0))}\,\tilde \omega_{\mathbf A}^{\textsf x}(\kappa^{j} r) + C \tilde \omega_{\mathbf g}^{\textsf x}(\kappa^{j} r).
\]
Now, by repeating the same line of argument in Section \ref{sec:int-d}, we reach the following estimate:
for $X$, $Y \in C_1^+$ with $\abs{X-Y} <\frac12$, we have
\begin{multline}\label{eq14.00adj}
\abs{u(X)-u(Y)} \lesssim \norm{u}_{L^1(C^+_4)} \,\abs{X-Y}^\beta\\
+\left(\norm{u}_{L^1(C^+_4)} + \int_0^{1} \frac{\tilde\omega_{\mathbf g}^{\textsf x}(t)}t \,dt\right) \int_0^{2\abs{X-Y}} \frac{\tilde \omega_{\mathbf A}^{\textsf x}(t)}t \,dt + \int_0^{2\abs{X-Y}} \frac{\tilde \omega_{\mathbf g}^{\textsf x}(t)}t \,dt \qedhere
\end{multline}
and proves Theorem \ref{thm:dd-wolt}.

\begin{remark}					\label{rmk4.15}
We shall use the following observation in next section.
\begin{enumerate}[i.]
\item
In Lemma~ \ref{lem02-weak11} and \ref{lem-weak11-nd}, one can replace $C_1^-=(-1,0)\times B_1$ by $(-1,0) \times \mathcal{D}$, where $\mathcal D \subset \bR^n$ is a smooth set satisfying $B^+_{2/3}(0) \subset \cD \subset B^+_{3/4}(0)$.
The same proofs work.
\item
In Lemma~\ref{lem-weak11-adj}, one can replace $C_2^+=(0,4)\times B_2$ by $(0,4)\times 2\mathcal{D}$ and $C_1^+=(0,1)\times B_1$ by $Q_1^+=(0,1)\times B_1^+$.
This can be proved in the same way using the interior estimates for homogeneous parabolic equations with constant coefficients and the following estimate near a flat boundary
\[
\norm{D^2v}_{C^{1/2,1}(Q^+_{R/4})}
\lesssim R^{-4-\frac{n}2} \norm{v}_{L^2(Q^+_{R/2})}
\lesssim R^{-3-\frac{n}2} \norm{D_1v}_{L^2(Q^+_{R/2})},
\]
and after replacing $v$ with $v-cx^1$, which satisfies the same equation and boundary condition,
\[
\norm{D^2v}_{C^{1/2,1}(Q^+_{R/4})}
\lesssim R^{-3-\frac{n}2} \norm{D_1v-c}_{L^2(Q^+_{R/2})}
\lesssim R^{-2-\frac{n}2} \norm{D^2v}_{L^2(Q^+_{R/2})},
\]
where we used the Poincar\'e inequality by taking $c= \fint_{Q_{R/2}^+} D_1 v$.
\end{enumerate}

We would like to mention that there is a flaw in \cite[Lemma~2.23]{DK17}, which is an elliptic version of Lemma~\ref{lem-weak11-adj}.
The lemma there should be stated similar to Lemma~\ref{lem-weak11-adj} so that  one can get around with the boundary estimate for $v$ in the proof.
The result depending on the lemma there, which is \cite[Theorem~1.10]{DK17}, remains intact because it is an interior estimate.
There is a similar flaw in \cite[Lemma~2.4]{DEK18} and it should be stated in such a way as in ii) above.
Again, the conclusion depending on the lemma there, which is \cite[Theorem~1.8]{DEK18}, remains unchanged.
\end{remark}

\section{Boundary estimates}				\label{sec:bdry}

\subsection{Proof of Theorem~\ref{thm-main-d}}
We extend the coefficients and data $b^i$, $c^i$, $d$, $g^i$, and $f$ to zero in $(-T, 0] \times \Omega$ and take the even extension of the $a^{ij}$ with respect to $t=0$.
We note that these extension do not affect the conditions of the theorem.
Moreover, if we also extend $u$ to be zero over $(-T, 0)\times \Omega$, then $u$ satisfies
\[
Pu=\dv \vec g + f \;\text{ in }\;(-T, T) \times \Omega,\quad u=0 \;\text{ on }\; (-T, T) \times \partial \Omega.
\]
In the proof, we shall assume that these extensions have been made.

\begin{proposition}				\label{prop2.5p}
For any open set $\Omega' \subset\subset \Omega$, we have $u \in \mathring C^{1/2,1}(\overbar{\Omega'_T})$.
\end{proposition}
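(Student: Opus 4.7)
The plan is to reduce Proposition~\ref{prop2.5p} to the interior estimate of Theorem~\ref{thm:d-wolt}, which concerns only the top-order operator with no lower-order or zero-order source. First I would fix $\Omega' \subset\subset \Omega'' \subset\subset \Omega$ and bootstrap the regularity of $u$. Starting from $u \in \cH^1_2(\Omega_T)$ and using that $\vec c, d \in L^q$ and $f \in L^q$ for $q > n+2$, the De Giorgi--Nash--Moser theory together with an $\cH^1_p$ iteration yields $u \in C^{\alpha/2,\alpha}_{\loc}$ for some $\alpha \in (0,1)$ and $Du \in L^p_{\loc}$ for every $p < \infty$. In particular, $u$ is uniformly Dini continuous on $\overbar{\Omega''_T}$. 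Rewriting the equation as
\begin{equation*}
\partial_t u - D_i(a^{ij} D_j u) = D_i(g^i + b^i u) + h, \qquad h := f - c^i D_i u - du,
\end{equation*}
Lemma~\ref{lem00} immediately gives $\tilde g^i := g^i + b^i u \in \mathsf{DMO_x} \cap L^\infty(\Omega''_T)$, and H\"older's inequality gives $h \in L^q(\Omega''_T)$.

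Next I would absorb the scalar source $h$ into a divergence via a heat potential. Choose a cutoff $\chi \in C^\infty_c(\Omega''_T)$ with $\chi \equiv 1$ on a neighborhood of $\overbar{\Omega'_T}$, extend $h\chi$ by zero to $(0,T) \times \bR^n$, and let $V$ be the unique $W^{1,2}_q$-solution of the Cauchy problem $\partial_t V - \Delta V = h\chi$ with $V(0,\cdot) \equiv 0$. By Lemma~\ref{psobolev}(iv), $DV \in C^{\alpha'/2,\alpha'}$ with $\alpha' = 1 - (n+2)/q$, so $DV$ is uniformly Dini continuous. Setting $\tilde u := u - V$ and using $h = \partial_t V - D_i(D_i V)$ on $\Omega'_T$, one checks
\begin{equation*}
\partial_t \tilde u - D_i(a^{ij} D_j \tilde u) = D_i\bigl(\tilde g^i + (a^{ij} - \delta_{ij}) D_j V\bigr) \;\text{ in }\; \Omega'_T,
\end{equation*}
and another application of Lemma~\ref{lem00} (to the product of the Dini continuous $D_j V$ with $a^{ij}-\delta_{ij} \in \mathsf{DMO_x} \cap L^\infty$) shows the right-hand side equals $D_i \bar g^i$ with $\bar g^i \in \mathsf{DMO_x} \cap L^\infty(\Omega'_T)$. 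Theorem~\ref{thm:d-wolt}, applied after translation and parabolic rescaling on a finite cover of $\overbar{\Omega'_T}$ by cylinders $C_{4r}^-(X_0) \subset \Omega''_T$, then yields $\tilde u \in \rC^{1/2,1}(\overbar{\Omega'_T})$.

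It remains to verify that $V \in \rC^{1/2,1}_{\loc}$, which is the main technical point. The spatial part is easy: $DV \in C^{\alpha'/2,\alpha'}$ is continuous. However, Lemma~\ref{psobolev}(ii) alone produces a time H\"older exponent strictly below $1/2$, which is not enough for condition~\eqref{eq1121may9}. The required time exponent $>1/2$ must be extracted either by a finer parabolic Morrey-type embedding $W^{1,2}_q \hookrightarrow C^{(1+\alpha')/2,\,1+\alpha'}$, exploiting that $\partial_t V \in L^q$ with $q > 2$ together with the spatial smoothness of $V$, or alternatively by running the very argument of \eqref{eq13.19fs} on $V$ itself (as a weak solution of $\partial_t V - \dv(DV) = h\chi$ with constant principal coefficients), which estimates $\bigabs{V(t-r^2,x)-V(t,x)}/r$ directly by the oscillation of $DV$ and the $\mathsf{DMO_x}$ modulus of $h\chi$. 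In either case, combined with $\tilde u \in \rC^{1/2,1}(\overbar{\Omega'_T})$ from the previous step, one obtains $u = \tilde u + V \in \rC^{1/2,1}(\overbar{\Omega'_T})$, completing the proof.
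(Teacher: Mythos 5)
Your proposal follows essentially the same route as the paper: both split off the scalar source $h=f-c^iD_iu-du$ into a heat potential (the paper's $v$ via a Dirichlet problem in $\Omega_T$, your $V$ via a cut-off Cauchy problem---a cosmetic difference), bootstrap with $L^p$ theory and Sobolev embedding to get $u\in C^{\alpha/2,\alpha}_{\loc}$ and $Du\in L^p_{\loc}$, use Lemma~\ref{lem00} to put $b^iu$ and $(a^{ij}-\delta^{ij})D_jV$ into $\mathsf{DMO_x}\cap L^\infty$, and then apply Theorem~\ref{thm:d-wolt} to the remaining piece.

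You are right to flag the time regularity of $V$ as the one delicate point, since Lemma~\ref{psobolev}(ii) alone does not give a time exponent above $1/2$; the paper handles this implicitly by using that $V\in W^{1,2}_q$ with $q>n+2$ in fact embeds into $C^{(1+\alpha')/2,\,1+\alpha'}$ with $\alpha'=1-\frac{n+2}{q}$, which is your first alternative and is the correct fix. Your second alternative contains the right idea but misstates the hypothesis used on $h$: since $h\chi$ is a scalar source (not a divergence), the term $r^{-1}\int_{t-r^2}^t (h\chi)^{(r)}(s,x)\,ds$ has no cancellation to exploit, so it is not controlled by a $\mathsf{DMO_x}$ modulus of $h\chi$ (which is not assumed anyway). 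What one actually uses is H\"older's inequality together with the mollifier bound $\abs{(h\chi)^{(r)}(s,x)}\lesssim r^{-n/q}\norm{h(s,\cdot)}_{L^q(B_r(x))}$, giving a contribution of order $r^{1-(n+2)/q}\norm{h}_{L^q}$, which tends to zero precisely because $q>n+2$. With this correction both of your alternatives close the argument, and the proposal matches the paper's proof.
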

\begin{proof}
Since $a^{ij}$ are $\mathsf{VMO_x}$ in $\overline \Omega_T$ and $\partial\Omega$ is $C^1$, by moving the lower-order terms to the right-hand side of the equation, we can show that $u$, $Du \in L^p(\Omega'_T)$ for any $1<p<\infty$.
Indeed, let $v$ solve
\begin{equation}				\label{eq0224bsun}
v_t -\Delta v=f-c^i D_i u-du\;\text{ in }\;\Omega_T,\quad v=0 \;\text{ on }\;\partial_p^- \Omega_T.
\end{equation}
Then $w=u-v$ satisfies
\begin{equation}				\label{eq0225bsun}
w_t-D_i(a^{ij}D_j w)=D_i h^i\;\text{ in }\;\Omega_T,\quad w=0 \;\text{ on }\;\partial_p^- \Omega_T,
\end{equation}
where
\begin{equation}				\label{eq0210bsun}
h^i:=g^i +b^i u +(a^{ij}-\delta^{ij}) D_j v.
\end{equation}

Note that $f- c^i D_i u-du$ in \eqref{eq0224bsun} belongs to $L^{q_1}=L^{q_1}(\Omega_T)$, where $\frac{1}{q_1}=\frac{1}{q}+\frac{1}{2}$.
By the parabolic $L^p$ estimates, H\"older's inequality, and Sobolev embedding (Lemma~\ref{psobolev}), we have
\begin{equation}			\label{eq0202bsun}
\norm{v}_{L^{p_1}} +\norm{D v}_{L^{p_1}} \lesssim \norm{v}_{W^{1,2}_{q_1}}  \lesssim \norm{f}_{L^q} + \norm{\vec c}_{L^q} \norm{D u}_{L^2}+ \norm{d}_{L^q}\norm{u}_{L^2},
\end{equation}
where
\[
\frac{1}{p_1}:=\frac{1}{q_1}-\frac{1}{n+2}=\frac{1}{q}+\frac{1}{2}-\frac{1}{n+2}.
\]
Therefore, from \eqref{eq0202bsun} and H\"older's inequality, we see that $\vec h=(h^1,\ldots, h^n)$ in \eqref{eq0210bsun} satisfies
\[
\norm{\vec h}_{L^{p_1}}
\lesssim \left(1+ \norm{\vec b}_{L^\infty} + \norm{\vec c}_{L^q}+ \norm{d}_{L^q}\right)  \left( \norm{f}_{L^q}+\norm{\vec g}_{L^\infty}  \right).
\]
Then we apply the parabolic $L^p$ theory (see, for instance, \cite{DK11}) to $w$ and get
\[
\norm{w}_{\cH^1_{p_1}} \le C \left( \norm{f}_{L^q}+ \norm{\vec g}_{L^\infty}\right),
\]
where $C$ is a constant depending only on $n$, $\lambda$, $q$, $Q$, $\partial \Omega$, $\omega_{\mathbf A}^{\textsf x}$, $\norm{\vec c}_{L^q}$, $\norm{d}_{L^q}$, and $\norm{\vec b}_{L^\infty}$.
Therefore, we have $u$, $Du \in L^{p_1}$ and
\[
\norm{u}_{L^{p_1}}+\norm{Du}_{L^{p_1}} \lesssim\norm{f}_{L^q}+\norm{\vec g}_{L^\infty}.
\]
Feeding it back to the equations \eqref{eq0224bsun} and \eqref{eq0225bsun} (i.e. bootstrapping), we eventually get $u,Du \in L^{p}$, for any $1<p<\infty$, and
\[
\norm{u}_{L^p}+\norm{Du}_{L^p} \le C \left(\norm{f}_{L^q}+ \norm{\vec g}_{L^\infty}\right)
\]
as claimed, with $C$ depending additionally on $p$.
It then follows from the equation of $u$ that $u\in \cH^1_p$ for any $p\in (n+2,q)$ and thus by the Sobolev embedding (Lemma~\ref{psobolev2}), we particularly have $u \in C^{\alpha/2, \alpha}(\Omega_T)$ for some $\alpha\in (0,1)$.
Recall that $v$ solves \eqref{eq0224bsun} with $f - c^i D_iu -du \in L^p(\Omega_T)$ for $p \in (n+2,q)$.
By the parabolic $L^p$ theory with Sobolev embedding (Lemma~\ref{psobolev}), we find $D v \in C^{\delta/2,\delta}(\Omega'_T)$ with $\delta=1-\frac{n+2}{p}$.
Therefore, by Lemma~\ref{lem00}, we see that $\vec h \in \textsf{DMO}_{\textsf x} \cap L^\infty$ in the interior of $\Omega_T$.

In summary, $w=u-v$ is a weak solution of \eqref{eq0225bsun}, where $\vec h \in \textsf{DMO}_{\textsf x} \cap L^\infty$, and $\omega_{\vec h}^{\textsf x}$ and $\norm{\vec h}_{L^\infty}$ are completely determined by the given data (namely $n$, $\lambda$, $\Omega$, $T$, $\omega_{\mathbf A}^{\textsf x}$, $q$, $\norm{f}_{L^q}$, $\norm{\vec c}_{L^q}$, $\norm{d}_{L^q}$, $\omega_{\vec b}^{\textsf x}$, $\norm{\vec b}_{L^\infty}$, $\omega_{\vec g}^{\textsf x}$ and $\norm{\vec g}_{L^\infty}$).

By the interior estimates in Section~\ref{sec:int-d}, we find that $w \in \rC^{1/2,1}(\overbar{\Omega'_T})$ and $\norm{w}_{C^{1/2,1}(\Omega'_T)}$ is bounded by a constant $C$ depending only on the above mentioned given data  and $\Omega'$. Since $v \in C^{(1+\delta)/2,1+\delta}(\Omega'_T)$, we see that $u \in \mathring C^{1/2,1}(
\overbar{\Omega'_T})$.
\end{proof}

Next, we turn to $\mathring C^{1/2,1}$ estimate near the lateral boundary.
Under a volume preserving mapping of locally flattening boundary
\[
y=\vec \Phi(x)=(\Phi^1(x),\ldots, \Phi^n(x)),	\quad (\det D\vec \Phi =1)
\]
let $\tilde u(t, y)=u(t, x)$,
which satisfies
\[
\tilde u_t-D_i(\tilde a^{ij} D_j \tilde u+\tilde a^i\tilde u)+\tilde b^iD_i \tilde u= \dv \tilde{\vec g}+\tilde f
\]
with
\begin{gather*}
\tilde a^{ij}(t,y)= D_l\Phi^i(x) D_k\Phi^j(x) a^{kl}(t,x),
\quad \tilde a^{i}(t,y)=D_l \Phi^i(x) a^l(t,x),\\
\tilde b^i(t,y)=D_l \Phi^i(x) b^l(t,x), \quad \tilde c(t,y)=c(t,x),\\
\tilde g^i(t,y)= D_l\Phi^i(x) g^l(t,x),\quad \tilde f(t,y)=f(t,x).
\end{gather*}
Without loss of generality, we assume that the above equation is satisfied in $(-16,0)\times \cD$, with a smooth set $\cD \subset \bR^n$ satisfying $B^+_{4}(0) \subset \cD \subset B^+_{5}(0)$.
By Lemma~\ref{lem00}, we see that the coefficients and data satisfy the same conditions in $(-16,0)\times \cD$. Now write $u=v+w$, where $v$ and $w$ are as in the proof of Proposition~\ref{prop2.5p} with $\Omega_T$ replaced with $(-16,0)\times \cD$. Since by the global parabolic $L^p$ estimate and Sobolev embedding $v \in C^{(1+\delta)/2,1+\delta}((-16,0)\times \cD)$,
it is enough to show that $w$ is $\mathring C^{1/2,1}$ near the flat lateral boundary, and thus we are reduced to prove the following proposition.
Hereafter, for any $\overbar X \in \partial\bR_+^{n+1}=\set{x^1=0}$ we write
\begin{equation*}					
Q^-_r({\overbar X})=C_r^- \cap \set{x^1>0}+\overbar X,\quad \Delta_r^-({\overbar X})=C^-_r \cap \set{x^1=0} + \overbar X.
\end{equation*}

\begin{proposition}					\label{prop01}
Assume $\mathbf A \in \mathsf{DMO_x}$ and  $\vec g \in \mathsf{DMO_x} \cap L^\infty$.
If $u \in \cH^1_2(Q^-_4)$ is a weak solution of
\[
u_t-D_i(a^{ij} D_j u)= \dv \vec g\;\mbox{ in }\;Q^-_4,
\]
satisfying $u=0$ on $\Delta_4^-(0)$, then $u \in \mathring C^{1/2,1}(\overline{Q^-_1})$.
\end{proposition}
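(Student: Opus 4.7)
The plan is to adapt the interior argument from Section~\ref{sec:int-d} (Theorem~\ref{thm:d-wolt}) to the flat-boundary setting, making the appropriate modifications for points $X_0\in\overline{Q_3^-}$ near $\Delta_4^-(0)$. The whole structure---decomposition $u=v+w$ with $v$ solving the frozen-in-$x$ equation and $w$ absorbing the oscillation errors, weak-type $(1,1)$ bound on $w$, Campanato-type iteration of the quantity
\[
\phi(X_0,r)=\inf_{\vec q\in\bR^n}\Bigl(\fint_{Q_r^-(X_0)}\abs{Du-\vec q}^{1/2}\Bigr)^2,
\]
and summing a geometric series to obtain a modulus of continuity for $Du$---should go through verbatim once three boundary ingredients are in place.

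First, for $X_0=(t_0,x_0)\in \overline{Q_3^-}$ and small $r$, we set $\mathcal{D}=Q_r^-(X_0)\cap\{x^1>0\}$ and let $w\in\cH^1_2(\mathcal{D})$ solve
\[
P_0 w = D_i\bigl((a^{ij}-\bar a^{ij})D_j u + (g^i-\bar g^i)\bigr)\;\text{in }\mathcal{D},\qquad w=0\;\text{on }\partial_p^-\mathcal{D},
\]
with $\bar a^{ij}$, $\bar g^i$ averages in $x$ over $\Omega_r(x_0)$, so that $v:=u-w$ satisfies the homogeneous equation $P_0 v=0$ in $\mathcal{D}$ with $v=0$ on the flat part of the boundary. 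To handle $w$, I invoke the boundary version of the weak-type $(1,1)$ bound pointed out in Remark~\ref{rmk4.15}(i), i.e.\ Lemma~\ref{lem02-weak11} applied on half-ball type domains; this gives, exactly as in \eqref{eq15.50ab},
\[
\Bigl(\fint_{Q_r^-(X_0)}\abs{Dw}^{1/2}\Bigr)^2 \lesssim \omega_{\mathbf A}^{\textsf x}(r)\,\norm{Du}_{L^\infty(Q_r^-(X_0))} + \omega_{\vec g}^{\textsf x}(r).
\]

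Second, and this is the main obstacle, I need a boundary analog of Lemma~\ref{lem:c_half}: for $v$ solving $P_0 v=0$ in a half-cylinder with $v=0$ on the flat face, I need
\[
[Dv]_{C^{1/2,1}(Q_{r/2}^-(X_0))}\lesssim r^{-1}\Bigl(\fint_{Q_r^-(X_0)}\abs{Dv-\vec q}^{1/2}\Bigr)^2
\]
for any $\vec q\in\bR^n$ such that $Dv-\vec q$ still satisfies a usable equation. Since $P_0$ has $x$-independent coefficients, the tangential derivatives $D_{j}v$ ($j\neq 1$) solve the same equation with zero Dirichlet data on the flat face, so standard even-reflection across $\{x^1=0\}$ combined with interior $W^{1,2}_p$-estimates of \cite{DK11} and Sobolev embedding yields the required $C^{1/2,1}$ bound for $D_j v$ with $j\ne 1$; for the normal derivative $D_1 v$ one uses $P_0 v=0$ to solve for $\partial_t v-D_1(\bar a^{1j}D_j v)$, or equivalently reflects $D_1 v$ oddly, which is consistent with $v=0$ on $\{x^1=0\}$. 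Picking $\vec q$ to be a constant vector whose first component matches an admissible choice (so that the iteration remains invariant), one recovers the analog of \eqref{eq15.50br} with a factor $2N_0\kappa$, and then the Campanato iteration \eqref{eq22.25fr}--\eqref{eq0947fo} proceeds identically.

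Third, once we have $\phi(X_0,\kappa^j r)\le \kappa^{j\beta}\phi(X_0,r)+C\norm{Du}_{L^\infty}\tilde\omega_{\mathbf A}^{\textsf x}(\kappa^j r)+C\tilde\omega_{\vec g}^{\textsf x}(\kappa^j r)$ uniformly for $X_0\in\overline{Q_2^-}$, the bootstrap $L^\infty$-bound on $Du$, the existence of $\vec q_{X_0,r}\to Du(X_0)$ and the Hölder/Dini modulus of continuity for $Du$ up to the boundary follow by repeating verbatim the argument leading to \eqref{eq10.23m} and \eqref{eq13.27m}; one only needs to verify the boundary Poincaré-type step, which is immediate since $u=0$ on $\Delta_r^-(X_0)$ whenever $x_0^1\le r$. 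This gives $Du\in C^{\beta/2,\beta}(\overline{Q_1^-})$ with a modulus determined by $\omega_{\mathbf A}^{\textsf x}$ and $\omega_{\vec g}^{\textsf x}$.

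Finally, to upgrade to $\mathring{C}^{1/2,1}$, i.e.\ the half-Hölder time continuity \eqref{eq1121may9}, I imitate the mollification argument at the end of Section~\ref{sec:int-d}, but use a tangential mollifier $\eta$ supported in $B_r^+(0)$ (only convolving in the tangential $x'$-directions, or reflecting $u$ oddly across $\{x^1=0\}$ first so the even extension argument applies). Since $u=0$ on the lateral flat face, both $u$ and its odd reflection live in $\cH^1_2$, and the identity \eqref{eq1607bt} produces the same bound
\[
\frac{\abs{u(t-r^2,x)-u(t,x)}}{r}\lesssim \osc_{Q_r^-(X)}Du + \omega_{\mathbf A Du}^{\textsf x}(r)+\omega_{\vec g}^{\textsf x}(r),
\]
whose right-hand side tends to zero as $r\to 0$ by the already established continuity of $Du$ and Lemma~\ref{lem00} (which guarantees $\mathbf A Du \in \mathsf{DMO_x}\cap L^\infty$ locally once $Du$ is continuous). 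This concludes $u\in\mathring C^{1/2,1}(\overline{Q_1^-})$. The only genuinely delicate point in the whole scheme is producing the boundary $C^{1/2,1}$ estimate of the second paragraph with a constant-coefficient equation, and this is handled cleanly by the reflection trick afforded by the time-only dependence of $\bar a^{ij}$.
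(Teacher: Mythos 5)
Your overall Campanato strategy matches the paper's, but there is a genuine gap in the central step (the boundary analogue of Lemma~\ref{lem:c_half}), and a second subtlety you gloss over concerning which constant vectors $\vec q$ are admissible near the boundary.

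On the boundary regularity estimate: you propose to prove $[Dv]_{C^{1/2,1}}$ via ``standard even/odd reflection across $\{x^1=0\}$.'' This does not work when the off-diagonal entries $\bar a^{1j}(t)$, $j\ne 1$, are nonzero. If $\tilde v(t,x^1,x')=-v(t,-x^1,x')$ is the odd extension of $v$, then at $x^1<0$ one computes
\[
\partial_t\tilde v-\bar a^{ij}(t)D_{ij}\tilde v
=-\Bigl(\partial_t v-\bar a^{11}D_{11}v-\bar a^{jk}D_{jk}v+2\bar a^{1j}D_{1j}v\Bigr),
\]
which does not vanish unless all $\bar a^{1j}=0$; the reflected function satisfies a \emph{different} equation with discontinuously reflected mixed-coefficient signs. (You also interchange the parities: if $v$ reflects oddly, then $D_1 v$ reflects evenly, not oddly.) A linear change of variables diagonalizing $\bar a^{1j}(t)$ is $t$-dependent and would produce extra first-order terms, so it does not rescue the reflection. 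The paper avoids all of this by proving the boundary constant-coefficient estimate directly from boundary $W^{1,2}_p$ estimates, Sobolev embedding, and the Poincar\'e inequality on the half-cylinder (Lemma~\ref{lemma1252bm}); no reflection is used.

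On the admissible competitor: near the boundary the decay estimate only holds after subtracting a \emph{normal} constant of the form $q\vec e_1$, not an arbitrary $\vec q\in\bR^n$. The reason is that the tangential derivatives $D_j v$ ($j\ne 1$) vanish on $\{x^1=0\}$ and therefore cannot be shifted by a nonzero constant while remaining zero there, whereas $v-qx^1$ still solves the homogeneous equation with zero Dirichlet data, so $D_1 v$ may be shifted. The paper encodes this precisely by proving $[Dv]_{C^{\alpha/2,\alpha}(Q^-_{r/2})}\lesssim r^{-\alpha}\bigl(\fint_{Q^-_r}\abs{D_1 v}^{1/2}\bigr)^2$ (with the right side controlled by the \emph{normal} derivative only) and by introducing the auxiliary boundary quantity $\varphi(\overline X,r):=\inf_{q\in\bR}\bigl(\fint\abs{Du-q\vec e_1}^{1/2}\bigr)^2$ alongside the interior $\phi$. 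Your phrase ``picking $\vec q$ to be a constant vector whose first component matches an admissible choice'' hints at this but does not state that the tangential components must be zero, and without that restriction the iteration will not close at boundary points. The paper then needs a three-case analysis (interior/near-boundary/intermediate, depending on $x_0^1$ relative to $\rho$ and $r$) to pass from $\varphi$ at boundary points to $\phi$ at arbitrary points; this is not ``verbatim'' as you claim but a nontrivial step (Lemma~\ref{lem-02}). The rest of your outline (weak-$(1,1)$ step via Remark~\ref{rmk4.15}, $L^\infty$-bootstrap, modulus of continuity, and the mollification argument for the $\mathring C^{1/2,1}$ time-oscillation claim) does follow the paper's line once the two issues above are repaired.
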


\begin{lemma}				\label{lemma1252bm}
Let $\bar a^{ij}=\bar a^{ij}(t)$ satisfy \eqref{parabolicity}.
Consider the operator $P_0$ defined by
\begin{equation*}			
P_0 u := \partial _t u- D_i(\bar a^{ij}(t) D_j u).
\end{equation*}
If $u$ is a weak solution of
\[
P_0 u=0\; \text{ in } \; Q^{-}_r,\quad u=0\; \text{ on } \; \Delta_r^-
\]
then, for any $\alpha \in (0,1)$ we have the estimates
\[
[u]_{C^{\alpha/2,\alpha}(Q_{r/2}^{-})} \le C r^{-\alpha} \left( \fint_{Q_r^{-}} \abs{u}^{\frac12}\right)^{2},
\]
and
\[
[Du]_{C^{\alpha/2,\alpha}(Q_{r/2}^{-})} \le C r^{-\alpha} \left( \fint_{Q_r^{-}} \abs{D_1 u}^{\frac12}\right)^{2},
\]
where $C$ is a constant depending only on $\alpha$, $n$ and $\lambda$.
\end{lemma}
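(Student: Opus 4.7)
Both inequalities are boundary analogues of the interior Lemma \ref{lem:c_half}, and I would derive them by combining three standard ingredients: (i) the global $W^{1,2}_p$ estimate of \cite{DK11} for the operator $P_0$ on a half-cylinder with homogeneous Dirichlet data on the parabolic boundary, which holds for every $p \in (1,\infty)$ because $\bar a^{ij}$ depends only on $t$; (ii) the parabolic Sobolev embedding Lemma \ref{psobolev}, applied with $p > n+2$ chosen so that $1 - (n+2)/p > \alpha$; and (iii) a Krylov-style iteration over concentric half-cylinders that replaces an $L^p$ average on the right-hand side by an $L^{1/2}$ one.

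For the first inequality, I would fix a smooth cutoff $\eta$ equal to $1$ on $Q_{r/2}^-$, supported in $Q_{3r/4}^-$, and vanishing near the round lateral and bottom parts of $\partial Q_r^-$. Since $u$ vanishes on $\Delta_r^-$, the function $\eta u$ vanishes on the entire parabolic boundary of $Q_r^-$, and the global $W^{1,2}_p$ estimate applies to give $\|\eta u\|_{W^{1,2}_p(Q_r^-)} \lesssim \|P_0(\eta u)\|_{L^p(Q_r^-)}$. Expanding $P_0(\eta u) - \eta P_0 u$ and absorbing the commutator terms $(\partial_t\eta)u$, $(D^2\eta)u$ and $D\eta\cdot Du$ via an iteration across shrinking concentric radii yields $\|u\|_{W^{1,2}_p(Q_{r/2}^-)} \lesssim r^{-2-(n+2)/p}\|u\|_{L^p(Q_r^-)}$; Lemma \ref{psobolev}(ii) then produces a Hölder bound of $u$ with an $L^p$ average on the right, and a final Krylov-style iteration reduces it to $(\fint_{Q_r^-}|u|^{1/2})^2$.

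For the second inequality, the key observation is that $\bar a^{ij}$ is independent of $x$, so every tangential derivative $D_k u$ with $k \in \{2,\ldots,n\}$ solves $P_0(D_k u)=0$ in $Q_r^-$ and vanishes on $\Delta_r^-$ because $u$ does. Applying the first inequality to each such $D_k u$ handles the tangential components. For the normal derivative $D_1 u$, which does not vanish on the flat boundary, I would instead invoke Lemma \ref{psobolev}(iv) together with the $W^{1,2}_p$ bound above; this gives $C^{\alpha/2,\alpha}$ control of $Du$ in terms of $\|u\|_{L^p(Q_r^-)}$. The Poincar\'e inequality $\|u\|_{L^p(Q_r^-)} \lesssim r\|D_1 u\|_{L^p(Q_r^-)}$, valid because $u$ vanishes on $\Delta_r^-$, converts the right-hand side into $\|D_1 u\|_{L^p}$, and one further iteration reduces it to $(\fint_{Q_r^-}|D_1 u|^{1/2})^2$. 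The tangential $|D_k u|^{1/2}$ averages can be consolidated into $|D_1 u|^{1/2}$ in the same way, using an analogous Poincar\'e inequality in $x^1$ for $D_k u$ together with the $W^{1,2}_p$ bound.

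The main obstacle I anticipate is the $L^p \to L^{1/2}$ conversion on the right-hand side, and, in the second inequality, the consolidation of all components of $Du$ into $|D_1 u|^{1/2}$ alone. Since $L^{1/2}$ is not a norm, Poincar\'e-type inequalities cannot be invoked there directly, so this reduction has to be carried out through a concentric-cylinder iteration exploiting the intermediate H\"older continuity of $u$ and $Du$ proved in the earlier steps, rather than through a single interpolation step.
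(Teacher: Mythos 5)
Your overall strategy matches the paper's: a boundary $W^{1,2}_p$ estimate for $P_0$ (via cutoff and concentric-cylinder iteration), the Sobolev embedding of Lemma~\ref{psobolev}, the Poincar\'e inequality in $x^1$ (using that $u$ vanishes on $\Delta_r^-$) to produce $D_1u$ on the right, and an interpolation of the type $\|f\|_{L^2}\le\epsilon\|f\|_{L^\infty}+\epsilon^{-3}\|f\|_{L^{1/2}}$ together with iteration to lower the exponent to $1/2$. The paper's proof is a terser version of exactly this chain, obtaining at once
\[
\norm{Du}_{C^{\alpha/2,\alpha}(Q_{1/2}^-)}\lesssim\norm{u}_{W^{1,2}_p(Q_{1/2}^-)}\lesssim\norm{u}_{L^2(Q_{3/4}^-)}\lesssim\norm{D_1u}_{L^2(Q_{3/4}^-)},
\]
and then interpolating and iterating.

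However, for the second inequality your tangential/normal split is both superfluous and, in its final step, not quite right. You propose to first bound $[D_ku]_{C^{\alpha/2,\alpha}}$ for $k\ge2$ via the first inequality (giving $(\fint|D_ku|^{1/2})^2$ on the right) and then ``consolidate'' those averages into $(\fint|D_1u|^{1/2})^2$ using ``an analogous Poincar\'e inequality in $x^1$ for $D_ku$.'' That consolidation does not work as stated: a Poincar\'e inequality in $x^1$ applied to $D_ku$ (which vanishes on $\{x^1=0\}$) bounds $D_ku$ by $D_1D_ku$, not by $D_1u$, and trading the mixed second derivative for $D_1u$ at the $L^{1/2}$ level is not straightforward. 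Fortunately the split is unnecessary: the argument you give for the ``normal derivative'' --- Lemma~\ref{psobolev}(iv) plus the $W^{1,2}_p$ bound to get $\norm{Du}_{C^{\alpha/2,\alpha}}$ in terms of $\norm{u}_{L^p}$, followed by Poincar\'e to pass to $\norm{D_1u}_{L^p}$, followed by interpolation/iteration --- already controls the full gradient $Du$, not just $D_1u$, so it subsumes the tangential components and the consolidation step should simply be dropped. (As a side remark, the scaling exponent in your interior $W^{1,2}_p$ bound should be $r^{-2}$ rather than $r^{-2-(n+2)/p}$, though this does not affect the argument.)
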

\begin{proof}
We set $r=1$. The general case follows from the scaling.
First, for given $\alpha \in (0,1)$, choose $p \in (n+2,\infty)$ such that $\alpha=1-\frac{n+2}{p}$.
Then, by Sobolev embedding (Lemma~\ref{psobolev}) followed by the parabolic $L^p$ estimates and the Poincar\'e's inequality, we have
\[
\norm{Du}_{C^{\alpha/2,\alpha}(Q^{-}_{1/2})} \le \norm{u}_{W^{1,2}_p(Q^{-}_{1/2})} \lesssim \norm{u}_{L^2(Q^{-}_{3/4})} \lesssim \norm{D_1 u}_{L^2(Q^{-}_{3/4})}.
\]
Note that for $\epsilon >0$ we have
\[
\norm{D_1 u}_{L^2(Q^{-}_{3/4})} \le \epsilon \norm{Du}_{L^\infty(Q^{-}_{3/4})}+ \epsilon^{-3} \norm{D_1 u}_{L^{1/2}(Q^{-}_{3/4})}
\]
and an iteration gives the second inequality. The proof of the first inequality is similar.
\end{proof}

The rest of this subsection is devoted to the proof of Proposition~\ref{prop01}.
We shall derive an a priori estimate of the modulus of continuity of $Du$ by assuming that $u$ is in $C^{1/2,1}(\overbar{Q^-_3})$.
The general case follows from a standard approximation argument.

For $X \in Q^-_4$ and  $r>0$, define
\begin{equation}				\label{eq12.14fo}
\phi(X,r):=\inf_{\vec q \in \bR^n} \left( \fint_{C^-_r(X) \cap Q^{-}_4} \,\abs{Du - \vec q}^{\frac12} \right)^{2}
\end{equation}
and choose a vector $\vec q_{X,r}\in \bR^n$ satisfying
\begin{equation}				\label{eq0955w}
\phi(X,r) = \left( \fint_{C^-_r(X) \cap Q^{-}_4} \,\abs{Du - \vec q_{X,r}}^{\frac12} \right)^{2}.
\end{equation}
Also, for $\overline X \in \Delta_4^-(0)$ and $r>0$, we introduce an auxiliary quantity
\begin{equation}				\label{eq12.14fop}
\varphi(\overline X, r):=\inf_{q\in \bR}\, \left( \fint_{C^-_r(\overline X) \cap Q^{-}_4} \,\abs{Du - q \vec e_1}^{\frac12} \right)^{2}
\end{equation}
and fix a number $q_{\overline X, r} \in \bR$ satisfying
\begin{equation*}				
\varphi(\overline X, r) = \left( \fint_{C^-_r(\overline X) \cap Q^{-}_4} \,\abs{Du - q_{\overline X, r}\, \vec e_1}^{\frac12} \right)^{2}.
\end{equation*}
We present a series of lemmas (and their proofs) that will provide key estimates for the proof of Proposition~\ref{prop01}.

\begin{lemma}						\label{lem-01}
Let $\beta \in (0,1)$.
For any $\overline X_0 \in \Delta_3^-(0)$ and $0<\rho \le r \le \frac12$, we have
\begin{equation*}					
\varphi(\overline X_0, \rho)
\lesssim_{n,\lambda, \beta} \left(\frac{\rho}{r}\right)^{\beta} \varphi(\overline X_0, r)+ \norm{Du}_{L^\infty(Q^{-}_{2r}(\overline X_0))}\,\tilde\omega_{\mathbf A}^{\textsf x}(2\rho) +  \tilde \omega_{\vec g}^{\textsf x}(2\rho),
\end{equation*}
where $\tilde\omega_\bullet^{\textsf x}$ is the Dini function given by \eqref{eq14.27w}.
\end{lemma}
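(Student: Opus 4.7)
The argument parallels the interior Campanato scheme developed in Section~\ref{sec:int-d}, adapted to a flat lateral boundary with a mixed homogeneous Dirichlet/free condition. The crucial new feature is that, since $u=0$ on $\Delta_4^-(0)$, the tangential derivatives $D_ku$ for $k\ge 2$ vanish on the flat portion of the boundary, and this is precisely what makes the one-parameter comparison family $q\vec e_1$ in the definition \eqref{eq12.14fop} of $\varphi$ the natural object to track near boundary points.

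Fix $\overline X_0\in\Delta_3^-(0)$ and $r\in(0,\tfrac12]$. The plan is to freeze the coefficients by setting
\[
\bar a^{ij}(t)=\fint_{B_r(\overline x_0)\cap\{x^1>0\}}a^{ij}(t,\cdot),\qquad \bar g^i(t)=\fint_{B_r(\overline x_0)\cap\{x^1>0\}}g^i(t,\cdot),
\]
put $P_0=\partial_t-D_i(\bar a^{ij}D_j\,\cdot)$, and decompose $u=v+w$ on $Q_r^-(\overline X_0)$, where $w$ is the weak solution of
\[
P_0w=D_i\bigl((a^{ij}-\bar a^{ij})D_ju\bigr)+D_i(g^i-\bar g^i) \ \text{in}\ Q_r^-(\overline X_0),\quad w=0 \ \text{on}\ \partial_p^-Q_r^-(\overline X_0).
\]
Then $v=u-w$ satisfies $P_0v=0$ in $Q_r^-(\overline X_0)$ and vanishes on $\Delta_r^-(\overline X_0)$. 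The half-cylinder version of Lemma~\ref{lem02-weak11} noted in Remark~\ref{rmk4.15} (i), together with the $L^{1/2}$-truncation argument used to obtain \eqref{eq15.50ab}, gives
\[
\left(\fint_{Q_r^-(\overline X_0)}|Dw|^{1/2}\right)^2\lesssim \omega_{\mathbf A}^{\textsf x}(r)\,\|Du\|_{L^\infty(Q_r^-(\overline X_0))}+\omega_{\vec g}^{\textsf x}(r).
\]

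For $v$, I would exploit that for every $q\in\bR$ the function $v-qx^1$ still solves $P_0(v-qx^1)=0$ and vanishes on $\Delta_r^-(\overline X_0)$. Applying the second estimate in Lemma~\ref{lemma1252bm} (after rescaling) with an auxiliary exponent $\alpha\in(\beta,1)$ and taking $q=q_{\overline X_0,r}$ yields, by the triangle inequality,
\[
[Dv]_{C^{\alpha/2,\alpha}(Q_{r/2}^-(\overline X_0))}\lesssim r^{-\alpha}\left[\varphi(\overline X_0,r)+\left(\fint_{Q_r^-(\overline X_0)}|Dw|^{1/2}\right)^2\right].
\]
Since $v\equiv 0$ on $\Delta_r^-(\overline X_0)$, the tangential components $D_kv(\overline X_0)$ for $k\ge 2$ vanish, so $Dv(\overline X_0)=D_1v(\overline X_0)\vec e_1$, and the pointwise H\"older bound with $q^\ast:=D_1v(\overline X_0)$ gives $|Dv(Y)-q^\ast\vec e_1|\le[Dv]_{C^{\alpha/2,\alpha}}(\kappa r)^\alpha$ on $Q_{\kappa r}^-(\overline X_0)$ for any $\kappa\in(0,\tfrac12]$. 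Writing $u=v+w$ and combining with the $Dw$ bound produces the one-step contraction
\[
\varphi(\overline X_0,\kappa r)\lesssim \kappa^\alpha\varphi(\overline X_0,r)+C(\kappa)\bigl[\omega_{\mathbf A}^{\textsf x}(r)\|Du\|_{L^\infty(Q_r^-(\overline X_0))}+\omega_{\vec g}^{\textsf x}(r)\bigr].
\]

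Finally, I would take $\alpha>\beta$ and $\kappa$ small enough that $\kappa^\alpha$ is absorbed into $\kappa^\beta$, then iterate this contraction exactly as in the derivation of \eqref{eq22.25fr}--\eqref{eq0947fo}; the telescoping error terms collect into $\tilde\omega_{\mathbf A}^{\textsf x}$ and $\tilde\omega_{\vec g}^{\textsf x}$ through the doubling property \eqref{eq9.36} and the definition \eqref{eq14.27w}. The mild enlargements $Q_r^-(\overline X_0)\subset Q_{2r}^-(\overline X_0)$ and $\tilde\omega^{\textsf x}(\rho)\le C\tilde\omega^{\textsf x}(2\rho)$ then recover exactly the form displayed in the lemma. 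The principal new obstacle compared with Section~\ref{sec:int-d} is the boundary centering step: one must verify that, thanks to the flat homogeneous Dirichlet condition, the vector $D_1v(\overline X_0)\vec e_1$ is a legitimate comparison value, and that Lemma~\ref{lemma1252bm} indeed supplies a H\"older estimate for $Dv$ which is global up to the flat boundary. Once this centering is admissible, the Campanato iteration closes in essentially the same way as in the interior case.
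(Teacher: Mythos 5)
Your proposal follows the paper's proof nearly step for step: freeze coefficients in $x$, split $u=v+w$ with $w$ solving the frozen-coefficient divergence-form problem with zero parabolic data, use the boundary weak-$(1,1)$ estimate to control $Dw$ in $L^{1/2}$, apply Lemma~\ref{lemma1252bm} to $v-qx^1$ to get a one-step Campanato contraction for $\varphi$, and iterate.

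The one place you would need to be more careful is the domain of the $w$-problem. You pose it on $Q_r^-(\overline X_0)=(t_0-r^2,t_0]\times B_r^+(\bar x_0)$ and invoke Remark~\ref{rmk4.15}~(i), but that remark replaces $B_1$ by a \emph{smooth} set $\cD$ with $B^+_{2/3}\subset\cD\subset B^+_{3/4}$ precisely because the half-ball has a Lipschitz edge along the equator, on which the duality proof of Lemma~\ref{lem02-weak11} (which needs full $W^{1,2}_p$ regularity for the adjoint problem up to the boundary) does not apply directly. The paper therefore poses the $w$-problem on the smooth cylinder $(t_0-4r^2,t_0)\times\cD_{2r}(\bar x_0)$, which is sandwiched between $Q_r^-(\overline X_0)$ and $Q_{2r}^-(\overline X_0)$; the inclusion $Q_{2r}^-(\overline X_0)$ on the right is also what produces the $\tilde\omega^{\textsf x}_\bullet(2\rho)$ and the $\norm{Du}_{L^\infty(Q^-_{2r})}$ in the lemma's statement (modulo the doubling freedom, as you note). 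Your centering at the pointwise value $q^\ast=D_1v(\overline X_0)$ rather than the average $D_1v_{\overline X_0,\kappa r}$ used in the paper is equally valid — both exploit the same cancellation $D_kv(\overline X_0)=0$ for $k\ge2$ — so the only correction needed is the enlargement to a smooth domain before applying the weak-$(1,1)$ bound.
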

\begin{proof}
Let us write $\overline X_0=(t_0, \bar x_0)$,  $B^+_r(\bar x_0)=B_r(\bar x_0) \cap \set{x^1>0}$ and
\begin{equation*}
\bar a^{ij}=\bar a^{ij}(t)=\fint_{B^+_{2r}(\bar x_0)} a^{ij}(t,x)\,dx, \quad \bar g^i=\bar g^i(t) = \fint_{B^+_{2r}(\bar x_0)} g(t,x)\,dx.
\end{equation*}
Also, as in \cite{DEK18}, we fix a smooth set $\cD \subset \bR^n$ satisfying $B^+_{2/3}(0) \subset \cD \subset B^+_{3/4}(0)$ and denote
\begin{equation}				\label{eq2153ap28}
\cD_{r}(\bar x_0)=r\cD+\bar x_0.
\end{equation}
We decompose $u=v+w$, where $w \in \cH^1_2$ is the weak solution of the problem
\begin{equation*}
\left\{
\begin{aligned}
\partial_t w-D_i (\bar a^{ij} D_j w) &= D_i \left((a^{ij}- \bar a^{ij}) D_j u +(g^i-\bar g^i)\right) \;\text{ on }\; (t_0-4r^2, t_0)\times \cD_{2r}(\bar x_0),\\
w&=0\;\text{ on }\;\partial_p^- \left( (t_0-4r^2, t_0)\times \cD_{2r}(\bar x_0) \right).
\end{aligned}
\right.
\end{equation*}
We apply a modified and scaled version of Lemma~\ref{lem02-weak11} to $w$ to find that for any $\alpha>0$, we have (see Remark~\ref{rmk4.15})
\[
\abs{\set{X\in Q^{-}_r(\overline X_0): \abs{Dw(X)} > \alpha}}
\lesssim \frac{1}{\alpha}\left(\norm{Du}_{L^\infty(Q^{-}_{2r}(\overline X_0))} \int_{Q^{-}_{2r}(\overline X_0)} \abs{\mathbf A-\overline{\mathbf A}} +  \int_{Q^{-}_{2r}(\overline X_0)}  \abs{\vec g -\overline{\vec g}} \,\right),
\]
where we also used that $Q^{-}_r(\overline X_0) \subset (t_0-4r^2, t_0)\times \cD_{2r}(\bar x_0) \subset Q^{-}_{2r}(\overline X_0)$.
Then, we have
\begin{equation}				\label{eq15.50ad}
\left(\fint_{Q^{-}_r(\overline X_0)} \abs{Dw}^{\frac12} \right)^{2} \lesssim \omega_{\mathbf A}^{\textsf x}(2r) \,\norm{Du}_{L^\infty(Q^{-}_{2r}(\overline X_0))} +  \omega_{\vec g}^{\textsf x}(2r).
\end{equation}
On the other hand, $v=u-w$ satisfies
\begin{equation}				\label{eq08.48th}
v_t-D_i(\bar a^{ij} D_j v) =0 \;\mbox{ in }\;  Q^{-}_r(\overline X_0);\quad v=0 \;\mbox{ on }\;  \Delta_r^-(\overline X_0).
\end{equation}
Let $\mu=\frac12 (\beta+1)$ so that we have $0<\beta<\mu<1$.
Then, by Lemma~\ref{lemma1252bm}, we obtain
\[
[D v]_{C^{\mu/2,\mu}(Q^{-}_{r/2}(\overline X_0))}\lesssim r^{-\mu} \left(\fint_{Q^{-}_r(\overline X_0)} \abs{D_{1}v}^{\frac12}\,\right)^{2}.
\]
Since $v-q x_1$, for any $q\in \bR$, also satisfies \eqref{eq08.48th}, we have
\begin{equation}			\label{bdry-reg}
[D v]_{C^{\mu/2,\mu}(Q^{-}_{r/2}(\overline X_0))} \lesssim  r^{-\mu} \left(\fint_{Q^{-}_r(\overline X_0)} \abs{D_1v -q}^{\frac12}\,\right)^{2}, \quad \forall q \in \bR.
\end{equation}
Let $0< \kappa < \frac12$ be a number to be fixed later.
Note that we have
\[
\left(\fint_{Q^{-}_{\kappa r}(\overline X_0)} \Abs{D_1v -D_1 v_{\overline X_0, \kappa r}}^{\frac12} \right)^{2} \le (2\kappa r)^\mu [D v]_{C^{\mu/2,\mu}(Q^{-}_{r/2}(\overline X_0))},
\]
while, for $j=2, \ldots, d$, we have
\[
\left(\fint_{Q^{-}_{\kappa r}(\overline X_0)} \abs{D_j v}^{\frac12} \right)^{2} = \left(\fint_{Q^{-}_{\kappa r}(\overline X_0)} \abs{D_j v - D_jv(\overline X_0)}^{\frac12} \right)^{2} \le (2\kappa r)^\mu [D v]_{C^{\mu/2,\mu}(Q^{.}_{r/2}(\overline X_0))}.
\]
Hence, by \eqref{bdry-reg} we obtain
\begin{equation}				\label{eq15.50b}
\left(\fint_{Q^{-}_{\kappa r}(\overline X_0)} \Abs{Dv - D_1 v_{\overline X_0, \kappa r}\,\vec e_1}^{\frac12} \right)^{2}
\le N_0 \kappa^\mu \left(\fint_{Q^{-}_r(\overline X_0)} \abs{D_1v -q}^{\frac12} \, \right)^{2},\quad \forall q \in \bR,
\end{equation}
where $N_0$ is an absolute constant determined only by $n$, $\lambda$ and $\beta$.
By using the decomposition $u=v+w$, we obtain from \eqref{eq15.50b} that
\begin{align*}
&\left(\fint_{Q^{-}_{\kappa r}(\overline X_0)} \abs{Du - D_1 v_{\overline X_0, \kappa r}\vec e_1}^{\frac12} \right)^{2} \\
&\qquad\qquad \le 2 \left(\fint_{Q^{-}_{\kappa r}(\overline X_0)} \abs{Dv - D_1 v_{\overline X_0, \kappa r} \vec e_1}^{\frac12} \right)^{2}+ 2\left(\fint_{Q^{-}_{\kappa r}(\overline X_0)} \abs{Dw}^{\frac12} \right)^{2}\\
&\qquad \qquad \le  4N_0 \kappa^\mu \left(\fint_{Q^{-}_r(\overline X_0)} \abs{D_1u - q }^{\frac12} \right)^{2} +(4N_0\kappa^\mu + 2\kappa^{-2n-4}) \left(\fint_{Q^{-}_r(\overline X_0)} \abs{Dw}^{\frac12} \right)^{2}.
\end{align*}
Since $q\in \bR$ is arbitrary, by using \eqref{eq15.50ad}, we thus obtain
\[
\varphi(\overline X_0,\kappa r)
 \le 4 N_0 \kappa^\mu \, \varphi(\overline X_0, r) + C(4N_0\kappa^\mu + 2\kappa^{-2n-4}) \left( \omega_{\mathbf A}^{\textsf x}(2r) \,\norm{Du}_{L^\infty(Q^{-}_{2r}(\overline X_0))} + \omega_{\vec g}^{\textsf x}(2r) \right).
\]
Let $\kappa \in (0,\frac12)$ be sufficiently small so that $4N_0 \le \kappa^{\beta-\mu}$; we may take this $\kappa$ and the one in \eqref{eq22.25fr} to be the same.
Then, we obtain
\[
\varphi(\overline X_0,\kappa r) \le \kappa^{\beta} \varphi(\overline X_0,r)+ C \left( \omega_{\mathbf A}^{\textsf x}(2r) \,\norm{Du}_{L^\infty(Q^{-}_{2r}(\overline X_0))} +  \omega_{\vec g}^{\textsf x}(2r) \right).
\]
Note that $\kappa^\beta <1$.
By iterating, for $j=1,2,\ldots$, we get
\begin{multline*}
\varphi(\overline X_0,\kappa^j r) \le \kappa^{j \beta} \varphi(\overline X_0,r)\\
+C \left(\norm{Du}_{L^\infty(Q^{-}_{2r}(\overline X_0))} \sum_{i=1}^{j} \kappa^{(i-1)\beta} \omega_{\mathbf A}^{\textsf x}(2\kappa^{j-i} r) + \sum_{i=1}^{j} \kappa^{(i-1)\beta} \omega_{\vec g}^{\textsf x}(2\kappa^{j-i} r) \right).
\end{multline*}
Therefore, we have
\begin{equation}				\label{eq22.25f}
\varphi(\overline X_0,\kappa^j r) \le \kappa^{j \beta} \varphi(\overline X_0,r) +C \norm{Du}_{L^\infty(Q^{-}_{2r}(\overline X_0))}\,\tilde \omega_{\mathbf A}^{\textsf x}(2\kappa^{j} r) + C \tilde \omega_{\vec g}^{\textsf x}(2\kappa^{j} r),
\end{equation}
where the Dini function $\tilde \omega_{\bullet}^{\textsf x}$ is given by \eqref{eq14.27w}.

Note that \eqref{eq22.25f} is a boundary version of the estimate \eqref{eq22.25fr}  in the previous section and thus, the lemma follows from the exactly same arguments as used in deriving \eqref{eq0947fo} from \eqref{eq22.25fr}.
\end{proof}

\begin{lemma}					\label{lem-02}
Let $\beta \in (0,1)$.
For any $X_0 \in Q^-_3$ and $0<\rho\le r \le \frac14$, we have
\begin{equation}					\label{eq10.39tu}
\phi(X_0, \rho) \lesssim_{n,\lambda, \beta}   \rho^\beta r^{-\beta-n-2} \norm{Du}_{L^1(C^-_{3r}(X_0)\cap Q^-_4)} +\norm{Du}_{L^\infty(C^-_{5r}(X_0)\cap Q^-_4)}\, \hat\omega_{\mathbf A}^{\textsf x}(\rho) +  \hat\omega_{\vec g}^{\textsf x}(\rho),
\end{equation}
where $\hat\omega_\bullet^{\textsf x}(t)$ is the Dini function defined by
\begin{equation}					\label{eq1538th}
\hat \omega_{\bullet}^{\textsf x}(t):= \sup_{s \in [t,1]} \,\left(\frac{t}{s}\right)^\beta\, \tilde\omega_{\bullet}^{\textsf x}(s) \quad (0<t\le 1).
\end{equation}
\end{lemma}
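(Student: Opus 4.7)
The plan is to combine the interior iteration \eqref{eq0947fo} already proved at points of $Q_4^-$ with the boundary iteration provided by Lemma~\ref{lem-01} at points of $\Delta_4^-(0)$, bridging the two at the natural scale $d := (x_0)^1$, i.e.\ the distance from $X_0=(t_0,x_0)$ to $\Delta_4^-(0)$. Write $\overline{X}_0 := (t_0,0,x_0')$ for the orthogonal projection of $X_0$ onto $\Delta_4^-(0)$. The workhorse observation is that, whenever $C_\rho^-(X_0)\subset C_{3\rho}^-(\overline{X}_0)$, the fact that $\vec q = q\vec e_1$ is an admissible competitor in \eqref{eq12.14fo} and the volumes are comparable gives
\[
\phi(X_0,\rho)\lesssim \varphi(\overline{X}_0,3\rho),
\]
and conversely $\varphi(\overline{X}_0,s) \lesssim s^{-n-2}\|Du\|_{L^1(C_s^-(\overline{X}_0)\cap Q_4^-)}$ for any $s$.

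The proof splits into three regimes according to the relative sizes of $\rho$, $d$, and $r$. In the \emph{interior} regime $d\ge 2r$, the cylinder $C_r^-(X_0)$ sits entirely in $Q_4^-$ and \eqref{eq0947fo} applied at $X_0$ between scales $\rho$ and $r$, together with the trivial estimate $\phi(X_0,r)\lesssim r^{-n-2}\|Du\|_{L^1(C_r^-(X_0))}$ from \eqref{eq12.14fa} and the bound $\tilde\omega\le\hat\omega$, closes the case. In the \emph{boundary} regime $d<2\rho$, one has $C_\rho^-(X_0)\subset C_{3\rho}^-(\overline{X}_0)$, so the bridging observation gives $\phi(X_0,\rho)\lesssim\varphi(\overline{X}_0,3\rho)$, and then Lemma~\ref{lem-01} at $\overline{X}_0$ (applied between scales $3\rho$ and a suitable $r'\sim r$, with $3r'\le 1/2$ guaranteed by $r\le 1/4$), combined with $\varphi(\overline{X}_0,r')\lesssim (r')^{-n-2}\|Du\|_{L^1(C_{r'}^-(\overline{X}_0)\cap Q_4^-)}$ and the inclusion $C_{r'}^-(\overline{X}_0)\cap Q_4^-\subset C_{3r}^-(X_0)\cap Q_4^-$ (from $d<2\rho\le 2r$), delivers the bound. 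In the \emph{mixed} regime $2\rho\le d<2r$ one concatenates the two: \eqref{eq0947fo} between scales $\rho$ and $d/2$ gives
\[
\phi(X_0,\rho)\lesssim (\rho/d)^\beta \phi(X_0,d/2) + \|Du\|_{L^\infty(C_{d/2}^-(X_0))}\,\tilde\omega_{\mathbf A}^{\textsf x}(\rho) + \tilde\omega_{\vec g}^{\textsf x}(\rho),
\]
and $\phi(X_0,d/2)$ is then handled exactly as in the boundary regime at the scale $d/2$ in place of $\rho$, ending with Lemma~\ref{lem-01} at $\overline{X}_0$ between scales $3d/2$ and $r'\sim r$.

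The role of $\hat\omega^{\textsf x}_\bullet$ introduced in \eqref{eq1538th} is precisely to absorb the mixed-regime error terms, which take the form $(\rho/d)^\beta\,\tilde\omega^{\textsf x}_\bullet(cd)$ with $\rho\le d\le 2r\le 1/2$; these are dominated by $\hat\omega^{\textsf x}_\bullet(\rho)$ by definition, while the direct interior contributions $\tilde\omega^{\textsf x}_\bullet(\rho)$ are trivially majorized by $\hat\omega^{\textsf x}_\bullet(\rho)$. The main obstacle I anticipate is not conceptual but organisational: keeping track of the various radius enlargements (the $3r$ and $5r$ in the statement) as one passes between cylinders centered at $X_0$ and cylinders centered at $\overline{X}_0$, and checking that at every application of Lemma~\ref{lem-01} the outer scale stays within the admissible range, which reduces to the assumption $r\le 1/4$.
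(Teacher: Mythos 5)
Your proposal matches the paper's proof: both split into the same three regimes (interior, boundary, and mixed, according to the relative sizes of $\rho$, $r$, and the distance $x_0^1$ to the flat boundary), both use the bridging observation $\phi(X_0,\cdot)\lesssim\varphi(\overline X_0,\cdot)$ at comparable scales because $q\vec e_1$ is an admissible competitor, and both absorb the $(\rho/d)^\beta\,\tilde\omega(cd)$ error from the mixed regime into $\hat\omega(\rho)$ via its defining supremum. The only cosmetic difference is the placement of the cutoffs ($d<2\rho$ vs.\ $d\le\rho$, etc.) and the enlargement constants, which do not affect the argument.
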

\begin{proof}
In this proof we shall denote
\[
X_0= (t_0, x_0)= (t_0, x^1_0, x^2_0, \ldots, x^{n}_0)\quad\text{and}\quad \overline X_0= (t_0, \bar x_0)= (t_0, 0, x^2_0, \ldots, x^{n}_0).
\]
First, we note that $Q^{-}_{\nu r}(\overline X_0) \subset Q^-_4$ for $\nu \le 4$ and
\begin{equation}				\label{eq12.14fb}
\varphi(\overline X_0, \nu r) \le \left(\fint_{Q_{\nu r}^{-}(\overline X_0)} \abs{Du}^{\frac12}\right)^{2} \lesssim_{n, \nu} r^{-n-2} \norm{Du}_{L^1(Q_{\nu r}^{-}(\overline X_0))}.
\end{equation}
There are three possibilities.
\begin{enumerate}[i.]
\item
$\rho \le r \le x^1_0$\,:\,
We utilize an interior estimate developed in Section~\ref{sec:int-d} as follows.
Since $C^-_r(X_0) \subset Q^-_4$, we observe that $\phi(X_0,\rho)$ is identical to that introduced in Section~\ref{sec:int-d}.
We recall that it satisfies \eqref{eq0947fo}, and thus by \eqref{eq12.14fa} that
\begin{equation}
							\label{eq1447fr}
\phi(X_0, \rho)  \lesssim \left(\frac{\rho}r\right)^{\beta}\, r^{-n-2}\norm{Du}_{L^1(C^-_r(X_0))} + \norm{Du}_{L^\infty(C^-_r(X_0))}\,\tilde \omega_{\mathbf A}^{\textsf x}(\rho) +  \tilde \omega_{\vec g}^{\textsf x}(\rho).
\end{equation}

\item
$x^1_0 \le \rho \le r$\,:\,
Since $C^-_{\rho}(X_0) \cap Q^{-}_4 \subset Q^{-}_{2\rho}(\overline X_0) \subset Q^-_4$, we have
\begin{align*}
\phi(X_0, \rho) &=\left( \fint_{C^-_\rho(X_0) \cap Q^{-}_4} \,\abs{Du - \vec q_{X_0, \rho}}^{\frac12} \right)^{2}  \le \left( \fint_{C^-_\rho(X_0) \cap Q^{-}_4} \,\abs{Du -  q_{\overline X_0, 2\rho}\, \vec e_1}^{\frac12} \right)^{2} \\
&\le  4^{n+2} \left( \fint_{Q^{-}_{2\rho}(\overline X_0)} \,\abs{Du - q_{\overline X_0, 2\rho}\,\vec e_1}^{\frac12} \right)^{2} = 4^{n+2} \varphi(\overline X_0, 2\rho).
\end{align*}
Therefore, by Lemma~\ref{lem-01} and \eqref{eq12.14fb}, we have
\begin{align}
							\nonumber
\phi(X_0, \rho) &\lesssim  \left(\frac{2\rho}{2r}\right)^\beta\varphi(\overline X_0, 2r) + \norm{Du}_{L^\infty(Q^{-}_{4r}(\overline X_0))}\, \tilde \omega_{\mathbf A}^{\textsf x}(4\rho) +  \tilde \omega_{\vec g}^{\textsf x}(4\rho)\\
							\label{eq0919tu}
&\lesssim \rho^\beta r^{-\beta-n-2} \norm{Du}_{L^1(Q^{-}_{2r}(\overline X_0))}+\norm{Du}_{L^\infty(Q^{-}_{4r}(\overline X_0))}\, \hat \omega_{\mathbf A}^{\textsf x}(\rho) +\hat \omega_{\vec g}^{\textsf x}(\rho),
\end{align}
where we used the fact
\[
\tilde\omega_{\bullet}^{\textsf x}(4\rho) \lesssim \hat \omega_{\bullet}^{\textsf x}(\rho)
\]
in the last step.
\item
$\rho \le x^1_0 \le r$\,:\,
Take $R=x^1_0$.
Since $C^-_R(X_0) \subset Q^-_4$ and $C^-_R(X_0) \subset Q^{-}_{2R}(\overline X_0)$, we have
\begin{align*}
\phi(X_0, R) &= \left( \fint_{C^-_{R}(X_0)} \,\abs{Du - \vec q_{X_0, R}}^{\frac12} \right)^{2} \le \left( \fint_{C^-_{R}(X_0)} \,\abs{Du -  q_{\overline X_0, 2R}\, \vec e_1}^{\frac12} \right)^{2} \\
&\le 4^{n+1} \left( \fint_{Q^{-}_{2R}(\overline X_0)} \,\abs{Du -  q_{\overline X_0, 2R}\, \vec e_1}^{\frac12} \right)^{2} = 4^{n+1} \varphi(\overline X_0, 2R).
\end{align*}
Therefore, by \eqref{eq0946fo} and Lemma~\ref{lem-01}, we get
\begin{align*}
\phi(X_0, \rho) &\lesssim \left(\frac{\rho}{R}\right)^{\beta} \varphi(\overline X_0, 2R) + \norm{Du}_{L^\infty(C^-_R(X_0))}\,\tilde \omega_{\mathbf A}^{\textsf x}(\rho) +  \tilde \omega_{\vec g}^{\textsf x}(\rho) \\
&\lesssim  \left(\frac{\rho}{R}\right)^{\beta}  \left\{ \left(\frac{2R}{2r}\right)^{\beta}\varphi(\overline X_0, 2r)+  \norm{Du}_{L^\infty(Q^{-}_{4r}(\overline X_0))}\,\tilde\omega_{\mathbf A}^{\textsf x}(4R) +\tilde\omega_{\vec g}^{\textsf x}(4R)  \right\} \\
&\qquad \qquad +\norm{Du}_{L^\infty(Q^{-}_r(\overline X_0))}\,\tilde \omega_{\mathbf A}^{\textsf x}(\rho) +  \tilde \omega_{\vec g}^{\textsf x}(\rho).
\end{align*}
Notice that from \eqref{eq1538th}, we find
\[
\left(\frac{\rho}{R}\right)^{\beta} \tilde\omega_{\bullet}^{\textsf x}(4R) \lesssim  \hat \omega_{\bullet}^{\textsf x}(\rho),\quad \tilde\omega_{\bullet}^{\textsf x}(\rho) \le \hat \omega_{\bullet}^{\textsf x}(\rho).
\]
Therefore, we have
\begin{equation}
						\label{eq14.24m}
\phi(X_0, \rho) \lesssim  \rho^\beta r^{-\beta-n-2}\norm{Du}_{L^1(Q^-_{2r}(\overline X_0))}+\norm{Du}_{L^\infty(Q^{-}_{4r}(\overline X_0))}\, \hat \omega_{\mathbf A}^{\textsf x}(\rho) + \hat \omega_g^{\textsf x}(\rho).
\end{equation}
\end{enumerate}
We have thus covered all three possible cases and obtained bounds for $\phi(X_0, \rho)$, namely, \eqref{eq1447fr}, \eqref{eq0919tu}, and \eqref{eq14.24m}.
Notice that $\abs{X_0-\overline X_0}=x^1_0 \le r$ in cases ii) and iii).
Therefore, we have
\[
Q^{-}_{\nu r}(\overline X_0) \subset C^-_{(\nu+1)r}(X_0) \cap Q_4^-
\]
and \eqref{eq10.39tu} follows immediately.
We note that $\hat \omega_{\bullet}^{\textsf x}$ is a Dini function; see \cite{DEK18}.
\end{proof}

\begin{lemma}						\label{lem-03}
We have
\begin{equation}					\label{eq10.23mb}
\norm{Du}_{L^\infty(Q^-_2)} \le C \norm{Du}_{L^1(Q^-_4)} + C\int_0^{1} \frac{\hat \omega_{\vec g}^{\textsf x}(t)}t \,dt,
\end{equation}
where $C>0$ is a constant depending only on $n$, $\lambda$ and $\omega_{\mathbf A}^{\textsf x}$.
\end{lemma}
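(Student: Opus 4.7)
The plan is to reproduce at the boundary the scheme of Section~\ref{sec:int-d} that gave the interior bound \eqref{eq10.23m}, substituting Lemma~\ref{lem-02} for the interior decay \eqref{eq0947fo}. As in the interior argument, I would first derive an a priori pointwise bound for $|Du(X_0)|$ at an arbitrary $X_0 \in Q^{-}_3$ and then take suprema over a nested sequence of subdomains, iterating to absorb the $L^\infty$ terms. The general $\cH^1_2$ case will then follow by approximating the coefficients and data once the a priori bound is known; the working hypothesis $u\in C^{1/2,1}(\overbar{Q^{-}_3})$ is what guarantees finiteness of the telescoping sum in the absorption step.

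The first step converts the excess decay of Lemma~\ref{lem-02} into a pointwise control of $Du(X_0)$. Exactly as in the derivation leading to \eqref{eq13.13}, averaging the triangle inequality
\[
|\vec q_{X_0,r}-\vec q_{X_0,r/2}|^{1/2} \le |Du(Y)-\vec q_{X_0,r}|^{1/2}+|Du(Y)-\vec q_{X_0,r/2}|^{1/2}
\]
over $Y\in C^{-}_{r/2}(X_0)\cap Q^{-}_4$, iterating dyadically, and using the continuity assumption to identify $\lim_k \vec q_{X_0,2^{-k}r}=Du(X_0)$, I get
\[
|Du(X_0)-\vec q_{X_0,r}|\lesssim \sum_{j=0}^\infty \phi(X_0,2^{-j}r).
\]
Inserting the bound of Lemma~\ref{lem-02} into each summand and using the standard comparison $\sum_j \hat\omega_{\bullet}^{\textsf x}(2^{-j}r)\lesssim \int_0^r \hat\omega_{\bullet}^{\textsf x}(t)/t\,dt$ together with the trivial $|\vec q_{X_0,r}|\lesssim r^{-n-2}\|Du\|_{L^1(Q^{-}_4)}$ yields, for every $X_0\in Q^{-}_3$ and $0<r\le \tfrac14$,
\[
|Du(X_0)|\lesssim r^{-n-2}\|Du\|_{L^1(Q^{-}_4)} + \|Du\|_{L^\infty(C^{-}_{5r}(X_0)\cap Q^{-}_4)}\int_0^r \frac{\hat\omega_{\mathbf A}^{\textsf x}(t)}{t}\,dt+\int_0^r \frac{\hat\omega_{\vec g}^{\textsf x}(t)}{t}\,dt.
\]

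The second step is the absorption argument, mirroring the interior proof word for word. Set $r_k=3-2^{1-k}$, so that $r_1=1$, $r_{k+1}-r_k=2^{-k}$, and $r_k\nearrow 3$; for $Y_0\in Q^{-}_{r_k}$ and $r\le 2^{-k}/5$, the expanded set $C^{-}_{5r}(Y_0)\cap Q^{-}_4$ is contained in $Q^{-}_{r_{k+1}}$. I would then choose $r_0\in(0,\tfrac14]$ small enough that $C\int_0^{r_0} \hat\omega_{\mathbf A}^{\textsf x}(t)/t\,dt\le 3^{-n-2}$, which is possible because $\hat\omega_{\mathbf A}^{\textsf x}$ is a Dini function, and pick $k_0$ with $2^{-k_0}/5\le r_0$. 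Taking supremum over $Y_0\in Q^{-}_{r_k}$ in the pointwise bound with $r=2^{-k}/5$ yields, for $k\ge k_0$,
\[
\|Du\|_{L^\infty(Q^{-}_{r_k})}\le 3^{-n-2}\|Du\|_{L^\infty(Q^{-}_{r_{k+1}})}+C\,2^{k(n+2)}\|Du\|_{L^1(Q^{-}_4)}+C\int_0^1 \frac{\hat\omega_{\vec g}^{\textsf x}(t)}{t}\,dt.
\]
Multiplying by $3^{-k(n+2)}$ and summing over $k\ge k_0$, the a priori finiteness of $\sum_{k\ge k_0}3^{-k(n+2)}\|Du\|_{L^\infty(Q^{-}_{r_k})}$, which is ensured by $u\in C^{1/2,1}(\overbar{Q^{-}_3})$, lets the telescoping $L^\infty$ terms cancel and delivers \eqref{eq10.23mb}.

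The main obstacle I anticipate is the bookkeeping in the first step: Lemma~\ref{lem-02} produces an expansion to $C^{-}_{5r}(X_0)\cap Q^{-}_4$ rather than to $C^{-}_{2r}$ as in the interior case, and it involves the Dini function $\hat\omega_{\bullet}^{\textsf x}$ instead of $\tilde\omega_{\bullet}^{\textsf x}$. Both modifications preserve the overall architecture of the argument, but the constants in the covering and in the smallness condition defining $r_0$ have to be adjusted accordingly; once this is done, the remainder is a mechanical transcription of the interior derivation of \eqref{eq10.23m}.
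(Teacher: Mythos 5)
Your proof is correct and follows essentially the same approach as the paper: convert the decay estimate of Lemma~\ref{lem-02} into a pointwise bound for $Du(X_0)$ via a dyadic telescoping of the $\vec q_{X_0,2^{-k}r}$, then run the iterative absorption over the nested cylinders $Q^-_{r_k}$, with the a priori $C^{1/2,1}$ regularity guaranteeing convergence of the telescoped sum. (Minor slip: with $r_k = 3-2^{1-k}$ one has $r_1 = 2$, not $1$, which is in fact what makes the target domain $Q^-_2$ fall inside $Q^-_{r_1}$.)
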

\begin{proof}
The proof is essentially the same as that of \eqref{eq10.23m}.
For $X \in Q^-_3$ and $0<r \le \frac14$, let $\set{\vec q_{X,2^{-k}r}}_{k=0}^\infty$ be a sequence of vectors in $\bR^n$ as given in \eqref{eq0955w}.
Since we have
\[
\abs{\vec q_{X,r} - \vec q_{X, \frac12 r}}^{\frac12} \le
\abs{Du(Y)-\vec q_{X,r}}^{\frac12} + \abs{Du(Y) - \vec q_{X, \frac12 r}}^{\frac12},
\]
by taking average over $Y \in C^-_{r/2}(X) \cap Q^{-}_4$ and then taking squares, we obtain
\[
\abs{\vec q_{X,r} -\vec q_{X,\frac12 r}} \le 2 \phi(X,r) + 2 \phi(X,\tfrac12 r).
\]
Then, by iterating, we get
\begin{equation}				\label{eq1408th}
\abs{\vec q_{X,2^{-k} r} - \vec q_{X, r}} \le 4 \sum_{j=0}^k \phi(X, 2^{-j} r).
\end{equation}
Therefore, by taking $k\to \infty$ in \eqref{eq1408th}, using \eqref{eq10.39tu}  and \cite[Lemma~2.7]{DK17}, we get
\[
\abs{Du(X)-\vec q_{X,r}} \lesssim r^{-n-2}\norm{Du}_{L^1(C^-_{3r}(X)\cap Q^-_4)}+
\norm{Du}_{L^\infty(C^-_{5r}(X)\cap Q^-_4)} \int_0^r \frac{\hat \omega_{\mathbf A}^{\textsf x}(t)}t \,dt+\int_0^r \frac{\hat\omega_{\vec g}^{\textsf x}(t)}t \,dt.
\]
By averaging the obvious inequality
\[
\abs{\vec q_{X, r}}^{\frac12} \le \abs{Du(Y) -\vec q_{X, r}}^{\frac12} + \abs{Du(Y)}^{\frac12}
\]
over $Y \in C^-_r(X)\cap Q^-_4$ and taking square, we get
\[
\abs{\vec q_{X, r}} \le  2 \phi(X, r) + 2 \left(\fint_{C^-_r(X)\cap Q^-_4} \abs{Du}^{\frac12}\right)^{2}.
\]
Combining these together and using
\[
\phi(X,r)  \le \left(\fint_{C^-_r(X)\cap Q^-_4} \abs{Du}^{\frac12}\right)^{2} \lesssim r^{-n-2} \norm{Du}_{L^1(C^-_r(X)\cap Q^-_4)},
\]
we obtain
\[
\abs{Du(X)}  \lesssim r^{-n-2} \norm{Du}_{L^1(C^-_{3r}(X)\cap Q^-_4)} + \norm{Du}_{L^\infty(C^-_{5r}(X)\cap Q^-_4)} \int_0^r \frac{\hat\omega_{\mathbf A}^{\textsf x}(t)}t \,dt+\int_0^r \frac{\hat\omega_{\vec g}^{\textsf x}(t)}t \,dt.
\]
Now, taking supremum for $X\in C^-_r(X_0)\cap Q^-_4$, where $X_0 \in Q^-_3$ and $r\le \frac14$, we have
\begin{multline*}
\norm{Du}_{L^\infty(C^-_r(X_0)\cap Q^-_4)} \\
\le C r^{-n-2} \norm{Du}_{L^1(C^-_{4r}(X_0)\cap Q^-_4)}
+C \norm{Du}_{L^\infty(C^-_{6r}(X_0)\cap Q^-_4)} \int_0^r \frac{\hat\omega_{\mathbf A}^{\textsf x}(t)}t \,dt + C \int_0^r \frac{\hat\omega_{\vec g}^{\textsf x}(t)}t \,dt .
\end{multline*}
We fix $r_0 <\frac14$ such that for any $0<r\le r_0$,
\[
C \int_0^{r} \frac{\hat\omega_{\mathbf A}^{\textsf x}(t)}t \,dt \le \frac1{3^{n+2}}.
\]
Then, we have for any $X_0 \in Q^-_3$ and $0<r\le r_0$ that
\begin{multline*}
\norm{Du}_{L^\infty(C^-_r(X_0)\cap Q^-_4)}\\
\le 3^{-n-2}\norm{Du}_{L^\infty(C^-_{6r}(X_0) \cap Q^-_4)}
+C r^{-n-2} \norm{Du}_{L^1(C^-_{4r}(X_0) \cap Q^-_4)} + C \int_0^r \frac{\hat\omega_{\vec g}^{\textsf x}(t)}t \,dt.
\end{multline*}
For $k=1,2,\ldots$, denote $r_k=3-2^{1-k}$.
Note that $r_{k+1}-r_k=2^{-k}$ for $k\ge 1$ and $r_1=2$.
For $X_0\in Q^-_{r_k}$ and $r\le 2^{-k-3}$, we have $C^-_{6r}(X_0) \cap Q^-_4 \subset Q^{-}_{r_{k+1}}$.
We take $k_0$ sufficiently large such that $2^{-k_0-3}\le r_0$.
It then follows that for any $k\ge k_0$,
\[
\norm{Du}_{L^\infty(Q^-_{r_k})} \le C 2^{k(n+2)} \norm{Du}_{L^1(Q^-_4)} + C \int_0^{1} \frac{\hat\omega_{\vec g}^{\textsf x}(t)}t \,dt+3^{-n-2} \norm{Du}_{L^\infty(Q^-_{r_{k+1}})}.
\]
By multiplying the above by $3^{-k(n+2)}$ and then summing over $k \ge k_0$, we reach
\[
\sum_{k=k_0}^\infty 3^{-k(n+2)}\norm{Du}_{L^\infty(Q^-_{r_k})}
 \le C \norm{Du}_{L^1(Q^-_4)} + C \int_0^{1} \frac{\hat\omega_{\vec g}^{\textsf x}(t)}t \,dt
+\sum_{k=k_0}^\infty 3^{-(k+1)(n+2)} \norm{Du}_{L^\infty(Q^-_{r_{k+1}})}.
\]
Since we assume that $u\in C^{1/2,1}(\overbar{Q^-_3})$, the summations on both sides are convergent and we obtain \eqref{eq10.23mb}.
\end{proof}

\begin{lemma}\label{lem-04} Let $\beta \in (0,1)$.
For any $X_0 \in Q^-_3$ and $0< r \le \frac15$, we have
\begin{multline*}				
\abs{Du(X_0)-\vec q_{X_0,r}} \lesssim_{n,\lambda, \beta} r^{\beta}\,\norm{Du}_{L^1(C^-_{3/5}(X_0) \cap Q^-_4)}\\
+ \norm{Du}_{L^\infty(C^-_1(X_0)\cap Q^-_4)} \int_0^r \frac{\hat \omega_{\mathbf A}^{\textsf x}(t)}{t}dt  +  \int_0^r \frac{\hat \omega_{\vec g}^{\textsf x}(t)}{t}dt.
\end{multline*}
\end{lemma}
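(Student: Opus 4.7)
The plan is to combine the telescoping/iteration machinery already used in Lemma~\ref{lem-03} with the single application of Lemma~\ref{lem-02} carried out at the largest admissible scale $r_0 = 1/5$, where the radii in Lemma~\ref{lem-02} are rescaled so that $C^-_{3r_0}(X_0)$ becomes $C^-_{3/5}(X_0)$ and $C^-_{5r_0}(X_0)$ becomes $C^-_1(X_0)$. This is what dictates the particular domains appearing in the statement.

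First I would start from the telescoping identity for $\{\vec q_{X_0, 2^{-k}r}\}_{k\ge 0}$ that was already established in the derivation of \eqref{eq1408th}: namely the triangle inequality applied to $\abs{\vec q_{X_0, 2^{-k}r} - \vec q_{X_0, 2^{-(k+1)}r}}^{1/2} \le \abs{Du(Y) - \vec q_{X_0, 2^{-k}r}}^{1/2} + \abs{Du(Y) - \vec q_{X_0, 2^{-(k+1)}r}}^{1/2}$ followed by averaging over $Y \in C^-_{2^{-(k+1)}r}(X_0) \cap Q^-_4$ and squaring, giving $\abs{\vec q_{X_0, 2^{-(k+1)}r} - \vec q_{X_0, 2^{-k}r}} \le 2\phi(X_0, 2^{-k}r) + 2\phi(X_0, 2^{-(k+1)}r)$. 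Iterating and using the a priori assumption $u \in C^{1/2,1}(\overbar{Q^-_3})$ (so that $\vec q_{X_0, 2^{-k}r} \to Du(X_0)$ as $k\to \infty$, by Lebesgue differentiation), I obtain
\[
\abs{Du(X_0) - \vec q_{X_0, r}} \le 4 \sum_{j=0}^{\infty} \phi(X_0, 2^{-j} r).
\]

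Next I apply Lemma~\ref{lem-02} with $\rho = 2^{-j} r$ and with the outer scale taken to be $1/5$ (so that the conditions $0 < \rho \le 1/5$ and $r \le 1/4$ of that lemma are satisfied for every $j \ge 0$ and every $r \le 1/5$). This gives
\[
\phi(X_0, 2^{-j} r) \lesssim (2^{-j} r)^{\beta} \norm{Du}_{L^1(C^-_{3/5}(X_0) \cap Q^-_4)} + \norm{Du}_{L^\infty(C^-_{1}(X_0) \cap Q^-_4)}\, \hat \omega_{\mathbf A}^{\textsf x}(2^{-j}r) + \hat \omega_{\vec g}^{\textsf x}(2^{-j} r).
\]
Summing the first term gives a convergent geometric series bounded by $r^\beta/(1-2^{-\beta}) \lesssim_\beta r^\beta$. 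For the other two terms I would invoke \cite[Lemma~2.7]{DK17} (or equivalently the doubling property \eqref{eq9.36} for $\hat \omega_{\mathbf A}^{\textsf x}$ and $\hat \omega_{\vec g}^{\textsf x}$, which are Dini functions as noted right after \eqref{eq14.24m}) to pass from the discrete dyadic sum to the Dini integral
\[
\sum_{j=0}^{\infty} \hat\omega_{\bullet}^{\textsf x}(2^{-j} r) \lesssim \int_0^r \frac{\hat\omega_{\bullet}^{\textsf x}(t)}{t}\,dt.
\]

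The combination of these estimates yields the desired inequality. There is no genuine obstacle here: the structure of the argument already appeared in the proof of Lemma~\ref{lem-03}, and the only subtle point is the bookkeeping on radii, where one must choose the outer radius $1/5$ so that $5 \cdot \tfrac{1}{5} = 1$ and $3 \cdot \tfrac{1}{5} = \tfrac{3}{5}$ give exactly the domains $C^-_1(X_0) \cap Q^-_4$ and $C^-_{3/5}(X_0) \cap Q^-_4$ that appear in the statement.
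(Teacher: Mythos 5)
Your proof is correct and follows essentially the same route as the paper: telescoping via \eqref{eq1408th}, identifying the limit of $\vec q_{X_0,2^{-k}r}$ as $Du(X_0)$ using the a priori regularity, applying Lemma~\ref{lem-02} at each dyadic scale $\rho=2^{-j}r$ with outer radius fixed at $1/5$, and summing via \cite[Lemma~2.7]{DK17} together with the geometric series for the $r^\beta$ term.
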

\begin{proof}
Let $\set{\vec q_{X_0,2^{-k} r}}_{k=0}^\infty \in \bR^n$ be as in the proof of Lemma~\ref{lem-03}.
By taking $k \to \infty$ in \eqref{eq1408th}, we get
\[
\abs{Du(X_0) -\vec q_{X_0,r}} \le \sum_{k=0}^{\infty} \,\abs{\vec q_{X_0,2^{-k} r}-\vec q_{X_0,\kappa^{i+1}r}}
\lesssim \sum_{k=0}^\infty \phi(X_0, 2^{-k} r).
\]
Note that by taking $2^{-k}r$ and $\frac{1}{5}$ in place of $\rho$ and $r$ in \eqref{eq10.39tu}, we have
\[
\phi(X_0, 2^{-k}r) \lesssim  2^{-k\beta} r^\beta \norm{Du}_{L^1(C^-_{3/5}(X_0)\cap Q^-_4)}+\norm{Du}_{L^\infty(C^-_{1}(X_0)\cap Q^-_4)}\, \hat\omega_{\mathbf A}^{\textsf x}(2^{-k}r) + \hat\omega_{\vec g}^{\textsf x}(2^{-k}r).
\]
Therefore, the lemma follows from \cite[Lemma~2.7]{DK17}.
\end{proof}

Now, we are ready to show that $u \in C^{1/2,1}(\overbar{Q^-_1})$.
For $X$, $Y \in Q^-_1$, we have
\[
\abs{Du(X)-Du(Y)} \le \abs{Du(X)- \vec q_{X,r}}\ + \abs{\vec q_{X,r}-\vec q_{Y,r}}+\abs{Du(Y)-\vec q_{Y,r}}.
\]
In the case when $\abs{X-Y} <\frac1 {2}$, set $r=2\abs{X-Y}$ and apply Lemma~\ref{lem-04} to get
\begin{multline*}
\abs{Du(X)- \vec q_{X,r}} +\abs{Du(Y)-\vec q_{Y,r}} \\
\lesssim r^{\beta}\,\norm{Du}_{L^1(Q^-_2)} + \norm{Du}_{L^\infty(Q^-_2)}\int_0^r \frac{\hat \omega_{\mathbf A}^{\textsf x}(t)}{t}dt+ \int_0^r \frac{\hat \omega_{\vec g}^{\textsf x}(t)}{t}dt.\end{multline*}
Take the average over $Z \in C^-_r(X)\cap C^-_r(Y) \cap Q^-_4$ in the inequality
\[
\abs{\vec q_{X,r}-\vec q_{Y,r}}^{\frac12} \le \abs{Du(Z)- \vec q_{X,r}}^{\frac12} +\abs{Du(Z)-\vec q_{Y,r}}^{\frac12},
\]
take squares and apply Lemma~\ref{lem-02} to get
\begin{align*}
\abs{\vec q_{X,r}-\vec q_{Y,r}} &\lesssim \phi(X,r) + \phi(Y,r) \\
&\lesssim r^\beta\, \norm{Du}_{L^1(Q^-_2)} + \norm{Du}_{L^\infty(Q^-_2)} \int_0^r \frac{\hat \omega_{\mathbf A}^{\textsf x}(t)}{t}dt+ \int_0^r \frac{\hat \omega_{\vec g}^{\textsf x}(t)}{t}dt.
\end{align*}
Combining these together and using Lemma~\ref{lem-03}, we obtain
\begin{multline}					\label{eq08.33st}
\abs{Du(X)-Du(Y)} \lesssim  \norm{Du}_{L^1(Q^-_2)}\,\abs{X-Y}^\beta \\
+\left( \norm{Du}_{L^1(Q^-_4)} + \int_0^1 \frac{\hat \omega_g(t)}t\,dt \right)
\int_0^{2\abs{X-Y}} \frac{\hat \omega_{\mathbf A}^{\textsf x}(t)}{t}dt
+\int_0^{2\abs{X-Y}} \frac{\hat \omega_{\vec g}^{\textsf x}(t)}{t}dt.
\end{multline}
In case when $\abs{X-Y} \ge \frac1 {2}$, we use
\[
\abs{Du(X) - Du(Y)} \le 2\norm{Du}_{L^\infty(Q^-_1)},
\]
apply Lemma~\ref{lem-03} and still obtain \eqref{eq08.33st}.

Finally, by almost the same proof as for \eqref{eq13.19fs}, one sees that  $u\in \mathring C^{1/2,1}(\overbar{Q_1^+})$.
This completes the proof of Proposition~\ref{prop01} and that of Theorem~\ref{thm-main-d}.
\qed

\begin{remark}
The estimate \eqref{eq08.33st} together with definition of $\hat\omega_{\bullet}(t)$ in \eqref{eq1538th} shows that in the case when $\mathbf A$ and $\vec g$ are $C^{\alpha/2, \alpha}$ functions with $\alpha \in (0,1)$, by choosing $\beta \in (\alpha, 1)$, $Du$ is a $C^{\alpha/2, \alpha}$ function.
In short, we recover the classical Schauder estimates.
This observation also holds for solutions to non-divergence and its adjoint equations.
\end{remark}
\subsection{Proof of Theorem~\ref{thm-main-nd}}
We take the even extension of the coefficients and data $\mathbf A$, $\vec b$, $c$ and extend $u$ and $g$ as zero over $(-T, 0] \times \Omega$. We note that these extensions do not affect the conditions because the extended $u$ satisfies
\[
\mathcal Pu=g  \;\text{ in }\;(-T, T) \times \Omega,\quad u=0 \;\text{ on }\; (-T, T) \times \partial \Omega
\]
for the extended operator $\mathcal P$, whose new coefficients are in $\mathsf{DMO_x}$ or $\mathsf{DMO}$ over $(-T,T)\times\Omega$ in both the first and the second parts of the theorem. As before, in the proof we assume that these extensions have been made.

The idea of proof is essentially the same as that of Theorem~\ref{thm-main-d} and we first establish interior $C^{1,2}$ estimates.
\begin{proposition}				\label{prop2.45p}
For any $p\in (1,\infty)$, we have $u\in W^{1,2}_{p}(\Omega_T)$.
Moreover, for any $\Omega' \subset\subset \Omega$, we have $u \in C^{1,2}(\overbar{\Omega'_T})$.
\end{proposition}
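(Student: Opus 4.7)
The plan is to follow closely the strategy used in Proposition~\ref{prop2.5p} for the divergence form case, adapting it to the non-divergence setting. The two key ingredients are (i) a parabolic $L^p$-bootstrap that exploits the $\mathsf{VMO_x}$ (in fact $\mathsf{DMO_x}$) regularity of $\mathbf{A}$ to improve the integrability of $u$ and $Du$, and (ii) the interior estimate of Theorem~\ref{thm:nd-wolt}, which will be applied once every lower-order term has been absorbed into a new right-hand side of $\mathsf{DMO_x} \cap L^\infty$ class.

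First I would establish $u \in W^{1,2}_p(\Omega_T)$ for every $p \in (1,\infty)$. Rewriting the equation as
\[
u_t - a^{ij} D_{ij} u = g - b^i D_i u - c u,
\]
the right-hand side belongs initially to some $L^{q_1}$, where $q_1$ is determined by $u, Du \in L^2$ together with $g, \vec b, c \in L^\infty$. Applying the $W^{1,2}_p$ theory for non-divergence parabolic equations with $\mathsf{VMO_x}$ leading coefficients (valid since $\mathbf{A} \in \mathsf{DMO_x} \subset \mathsf{VMO_x}$, after the even extension in $t$), followed by parabolic Sobolev embedding (Lemma~\ref{psobolev}), and iterating just as in the proof of Proposition~\ref{prop2.5p}, yields $u \in W^{1,2}_p(\Omega_T)$ for every $p < \infty$, with a norm controlled by $\|g\|_{L^\infty}$ and the quantitative DMO/$L^\infty$ data of the coefficients.

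Next, fix $\Omega' \subset\subset \Omega$ and pick $p > n+2$. Lemma~\ref{psobolev}(iv) then gives $Du \in C^{\delta/2,\delta}(\Omega_T)$ with $\delta = 1 - (n+2)/p$, and Lemma~\ref{psobolev}(ii) gives $u \in C^{\alpha/2,\alpha}$; in particular both $u$ and $Du$ are uniformly Dini continuous. Lemma~\ref{lem00} then guarantees that $b^i D_i u$ and $c u$ lie in $\mathsf{DMO_x} \cap L^\infty$, so that
\[
\tilde g := g - b^i D_i u - c u \in \mathsf{DMO_x} \cap L^\infty,
\]
with a modulus controlled by $\omega_{\vec b}^{\textsf x}, \omega_c^{\textsf x}, \omega_g^{\textsf x}$ and the Hölder moduli of $u, Du$. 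Now $u$ is a strong solution of $u_t - a^{ij} D_{ij} u = \tilde g$ with $\mathbf{A} \in \mathsf{DMO_x}$ and $\tilde g \in \mathsf{DMO_x} \cap L^\infty$, so Theorem~\ref{thm:nd-wolt} applies on any rescaled sub-cylinder $C_r^-(X_0) \subset \Omega'_T$ and delivers $D^2 u \in C^{0,0}(\overbar{\Omega'_T})$ and $\partial_t u \in L^\infty(\Omega'_T)$. Under the strengthened hypothesis of Theorem~\ref{thm-main-nd} (the even extensions of $\mathbf{A}, \vec b, c$ and the zero extension of $g$ are in $\mathsf{DMO}$ over $(-T,T)\times\Omega$), the coefficients and $\tilde g$ become genuinely continuous on $\overbar{\Omega'_T}$, and the second conclusion of Theorem~\ref{thm:nd-wolt} upgrades this to $u \in C^{1,2}(\overbar{\Omega'_T})$.

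The main technical obstacle is the bootstrap step: one must iterate the $W^{1,2}_p$ estimate carefully enough to push $u$ and $Du$ into a uniformly Dini class, so that Lemma~\ref{lem00} actually produces the $\mathsf{DMO_x}$ regularity of $\tilde g$. Once that is in hand, the reduction to Theorem~\ref{thm:nd-wolt} is essentially mechanical, and the passage from $\mathsf{DMO_x}$ to full $\mathsf{DMO}$ continuity in the second part is just a matter of observing that all relevant moduli of oscillation carry over under Lemma~\ref{lem00}.
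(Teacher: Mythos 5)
Your proof takes essentially the same route as the paper: bootstrap the $L^p$-theory (trivial here since $\vec b, c \in L^\infty$) to obtain $u \in W^{1,2}_p$ for every $p<\infty$, then use parabolic Sobolev embedding to make $u$ and $Du$ H\"older (hence uniformly Dini) continuous, apply Lemma~\ref{lem00} to absorb $b^i D_i u + cu$ into a right-hand side $\tilde g \in \mathsf{DMO_x} \cap L^\infty$, and finish with Theorem~\ref{thm:nd-wolt}. Your extra qualification at the end — that the full $C^{1,2}$ conclusion (continuity of $\partial_t u$, not merely $D^2u \in C^{0,0}$ and $\partial_t u \in L^\infty$) requires the continuity of $\mathbf A$ and of the new right-hand side, which is supplied by the strengthened $\mathsf{DMO}$ hypothesis — is a correct and careful observation: under $\mathsf{DMO_x}$ alone, Theorem~\ref{thm:nd-wolt} yields only the first conclusion, and one reads $\partial_t u = a^{ij}D_{ij}u + \tilde g$ to see that continuity of $\partial_t u$ genuinely needs continuity of $a^{ij}$ and $\tilde g$.
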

\begin{proof}
By the $L^p$ theory, we have $u \in W^{1,2}_p(\Omega_T)$ for any $1<p<\infty$ and
\[
\norm{u}_{W^{1,2}_p} \le C \norm{g}_{L^\infty} + C \norm{u}_{L^1},
\]
where $C$ is a constant depending only on $n$, $\lambda$, $p$, $\Omega_T$, and the coefficients of $\cP$.
Therefore, by the Morrey-Sobolev embedding, for any $0<\mu<1$, we have
\[
\norm{u}_{C^{\mu/2, \mu}}+\norm{Du}_{C^{\mu/2, \mu}} \le C\norm{g}_{L^\infty}+C\norm{u}_{L^1}.
\]
In particular, we have
\[
\varrho_{Du}(t)+ \varrho_u(t) \le C \left( \norm{g}_{L^\infty} + \norm{u}_{L^1}\right) t^{\mu}.
\]
We rewrite the equation as
\[
\partial_t u-a^{ij} D_{ij} u= g-b^i D_i u-cu =: g_1.
\]
Then $g_1 \in \mathsf{DMO_x} \cap L^\infty$ by Lemma~\ref{lem00}.
Moreover, by \eqref{eq11.39w}, we have
\[
\omega_{g_1}^{\textsf x}(t) \le \omega_g^{\textsf x}(t) + C\left( \norm{g}_{L^\infty} + \norm{u}_{L^1}\right) \left\{\omega_{\vec b}^{\textsf x}(t)+ \omega_c^{\textsf x}(t)+ \left(\norm{\vec b}_{L^\infty} + \norm{c}_{L^\infty}\right)t^\mu \right\}.
\]
Therefore, $\omega_{g_1}^{\textsf x}$ is a Dini function that is completely determined by the given data (namely $n$, $\lambda$, $\Omega$, $T$, $\omega_{\mathbf A}^{\textsf x}$,  $\omega_{\vec b}^{\textsf x}$, $\norm{\vec b}_{L^\infty}$, $\omega_c^{\textsf x}$, $\norm{c}_{L^\infty}$, $\omega_g^{\textsf x}$, and $\norm{g}_{L^\infty}$) and $\norm{u}_{L^1}$.
By Section~\ref{sec:int-nd}, we thus find that $u \in C^{1,2}(\overbar{\Omega'_T})$ and $\norm{u}_{C^{1,2}(\Omega'_T)}$ is bounded by a constant $C$ depending only on the above mentioned given data, $\norm{u}_{L^1(\Omega_T)}$, and $\Omega'$.
\end{proof}

Next, we turn to $C^{1,2}$ estimates near the boundary.
Hereafter, we shall assume that the coefficients $a^{ij}$, $b^i$, $c$, and the data $g$ are in $\mathsf{DMO}$.

Let $g_1$ be as given in the proof of Proposition~\ref{prop2.45p}.
Under a mapping of locally flattening boundary
\[
y=\vec \Phi(x)=(\Phi^1(x),\ldots, \Phi^n(x)),	
\]
let $\tilde u(t,y)=u(t,x)$, which satisfies
\[
\partial_t u-\tilde a^{ij} D_{ij} \tilde u=\tilde g_1+ \tilde b^iD_i\tilde u=:\tilde h,
\]
where
\[
\tilde a^{ij}(t,y)= D_l\Phi^i(x) D_k\Phi^j(x) a^{kl}(t,x),\quad  \tilde b^i(t,y)= D_{kl}\Phi^i(x) a^{kl}(t,x),\quad \tilde g_1(t,y)=g_1(t,x).
\]
By Lemmas~\ref{lem00}, we see that the coefficients $\tilde a^{ij}$ and the data $\tilde h$ are in $\mathsf{DMO}$. Therefore, we are reduced to prove the following.
\begin{proposition}					\label{prop01nd}
Let $\mathbf A$ and $g$ be in $\mathsf{DMO}$.
If $u \in W^{1,2}_2(Q^-_4)$ is a strong solution of
\[
\partial_t u-a^{ij} D_{ij} u= g\;\mbox{ in }\;Q^-_4
\]
satisfying $u=0$ on $\Delta_4^-(0)$, then $u \in C^{1,2}(\overbar{Q^-_1})$.
\end{proposition}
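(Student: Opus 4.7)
The strategy is to run a Campanato scheme for $D^2 u$ parallel to that of Proposition~\ref{prop01} (which treated $Du$ in the divergence case), exploiting the full $\mathsf{DMO}$ hypothesis on $\mathbf A$ and $g$ in $(t,x)$ to freeze the leading coefficients and data to \emph{constants} on small cylinders. As before, one first assumes a priori that $u \in C^{1,2}(\overbar{Q_3^-})$ and removes this by smoothing $\mathbf A$ and $g$. Let $\mathcal S \subset \mathrm{Sym}(n)$ denote the subspace of symmetric matrices $\mathbf q$ with $q^{ij} = 0$ whenever $i, j \ge 2$; this is dictated by the boundary condition, since $u \equiv 0$ on $\Delta_4^-(0)$ forces $D_{jk} u \equiv 0$ there for $j, k \ge 2$. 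Define the interior and boundary Campanato quantities
\begin{align*}
\phi(X_0, r) &:= \inf_{\mathbf q \in \mathrm{Sym}(n)} \left(\fint_{C_r^-(X_0) \cap Q_4^-} \abs{D^2 u - \mathbf q}^{1/2}\right)^2, \\
\varphi(\overline X_0, r) &:= \inf_{\mathbf q \in \mathcal S} \left(\fint_{Q_r^-(\overline X_0)} \abs{D^2 u - \mathbf q}^{1/2}\right)^2.
\end{align*}

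For $\overline X_0 \in \Delta_3^-(0)$ and $r \in (0, \tfrac12)$, set $\bar a^{ij} := \fint_{Q_r^-(\overline X_0)} a^{ij}$ and $\bar g := \fint_{Q_r^-(\overline X_0)} g$ (constants in both $t$ and $x$), and decompose $u = v + w$, where $w \in W^{1,2}_2$ solves
\[
\partial_t w - \bar a^{ij} D_{ij} w = -(a^{ij} - \bar a^{ij}) D_{ij} u + (g - \bar g) \text{ in } Q_r^-(\overline X_0), \quad w = 0 \text{ on } \partial_p^- Q_r^-(\overline X_0),
\]
so that $v$ satisfies the constant-coefficient homogeneous problem with $v = 0$ on the flat lateral face. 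The half-ball analog of Lemma~\ref{lem-weak11-nd} (provided by Remark~\ref{rmk4.15}~(i)) applied to $w$ yields
\[
\left(\fint_{Q_r^-(\overline X_0)} \abs{D^2 w}^{1/2}\right)^2 \lesssim \omega_{\mathbf A}(r)\, \norm{D^2 u}_{L^\infty(Q_r^-(\overline X_0))} + \omega_g(r),
\]
while classical boundary Schauder theory for constant-coefficient homogeneous parabolic equations on a half-space with zero lateral Dirichlet data provides
\[
[D^2 v]_{C^{\alpha/2, \alpha}(Q_{r/2}^-(\overline X_0))} \lesssim r^{-\alpha} \left(\fint_{Q_r^-(\overline X_0)} \abs{D^2 v - \mathbf q}^{1/2}\right)^2
\]
for every $\alpha \in (0,1)$ and every $\mathbf q \in \mathcal S$, using that $D_{jk} v$ (for $j, k \ge 2$) and $D_{1j} v - q^{1j} = D_1(D_j v - q^{1j} x^1)$ (for $j \ge 2$) each arise from solutions of the homogeneous constant-coefficient problem with zero lateral Dirichlet condition, while $D_{11} v - q^{11}$ is recovered from the PDE using uniform ellipticity $\bar a^{11} \ge \lambda$.

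Combining these two bounds and optimizing over $\mathbf q$ produces the Campanato recurrence
\[
\varphi(\overline X_0, \kappa r) \le \kappa^\beta \varphi(\overline X_0, r) + C\bigl(\omega_{\mathbf A}(r)\, \norm{D^2 u}_{L^\infty(Q_{2r}^-(\overline X_0))} + \omega_g(r)\bigr)
\]
for any $\beta \in (0,1)$ and sufficiently small $\kappa = \kappa(\beta) \in (0,\tfrac12)$. Iterating, and then transferring the decay to $\phi$ at interior points near the boundary via the three-case analysis of Lemma~\ref{lem-02} (distinguishing $\rho \le r \le x_0^1$, $x_0^1 \le \rho \le r$, and $\rho \le x_0^1 \le r$), reproduces the pattern of Lemmas~\ref{lem-02}, \ref{lem-03}, and~\ref{lem-04}: an $L^\infty$ bound on $D^2 u$ over $Q_2^-$, pointwise convergence of the minimizing matrices $\mathbf q_{X_0, 2^{-k}r}$ to $D^2 u(X_0)$, and a modulus-of-continuity estimate of the form \eqref{eq08.33st} for $D^2 u$. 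This gives $D^2 u \in C^{0,0}(\overbar{Q_1^-})$. Continuity of $Du$ is automatic from $u \in W^{1,2}_p$ ($p > n+2$) and parabolic Sobolev embedding, while continuity of $\partial_t u$ follows from $\partial_t u = a^{ij} D_{ij} u + g$ together with uniform continuity of $\mathbf A$ and $g$ guaranteed by the full $\mathsf{DMO}$ hypothesis.

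The main technical obstacle is the boundary Schauder estimate for $D^2 v$ in the restricted subspace $\mathcal S$. Tangential differentiation directly handles $D_{jk} v$ for $j, k \ge 2$ and, after the shift $D_j v - q^{1j} x^1$, also $D_{1j} v$ for $j \ge 2$, because each of these functions solves a homogeneous constant-coefficient problem with zero Dirichlet data on the flat face. The normal-normal component $D_{11} v - q^{11}$ must be recovered algebraically from the PDE, and this step requires \emph{constant} frozen coefficients: with time-measurable $\bar a^{11}(t)$ the factor $\bar a^{11}(t)^{-1}$ has no Hölder regularity in $t$, and the algebraic recovery would lose control of the $t$-modulus of $D_{11} v$. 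This is precisely why $C^{1,2}$ regularity up to the lateral boundary requires the full $\mathsf{DMO}$ hypothesis in $(t,x)$ rather than just $\mathsf{DMO_x}$.
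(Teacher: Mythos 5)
Your approach has a genuine gap in the claimed boundary Schauder estimate for the full $D^2 v$, and it is exactly the gap that the paper's choice of Campanato quantity is designed to avoid. You propose to obtain $[D_{11}v]_{C^{\alpha/2,\alpha}}$ from the PDE identity
\[
\bar a^{11}\bigl(D_{11}v - q^{11}\bigr) = \partial_t v - \sum_{(i,j)\neq(1,1)}\bar a^{ij}\bigl(D_{ij}v - q^{ij}\bigr) - c, \qquad c := \bar a^{ij}q^{ij} + \bar g,
\]
which gives $[D_{11}v] \lesssim [\partial_t v] + \sum_{(i,j)\neq(1,1)}[D_{ij}v]$. The tangential and mixed terms are controlled as you say. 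But Lemma~\ref{lemma1252bm} applied to $\partial_t v$ (which solves the homogeneous frozen equation with zero lateral Dirichlet data) yields only $[\partial_t v]_{C^{\alpha/2,\alpha}(Q^-_{r/2})} \lesssim r^{-\alpha}\bigl(\fint_{Q^-_r}|\partial_t v|^{1/2}\bigr)^2$, and you would then need the conversion $\bigl(\fint|\partial_t v|^{1/2}\bigr)^2 \lesssim \bigl(\fint|D^2 v - \mathbf q|^{1/2}\bigr)^2$. This does not follow: $\partial_t v - c$ is pointwise controlled by $|D^2 v - \mathbf q|$, but $c$ is a constant determined by $\mathbf q$ and $\bar g$ with no a priori relation to the $L^{1/2}$-oscillation of $D^2 v$; the restriction $\mathbf q\in\mathcal S$ does not pin it down. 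Evaluating the PDE on $\Delta_r^-$ to express $c$ produces a sup-norm of $D^2 v - \mathbf q$ rather than an $L^{1/2}$-average, so closing the loop would require a separate interpolation/absorption argument that you have not supplied. The structural problem is that your Campanato quantities $\phi$ and $\varphi$ only \emph{see} $D^2 u$, while the PDE links $\partial_t u$ and $D_{11}u$ only up to a constant.

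The paper's proof (Section~4.2) is engineered around exactly this point: the boundary Campanato quantity $\varphi(\overline X_0,r)$ measures the oscillation of $D_{xx'}u$ -- which deliberately \emph{excludes} $D_{11}u$ -- against $\mathbf U_1(n)$, \emph{together with} the separate term $\bigl(\fint|\partial_t u|^{1/2}\bigr)^2$ compared against the natural reference $0$ (natural because $\partial_t u = 0$ on $\Delta_r^-$). Since $\bigl(\fint|\partial_t v|^{1/2}\bigr)^2$ then appears on both sides of the resulting recurrence (see the display following \eqref{bdry-reg} in the proof of Lemma~\ref{lem-01nd}), there is no conversion to perform and the offset $c$ never arises. $D_{11}u$ is only recovered at the very last step, from the \emph{original} equation $D_{11}u = (a^{11})^{-1}\bigl(\partial_t u - \sum_{(i,j)\neq(1,1)}a^{ij}D_{ij}u - g\bigr)$, using the uniform continuity of $a^{ij}$ and $g$ that $\mathsf{DMO}$ supplies. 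Your diagnosis that the full $\mathsf{DMO}$ hypothesis is needed is correct in spirit, but its primary use is earlier than you indicate: $\bar a^{ij}$ and $\bar g$ must be genuine constants so that $\partial_t v$ itself solves the homogeneous frozen equation, not merely so that $\bar a^{11}$ can be inverted without losing $t$-regularity at the final algebraic step.
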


The rest of this subsection is devoted to the proof of Proposition~\ref{prop01nd}.
As in the proof of Proposition~\ref{prop01}, we shall derive an a priori estimate of the modulus of continuity of $D^2u$ and $\partial_t u$ by assuming that $u$ is in $C^{1,2}(\overbar{Q^-_3})$.

In the proof, we shall denote
\[
D_{xx'} u =
 \begin{pmatrix}
  0 & D_{12}u & \cdots & D_{1n}u \\
  0& D_{22}u & \cdots & D_{2n}u \\
  \vdots  & \vdots  & \ddots & \vdots  \\
  0 & 0 & \cdots & D_{nn}u
 \end{pmatrix}
\quad\text{and}\quad
D_{1x'} u =
 \begin{pmatrix}
  0 & D_{12}u & \cdots & D_{1n}u \\
  0& 0 & \cdots & 0 \\
  \vdots  & \vdots  & \ddots & \vdots  \\
  0 & 0 & \cdots & 0
 \end{pmatrix}.
\]
Also, let $\mathbf{U}_0(n)$ and  $\mathbf{U}_1(n)$, respectively,  be the set of all $n\times n$ matrices of the form
\[
 \begin{pmatrix}
  0 & q^{12} & \cdots & q^{1n} \\
  0& q^{22} & \cdots & q^{2n} \\
  \vdots  & \vdots  & \ddots & \vdots  \\
  0 & 0 & \cdots & q^{nn}
 \end{pmatrix}
\quad\text{and}\quad
 \begin{pmatrix}
  0 & q^{12} & \cdots & q^{1n} \\
  0& 0 & \cdots & 0 \\
  \vdots  & \vdots  & \ddots & \vdots  \\
  0 & 0 & \cdots & 0
 \end{pmatrix}.
\]

We shall first show that $D^2_{xx'}u$ and $\partial_t u$ are continuous over $\overbar{Q_1^-}$.
For $X\in Q^-_4$ and $r>0$, define
\[
\phi(X,r):=\inf_{\mathbf q \in \mathbf{U}_0(n)} \left(  \fint_{C^-_r(X) \cap Q^{-}_4} \, \abs{D_{xx'}u - \mathbf q}^{\frac12} \right)^{2} +
\inf_{q \in \bR} \left(  \fint_{C^-_r(X) \cap Q^{-}_4} \, \abs{\partial_t u - q}^{\frac12} \right)^{2}
\]
and fix a matrix $\mathbf q_{X,r}\in \mathbf U_0(n)$ and a number $q_{X,r} \in \bR$ satisfying
\[
\phi(X,r) = \left( \fint_{C^-_r(X) \cap Q^{-}_4} \,\abs{D_{xx'}u - \mathbf q_{X,r}}^{\frac12} \right)^{2} + \left(  \fint_{C^-_r(X) \cap Q^{-}_4} \, \abs{\partial_t u - q_{X,r}}^{\frac12} \right)^{2}.
\]
Also, similar to \eqref{eq12.14fop}, for $\overline X \in \Delta^-_4$ and $r>0$, we introduce an auxiliary quantity
\[
\varphi(\overline X, r):=\inf_{\mathbf q \in \mathbf U_1(n)}\left( \fint_{Q^{-}_r(\overline X)} \abs{D_{xx'}u -\mathbf q}^{\frac12} \right)^{2} + \left(  \fint_{Q^{-}_r(\overline X)} \, \abs{\partial_t u}^{\frac12} \right)^{2}.
\]

The following lemma is in parallel with Lemma~\ref{lem-01}.

\begin{lemma}						\label{lem-01nd}
Let $\beta \in (0,1)$.
For any $\overline X_0 \in \Delta_3^-$ and $0<\rho \le r \le \frac12$, we have
\[
\varphi(\overline X_0, \rho) \lesssim_{n,\lambda, \beta} \left(\frac{\rho}{r}\right)^{\beta} \varphi(\overline X_0,r)+ \norm{D^2u}_{L^\infty(Q^{-}_{2r}(\overline X_0))}\,\tilde\omega_{\mathbf A}(2\rho) +  \tilde\omega_g(2\rho).
\]
\end{lemma}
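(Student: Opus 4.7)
The plan is to mirror the proof of Lemma~\ref{lem-01}, now using the non-divergence weak-$(1,1)$ machinery (Lemma~\ref{lem-weak11-nd} in the half-ball form of Remark~\ref{rmk4.15}) and exploiting the $\mathsf{DMO}$ hypothesis to freeze the coefficients in \emph{both} $t$ and $x$. Write $\overline X_0=(t_0,\bar x_0)$, set
\[
\bar a^{ij} := \fint_{Q_{2r}^{-}(\overline X_0)} a^{ij},\qquad \bar g := \fint_{Q_{2r}^{-}(\overline X_0)} g
\]
(now pure constants), and, with $\cD_{2r}(\bar x_0)$ as in \eqref{eq2153ap28}, decompose $u=v+w$, where $w \in W^{1,2}_2$ solves
\[
\partial_t w - \bar a^{ij} D_{ij} w = (a^{ij}-\bar a^{ij}) D_{ij} u + (g-\bar g)\quad\text{in } (t_0-4r^2,t_0)\times\cD_{2r}(\bar x_0),
\]
with zero forward parabolic boundary values.

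First I would apply the rescaled half-ball version of Lemma~\ref{lem-weak11-nd} to $w$ and run the layer-cake argument that produces \eqref{eq15.50ab}, obtaining
\[
\left(\fint_{Q_r^{-}(\overline X_0)}|D^2 w|^{1/2}\right)^{2} \lesssim \omega_{\mathbf A}(2r)\,\|D^2 u\|_{L^\infty(Q_{2r}^{-}(\overline X_0))} + \omega_g(2r).
\]
The pointwise identity $\partial_t w = \bar a^{ij} D_{ij} w + (a^{ij}-\bar a^{ij}) D_{ij} u + (g-\bar g)$ then upgrades this to the same bound for $(\fint_{Q_r^{-}(\overline X_0)}|\partial_t w|^{1/2})^{2}$.

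The function $v=u-w$ satisfies the constant-coefficient equation $\partial_t v - \bar a^{ij} D_{ij} v = \bar g$ and vanishes on the flat portion of the lateral boundary of $(t_0-4r^2,t_0)\times\cD_{2r}(\bar x_0)$. Consequently, $\partial_t v$ and each $D_k v$ with $k=2,\ldots,n$ solve the \emph{homogeneous} constant-coefficient equation and vanish on that flat portion; the same holds for $D_k v - q^{1k} x^1$ for any $q^{1k}\in\bR$, since $q^{1k} x^1$ vanishes on $\{x^1=0\}$. Setting $\mu := (1+\beta)/2\in(\beta,1)$ and applying Lemma~\ref{lemma1252bm} to these functions on $Q_r^{-}(\overline X_0)$ yields
\[
[\partial_t v]_{C^{\mu/2,\mu}(Q_{r/2}^{-}(\overline X_0))} \lesssim r^{-\mu}\left(\fint_{Q_r^{-}(\overline X_0)}|\partial_t v|^{1/2}\right)^{2},
\]
and, for every $k=2,\ldots,n$ and every $q^{1k}\in\bR$,
\[
[D(D_k v)]_{C^{\mu/2,\mu}(Q_{r/2}^{-}(\overline X_0))} \lesssim r^{-\mu}\left(\fint_{Q_r^{-}(\overline X_0)}|D_{1k} v - q^{1k}|^{1/2}\right)^{2}.
\]

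For $\kappa\in(0,\tfrac12)$, I would then approximate on $Q_{\kappa r}^{-}(\overline X_0)$: each mixed normal--tangential entry $D_{1k} v$ ($k\ge 2$) by its mean over $Q_{\kappa r}^{-}(\overline X_0)$, and each tangential--tangential entry $D_{jk} v$ ($j,k\ge 2$) as well as $\partial_t v$ by their boundary value $0$ at $\overline X_0$. Using the Hölder bounds above, this yields, for some matrix $\mathbf q'\in\mathbf U_1(n)$,
\[
\left(\fint_{Q_{\kappa r}^{-}(\overline X_0)}|D_{xx'} v - \mathbf q'|^{1/2}\right)^{2} + \left(\fint_{Q_{\kappa r}^{-}(\overline X_0)}|\partial_t v|^{1/2}\right)^{2} \le N_0\,\kappa^\mu\,\varphi(\overline X_0,r),
\]
with $N_0=N_0(n,\lambda,\beta)$. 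Recombining $u=v+w$, transferring the $w$-averages from $Q_r^{-}$ to $Q_{\kappa r}^{-}$ at cost $\kappa^{-2n-4}$, and choosing $\kappa$ so that $4N_0\le\kappa^{\beta-\mu}$, yields the one-step decay
\[
\varphi(\overline X_0,\kappa r) \le \kappa^\beta \varphi(\overline X_0,r) + C\bigl(\omega_{\mathbf A}(2r)\,\|D^2 u\|_{L^\infty(Q_{2r}^{-}(\overline X_0))} + \omega_g(2r)\bigr),
\]
which iterates and sums over dyadic scales exactly as in the passage from \eqref{eq22.25f} to \eqref{eq0947fo}, producing the Dini function $\tilde\omega_\bullet$. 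The main obstacle is the bookkeeping in the third and fourth paragraphs: one must carefully match those entries of $D_{xx'} v$ that genuinely require mean subtraction (the mixed $D_{1k} v$, $k\ge 2$) against those that vanish at $\overline X_0$ (the $D_{jk} v$ with $j,k\ge 2$, and $\partial_t v$), in a way consistent with the restriction $\mathbf q\in\mathbf U_1(n)$ in the definition of $\varphi$.
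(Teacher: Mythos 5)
Your proposal is correct and follows essentially the same route as the paper's own proof: the same decomposition $u=v+w$ with constants $\bar a^{ij},\bar g$ averaged over $Q^-_{2r}(\overline X_0)$, the same invocation of the half-ball weak-$(1,1)$ estimate (Lemma~\ref{lem-weak11-nd} via Remark~\ref{rmk4.15}), the same application of Lemma~\ref{lemma1252bm} to $\partial_t v$ and to $D_k v - q^{1k}x^1$ for $k\ge 2$, and the same bookkeeping (mean subtraction for $D_{1k}v$, $k\ge2$, versus vanishing at $\overline X_0$ for $D_{jk}v$ with $j,k\ge2$ and for $\partial_t v$) consistent with the restriction $\mathbf q\in\mathbf U_1(n)$. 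Your explicit upgrade of the $D^2w$ bound to a $\partial_t w$ bound via the pointwise equation is exactly how one justifies the paper's combined weak-$(1,1)$ assertion, so there is no genuine difference there.
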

\begin{proof}
For $\overline X_0=(t_0,\bar x_0) \in \Delta_3^-(0)$ and $0<r \le \frac12$, we decompose $u=v+w$, where $w \in W^{1,2}_2$ is the unique solution of the problem
\begin{equation*}
\left\{
\begin{array}{rcl}
\partial_t w- \bar a^{ij} D_{ij} w= (a^{ij}- \bar a^{ij}) D_{ij} u +g-\bar g &\text{on} & (t_0-4r^2, t_0)\times \cD_{2r}(\bar x_0),\\
w=0 &\text{on} & \partial_p^- \left( (t_0-4r^2, t_0)\times \cD_{2r}(\bar x_0) \right),
\end{array}
\right.
\end{equation*}
where $\cD_r(\bar x_0)$ is as defined in \eqref{eq2153ap28} and
\[
\bar a^{ij} = \fint_{Q^-_{2r}(\overline X_0)} a^{ij}\quad\text{and}\quad \bar g=\fint_{Q^-_{2r}(\overline X_0)} g.
\]
We apply a modified and scaled version of Lemma~\ref{lem-weak11-nd} to $w$ to find that for any $\alpha>0$, we have (see Remark~\ref{rmk4.15})
\begin{multline*}
\abs{\set{X\in Q^{-}_r(\overline X_0): \abs{D^2w(X)} > \alpha}} + \abs{\set{X\in Q^{-}_r(\overline X_0): \abs{\partial_t w(X)} > \alpha}}\\
\lesssim \frac{1}{\alpha}\left(\,\norm{D^2 u}_{L^\infty(Q^{-}_{2r}(\overline X_0))} \int_{Q^{-}_{2r}(\overline X_0)} \abs{\mathbf{A}-\bar{\mathbf A}} +  \int_{Q^{-}_{2r}(\overline X_0)}  \abs{g -\bar g}\,\right).
\end{multline*}
Therefore, we have
\[
\left(\fint_{Q^{-}_r(\overline X_0)} \abs{D^2w}^{\frac12} \right)^{2} + \left(\fint_{Q^{-}_r(\overline X_0)} \abs{\partial_t w}^{\frac12} \right)^{2} \lesssim \omega_{\mathbf A}(2r) \,\norm{D^2 u}_{L^\infty(Q^{-}_{2r}(\overline X_0))} +  \omega_g(2r).
\]
Since $v=u-w$, it satisfies
\[
v_t-\bar a^{ij} D_{ij} v= v_t-D_i (\bar a^{ij} D_j v)= \bar g\;\mbox{ in }\;Q^{-}_r(\overline X_0);\quad v=0\;\mbox{ on }\;\Delta_r^-(\overline X_0),
\]
and thus, we see that $D_k v$ satisfies \eqref{eq08.48th} for $k=2, \ldots, d$. Let $\mu=\frac12 (\beta+1)$ as before.
Then, by \eqref{bdry-reg} and the symmetry of $D^2u$, we have
\[
[D_{xx'}v]_{C^{\mu/2,\mu}(Q^{-}_{r/2}(\overline X_0))}\lesssim r^{-\mu} \left(\fint_{Q^{-}_r(\overline X_0)} \abs{D_{1x'}v-\mathbf q}^{\frac12}\,\right)^{2},\quad \forall \mathbf q\in \mathbf U_1(n).
\]
Also, since $\partial_t v$ satisfies \eqref{eq08.48th}, by Lemma~\ref{lemma1252bm}, we have
\[
[\partial_t v]_{C^{\mu/2,\mu}(Q^{-}_{r/2}(\overline X_0))}\lesssim r^{-\mu} \left(\fint_{Q^{-}_r(\overline X_0)} \abs{v_t}^{\frac12}\,\right)^{2}.
\]
Note that for $2\le i \le j \le n$ and $0<\kappa < \frac12$, we have
\[
\left(\fint_{Q^{-}_{\kappa r}(\overline X_0)} \abs{D_{ij} v}^{\frac12} \right)^{2} = \left(\fint_{Q^{-}_{\kappa r}(\overline X_0)} \abs{D_{ij} v - D_{ij}v(\overline X_0)}^{\frac12} \right)^{2} \le (2\kappa r)^\mu  [D_{xx'}v]_{C^{\mu/2,\mu}(Q^{-}_{r/2}(\overline X_0))}
\]
and
\[
\left(\fint_{Q^{-}_{\kappa r}(\overline X_0)} \abs{\partial_t v}^{\frac12} \right)^{2} = \left(\fint_{Q^{-}_{\kappa r}(\overline X_0)} \abs{\partial_t v - \partial_t v(\overline X_0)}^{\frac12} \right)^{2} \le (\kappa r)^{2 \mu/2} [\partial_t v]_{C^{\mu/2,\mu}(Q^{-}_{r/2}(\overline X_0))}.
\]

Therefore, if we take $\mathbf q_{\overline X_0, \kappa r} \in \mathbf U_1(n)$ whose $(1,i)$ entry is $D_{1i}v_{\overline X_0, \kappa r}$ for  $i=2,\ldots, n$, then similar to \eqref{eq15.50b}, we have
\begin{multline*}
\left(\fint_{Q^{-}_{\kappa r}(\overline X_0)} \abs{D_{xx'}v - \mathbf q_{\overline X_0, \kappa r}}^{\frac12} \right)^{2} + \left(\fint_{Q^{-}_{\kappa r}(\overline X_0)} \abs{\partial_t v}^{\frac12} \right)^{2} \\
\le N_0 \kappa^\mu\left\{ \left(\fint_{Q^{-}_{r}(\overline X_0)} \abs{D_{1x'}v -\mathbf q}^{\frac12}\, \right)^{2} + \left(\fint_{Q^{-}_r(\overline X_0)} \abs{\partial_t v}^{\frac12}\,\right)^{2} \right\}, \quad \forall \mathbf q \in \mathbf U_1(n).
\end{multline*}
By the same argument that led to \eqref{eq22.25f}, there is $\kappa \in (0, \frac12)$ such that
\[
\varphi(\bar X_0, \kappa^j r) \le \kappa^{j \beta} \varphi(\bar X_0, r) +C \norm{D^2 u}_{L^\infty(Q^{+}_{2r}(\bar X_0))}\,\tilde \omega_{\mathbf A}(2\kappa^{j} r) + C \tilde \omega_g(2\kappa^{j} r),
\]
where $\tilde\omega_\bullet(t)$ is the same as in \eqref{eq14.27w}.
The rest of proof is the same as that of Lemma~\ref{lem-01}.
\end{proof}

By modifying the proof of Lemmas \ref{lem-02}, \ref{lem-03} and \ref{lem-04} in a straightforward way, we obtain the following lemmas.

\begin{lemma}						
Let $\beta \in (0,1)$.
For any $X \in Q^-_3$ and $0<\rho\le r \le \frac14$, we have
\begin{multline*}
\phi(X, \rho) \lesssim_{n,\lambda,\beta}  \rho^{\beta}\, r^{-\beta-n-2} \left(\norm{D_{xx'}u}_{L^1(C^-_{3r}(X)\cap Q^-_4)}+\norm{\partial_t u}_{L^1(C^-_{3r}(X)\cap Q^-_4)} \right)\\
+\left(\norm{D_{xx'}u}_{L^\infty(C^-_{5r}(X)\cap Q^-_4)}+\norm{\partial_t u}_{L^\infty(C^-_{5r}(X)\cap Q^-_4)}\right) \hat\omega_{\mathbf A}(\rho) + C \hat\omega_g(\rho).
\end{multline*}
\end{lemma}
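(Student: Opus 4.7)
The plan is to mimic the three-case proof of Lemma \ref{lem-02} in the non-divergence setting, with the pair $(D_{xx'}u, \partial_t u)$ playing the role of $Du$, the class $\mathbf{U}_0(n)\times \bR$ replacing $\bR^n$ in the definition of $\phi$, and $\mathbf{U}_1(n)\times\{0\}$ replacing $\{q\vec e_1 : q\in\bR\}$ in the definition of $\varphi$. The key boundary ingredient is Lemma \ref{lem-01nd}, and the key interior ingredient is the iteration estimate on $\phi$ carried out in Section \ref{sec:int-nd} (the non-divergence analog of \eqref{eq0947fo}). Writing $X=(t, x^1, x^2, \ldots, x^n)$ and $\overline X=(t, 0, x^2, \ldots, x^n)$, I split according to whether (i) $\rho\le r\le x^1$, (ii) $x^1\le \rho\le r$, or (iii) $\rho\le x^1\le r$.

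In case (i), $C^-_r(X)$ is interior to $Q^-_4$, so $\phi(X,\rho)$ coincides with the interior quantity analyzed in Section \ref{sec:int-nd}; that analysis yields
\[
\phi(X,\rho)\lesssim \bigl(\tfrac{\rho}{r}\bigr)^\beta r^{-n-2}\bigl(\norm{D_{xx'}u}_{L^1(C^-_r(X))} + \norm{\partial_t u}_{L^1(C^-_r(X))}\bigr) + \norm{D^2u}_{L^\infty(C^-_r(X))}\,\tilde\omega_{\mathbf A}(\rho) + \tilde\omega_g(\rho),
\]
which is stronger than the claim since $\tilde\omega_\bullet\le \hat\omega_\bullet$. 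In case (ii), the inclusions $\mathbf{U}_1(n)\subset \mathbf{U}_0(n)$ and $C^-_\rho(X)\cap Q^-_4\subset Q^-_{2\rho}(\overline X)\subset Q^-_4$ give $\phi(X,\rho)\lesssim \varphi(\overline X, 2\rho)$; here the vanishing of $\partial_t u$ on $\Delta_4^-$ (a consequence of $u=0$ on $\Delta_4^-$) is what allows $\varphi$, defined with the trivial constant $0$ in its $\partial_t u$ slot, to dominate $\phi$ defined with a free constant $q$. Applying Lemma \ref{lem-01nd} together with the trivial $L^1$ bound $\varphi(\overline X, 2r)\lesssim r^{-n-2}\bigl(\norm{D_{xx'}u}_{L^1(Q^-_{2r}(\overline X))}+\norm{\partial_t u}_{L^1(Q^-_{2r}(\overline X))}\bigr)$ and collapsing $\tilde\omega_\bullet(4\rho)$ into $\hat\omega_\bullet(\rho)$ then yields the claim.

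Case (iii) couples the two regimes through the intermediate scale $R=x^1$. At scale $R$, $C^-_R(X)\subset Q^-_{2R}(\overline X)$ gives $\phi(X,R)\lesssim \varphi(\overline X, 2R)$; the interior iteration from Section \ref{sec:int-nd} then propagates $\phi$ from scale $R$ down to scale $\rho$ with gain $(\rho/R)^\beta$; and Lemma \ref{lem-01nd} controls $\varphi(\overline X, 2R)$ by $(R/r)^\beta \varphi(\overline X, 2r)$ plus Dini-type remainders. The product $(\rho/R)^\beta (R/r)^\beta=(\rho/r)^\beta$ produces the desired decay on the $L^1$ norms, while the residual boundary Dini term $(\rho/R)^\beta\tilde\omega_\bullet(4R)$ collapses into $\hat\omega_\bullet(\rho)$ by the very definition \eqref{eq1538th}. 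The main bookkeeping obstacle is to track the two independent infima defining $\phi$ (one over $\mathbf{U}_0(n)$ for $D_{xx'}u$, one over $\bR$ for $\partial_t u$) separately through each comparison, which is routine since the two infima decouple and the vanishing of $\partial_t u$ on $\Delta_4^-$ sends the would-be optimal constant to $0$ at the boundary. Assembling the three cases and using the inclusion $Q^-_{\nu r}(\overline X)\subset C^-_{(\nu+1)r}(X)\cap Q^-_4$ finally yields the claimed estimate.
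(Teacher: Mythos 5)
Your proof is correct and takes essentially the same approach the paper intends: a three-case split on the relative sizes of $\rho$, $r$, $x^1$, with the interior iteration from Section~\ref{sec:int-nd} handling case (i), the comparison $\phi\lesssim\varphi$ combined with Lemma~\ref{lem-01nd} handling case (ii), and the intermediate scale $R=x^1$ coupling the two in case (iii), where the definition \eqref{eq1538th} of $\hat\omega_\bullet$ absorbs the residual $(\rho/R)^\beta\tilde\omega_\bullet(4R)$ term. This is precisely the "straightforward modification of Lemma~\ref{lem-02}" that the paper invokes without writing out the details.
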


\begin{lemma}						
We have
\[
\norm{D_{xx'} u}_{L^\infty(Q^-_2)} +\norm{u_t}_{L^\infty(Q^-_2)} \le C \left( \norm{D_{xx'} u}_{L^1(Q^-_4)} + \norm{u_t}_{L^1(Q^-_4)}\right)+ C\int_0^{1} \frac{\hat\omega_g(t)}t \,dt,
\]
where $C>0$ is a constant depending only on $n$, $\lambda$ and $\omega_{\mathbf A}$.
\end{lemma}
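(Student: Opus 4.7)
The plan is to mimic almost verbatim the proof of Lemma~\ref{lem-03}, working simultaneously with the pair $(D_{xx'}u,\partial_t u)$ using the composite quantity $\phi(X,r)$ already defined just above, and invoking the dyadic mean--oscillation estimate supplied by the preceding lemma (the analogue of Lemma~\ref{lem-02} in the non--divergence setting).

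First, for $X\in Q^-_3$ and $0<r\le \tfrac14$, choose the sequence $\{\mathbf q_{X,2^{-k}r}\}\subset \mathbf U_0(n)$ and $\{q_{X,2^{-k}r}\}\subset\bR$ that realize the infima defining $\phi(X,2^{-k}r)$. Averaging the triangle inequality
\[
|\mathbf q_{X,r}-\mathbf q_{X,r/2}|^{1/2}\le |D_{xx'}u(Y)-\mathbf q_{X,r}|^{1/2}+|D_{xx'}u(Y)-\mathbf q_{X,r/2}|^{1/2}
\]
(and the analogous one for $q_{X,r}$) over $Y\in C^-_{r/2}(X)\cap Q^-_4$ and squaring yields $|\mathbf q_{X,r}-\mathbf q_{X,r/2}|+|q_{X,r}-q_{X,r/2}|\lesssim \phi(X,r)+\phi(X,r/2)$, and iterating gives
\[
|\mathbf q_{X,2^{-k}r}-\mathbf q_{X,r}|+|q_{X,2^{-k}r}-q_{X,r}|\lesssim \sum_{j=0}^{k}\phi(X,2^{-j}r).
\]
Because $u\in C^{1,2}(\overline{Q_3^-})$ by the a priori assumption, $\mathbf q_{X,2^{-k}r}\to D_{xx'}u(X)$ and $q_{X,2^{-k}r}\to\partial_t u(X)$ as $k\to\infty$. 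Substituting the previous lemma into the summand and applying \cite[Lemma~2.7]{DK17} to the Dini tails yields, with $\Phi_r(X):=\|D_{xx'}u\|_{L^1(C^-_{3r}(X)\cap Q^-_4)}+\|\partial_t u\|_{L^1(C^-_{3r}(X)\cap Q^-_4)}$ and $\Psi_r(X):=\|D_{xx'}u\|_{L^\infty(C^-_{5r}(X)\cap Q^-_4)}+\|\partial_t u\|_{L^\infty(C^-_{5r}(X)\cap Q^-_4)}$, the pointwise bound
\[
|D_{xx'}u(X)-\mathbf q_{X,r}|+|\partial_t u(X)-q_{X,r}|\lesssim r^{-n-2}\Phi_r(X)+\Psi_r(X)\!\int_0^r\!\frac{\hat\omega_{\mathbf A}(t)}{t}dt+\int_0^r\!\frac{\hat\omega_g(t)}{t}dt.
\]
Averaging $|\mathbf q_{X,r}|^{1/2}\le|D_{xx'}u(Y)-\mathbf q_{X,r}|^{1/2}+|D_{xx'}u(Y)|^{1/2}$ (and similarly for $q_{X,r}$) over $C^-_r(X)\cap Q^-_4$, squaring, and combining with the estimate just obtained gives a pointwise bound on $|D_{xx'}u(X)|+|\partial_t u(X)|$ with the same right--hand side.

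Finally, taking the supremum over $X\in C^-_r(X_0)\cap Q^-_4$ with $X_0\in Q^-_3$, we obtain
\[
\Psi_{r/5}(X_0)\lesssim r^{-n-2}\Phi_{4r/3}(X_0)+\Psi_r(X_0)\!\int_0^r\!\frac{\hat\omega_{\mathbf A}(t)}{t}dt+\int_0^r\!\frac{\hat\omega_g(t)}{t}dt.
\]
We then fix $r_0<\tfrac14$ so that $C\int_0^{r_0}\hat\omega_{\mathbf A}(t)/t\,dt\le 3^{-n-2}$, introduce the sequence $r_k=3-2^{1-k}$, and run the standard iteration used in the proof of Lemma~\ref{lem-03}: for $k\ge k_0$ large enough so that $2^{-k_0-3}\le r_0$ and all $X_0\in Q^-_{r_k}$ we have $C^-_{6r}(X_0)\cap Q^-_4\subset Q^-_{r_{k+1}}$ for $r\le 2^{-k-3}$, leading to
\[
\|D_{xx'}u\|_{L^\infty(Q^-_{r_k})}+\|\partial_tu\|_{L^\infty(Q^-_{r_k})}\le 3^{-n-2}(\|D_{xx'}u\|_{L^\infty(Q^-_{r_{k+1}})}+\|\partial_tu\|_{L^\infty(Q^-_{r_{k+1}})})+C2^{k(n+2)}(\|D_{xx'}u\|_{L^1(Q^-_4)}+\|\partial_tu\|_{L^1(Q^-_4)})+C\int_0^1\!\frac{\hat\omega_g(t)}{t}dt.
\]
Multiplying by $3^{-k(n+2)}$ and summing over $k\ge k_0$ (the sums converge because $u\in C^{1,2}(\overline{Q_3^-})$) produces the claimed estimate on $Q^-_2$.

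The only genuine subtlety is ensuring that the joint treatment of $D_{xx'}u$ and $\partial_t u$ is legitimate, i.e. that both the telescoping step and the covering/iteration step work cleanly when the two quantities are packaged into a single $\phi$; this is built in because $\phi$ controls both terms simultaneously via the preceding lemma. Everything else is a direct transcription of the divergence--form argument, with $L^\infty$ norms of $D_{xx'}u$ and $\partial_t u$ playing the role previously played by $\|Du\|_{L^\infty}$, and with the Dini function $\hat\omega_\bullet$ (rather than $\hat\omega_\bullet^{\textsf x}$) appearing because the coefficients and data now lie in $\mathsf{DMO}$ over all variables.
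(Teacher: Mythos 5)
Your argument is exactly the transcription the paper has in mind when it says these lemmas are obtained ``by modifying the proof of Lemmas~\ref{lem-02}, \ref{lem-03} and \ref{lem-04} in a straightforward way''; the telescoping, pointwise bound via the preceding lemma and \cite[Lemma~2.7]{DK17}, and the $3^{-k(n+2)}$-weighted iteration are identical to the proof of Lemma~\ref{lem-03}, with the pair $(D_{xx'}u,\partial_t u)$ replacing $Du$ and $\hat\omega_\bullet$ replacing $\hat\omega^{\textsf x}_\bullet$. The only blemish is a harmless index slip in the penultimate display (the supremum over $X\in C^-_r(X_0)\cap Q_4^-$ of $\Psi_r(X)$ gives $\Psi_{6r/5}(X_0)$, i.e.\ the $L^\infty$ norm over $C^-_{6r}(X_0)\cap Q_4^-$, not $\Psi_r(X_0)$), but your subsequent iteration correctly uses the $6r$ radius, so the argument is unaffected.
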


\begin{lemma}						
Let $\beta \in (0,1)$.
For any $X \in Q^-_3$ and $0< r \le \frac15$, we have
\begin{multline*}
\abs{D_{xx'}u(X)-\mathbf q_{X,r}} +\abs{u_t(X)-q_{X,r}}
 \lesssim_{n, \lambda, \beta} r^{\beta}\left(\norm{D_{xx'}u}_{L^1(C^-_{3/5}(X) \cap Q^-_4)}+\norm{\partial_t u}_{L^1(C^-_{3/5}(X) \cap Q^-_4)} \right) \\
+ \left(\norm{D_{xx'}u}_{L^\infty(C^-_1(X)\cap Q^-_4)} +\norm{\partial_t u}_{L^\infty(C^-_1(X)\cap Q^-_4)} \right) \int_0^r \frac{\hat\omega_{\mathbf A}(t)}{t}dt +  \int_0^r \frac{\hat\omega_{g}(t)}{t}dt.
\end{multline*}
\end{lemma}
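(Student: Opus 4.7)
The plan is to mirror the proof of Lemma~\ref{lem-04}, adapting the telescoping argument to the joint minimizer $(\mathbf q_{X,r}, q_{X,r}) \in \mathbf U_0(n) \times \bR$ that realizes $\phi(X,r)$ for the combined $L^{1/2}$ mean oscillation of $(D_{xx'}u,\,\partial_t u)$ on $C_r^-(X)\cap Q_4^-$. The a priori assumption $u \in C^{1,2}(\overbar{Q_3^-})$ will allow us to identify the limit of these minimizers with $(D_{xx'}u(X),\partial_t u(X))$.

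First, I would compare consecutive minimizers along a dyadic sequence of radii. Using the triangle inequality in $L^{1/2}$ applied separately to the matrix-valued and scalar-valued components, averaging over $Y \in C^-_{r/2}(X)\cap Q^-_4$, and squaring gives
\[
\abs{\mathbf q_{X,r} - \mathbf q_{X,r/2}} + \abs{q_{X,r} - q_{X,r/2}} \lesssim \phi(X,r) + \phi(X, r/2).
\]
Iterating yields the telescoping bound
\[
\abs{\mathbf q_{X, 2^{-k}r} - \mathbf q_{X,r}} + \abs{q_{X, 2^{-k}r} - q_{X,r}} \lesssim \sum_{j=0}^{k}\phi(X, 2^{-j}r).
\]

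Next, the previous lemma (the non-divergence boundary analog of Lemma~\ref{lem-02}) shows $\phi(X, 2^{-j}r) \to 0$ as $j \to \infty$, so both sequences are Cauchy. Since $D_{xx'}u$ and $\partial_t u$ are continuous by the a priori assumption, the Lebesgue differentiation theorem identifies the limit with $(D_{xx'}u(X),\partial_t u(X))$. Passing $k \to \infty$ in the telescoping inequality produces
\[
\abs{D_{xx'}u(X) - \mathbf q_{X,r}} + \abs{\partial_t u(X) - q_{X,r}} \lesssim \sum_{j=0}^{\infty}\phi(X, 2^{-j}r).
\]
Then I would apply the previous lemma to each term with $\rho = 2^{-j}r$ and the coarser radius $\tfrac15$ in place of $r$, bounding $\phi(X, 2^{-j}r)$ by
\[
2^{-j\beta} r^{\beta}\bigl(\norm{D_{xx'}u}_{L^1(C^-_{3/5}(X)\cap Q^-_4)} + \norm{\partial_t u}_{L^1(C^-_{3/5}(X)\cap Q^-_4)}\bigr) + \bigl(\norm{D_{xx'}u}_{L^\infty(C^-_1(X)\cap Q^-_4)} + \norm{\partial_t u}_{L^\infty(C^-_1(X)\cap Q^-_4)}\bigr)\hat\omega_{\mathbf A}(2^{-j}r) + \hat\omega_g(2^{-j}r).
\]
Summing the geometric series $\sum 2^{-j\beta}$ in the first term and invoking \cite[Lemma~2.7]{DK17} to convert $\sum_{j\ge 0}\hat\omega_\bullet(2^{-j}r)$ into the Dini integral $\int_0^{r}\hat\omega_\bullet(t)/t\,dt$ yields the asserted estimate.

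The one subtlety is that $\phi(X,r)$ couples a matrix-valued infimum (over $\mathbf U_0(n)$) with a scalar-valued one (over $\bR$); however, because the defining functional splits as a sum of two independent $L^{1/2}$ infima, the triangle-inequality bookkeeping for $\mathbf q$ and $q$ can be carried out component-wise and then added, so the proof of Lemma~\ref{lem-04} transfers with essentially only notational changes. The main ``obstacle'' is thus purely bookkeeping, namely ensuring that the Cauchy limit of $(\mathbf q_{X,2^{-k}r}, q_{X, 2^{-k}r})$ coincides with $(D_{xx'}u(X), \partial_t u(X))$, which is secured by the continuity provided by the a priori $C^{1,2}$ assumption and a standard approximation argument.
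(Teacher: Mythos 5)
Your proposal is correct and matches the paper's intended proof: the paper states that this lemma is obtained by "modifying the proof of Lemmas~\ref{lem-02}, \ref{lem-03} and \ref{lem-04} in a straightforward way," and your argument is precisely the straightforward modification of Lemma~\ref{lem-04}. Treating the matrix-valued and scalar-valued components separately in the $L^{1/2}$ triangle inequality, then adding, is exactly the right bookkeeping, and the rest (telescoping, identification of the limit via the a priori $C^{1,2}$ assumption, applying the preceding lemma with $\rho=2^{-j}r$ and $r=\tfrac15$, and using \cite[Lemma~2.7]{DK17} to pass to the Dini integrals) mirrors the proof of Lemma~\ref{lem-04} faithfully.
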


With the above lemmas at hand, we obtain, similar to \eqref{eq08.33st}, the following estimates for $X$, $Y \in Q^-_1$:
\begin{multline*}					
\abs{D_{xx'} u(X)-D_{xx'} u(Y)} +\abs{\partial_t u(X)- \partial_t u(Y)}  \le  C\norm{D^2u}_{L^1(Q^-_2)}\,\abs{x-y}^\beta \\
+C\left( \norm{D^2 u}_{L^1(Q^-_4)} + \int_0^1 \frac{\hat\omega_g(t)}t\,dt \right)  \int_0^{2\abs{X-Y}} \frac{\hat\omega_{\mathbf A}(t)}{t}dt +  \int_0^{2\abs{X-Y}} \frac{\hat\omega_{g}(t)}{t}dt,
\end{multline*}
where $\beta \in (0,1)$ is any given number, $C=C(n, \lambda, \omega_{\mathbf A}, \beta)$, and $\hat\omega_\bullet(t)$ is a Dini function as defined in \eqref{eq1538th}.
This shows that $D_{xx'}u$ and $\partial_t u$ are continuous in $\overbar{Q^-_1}$.
We now use the equation to write
\[
D_{11}u = \frac{1}{a^{11}}
 \left(u_t-\sum_{(i,j)\neq (1,1)} a^{ij}D_{ij}u-g \right).
\]
This shows that $D_{11}u$ is also continuous in $\overbar{Q^-_1}$ and
we have shown that $u \in C^{1,2}(\overbar{Q^-_1})$ as desired.
This completes the proof of Proposition~\ref{prop01nd} and that of Theorem~\ref{thm-main-nd}.
\qed

\subsection{Proof of Theorem~\ref{thm-main-adj}}
We first prove a boundary lemma and establish the interior $C^{0,0}$ estimates.
\begin{lemma}
                                        \label{lem-bdry-Lip}
Suppose that $u\in L^2(Q^+_2)$ satisfies
\[
-\partial_t u-D_{ij}(a^{ij} u)=\dv^2 \mathbf g\;\text{ in }\; Q^+_2,\quad
u=-\frac{\mathbf g\nu\cdot\nu}{\mathbf A\nu\cdot\nu}\;\text{ on }\;\Delta_2^+,
\]
where $\mathbf A=(a^{ij})$ and $\mathbf g=(g^{ij})$ are constant symmetric matrices and $\mathbf A$ satisfies \eqref{parabolicity}.
Then, we have
\begin{equation}
                                    \label{eq2.02}
[u]_{C^{1/2,1}(Q_{1}^+)} \lesssim_{n,\lambda} \left(\int_{Q_2^+} \abs{u-q}^{\frac12} \right)^{2} ,\quad \forall q \in \bR.
\end{equation}
\end{lemma}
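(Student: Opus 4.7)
The plan is to reduce the problem to a homogeneous Dirichlet problem and then apply the interior $C^{1/2,1}$ estimate via odd reflection. Since $\mathbf{A}$ and $\mathbf{g}$ are constant, the source $\dv^2 \mathbf{g}$ vanishes identically, and on $\Delta_2^+$ (where $\nu = -e_1$) the prescribed boundary value reduces to the single constant $c_0 := -g^{11}/a^{11}$. Because $[\cdot]_{C^{1/2,1}}$ is invariant under additive constants, and because $u-q$ again satisfies the hypotheses of the lemma with $\mathbf{g}$ replaced by the constant symmetric matrix $\mathbf{g}+qa^{11}E_{11}$ (same equation, new boundary constant $c_0-q$), the $\forall q$ form of the estimate reduces to the special case $q=0$ applied to the shifted function. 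I would therefore aim to prove $[u]_{C^{1/2,1}(Q_1^+)} \lesssim \|u\|_{L^{1/2}(Q_2^+)}$ with a constant depending only on $n$ and $\lambda$.

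Setting $v := u - c_0$ gives a solution of $-\partial_t v - a^{ij}D_{ij}v = 0$ in $Q_2^+$ with $v=0$ on $\Delta_2^+$, and $[u]_{C^{1/2,1}} = [v]_{C^{1/2,1}}$. I would then perform a linear change of variables that preserves the hyperplane $\{x^1 = 0\}$ and reduces $\mathbf{A}$ to the identity: an orthogonal transformation and dilation in the tangential variables normalizing $(a^{ij})_{i,j \ge 2}$, a shear $y^j = x^j - \alpha^j x^1$ killing the cross terms $a^{1j}$, and finally a rescaling of $x^1$. The constants depend only on $n$ and $\lambda$ via $\|\mathbf{A}\|$ and $\|\mathbf{A}^{-1}\|$. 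In the new coordinates, extend $v$ to the full backward cylinder $C_2^+$ by odd reflection, $\tilde v(t,x^1,x') := -v(t,-x^1,x')$ for $x^1 < 0$; since $v = 0$ on $\{x^1 = 0\}$ and $\Delta$ commutes with the reflection, $\tilde v$ solves $-\partial_t \tilde v - \Delta \tilde v = 0$ in $C_2^+$.

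The backward-parabolic analogue of Lemma~\ref{lem:c_half} (proved identically, with backward cylinders in place of forward ones) applied to $\tilde v - p$ for any constant $p$ (constants are solutions) yields
\[
[\tilde v]_{C^{1/2,1}(C_1^+)} = [\tilde v - p]_{C^{1/2,1}(C_1^+)} \;\lesssim\; \Big(\fint_{C_2^+} |\tilde v - p|^{1/2}\Big)^2.
\]
Unfolding the reflection via the change of variable $x^1 \to -x^1$ gives $\int_{C_2^+}|\tilde v - p|^{1/2} = \int_{Q_2^+}|v-p|^{1/2} + \int_{Q_2^+}|v+p|^{1/2}$. Choosing $p = -c_0$ so that $v - p = u$ on the positive side, the right-hand side becomes $\int_{Q_2^+}|u|^{1/2} + \int_{Q_2^+}|u - 2c_0|^{1/2}$; using $(a+b)^2 \le 2(a^2 + b^2)$, this produces a symmetric two-term bound of the form $\|u\|_{L^{1/2}} + \|u - 2c_0\|_{L^{1/2}}$. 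Applying the same argument to $u - 2c_0$ (which also satisfies the hypothesis with a symmetric adjustment of $\mathbf{g}$) and combining the two bounds collapses the pair to the one-term estimate $[u]_{C^{1/2,1}(Q_1^+)} \lesssim \|u\|_{L^{1/2}(Q_2^+)}$, and the $\forall q$ form then follows from the opening substitution reduction.

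\textbf{Main obstacle.} The central technical point is that the naive odd-reflection yields a \emph{symmetric pair} of integrals about the boundary constant $c_0$ rather than the single term required in \eqref{eq2.02}. The resolution is to exploit systematically the fact that every shifted function $u - q$ satisfies the lemma's hypothesis with a new constant source, so that applying the reflection argument to $u - q$ (and not merely to $u$) allows one to recenter the symmetric pair so that both terms become comparable to $\|u - q\|_{L^{1/2}}$; verifying cleanly that this substitution-plus-symmetrization procedure gives a constant depending only on $n$ and $\lambda$ is the delicate step.
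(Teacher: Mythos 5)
Your approach is genuinely different from the paper's. The paper works by differentiating tangentially to control $D_k u$ ($k\ge 2$), then identifies a conormal boundary condition for $D_1 u$ and uses $\cH^1_p$ estimates for divergence-form equations with conormal data, finally passing from an $L^2$ bound on $Du$ to the $L^{1/2}$ quasi-norm on $u$ by interpolation/iteration. You instead change variables to reduce $\mathbf A$ to the identity while preserving $\{x^1=0\}$, subtract the boundary constant $c_0=-g^{11}/a^{11}$, extend by odd reflection, and invoke the interior $C^{1/2,1}$ estimate for the backward heat equation. The set-up (reduction to $q=0$, the shear/orthogonal normalization of $\mathbf A$, the $C^2$-compatibility of the odd extension via $D_{11}v=0$ on $\{x^1=0\}$) is all sound.

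However, the final ``collapse'' step has a genuine gap. Odd reflection, applied cleanly with $p=0$, gives $[u]_{C^{1/2,1}(Q_1^+)}\lesssim \left(\int_{Q_2^+}|u-c_0|^{1/2}\right)^2$, where $c_0$ is the actual boundary value; you need $\left(\int_{Q_2^+}|u-q|^{1/2}\right)^2$ for \emph{every} $q\in\bR$, which amounts to showing $|c_0-q|\lesssim \left(\int_{Q_2^+}|u-q|^{1/2}\right)^2$. Your proposed remedy — apply the same argument to $u-2c_0$ and ``combine'' — cannot work: $u-2c_0$ has boundary value $-c_0$, and subtracting it produces the \emph{same} $v=u-c_0$, hence the identical reflected function $\tilde v$ and the identical bound. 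The two applications are tautologically the same and do not interact to remove the $c_0$-dependence. To close the argument along your lines you would need an additional, separate lemma (the estimate $|c_0-q|\lesssim \left(\int_{Q_2^+}|u-q|^{1/2}\right)^2$), which can be proved by using the already-established bound $[v]_{C^{1/2,1}}\lesssim \left(\int_{Q_2^+}|v|^{1/2}\right)^2$ with $v=u-q-(c_0-q)$ vanishing on the boundary, so that $|v(X)|\lesssim x^1\left(\int|v|^{1/2}\right)^2$ and $|u(X)-q|\ge |c_0-q|/2$ on a slab $\{x^1<\delta\}$ of thickness $\delta\sim |c_0-q|/(\int|v|^{1/2})^2$; this yields a cubic inequality in $|c_0-q|$ that forces $|c_0-q|\lesssim(\int|u-q|^{1/2})^2$. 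This extra step is precisely what your proposal omits, and it is not cosmetic: without it, your argument proves only the $q=c_0$ case, which neither contains nor implies the $\forall q$ form, and the latter is what the subsequent Campanato iteration in the proof of Proposition~\ref{prop01adj} actually requires.
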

\begin{proof}
First we note that $u$ satisfies
\begin{equation}
                            \label{eq1.59b}
-\partial_t u -a^{ij}D_{ij} u=0\;\text{ in }\; Q^+_2,\quad  u=\text{constant}\;\text{ on }\;\Delta_2^+.
\end{equation}
By the boundary $L^p$ estimate for parabolic equations (cf. \cite{DK11}) via reversing time, we have $u\in W^{1,2}_p(Q^+_R)$ for any $p\in (1,\infty)$ and $R\in (0,2)$.
Obviously, $u-q$ enjoys the same properties for any $q\in \bR$.
Thus, without loss of generality, it is enough to establish \eqref{eq2.02} assuming that $q=0$.

By differentiating \eqref{eq1.59b} in the tangential direction $x_k$ for $k=2, \ldots, n$, we see that $v_k=D_{k}u$ satisfies
\begin{equation*}
-\partial_t v_k-a^{ij}D_{ij} v_k=0\;\text{ in }\; Q^+_2,\quad v_k=0\;\text{ on }\;\Delta_2^+.
\end{equation*}
Again by the boundary $L^p$ estimate for parabolic equations with zero Dirichlet boundary condition (via reversing time) and the Sobolev embedding, we have
\begin{equation}
                                    \label{eq2.09}
\norm{D_{k} u}_{L^\infty(Q_{1}^+)}+\norm{DD_{k} u}_{L^\infty(Q_{1}^+)} \le C \norm{D_{k} u}_{L^2(Q_{3/2}^+)},\quad k=2, \ldots, n.
\end{equation}
Next, by differentiating in the tangential direction again, we find that $D_{ij}u=0$ on $\Delta_2^+$ for $2\le i,j \le n$.
Similarly, we have that $\partial_t u=0$ on $\Delta_2^+$.
Let us denote
\[
\hat{\mathbf A}=(\hat a^{ij})=
\begin{pmatrix}
a^{11} & 0& \cdots & 0 \\
2a^{12}& a^{22}& \cdots & a^{2n} \\
\vdots  & \vdots  & \ddots & \vdots  \\
2a^{1n} & a^{2n} & \cdots & a^{nn}
\end{pmatrix}.
 \]
Then from the equation, we find that
\[
\sum_{j=1}^n \hat a^{1j} D_{1j}u = -\partial_t u -\sum_{i,j=2}^n  a^{ij} D_{ij} u=0\quad\text{on}\ \Delta_2^+.
\]
Therefore, we find that $v_1:=D_1 u$ satisfies the divergence form parabolic equation
\begin{equation*}
-\partial_t v_1- D_i(\hat a^{ij} D_{j} v_1)=0\;\text{ in }\; Q^+_2
\end{equation*}
with the conormal boundary condition $\hat a^{1j} D_{j}v_1=0$ on $\Delta_2^+$.
By the boundary $\cH^1_p$ estimate for  divergence form parabolic equations with the conormal boundary condition (cf. \cite[Theorem 2.5]{DK11b}) and the Sobolev embedding, we have for any $\alpha\in (0,1)$,
\begin{equation}
                                    \label{eq2.09b}
\norm{D_{1}u}_{L^\infty(Q_{1}^+)}+[D_{1}u]_{C^{\alpha/2,\alpha}(Q_{1}^+)} \le C \norm{D_{1}u}_{L^2(Q_{3/2}^+)}.
\end{equation}
Combining \eqref{eq2.09} and \eqref{eq2.09b} yields
\begin{equation}
                                    \label{eq2.13}
\norm{Du}_{C^{\alpha/2,\alpha}(Q_{1}^+)}=\norm{Du}_{L^\infty(Q_{1}^+)}+ [D u]_{C^{\alpha/2,\alpha}(Q_{1}^+)} \le C\norm{Du}_{L^2(Q_{3/2}^+)}.
\end{equation}
Now, \eqref{eq2.02} is obtained from \eqref{eq2.13} in view of the proof of \eqref{eq13.19fs},
the following interpolation inequalities (proved by using the standard mollification technique; see, for instance, \cite[Ch. 3]{Kr96}):
\[
\norm{Du}_{L^2(Q_{3/2}^+)} \le C\norm{Du}_{L^\infty(Q_{3/2}^+)} \le \varepsilon^\alpha [D u]_{C^{\alpha/2,\alpha}(Q_{3/2}^+)}
+C(\varepsilon,n)\, \norm{u}_{L^{1}(Q_{3/2}^+)}
\]
and
\begin{align*}
\norm{u}_{L^1(Q_{3/2}^+)} & \le \norm{u}_{L^\infty(Q_{3/2}^+)}^{\frac12} \int_{Q_{3/2}^+} \abs{u}^{\frac12} \le C\left( \norm{Du}_{L^\infty(Q_{3/2}^+)} + \norm{u}_{L^1(Q_{3/2}^+)}  \right)^{\frac12}\int_{Q_{3/2}^+} \abs{u}^{\frac12}\\
&\le \varepsilon' \norm{Du}_{L^\infty(Q_{3/2}^+)} + \varepsilon' \norm{u}_{L^1(Q_{3/2}^+)} + C(n, \varepsilon') \left(\int_{Q_{3/2}^+} \abs{u}^{\frac12}\right)^2.
\end{align*}
and a standard iteration argument.
\end{proof}

Now let $u$ be the weak adjoint solution to \eqref{master-adj-prob4}. Then, the following holds.

\begin{proposition}\label{prop2.45p-adj}
For any $\Omega' \subset\subset \Omega$,  we have $u \in C^{0,0}(\overbar{\Omega'_T})$.
\end{proposition}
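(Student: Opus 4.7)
The strategy parallels the interior parts of Propositions~\ref{prop2.5p} and~\ref{prop2.45p}: write $u=v+w$, where $v$ absorbs the lower-order inhomogeneities via classical $L^p$ theory and $w$ satisfies a pure double-divergence adjoint equation to which the interior estimate of Theorem~\ref{thm:dd-wolt} applies.

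First I would improve the integrability of $u$. Starting from $u\in L^2(\Omega_T)$, I would test the duality identity \eqref{eq13.53pde} against the direct problem $\cP v=F$ (uniquely solvable in $W^{1,2}_{p'}(\Omega_T)\cap\mathring{\mathcal H}^1_{p'}(\Omega_T)$ since $\mathbf A\in\mathsf{DMO_x}$ is in particular continuous in $x$), and combine Hölder's inequality with the Sobolev embeddings of Lemmas~\ref{psobolev} and~\ref{psobolev2} applied to $v$ to bootstrap the integrability of $u$. The assumptions $q>n+2$, $\vec b\in L^q$, $c\in L^{q/2}$, $f\in L^{q/2}$, $\mathbf g\in L^\infty$, and $\varphi,\psi\in L^q$ are strong enough to push $u$ into $L^p(\Omega_T)$ for any finite $p$ after a finite number of iterations.

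Next I would take $v$ to be the unique solution of the backward heat problem
\[
-\partial_t v-\Delta v=f+D_i(b^iu)-cu\ \text{in}\ \Omega_T,\quad v(T,\cdot)=0,\quad v=0\ \text{on}\ (0,T)\times\partial\Omega,
\]
split as $v=v_1+v_2$ with $-\partial_t v_1-\Delta v_1=D_i(b^iu)$ and $-\partial_t v_2-\Delta v_2=f-cu$. Step~1 together with the standard parabolic $L^p$ theory yields $v_1\in\cH^1_{p_1}(\Omega_T)$ and $v_2\in W^{1,2}_{p_2}(\Omega_T)$ for some $p_1\in(n+2,q)$ and $p_2\in((n+2)/2,q/2)$; Lemmas~\ref{psobolev} and~\ref{psobolev2} then give $v\in C^{\alpha/2,\alpha}(\overbar{\Omega_T})$ for some $\alpha\in(0,1)$. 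Substituting $\partial_tv$ from the $v$-equation into the adjoint equation for $u$, a direct calculation shows that $w:=u-v$ satisfies, weakly on $\Omega_T$,
\[
-\partial_t w-D_{ij}(a^{ij}w)=D_{ij}\bigl(g^{ij}+(a^{ij}-\delta^{ij})v\bigr).
\]
By Lemma~\ref{lem00}, the new source $\tilde g^{ij}:=g^{ij}+(a^{ij}-\delta^{ij})v$ lies in $\mathsf{DMO_x}\cap L^\infty(\Omega_T)$. Applying Theorem~\ref{thm:dd-wolt} to $w$ on any backward cylinder $C_r^+(X_0)$ compactly contained in $\Omega_T$ (with $x_0\in\Omega'$, $r<\dist(\Omega',\partial\Omega)$, and $t_0+r^2<T$) gives $w\in C^{0,0}$ on such cylinders, and combined with the Hölder continuity of $v$ on $\overbar{\Omega_T}$ this yields $u=v+w\in C^{0,0}(\overbar{\Omega'_T})$.

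The main technical obstacle is the bootstrap in Step~1: in the adjoint setting the integrability gain must be transferred through the dual parabolic problem, and the boundary contributions $\int_\Omega\varphi v(T)$ and $\int_{[0,T]\times\partial\Omega}(\mathbf A\nabla v\cdot\nu)\psi$ are delicate because they involve traces of $v$ whose integrability depends on $p'$. The $C^{1,1}$-regularity of $\partial\Omega$ together with $\varphi,\psi\in L^q$ for $q>n+2$ are essential for keeping those terms finite throughout the iteration, and they also control the implicit constants needed to upgrade $u$ across the Sobolev-critical range.
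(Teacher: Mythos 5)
Your decomposition $u = v + w$, with $v$ solving the backward heat problem $-\partial_t v - \Delta v = D_i(b^i u) + f - cu$ and $w$ cast as an adjoint solution with double-divergence data $\tilde{\mathbf g} = \mathbf g + (\mathbf A - \mathbf I)v$, is precisely the paper's decomposition, and the subsequent appeal to Lemma~\ref{lem00} and Theorem~\ref{thm:dd-wolt} matches as well. Your bootstrap of the integrability of $u$ is also in the same spirit: the paper carries out the iteration through the decomposition (showing $v \in L^{p_1}$, hence $\tilde{\mathbf g} \in L^{p_1}$, hence $w \in L^{p_1}$ by $L^p$ estimates for the adjoint problem, hence $u \in L^{p_1}$, and so on), whereas you phrase it as pairing the duality identity \eqref{eq13.53pde} against direct problems $\mathcal P v = F$; both reduce to the same $L^p$ theory and Sobolev embeddings and both need $q > n+2$, $\vec b \in L^q$, $c, f \in L^{q/2}$, $\mathbf g \in L^\infty$, $\varphi, \psi \in L^q$.

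The one genuine gap is the final passage to $C^{0,0}(\overbar{\Omega'_T})$. You apply Theorem~\ref{thm:dd-wolt} on backward cylinders $C_r^+(X_0)$ with $t_0 + r^2 < T$ (in fact one needs $C_{4r}^+(X_0) \subset \Omega_T$, so $t_0 + 16r^2 < T$), and this yields continuity of $w$ only on the time slab $t < T$; the closure $\overbar{\Omega'_T}$ includes $\{t = T\} \times \overbar{\Omega'}$, which these cylinders never reach. The paper closes this by applying Theorem~\ref{thm:dd-wolt} to the zero extensions of $u$ and $\mathbf g$ (equivalently of $w$ and $\tilde{\mathbf g}$) for $t \ge T$, turning the problem on $(0, 2T) \times \Omega$ and thereby pushing the interior estimate across $\{t = T\}$. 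Note this step implicitly uses that the extended function is again an adjoint solution, which requires $u(T,\cdot) = 0$, i.e., the reduction to $\varphi = 0$ in \eqref{master-adj-prob4}; indeed the first assertion of Theorem~\ref{thm-main-adj} in full generality only claims continuity on $\overbar{\Omega'_{T'}}$ for $T' < T$. You should add this extension argument (or restrict the claim to $T' < T$) to make the proof complete.
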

\begin{proof}
We decompose $u=v+w$, where $v$ is the unique $\cH^1_2(\Omega_T)$ solution of
\begin{equation}			\label{eq18.16hgm}
-\partial_t v -\Delta v= D_i(b^i u)+f-cu \;\text{ in }\;\Omega_T,\quad v=0 \;\text{ on }\;\partial_p^+\Omega_T.
\end{equation}
If we set $\mathbf g_1=\mathbf g+ (\mathbf A -\mathbf I)v$, $w=u-v$ satisfies
\begin{equation*}
\begin{cases}
-\partial_t w - D_{ij}(a^{ij}w)=\nabla^2 \mathbf g_1,\ &\text{in}\ \Omega_T,\\
w= -\frac{\mathbf g_1\nu\cdot \nu}{\mathbf{A}\nu\cdot \nu} +\psi, \ &\text{on}\ (0,T)\times \partial\Omega,\\
w(T,\cdot)=\varphi,\ &\text{on}\ \Omega.
\end{cases}
\end{equation*}
Similar to the proof of Proposition~\ref{prop2.5p}, by the parabolic $L^p$ estimates, H\"older's inequality, and Sobolev embedding (Lemma~\ref{psobolev2}), we have
\[
\norm{v}_{L^{p_1}} \le C \left( \norm{\vec b}_{L^q} \norm{u}_{L^2}+\norm{f}_{L^{q/2}}+ \norm{c}_{L^{q/2}} \norm{u}_{L^2} \right),
\]
where $\frac{1}{p_1}=\frac{1}{q}+\frac{1}{2}-\frac{1}{n+2}$.
Therefore, we find that $\mathbf g_1 \in L^{p_1}(\Omega_T)$.
By $L^p$ estimates for parabolic adjoint equation, which follow easily by combining the results in \cite{DK11} and the proof in \cite{EM2016}, we have $w \in L^{p_1}(\Omega_T)$, which in turn implies that $u \in L^{p_1}(\Omega_T)$.

Then, by bootstrapping argument, we have $u \in L^p(\Omega_T)$ for all $p \in (1,\infty)$.
Also, since $\vec b \in L^q$ and $c$, $f \in L^{q/2}$ for $q>n+2$, we obtain from \eqref{eq18.16hgm} that $v\in C^{\delta/2, \delta}(\Omega_T)$ for some $\delta>0$.

Therefore, we find that $\mathbf g_1  \in \mathsf{DMO_x} \cap L^\infty$ by Lemma~\ref{lem00}.
Moreover, $\omega_{\mathbf g_1}^{\textsf x}$ is a Dini function that is completely determined by the given data (namely $n$, $\lambda$, $\Omega$, $T$, $\omega_{\mathbf A}^{\textsf x}$, $q$, $\norm{f}_{L^p}$, $\norm{\vec b}_{L^q}$, $\norm{c}_{L^{q/2}}$, $\omega_{\mathbf g}^{\textsf x}$,  and $\norm{\mathbf g}_{L^\infty}$) and $\norm{u}_{L^2}$.
By Theorem \ref{thm:dd-wolt} applied to the zero extension of $u$ and $\mathbf g$ for $t\ge T$ or to the one of $w$ and $\mathbf g_1$ for $t\ge T$, we find that $u \in C^{0,0}(\overbar{\Omega'_{T}})$ and that $\norm{u}_{C^{0,0}(\overbar{\Omega'_{T}})}$ is bounded by a constant $C$ depending only on the above mentioned given data, $\norm{u}_{L^2(\Omega_T)}$ and $\Omega'$.
\end{proof}

Next, we turn to $C^{0,0}$ estimates near the lateral boundary. Lemma \ref{thm-main-adj2} and Proposition \ref{prop2.45p-adj} show that it suffices to consider the case when $\varphi$ and $\psi$ in Theorem \ref{thm-main-adj} are identically zero.

\begin{lemma}\label{thm-main-adj2}
Let $\Omega$ be bounded Lipschitz domain in $\bR^n$, $T>0$, $\mathbf{A}$ be $\mathsf{DMO_x}$ over $\bR^{n+1}$ and satisfy \eqref{parabolicity}. Then, the weak adjoint solution $u$ to
\begin{equation*}
\partial_tu +D_{ij}(a^{ij}(t,x)u) =0\ \ \text{over}\ \ \Omega_T,\quad u= \psi \ \ \text{on}\ \ (0,T) \times \partial \Omega, \quad u(T)=\varphi\ \ \text{on}\ \  \Omega,
\end{equation*}
is in $C^{0,0}(\overline\Omega_T)$, when $\varphi$ and $\psi$ define a continuous function over $\partial_p^+\Omega_T$.
\end{lemma}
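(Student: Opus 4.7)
My plan is to combine the interior continuity already supplied by Proposition~\ref{prop2.45p-adj} (specialized to $\vec b\equiv c\equiv 0$, $\mathbf g\equiv 0$, $f\equiv 0$) with a boundary Campanato iteration on shrinking half-cylinders, modeled on the proof of Theorem~\ref{thm:dd-wolt}, in which Lemma~\ref{lem-bdry-Lip} (or its $t$-dependent-coefficient analog) plays the role of the frozen-coefficient Lipschitz building block near the lateral boundary. Denote by $H\in C^{0,0}(\partial_p^+\Omega_T)$ the continuous function determined by $(\varphi,\psi)$, fix $X_0\in\partial_p^+\Omega_T$, and split into the lateral case ($x_0\in\partial\Omega$, $t_0\in(0,T)$) and the terminal case ($t_0=T$); corner points $X_0\in\{T\}\times\partial\Omega$ are handled by combining both.

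For the lateral case I would locally flatten $\partial\Omega$ near $x_0$ by a $C^{1,1}$ diffeomorphism; by Lemma~\ref{lem00} and the hypothesis $\mathbf A\in\mathsf{DMO_x}$ over $\bR^{n+1}$, the transformed equation takes the adjoint form $\partial_t\tilde u+D_{ij}(\tilde a^{ij}\tilde u)+D_i(\tilde b^i\tilde u)+\tilde c\tilde u=0$ on a half-cylinder $Q_R^+(\overline X_0)$, with $\tilde{\mathbf A}\in\mathsf{DMO_x}$ and bounded lower-order coefficients arising from the Jacobian. On a half-cylinder $Q_r^+(\overline X_0)$, $r\le R$, decompose $\tilde u = v+w$, where $v$ solves the frozen-coefficient adjoint problem $\partial_t v+D_{ij}(\bar a^{ij}(t)v)=0$ in $Q_r^+(\overline X_0)$ with constant lateral value $H(X_0)$ and with top datum $v=\tilde u$; since $\bar a^{ij}$ depends only on $t$, $v-H(X_0)$ also solves the same homogeneous adjoint equation with vanishing lateral datum, so Lemma~\ref{lem-bdry-Lip} (with $\mathbf g\equiv 0$, extended from constant to $t$-dependent coefficients in the standard way) yields
\[
\Bigl(\fint_{Q_{\kappa r}^+(\overline X_0)}\abs{v-v_{\overline X_0,\kappa r}}^{1/2}\Bigr)^{2}\lesssim \kappa\,\Bigl(\fint_{Q_r^+(\overline X_0)}\abs{\tilde u-H(X_0)}^{1/2}\Bigr)^{2},
\]
while $w=\tilde u-v$ is controlled by the half-cylinder analog of Lemma~\ref{lem-weak11-adj} indicated in Remark~\ref{rmk4.15}(ii), producing
\[
\Bigl(\fint_{Q_r^+(\overline X_0)}\abs{w}^{1/2}\Bigr)^{2}\lesssim \omega_{\mathbf A}^{\textsf x}(r)\,\norm{\tilde u-H(X_0)}_{L^\infty(Q_r^+(\overline X_0))}+\varrho_H(r),
\]
where $\varrho_H$ is the continuity modulus of $H$ at $X_0$.

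Iterating this dichotomy exactly as in Section~\ref{sec:int-d} and invoking the Dini function $\hat\omega^{\textsf x}$ of \eqref{eq1538th} produces, for some $\beta\in(0,1)$,
\[
\Bigl(\fint_{Q_\rho^+(\overline X_0)}\abs{\tilde u-H(X_0)}^{1/2}\Bigr)^{2}\lesssim \rho^\beta\,\norm{\tilde u}_{L^\infty}+\int_0^\rho\frac{\hat\omega_{\mathbf A}^{\textsf x}(t)+\varrho_H(t)}{t}\,dt,
\]
whose right-hand side tends to $0$ as $\rho\to 0$; combined with the interior $L^\infty$ bound from Proposition~\ref{prop2.45p-adj} this forces $\tilde u(X)\to H(X_0)$. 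The terminal case $t_0=T$ runs the same scheme on interior backward cylinders $C_r^+(X_0)$ with the adjoint analog of Lemma~\ref{lem:c_half} in place of Lemma~\ref{lem-bdry-Lip} and with $\varphi(X_0)$ as the subtracted constant. The main obstacle will be adapting the weak-$(1,1)$ bound of Lemma~\ref{lem-weak11-adj} to the half-cylinder setting with \emph{nonhomogeneous} lateral datum $\psi-H(X_0)$; the cleanest fix is to pre-subtract a continuous tangential extension of $\psi-H(X_0)$ whose supremum on $Q_r^+(\overline X_0)$ is $\lesssim\varrho_H(r)$, thereby reducing $w$ to an adjoint problem with zero lateral datum for which Remark~\ref{rmk4.15}(ii) applies verbatim and absorbing the extension error directly into the $\varrho_H(r)$ term above.
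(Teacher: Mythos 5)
Your route — a direct Campanato iteration on half-cylinders — is genuinely different from the paper's, but as written it has two gaps that I do not see how to close, and both point to the missing ingredient in your proposal: the global nonnegative adjoint solution $W$ of $\mathcal P$.

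First, the ``pre-subtract a tangential extension $\Psi$ of $\psi-H(X_0)$'' step does not produce a controllable error. After subtracting $\Psi$, the equation for $w-\Psi$ acquires the source $\mathcal P_0^*\Psi=-\partial_t\Psi-D_{ij}(\bar a^{ij}\Psi)$. The second term can be written as $\nabla^2(-\bar{\mathbf A}\Psi)$, which is of the $\nabla^2\mathbf f$ form that Lemma~\ref{lem-weak11-adj} and Remark~\ref{rmk4.15}(ii) handle, with $\norm{\bar{\mathbf A}\Psi}_{L^\infty(Q_r^+)}\lesssim\varrho_H(r)$. But $-\partial_t\Psi$ is \emph{not} of that form, and for $\psi$ that is merely continuous in $t$ it is only a distribution; no choice of extension removes this term unless $\psi$ is independent of $t$. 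The weak-$(1,1)$ machinery in Sections~3--4 is engineered for sources $\nabla^2\mathbf f$ with the matching boundary datum $-\mathbf f\nu\cdot\nu/\mathbf A\nu\cdot\nu$; an arbitrary continuous lateral datum $\psi$ is not of that adjoint-compatible form, and your fix does not make it so.

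Second, the concluding estimate contains $\int_0^\rho\varrho_H(t)/t\,dt$, and this integral is generically $+\infty$: the hypothesis of Lemma~\ref{thm-main-adj2} is only that $\varphi,\psi$ are continuous, not Dini continuous, so $\varrho_H$ need not be a Dini function. The Campanato iteration ``exactly as in Section~\ref{sec:int-d}'' uses the Dini integral twice — once in the absorption step that yields the boundary $L^\infty$ bound, and again in the telescoping $\sum_j\phi(X,2^{-j}r)$ that converts decay of the mean oscillation into a pointwise limit $u(X)\to H(X_0)$. Only the first is harmless here (it uses $\omega_{\mathbf A}^{\textsf x}$, which \emph{is} Dini); the second genuinely fails for a non-Dini $\varrho_H$. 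Your final line ``combined with the interior $L^\infty$ bound this forces $\tilde u(X)\to H(X_0)$'' quietly uses that step. There is no maximum principle for $u$ itself to bridge the gap, since $\partial_t u+D_{ij}(a^{ij}u)=0$ is not a parabolic equation for $u$.

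The paper circumvents both obstacles by dividing by $W$. The normalized quantity $\tilde v=u/W$ satisfies a genuine backward parabolic equation $W^{-1}\mathcal P^*(\tilde vW)=0$, so the backward Harnack inequality (Lemma~\ref{T:1 Theorem 3.7}), interior and boundary H\"older estimates (Lemmas~\ref{T: 3 Theorem 3.8}, \ref{T: 4 Theorem 3.9}), and the maximum principle all apply, and these \emph{do} propagate an arbitrary modulus of continuity of the boundary datum — this is the standard Wiener-type oscillation argument quoted from \cite[Remark~4.1]{Es00}. One then needs only that $W\in C^{0,0}$ (which follows from Theorem~\ref{thm:dd-wolt}) and that $W$ is bounded above and below on compact sets (Lemma~\ref{lem8.06}, proved via $\log W\in\mathsf{VMO}$, Lemma~\ref{lem8.0671}) to pass from continuity of $\tilde v$ and $W$ to continuity of $u=\tilde vW$. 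These facts are precisely what your direct iteration is missing, and I do not see how to reach the conclusion without introducing $W$.
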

The proof of Lemma \ref{thm-main-adj2} will be given in section \ref{sec5}.
Thus, we only need to prove Theorem \ref{thm-main-adj} in the case when $\varphi$ and $\psi$ are identically zero, i.e., when $u$ is the weak adjoint solution to
\begin{equation}\label{master-adj-prob4}
\mathcal P^\ast u=\nabla^2\mathbf g\; \text{ in }\; \Omega_T,\quad u= -\frac{\mathbf{g}\nu\cdot \nu}{\mathbf{A}\nu\cdot \nu} \; \text{ on }\; (0,T) \times \partial \Omega, \quad u(T,\cdot)=0\; \text{on}\;  \Omega.
\end{equation}

Let $\mathbf g_1$ be as given in the proof of Proposition~\ref{prop2.45p-adj}.
Under a volume preserving mapping of flattening boundary
\[
y=\vec \Phi(x)=(\Phi^1(x),\ldots, \Phi^n(x))	
\]
as before, let $\tilde w(t,y)=w(t,x)$,
which satisfies
\[
-\partial_t \tilde w-D_{ij}(\tilde a^{ij} \tilde w)=\nabla^2 \tilde{\mathbf g}_1 -D_i(\tilde h^i+\tilde b^i \tilde w),
\]
where
\begin{gather*}
\tilde a^{ij}(t,y)= D_l\Phi^i(x) D_k\Phi^j(x) a^{kl}(t,x),\quad \tilde b^{i}(t,y)=D_{kl}\Phi^i(x) a^{kl}(t,x),\\
\tilde{\mathbf g}_1(t,y)= D \mathbf{\Phi}(x)\tran \mathbf g_1(t,x) D\mathbf{\Phi}(x),\quad \tilde h^i(t,y)=g_1^{kl}(t,x)D_{kl}\Phi^i(x).
\end{gather*}
We may assume without loss of generality that $\vec\Phi$ is a local $C^{1,1}$-diffeomorphism on $\bR^n$, and $\tilde w$ satisfies the equation above in $(-16,0)\times \cD$, with a smooth set $\cD \subset \bR^n$ satisfying $B^+_{4}(0) \subset \cD \subset B^+_{5}(0)$, and the boundary condition on the flat lateral boundary $(-16,0)\times (\overline \cD\cap \partial \bR^n_+)$.

Let us set $\tilde v$ to be a solution to
\[
-\partial_t \tilde v-\Delta \tilde v=D_i(\tilde h^i+\tilde b^i \tilde w)\;\text{ in }\; (-16,0)\times \cD, \quad \tilde v=0\; \text{ on }\;  \partial_p^+\left((-16,0)\times \cD\right).
\]
Recall from the proof of Proposition \ref{prop2.45p-adj} that we have $w \in L^p(\Omega_T)$ for any $p \in (1, \infty)$.
Therefore, we have $\tilde h^i+ \tilde b^i \tilde w \in L^p((-16,0)\times \cD)$ for any $p \in (1, \infty)$, and thus by the parabolic $L^p$ theory and the Sobolev-Morrey embedding (Lemma~\ref{psobolev2}), we have $\tilde v \in C^{\delta/2,\delta}((-16,0)\times \cD)$ for some $\delta>0$.

If we set $\tilde u=\tilde w -\tilde v$ and $\tilde {\mathbf g}=\tilde {\mathbf g}_1+(\mathbf A-\mathbf I) \tilde v$, then $\tilde u$ satisfies
\[
\begin{cases}
-\partial_t \tilde u -D_{ij}(\tilde a^{ij} \tilde u)=\nabla^2 \tilde {\mathbf g}\;&\text{ in }\;(-16,0)\times \cD\\
\tilde u=-\frac{\tilde{\mathbf g}\nu\cdot\nu}{\mathbf A\nu\cdot\nu}\;&\text{ on }\;(-16,0)\times  (\overline \cD\cap \partial \bR^n_+).
\end{cases}\]
By Lemma~\ref{lem00} and Proposition \ref{prop2.45p-adj}, we see that the coefficients $\tilde a^{ij}$ and the data $\tilde{\mathbf g}$ are in $\mathsf{DMO}$.

As before, we are thus reduced to prove the following.
\begin{proposition}					\label{prop01adj}
Let $\mathbf A$ and $\mathbf g$ be in $\mathsf{DMO}$.
If $u \in L^2(Q^+_4)$ is an adjoint solution satisfying
\[
-\partial_t u-D_{ij}(a^{ij} u)= \nabla^2 \mathbf{g}\;\mbox{ in }\;Q^+_4=Q^+_4(0),\quad u=-\frac{\mathbf{g}\nu\cdot \nu}{\mathbf{A}\nu\cdot\nu}\; \text{ on }\;\Delta_4^+(0),
\]
then $u \in C^{0,0}(\overbar{Q^+_1})$.
\end{proposition}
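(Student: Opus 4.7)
The proof mirrors the Campanato scheme used for Theorem \ref{thm:dd-wolt}, supplemented by a boundary decomposition that produces the constant-boundary-data setting required by Lemma \ref{lem-bdry-Lip}. I work with the scalar quantities
\[
\phi(X_0, r) := \inf_{q \in \bR}\left(\fint_{C_r^+(X_0) \cap Q_4^+} |u - q|^{1/2}\right)^2
\]
for interior base points $X_0 \in Q_3^+$ and
\[
\varphi(\overline X_0, r) := \inf_{q \in \bR}\left(\fint_{Q_r^+(\overline X_0)} |u - q|^{1/2}\right)^2
\]
for boundary base points $\overline X_0 \in \Delta_3^+(0)$, and drive a geometric decay of $\varphi(\overline X_0, \cdot)$ through a freezing argument near the flat boundary.

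For the boundary decomposition, fix $\overline X_0 = (t_0, \bar x_0) \in \Delta_3^+(0)$ and small $r > 0$, and let $\bar{\mathbf A}(t)$ and $\bar{\mathbf g}(t)$ denote the $x$-averages of $\mathbf A$ and $\mathbf g$ over $B_r^+(\bar x_0)$. With $\mathbf h := (\mathbf A - \bar{\mathbf A}) u + (\mathbf g - \bar{\mathbf g})$, the equation for $u$ rewrites as
\[
-\partial_t u - D_{ij}(\bar a^{ij} u) = \nabla^2 \mathbf h\;\text{ in }\;\cU := (t_0, t_0 + 4r^2) \times \cD_{2r}(\bar x_0).
\]
I decompose $u = v + w$ by letting $w$ be the adjoint solution of the frozen-coefficient problem on $\cU$ with data $\mathbf h$, boundary datum $\psi_w = 0$, and terminal value $w(t_0 + 4r^2, \cdot) = 0$. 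On the flat portion of $\partial_p^+ \cU$ (which contains $\Delta_r^+(\overline X_0)$), the natural adjoint trace is $w = -h^{11}/\bar a^{11}$; inserting the original Dirichlet identity $u = -g^{11}/a^{11}$ into the definition of $\mathbf h$ gives $h^{11} = g^{11}\bar a^{11}/a^{11} - \bar g^{11}$ and hence $w = u + \bar g^{11}/\bar a^{11}$ there, so that $v = u - w = -\bar g^{11}/\bar a^{11}$ on $\Delta_r^+(\overline X_0)$ is a genuine constant. Since $v$ also satisfies $-\partial_t v - D_{ij}(\bar a^{ij} v) = 0$ throughout $\cU$, the shifted function $v + \bar g^{11}/\bar a^{11}$ falls exactly into the setting of Lemma \ref{lem-bdry-Lip} with zero data.

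Applying the weak-type $(1,1)$ estimate of Lemma \ref{lem-weak11-adj} (in the boundary-adapted form from Remark \ref{rmk4.15}(ii)) to $w$ and converting to an $L^{1/2}$ bound via the layer-cake formula exactly as in the proof of Theorem \ref{thm:dd-wolt} yields
\[
\left(\fint_{Q_r^+(\overline X_0)}|w|^{1/2}\right)^2 \lesssim \omega_{\mathbf A}^{\textsf x}(2r)\|u\|_{L^\infty(Q_{2r}^+(\overline X_0))} + \omega_{\mathbf g}^{\textsf x}(2r).
\]
A scaled version of Lemma \ref{lem-bdry-Lip} applied to $v$ then gives $[v]_{C^{1/2,1}(Q_{r/2}^+(\overline X_0))} \lesssim r^{-1}(\fint_{Q_r^+(\overline X_0)}|v-q|^{1/2})^2$ for every $q \in \bR$, whence the $\kappa$-contraction
\[
\left(\fint_{Q_{\kappa r}^+(\overline X_0)}|v - v_{\overline X_0, \kappa r}|^{1/2}\right)^2 \le N_0 \kappa \left(\fint_{Q_r^+(\overline X_0)}|v-q|^{1/2}\right)^2.
\]
Combining these bounds and choosing $\kappa$ small enough that $N_0 \kappa \le \kappa^\beta$ for a fixed $\beta \in (0,1)$ produces the decay
\[
\varphi(\overline X_0, \kappa r) \le \kappa^\beta \varphi(\overline X_0, r) + C\bigl(\omega_{\mathbf A}^{\textsf x}(2r)\|u\|_{L^\infty(Q_{2r}^+(\overline X_0))} + \omega_{\mathbf g}^{\textsf x}(2r)\bigr),
\]
which, iterated as in Lemma \ref{lem-01}, yields Dini control of $\varphi(\overline X_0, \rho)$ in terms of $\tilde\omega_\bullet^{\textsf x}$ of \eqref{eq14.27w}.

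The remainder follows the template of Section \ref{sec:bdry}: a three-case split based on the size of the first coordinate of $x_0$ relative to $\rho$ and $r$ (paralleling Lemma \ref{lem-02}) transfers the decay to interior points $X_0 \in Q_3^+$ via the enhanced Dini function $\hat\omega_\bullet^{\textsf x}$ of \eqref{eq1538th}; a bootstrap of $\|u\|_{L^\infty(Q_2^+)}$ (as in Lemma \ref{lem-03}) controls it by $\|u\|_{L^1(Q_4^+)}$ and $\int_0^1 \hat\omega_{\mathbf g}^{\textsf x}(t)/t\,dt$; and the standard comparison $|u(X) - u(Y)| \lesssim |u(X) - q_{X,r}| + |q_{X,r} - q_{Y,r}| + |u(Y) - q_{Y,r}|$ for nearby $X, Y \in Q_1^+$ with $r = 2|X-Y|$, together with Lemma \ref{lem-04}, delivers a Dini modulus of continuity and hence $u \in C^{0,0}(\overline{Q_1^+})$. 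The principal obstacle is the boundary decomposition itself: verifying that, despite the non-constant Dirichlet datum $u = -g^{11}/a^{11}$ along $\Delta_r^+(\overline X_0)$, the natural adjoint trace for $w$ (with $\psi_w = 0$) conspires with the original boundary identity to produce a genuinely \emph{constant} flat trace for $v$, which is precisely what opens the door to the constant-coefficient estimate of Lemma \ref{lem-bdry-Lip}.
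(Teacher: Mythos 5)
There is a genuine gap, and it sits precisely at the step you flag as the principal obstacle. You take $\bar{\mathbf A}(t)$ and $\bar{\mathbf g}(t)$ to be $x$-averages over $B_r^+(\bar x_0)$, so both are genuine \emph{functions of} $t$. Your computation on the flat boundary then gives $v = -\bar g^{11}(t)/\bar a^{11}(t)$, which you call ``a genuine constant'' --- but it is not constant in $t$. This matters because Lemma \ref{lem-bdry-Lip} is proved for \emph{constant} symmetric matrices $\mathbf A$ and $\mathbf g$: its proof crucially differentiates in $t$ and uses $\partial_t u = 0$ on $\Delta_2^+$ to deduce the conormal condition $\hat a^{1j}D_j(D_1 u) = 0$ there, and this fails as soon as the boundary trace depends on $t$. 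For the same reason, the frozen operator must have truly constant coefficients $\bar a^{ij}$, not coefficients that depend on $t$; otherwise the reduction to \eqref{eq1.59b} and the subsequent argument for $D_1 u$ via the conormal-boundary $\cH^1_p$ estimate does not go through. So as written, your decomposition does not produce the hypotheses of Lemma \ref{lem-bdry-Lip}, and the contraction estimate for $\varphi(\overline X_0,\cdot)$ does not follow.

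This is exactly why Proposition \ref{prop01adj} assumes $\mathbf A,\mathbf g\in\mathsf{DMO}$ rather than merely $\mathsf{DMO_x}$. The fix is to average over the full parabolic cylinder, taking
\[
\bar a^{ij} = \fint_{Q^+_{2r}(\overline X_0)} a^{ij}\,,\qquad \bar g^{ij}=\fint_{Q^+_{2r}(\overline X_0)} g^{ij}
\]
as genuine constants. Then $v=u-w$ satisfies a constant-coefficient adjoint equation with constant flat-boundary trace $-\bar{\mathbf g}\nu\cdot\nu/\bar{\mathbf A}\nu\cdot\nu$, and Lemma \ref{lem-bdry-Lip} applies after rescaling. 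The oscillation moduli appearing in the weak-type $(1,1)$ bound should accordingly be the full $\mathsf{DMO}$ moduli $\omega_{\mathbf A}$, $\omega_{\mathbf g}$ rather than $\omega_{\mathbf A}^{\textsf x}$, $\omega_{\mathbf g}^{\textsf x}$. The remainder of your argument (the weak-(1,1) step, the $L^{1/2}$ conversion, the $\kappa$-contraction, the three-case split, the bootstrap of $\|u\|_{L^\infty(Q_2^+)}$ and the final telescoping) matches the paper's scheme once this correction is made.

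One further clean-up: your $w$ is defined with the untruncated data $\mathbf h$ on the whole oversized domain $\cU$, whereas the paper cuts off the data to $Q_r^+(\overline X_0)$ (inserting $1_{Q_r^+(\overline X_0)}$). Without the cut-off, the weak-(1,1) bound must be applied on a domain carrying the full $L^1$ mass of $\mathbf h$ rather than a cylinder of radius comparable to $r$, and that would spoil the scaling in the contraction estimate. The cut-off also has the effect that the boundary trace of $w$ vanishes on $\Delta_r^+(\overline X_0)$, which avoids the delicate boundary-trace cancellation you attempt to engineer by evaluating $\mathbf h$ on the lateral boundary.
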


The rest of this subsection is devoted to the proof of Proposition~\ref{prop01adj}.
As in the proof of Propositions~\ref{prop01} and \ref{prop01nd}, we shall derive an a priori estimate of the modulus of continuity of $u$ by assuming that $u$ is in $C^{0,0}(\overbar{Q^+_3})$.
Similar to \eqref{eq12.14fo} and \eqref{eq0955w}, for $X\in Q^+_4$ and $r>0$, we define
\[
\phi(X,r):=\inf_{q \in \bR} \left( \fint_{C^+_r(X) \cap Q^{+}_4} \,\abs{u - q}^{\frac12} \right)^{2}
\]
and fix a number $q_{X,r}\in \bR$ satisfying
\[
\phi(X,r) = \left( \fint_{C^+_r(X) \cap Q^{+}_4} \, \abs{u - q_{X,r}}^{\frac12} \right)^{2}.
\]
The following lemma is in parallel with Lemmas~\ref{lem-01} and \ref{lem-01nd}.

\begin{lemma}						
Let $\beta \in (0,1)$.
For any $\overline X_0 \in \Delta_3^+(0)$ and $0<\rho \le r \le \frac12$, we have
\[
\phi(\overline X_0, \rho) \lesssim_{n, \lambda, \beta} \left(\frac{\rho}{r}\right)^{\beta}\,\phi(\overline X_0, r)+ \norm{u}_{L^\infty(Q^{+}_{2r}(\overline X_0))}\,\tilde\omega_{\mathbf A}(\rho) + C \tilde\omega_{\mathbf g}(\rho).
\]
\end{lemma}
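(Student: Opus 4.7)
The plan is to transpose the argument of Lemma~\ref{lem-01} to the adjoint setting, combining it with the adjoint machinery from Theorem~\ref{thm:dd-wolt} and Lemma~\ref{lem-bdry-Lip}. Writing $\overline X_0 = (t_0, \bar x_0)$, first freeze coefficients and data in the $x$-variable by setting
\[
\bar a^{ij}(t) := \fint_{B^+_{2r}(\bar x_0)} a^{ij}(t,\cdot),\qquad \bar g^{ij}(t) := \fint_{B^+_{2r}(\bar x_0)} g^{ij}(t,\cdot),
\]
and work on $\mathcal R := (t_0, t_0 + 4r^2) \times \mathcal D_{2r}(\bar x_0)$ with $\mathcal D$ the smooth set of Remark~\ref{rmk4.15}(ii). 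Decompose $u = v + w$, where $w \in L^2(\mathcal R)$ is the adjoint solution of
\[
-\partial_t w - D_{ij}(\bar a^{ij} w) = \nabla^2\bigl((\mathbf g - \bar{\mathbf g})1_{Q^+_r(\overline X_0)}\bigr) + D_{ij}\bigl((a^{ij} - \bar a^{ij})\,u\,1_{Q^+_r(\overline X_0)}\bigr)
\]
in $\mathcal R$, with boundary data $w = \bar{\mathbf g}(t)\nu\cdot\nu/\bar{\mathbf A}(t)\nu\cdot\nu - \mathbf g\nu\cdot\nu/\mathbf A\nu\cdot\nu$ on the flat portion of $\partial\mathcal R$ and $w = 0$ on the remainder of $\partial_p^+\mathcal R$. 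Because $\nabla^2 \bar{\mathbf g} = 0$ and the non-flat boundary values of $u$ are absorbed into $w$, the function $v = u - w$ solves $-\partial_t v - D_{ij}(\bar a^{ij} v) = 0$ in $Q^+_r(\overline X_0)$ with the $t$-only boundary value $v = -\bar{\mathbf g}(t)\nu\cdot\nu/\bar{\mathbf A}(t)\nu\cdot\nu$ on $\Delta^+_r(\overline X_0)$.

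Applying the scaled, boundary-adapted form of Lemma~\ref{lem-weak11-adj} licensed by Remark~\ref{rmk4.15}(ii) to $w$, together with the layer-cake passage from weak-$L^1$ to $L^{1/2}$ used in the proof of Theorem~\ref{thm:dd-wolt}, yields
\[
\left(\fint_{Q^+_r(\overline X_0)}|w|^{1/2}\right)^{\!2} \lesssim \omega_{\mathbf A}(2r)\,\|u\|_{L^\infty(Q^+_{2r}(\overline X_0))} + \omega_{\mathbf g}(2r).
\]
For $v$, fixing $\mu = (1+\beta)/2 \in (\beta, 1)$, the target is the boundary H\"older estimate
\[
[v]_{C^{\mu/2,\mu}(Q^+_{r/2}(\overline X_0))} \lesssim r^{-\mu}\left(\fint_{Q^+_r(\overline X_0)}|v - q|^{1/2}\right)^{\!2},\qquad \forall q \in \bR,
\]
the adjoint, $t$-measurable analog of Lemma~\ref{lem-bdry-Lip}. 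The key structural point is that $v - q$ solves the same adjoint problem for every constant $q$ (since $\bar{\mathbf A}$ depends only on $t$, so $\cP_0^\ast q = 0$, and the flat-boundary value is shifted only by $-q$). Tangential differentiation then shows that $D_k v$, $k = 2,\ldots,n$, solves the adjoint equation with zero Dirichlet data on the flat face, to which the boundary $W^{1,2}_p$ estimates for parabolic equations with coefficients measurable in $t$ from \cite{DK11,DK11b} apply; $D_1 v$ is recovered through the conormal argument of Lemma~\ref{lem-bdry-Lip}, and the final $L^\infty$-to-$L^{1/2}$ interpolation is identical.

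With these two estimates in hand, the rest is formal. Combining them and following the calculation that led to \eqref{eq22.25f}, for any $\kappa \in (0, 1/2)$ one obtains
\[
\phi(\overline X_0, \kappa r) \le 4 N_0 \kappa^\mu\,\phi(\overline X_0, r) + C\kappa^{-2n-4}\bigl(\omega_{\mathbf A}(2r)\|u\|_{L^\infty(Q^+_{2r}(\overline X_0))} + \omega_{\mathbf g}(2r)\bigr),
\]
with $N_0 = N_0(n,\lambda,\beta)$ universal. Fixing $\kappa$ so that $4 N_0 \kappa^{\mu - \beta} \le 1$ produces the contraction $\phi(\overline X_0, \kappa r) \le \kappa^\beta\,\phi(\overline X_0, r) + C(\cdots)$, and iterating plus telescoping between dyadic scales, exactly as in the derivation of \eqref{eq0947fo} from \eqref{eq22.25fr}, delivers the stated bound with $\tilde{\omega}_{\mathbf A}, \tilde{\omega}_{\mathbf g}$ defined as in \eqref{eq14.27w}.

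The hard part will be the boundary H\"older estimate for $v$. The na\"ive reduction $\tilde v := v + \bar{\mathbf g}(t)\nu\cdot\nu/\bar{\mathbf A}(t)\nu\cdot\nu$ to zero flat-boundary data is not viable because $\bar{\mathbf g}(t)$ is only measurable in $t$, so its distributional time derivative would enter the equation as an uncontrolled source. The way around this is to exploit the $q$-freedom noted above: since $v - q$ satisfies the same problem for every $q \in \bR$, one never has to subtract a $t$-dependent quantity, and the tangential differentiation argument of Lemma~\ref{lem-bdry-Lip} carries over once it is run against the $t$-measurable $L^p$ theory of \cite{DK11,DK11b} in place of the constant-coefficient $L^p$ theory.
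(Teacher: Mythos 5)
Your decomposition takes only spatial averages, $\bar a^{ij}(t)=\fint_{B^+_{2r}(\bar x_0)}a^{ij}(t,\cdot)$ and $\bar g^{ij}(t)=\fint_{B^+_{2r}(\bar x_0)}g^{ij}(t,\cdot)$, so that the frozen operator has $t$-measurable coefficients and the frozen data $\bar{\mathbf g}(t)$ is $t$-dependent. The paper instead averages over the full cylinder, $\bar a^{ij}=\fint_{Q^+_{2r}(\overline X_0)}a^{ij}$ and $\bar g^{ij}=\fint_{Q^+_{2r}(\overline X_0)}g^{ij}$, producing genuine constants. This is not a cosmetic difference; it is the reason Proposition~\ref{prop01adj} (and the corresponding part of Theorem~\ref{thm-main-adj}) assumes $\mathsf{DMO}$ rather than merely $\mathsf{DMO_x}$, and why the estimate you are trying to prove features $\tilde\omega_{\mathbf A}$ and $\tilde\omega_{\mathbf g}$, not the $\textsf{x}$-versions.

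The gap is where you flag ``the hard part.'' With constant $\bar{\mathbf A},\bar{\mathbf g}$, the function $v$ satisfies the hypotheses of Lemma~\ref{lem-bdry-Lip} verbatim, whose proof crucially uses that the flat boundary value of $v$ is a \emph{constant}: differentiating twice tangentially and in time gives $D_{ij}v=0$ ($2\le i,j\le n$) and $\partial_t v=0$ on $\Delta^+$, which combined with the equation yield the conormal condition $\hat a^{1j}D_jD_1v=0$ on $\Delta^+$ and then the divergence-form conormal $\cH^1_p$ theory of \cite{DK11b} applies to $D_1v$. If $\bar{\mathbf g}(t)$ and $\bar{\mathbf A}(t)$ depend measurably on $t$, the boundary datum $h(t)=-\bar{\mathbf g}(t)\nu\cdot\nu/\bar{\mathbf A}(t)\nu\cdot\nu$ is only measurable in $t$, so $\partial_t v=h'(t)$ on $\Delta^+$ is an uncontrolled distribution and the conormal boundary condition for $D_1v$ is destroyed. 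Your proposed fix via the ``$q$-freedom'' does not repair this: $v-q$ has boundary datum $h(t)-q$, which is still $t$-dependent for every constant $q$; the shift only eliminates a constant, not the oscillation in $t$. No $t$-measurable analog of Lemma~\ref{lem-bdry-Lip} is available in the paper, and the mechanism of its proof does not survive $t$-dependent boundary data. A secondary symptom of the same issue is that even if your bound could be pushed through, it would naturally produce $\omega_{\mathbf A}^{\textsf x}$ rather than the $\omega_{\mathbf A}$ appearing in the statement.
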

\begin{proof}
For $\overline X_0=(t_0, \bar x_0) \in \Delta_3^+(0)$ and $0<r \le \frac12$, we decompose $u=v+w$, where $w \in L^2$ is a unique adjoint solution of the problem
\begin{equation*}
\left\{
\begin{array}{rcl}
-\partial_t w-D_{ij}(\bar a^{ij}  w) = \nabla^2 \left(1_{Q_r^+(\overline X_0)} \left((\mathbf{A}- \bar{\mathbf A}) u+\mathbf{g}-\bar{\mathbf g}\right)\right) &\text{in}& (t_0,t_0+4r^2)\times \cD_{2r}(\bar x_0),\\
w=1_{Q_r^+(\overline X_0)} \left((\bar{\mathbf A}-\mathbf{A}) u+\mathbf{g}-\bar{\mathbf g}\right)\nu\cdot\nu/ \left(\bar{\mathbf A}\nu\cdot\nu \right) &\text{on}&(t_0,t_0+4r^2)\times \partial \cD_{2r}(\bar x_0),\\
w=0&\text{on}& \set{t_0+4r^2} \times \cD_{2r}(\bar x_0),
\end{array}
\right.
\end{equation*}
where $\cD_r(\bar x_0)$ is defined as in \eqref{eq2153ap28} and $\bar{\mathbf A}$ and $\bar{\mathbf g}$ are the constant matrix and the column vector whose entries are defined by
\[
\bar a^{ij} = \fint_{Q^+_{2r}(\overline X_0)} a^{ij}\quad\text{and}\quad \bar g=\fint_{Q^+_{2r}(\overline X_0)} g.
\]
We apply a modified and scaled version of Lemma~\ref{lem-weak11-adj} to $w$ to find that for any $\alpha>0$, we have (see Remark~\ref{rmk4.15})
\[
\abs{\set{X\in Q^{+}_r(\overline X_0): \abs{w(X)} > \alpha}} \lesssim \frac{1}{\alpha}\left(\,\norm{ u}_{L^\infty(Q^{+}_{r}(\overline X_0))} \int_{Q^{+}_{r}(\overline X_0)} \abs{\mathbf{A}-\bar{\mathbf A}} +  \int_{Q^{+}_{r}(\overline X_0)}  \abs{\mathbf{g}-\bar{\mathbf g}}\,\right).
\]
Therefore,
\[
\left(\fint_{Q^{+}_r(\overline X_0)} \abs{w}^{\frac12} \right)^{2} \lesssim \omega_{\mathbf A}(r) \,\norm{u}_{L^\infty(Q^{+}_{r}(\overline X_0))} +  \omega_{\mathbf g}(r).
\]
Since $v=u-w$ satisfies
\[
-\partial_t v-D_{ij}(\bar a^{ij}  v)=\nabla^2\bar{\mathbf g}\;\mbox{ in }\;Q^{+}_r(\overline X_0),\quad v=-\frac{\bar{\mathbf g}\nu\cdot\nu}{\bar{\mathbf A}\nu\cdot\nu}\;\mbox{ on }\;\Delta_r^+(\overline X_0),
\]
by Lemma \ref{lem-bdry-Lip} with scaling, we have
\[
[v]_{C^{1/2,1}(Q^{+}_{r/2}(\overline X_0))} \lesssim r^{-1} \left(\fint_{Q^{+}_r(\overline X_0)} \abs{v -q}^{\frac12}\,\right)^{2},\quad \forall q \in \bR.
\]
Thus similar to \eqref{eq15.50br}, for any $\kappa \in (0,\frac12)$ we have
\[
\left(\fint_{Q^{+}_{\kappa r}(\overline X_0)} \abs{v - {v}_{\bar X_0, \kappa r}}^{\frac12} \right)^{2}
\le N_0 \kappa \left(\fint_{Q^{+}_r(\overline X_0)} \abs{v -q}^{\frac12}\, \right)^{2}, \quad \forall q \in \bR.
\]
By the same argument that led to \eqref{eq22.25fr}, there is $\kappa \in (0, \frac12)$ such that
\[
\phi(\overline X_0, \kappa^j r) \le\kappa^{j \beta}  \phi(\bar X_0, r) +C \norm{u}_{L^\infty(Q^{+}_{r}(\overline X_0))}\,\tilde \omega_{\mathbf A}(\kappa^{j} r) + C \tilde \omega_{\mathbf g}(\kappa^{j} r),
\]
where $\tilde\omega_\bullet(t)$ is the same as in \eqref{eq14.27w}.
The rest of proof is the same as that of Lemma~\ref{lem-01}.
\end{proof}

By modifying the proof of Lemmas \ref{lem-02}, \ref{lem-03}, and \ref{lem-04} in a straightforward way, we obtain the following lemmas.

\begin{lemma}						
Let $\beta \in (0,1)$.
For any $X \in Q^+_3$ and $0<\rho\le r \le \frac14$, we have
\[
\phi(X, \rho) \lesssim_{n, \lambda, \beta} \, \rho^{\beta}\, r^{-\beta-n-2} \norm{u}_{L^1(C^+_{3r}(X)\cap Q^+_4)}+\norm{u}_{L^\infty(C^+_{5r}(X)\cap Q^+_4)}\, \hat\omega_{\mathbf A}(\rho) +  \hat\omega_{\mathbf g}(\rho).
\]
\end{lemma}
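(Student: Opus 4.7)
The plan is to mirror the three-case analysis used in Lemma~\ref{lem-02}, adjusting each step to the adjoint setting. Write $X=(t_0,x_0)$ with $x_0=(x_0^1,x_0')$ and set $\overline X_0=(t_0,0,x_0')\in \Delta_4^+$. First I record the basic bound
\[
\phi(\overline X_0,\nu r)\le \left(\fint_{Q^+_{\nu r}(\overline X_0)}\abs{u}^{1/2}\right)^{2}\lesssim_{n,\nu} r^{-n-2}\norm{u}_{L^1(Q_{\nu r}^+(\overline X_0))},
\]
valid whenever $Q^+_{\nu r}(\overline X_0)\subset Q^+_4$, together with the boundary decay estimate just established,
\[
\phi(\overline X_0,\rho)\lesssim \left(\tfrac{\rho}{r}\right)^{\beta}\phi(\overline X_0,r)+\norm{u}_{L^\infty(Q^+_{2r}(\overline X_0))}\tilde\omega_{\mathbf A}(\rho)+\tilde\omega_{\mathbf g}(\rho).
\]

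Case (i): $\rho\le r\le x_0^1$. Here $C^+_r(X)\Subset Q^+_4$, so $\phi(X,\rho)$ coincides with its interior analog and the interior estimate from the proof of Theorem~\ref{thm:dd-wolt} (namely the decay \eqref{eq0947fo} applied to $u$ together with the analog of \eqref{eq12.14fa}) directly yields
\[
\phi(X,\rho)\lesssim \left(\tfrac{\rho}{r}\right)^{\beta}r^{-n-2}\norm{u}_{L^1(C_r^+(X))}+\norm{u}_{L^\infty(C_r^+(X))}\tilde\omega_{\mathbf A}^{\textsf x}(\rho)+\tilde\omega_{\mathbf g}^{\textsf x}(\rho).
\]

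Case (ii): $x_0^1\le\rho\le r$. Since $C^+_\rho(X)\cap Q^+_4\subset Q^+_{2\rho}(\overline X_0)\subset Q^+_4$, I estimate
\[
\phi(X,\rho)\le \left(\fint_{C^+_\rho(X)\cap Q^+_4}\abs{u-q_{\overline X_0,2\rho}}^{1/2}\right)^{2}\le 4^{n+2}\,\phi(\overline X_0,2\rho),
\]
and then apply the boundary decay with $r\mapsto 2r$ to obtain a bound of the required form after noting $\tilde\omega^{\textsf x}_\bullet(4\rho)\lesssim \hat\omega_\bullet(\rho)$.

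Case (iii): $\rho\le x_0^1\le r$. Set $R=x_0^1$. The interior estimate (case i) gives control of $\phi(X,\rho)$ by $(\rho/R)^\beta\,\phi(X,R)$ plus the Dini tails at scale $\rho$, while case (ii) applied at scale $R$ gives $\phi(X,R)\lesssim \phi(\overline X_0,2R)$, and the boundary decay at scale $(2R,2r)$ then hands back a $(\rho/r)^\beta$ factor times $\phi(\overline X_0,2r)$, plus tails at scale $R$. The Dini tails at scale $R$ are absorbed via the elementary observation $(\rho/R)^\beta\tilde\omega_\bullet(4R)\lesssim \hat\omega_\bullet(\rho)$ from \eqref{eq1538th}.

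Combining the three cases and using $Q^+_{\nu r}(\overline X_0)\subset C^+_{(\nu+1)r}(X)\cap Q^+_4$ (which is automatic since $\abs{X-\overline X_0}=x_0^1\le r$ in cases (ii)--(iii)), the right-hand side consolidates into the single bound claimed. The main obstacle is simply bookkeeping: tracking the three regimes and the correct inclusions of cylinders, and verifying that the Dini tails assemble into $\hat\omega_{\mathbf A}(\rho)$ and $\hat\omega_{\mathbf g}(\rho)$ uniformly; once the boundary Campanato-type decay for $\phi(\overline X_0,\cdot)$ and the interior decay at scale $R=x_0^1$ are in hand, no new analytic input is needed.
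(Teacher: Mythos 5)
Your three-case split ($\rho\le r\le x_0^1$; $x_0^1\le\rho\le r$; $\rho\le x_0^1\le r$) with the comparison $C^+_\rho(X)\cap Q^+_4\subset Q^+_{2\rho}(\overline X_0)$, the interior decay for $\phi(X,\cdot)$ at scales below $x_0^1$, the boundary decay for $\phi(\overline X_0,\cdot)$, and the absorption $(\rho/R)^\beta\tilde\omega_\bullet(4R)\lesssim\hat\omega_\bullet(\rho)$ is precisely the adaptation of the proof of Lemma~\ref{lem-02} to the scalar adjoint quantity $\phi$, which is what the paper intends when it states that the lemma follows "by modifying the proof of Lemma~\ref{lem-02} in a straightforward way." The proposal is correct and matches the paper's approach.
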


\begin{lemma}						
We have
\[
\norm{u}_{L^\infty(Q^+_2)} \le C \norm{u}_{L^1(Q^+_4)} + C\int_0^{1} \frac{\hat\omega_{\mathbf g}(t)}t \,dt,
\]
where $C>0$ is a constant depending only on $n$, $\lambda$ and $\omega_{\mathbf A}$.
\end{lemma}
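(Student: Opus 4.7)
The statement is the boundary $L^\infty$ analog, for the adjoint problem, of Lemma~\ref{lem-03}. The plan is to mirror the proof of Lemma~\ref{lem-03} (and the corresponding step in the proof of Proposition~\ref{prop01nd}), but using the single-scalar quantity $\phi(X,\rho)=\inf_{q\in\bR}(\fint_{C^+_r(X)\cap Q^+_4}|u-q|^{1/2})^2$ together with the already-established decay estimate
\[
\phi(X,\rho) \lesssim \rho^\beta r^{-\beta-n-2}\|u\|_{L^1(C^+_{3r}(X)\cap Q^+_4)} + \|u\|_{L^\infty(C^+_{5r}(X)\cap Q^+_4)}\,\hat\omega_{\mathbf A}(\rho) + \hat\omega_{\mathbf g}(\rho).
\]

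First, I would fix $X\in Q^+_3$ and $0<r\le\tfrac14$, pick an almost-minimizer $q_{X,2^{-k}r}$ realizing $\phi(X,2^{-k}r)$, and, by the standard comparison
\[
|q_{X,r}-q_{X,r/2}|^{1/2} \le |u(Y)-q_{X,r}|^{1/2}+|u(Y)-q_{X,r/2}|^{1/2}
\]
averaged over $Y\in C^+_{r/2}(X)\cap Q^+_4$, obtain $|q_{X,2^{-k}r}-q_{X,r}|\le 4\sum_{j=0}^{k}\phi(X,2^{-j}r)$. Letting $k\to\infty$ (which is legitimate because we may assume $u\in C^{0,0}(\overline{Q^+_3})$ a priori, so $q_{X,2^{-k}r}\to u(X)$) and invoking \cite[Lemma~2.7]{DK17} on the Dini tail, this yields
\[
|u(X)-q_{X,r}|\lesssim r^{-n-2}\|u\|_{L^1(C^+_{3r}(X)\cap Q^+_4)}+\|u\|_{L^\infty(C^+_{5r}(X)\cap Q^+_4)}\int_0^r\frac{\hat\omega_{\mathbf A}(t)}{t}\,dt+\int_0^r\frac{\hat\omega_{\mathbf g}(t)}{t}\,dt.
\]
The trivial bound $|q_{X,r}|\le 2\phi(X,r)+2(\fint_{C^+_r(X)\cap Q^+_4}|u|^{1/2})^2 \lesssim r^{-n-2}\|u\|_{L^1(C^+_r(X)\cap Q^+_4)}$ then converts the previous display into a pointwise estimate for $|u(X)|$.

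Next, I take the supremum over $X\in C^+_r(X_0)\cap Q^+_4$ with $X_0\in Q^+_3$ and $r\le\tfrac14$, obtaining
\[
\|u\|_{L^\infty(C^+_r(X_0)\cap Q^+_4)} \le C r^{-n-2}\|u\|_{L^1(C^+_{4r}(X_0)\cap Q^+_4)} + C\|u\|_{L^\infty(C^+_{6r}(X_0)\cap Q^+_4)}\!\!\int_0^r\!\!\frac{\hat\omega_{\mathbf A}(t)}{t}\,dt + C\!\int_0^r\!\!\frac{\hat\omega_{\mathbf g}(t)}{t}\,dt.
\]
Since $\hat\omega_{\mathbf A}$ is Dini, choose $r_0<\tfrac14$ so that $C\int_0^{r_0}\hat\omega_{\mathbf A}(t)/t\,dt\le 3^{-n-2}$; this absorbs the $\mathbf A$-dependence into the constant.

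Finally, set $r_k=3-2^{1-k}$, pick $k_0$ with $2^{-k_0-3}\le r_0$, and observe that for $X_0\in Q^+_{r_k}$ and $r\le 2^{-k-3}$, we have $C^+_{6r}(X_0)\cap Q^+_4\subset Q^+_{r_{k+1}}$. This gives, for all $k\ge k_0$,
\[
\|u\|_{L^\infty(Q^+_{r_k})} \le 3^{-n-2}\|u\|_{L^\infty(Q^+_{r_{k+1}})} + C\,2^{k(n+2)}\|u\|_{L^1(Q^+_4)} + C\int_0^1\frac{\hat\omega_{\mathbf g}(t)}{t}\,dt.
\]
Multiplying by $3^{-k(n+2)}$ and summing in $k\ge k_0$, the telescoping $3^{-k(n+2)}$-factors on the two $L^\infty$ terms differ by exactly one unit shift, so they cancel modulo absolutely convergent residues (convergence being guaranteed by our a priori assumption $u\in C^{0,0}(\overline{Q^+_3})$), leaving
\[
\|u\|_{L^\infty(Q^+_2)}\lesssim \|u\|_{L^1(Q^+_4)}+\int_0^1\frac{\hat\omega_{\mathbf g}(t)}{t}\,dt.
\]
The main obstacle is purely bookkeeping: ensuring the telescoping iteration closes with finite constants and that the $\hat\omega_{\mathbf A}$-term can be absorbed on a sufficiently small scale, but both follow verbatim from the template already executed in Lemma~\ref{lem-03}; no new ingredient is needed beyond the preceding lemma and \cite[Lemma~2.7]{DK17}. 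The final approximation argument removing the a priori $C^{0,0}$ assumption is standard.
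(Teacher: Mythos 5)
Your proof is correct and mirrors the paper's intended argument exactly: the paper proves this lemma by ``modifying the proof of Lemma~\ref{lem-03} in a straightforward way,'' and you have carried out precisely that modification (replacing $Du$ by $u$, $Q^-$ by $Q^+$, and $\hat\omega^{\textsf x}_{\bullet}$ by $\hat\omega_{\bullet}$), including the same comparison of approximate minimizers, the absorption of the $\hat\omega_{\mathbf A}$-term at a small scale $r_0$, and the $3^{-k(n+2)}$-weighted telescoping over $r_k=3-2^{1-k}$ justified by the a priori $C^{0,0}(\overline{Q^+_3})$ assumption.
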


\begin{lemma}						
Let $\beta \in (0,1)$.
For any $x \in Q^+_3$ and $0< r \le \frac15$, we have
\[
\abs{u(X)-q_{X,r}} \lesssim_{n, \lambda, \beta} r^{\beta}\,\norm{u}_{L^1(C^+_{3/5}(X) \cap Q^+_4)}
+ \norm{u}_{L^\infty(C^+_1(X)\cap Q^+_4)} \int_0^{r} \frac{\hat\omega_{\mathbf A}(t)}{t}dt + \int_0^{r} \frac{\hat\omega_{\mathbf g}(t)}{t}dt.
\]
\end{lemma}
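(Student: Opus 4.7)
The plan is to follow exactly the pattern used for the divergence-form analog, Lemma~\ref{lem-04}, now with $Du$ replaced by $u$ itself and minus-cylinders replaced by plus-cylinders. For $X \in Q^+_3$ and $0<r\le \tfrac15$, I will work with the sequence of almost-minimizing constants $\{q_{X,2^{-k}r}\}_{k=0}^\infty$ defined just before the statement. As in the derivation of \eqref{eq1408th}, averaging
\[
\abs{q_{X,2^{-j-1}r}-q_{X,2^{-j}r}}^{1/2}\le \abs{u(Y)-q_{X,2^{-j-1}r}}^{1/2}+\abs{u(Y)-q_{X,2^{-j}r}}^{1/2}
\]
over $Y \in C^+_{2^{-j-1}r}(X)\cap Q^+_4$, squaring, and telescoping gives
\[
\abs{q_{X,2^{-k}r}-q_{X,r}}\ \lesssim\ \sum_{j=0}^{k}\phi(X,2^{-j}r).
\]

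Since $u\in C^{0,0}(\overbar{Q^+_3})$ by assumption, $q_{X,2^{-k}r}\to u(X)$ as $k\to\infty$, so letting $k\to\infty$ in the telescoping estimate yields
\[
\abs{u(X)-q_{X,r}}\ \lesssim\ \sum_{k=0}^{\infty}\phi(X,2^{-k}r).
\]
Now I would plug in the previous lemma (the bound on $\phi$) with $\rho=2^{-k}r$ and $r$ replaced by $\tfrac{1}{5}$, so that $C^+_{3r}(X)\subset C^+_{3/5}(X)\cap Q^+_4$ and $C^+_{5r}(X)\subset C^+_1(X)\cap Q^+_4$. This gives, for each $k$,
\[
\phi(X,2^{-k}r)\ \lesssim\ 2^{-k\beta}r^{\beta}\,\norm{u}_{L^1(C^+_{3/5}(X)\cap Q^+_4)}+\norm{u}_{L^\infty(C^+_1(X)\cap Q^+_4)}\,\hat\omega_{\mathbf A}(2^{-k}r)+\hat\omega_{\mathbf g}(2^{-k}r).
\]

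Summing over $k\ge 0$, the geometric series in the first term converges to a constant multiple of $r^\beta\norm{u}_{L^1(C^+_{3/5}(X)\cap Q^+_4)}$. For the two Dini terms, I would invoke \cite[Lemma~2.7]{DK17}, which converts $\sum_{k\ge 0}\hat\omega_\bullet(2^{-k}r)$ into an integral controlled by $\int_0^r \hat\omega_\bullet(t)/t\,dt$ (recall $\hat\omega_{\mathbf A}$ and $\hat\omega_{\mathbf g}$ are Dini functions by the discussion after \eqref{eq1538th}). Combining yields the stated inequality.

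The only non-routine point is the convergence $q_{X,2^{-k}r}\to u(X)$, which requires the a priori $C^{0,0}$ assumption on $u$ in $\overbar{Q^+_3}$; the final estimate for rough $u$ then follows by the standard approximation argument used throughout this section. Apart from this, every step is a direct transcription of the corresponding step in the proof of Lemma~\ref{lem-04}, with the preceding three lemmas in this subsection supplying the analogs of Lemmas \ref{lem-02} and \ref{lem-03}.
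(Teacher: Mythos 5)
Your proposal is correct and follows exactly the route the paper intends: the paper explicitly states this lemma is obtained by modifying the proof of Lemma~\ref{lem-04} in a straightforward way, and that is precisely what you do — telescope $\abs{q_{X,2^{-k}r}-q_{X,r}}$ against $\sum_k \phi(X,2^{-k}r)$, pass to the limit using the a priori continuity of $u$, plug the preceding $\phi$-bound in with $\rho=2^{-k}r$ and $r=1/5$, and sum the geometric and Dini series via \cite[Lemma~2.7]{DK17}.
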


With the above lemmas at hand, we obtain, similar to \eqref{eq08.33st}, the following estimates for $X$, $Y \in Q^+_1$:
\begin{multline*}					
\abs{u(X)-u(Y)} \le  C\norm{u}_{L^1(Q^+_2)}\,\abs{X-Y}^\beta \\
+C\left( \norm{u}_{L^1(Q^+_4)} + \int_0^1 \frac{\hat\omega_{\mathbf g}(t)}t\,dt \right)  \int_0^{2\abs{X-Y}} \frac{\hat\omega_{\mathbf A}(t)}{t}\,dt+ C\int_0^{2\abs{X-Y}} \frac{\hat\omega_{\mathbf g}(t)}{t}\,dt,
\end{multline*}
where $\beta \in (0,1)$ is any given number, $C=C(d, \lambda, \omega_{\mathbf A}, \beta)$, and $\hat\omega_\bullet(t)$ is the Dini function defined by \eqref{eq1538th}.
We have shown that $u \in C^{0,0}(\overbar{Q^+_1})$ as desired.
This completes the proof of Proposition~\ref{prop01adj} and that of Theorem~\ref{thm-main-adj}.
\qed

\section{Further topics on adjoint and normalized adjoint solutions}	\label{sec5}
This section is devoted to a study of adjoint and normalized adjoint solutions of the parabolic operator in non-divergence form
\begin{equation}\label{E: parabolicsinterminosbajos}
\mathcal P = \partial_t-  a^{ij}(X)D_{ij}
\end{equation}
with $\mathbf A=(a^{ij})$ verifying \eqref{parabolicity}; see  the definition below.
We present some new results when the coefficients $\mathbf A$ are of $\mathsf{DMO_x}$, extending corresponding results in \cite{Es00}.
As a consequence, we give here a proof to Lemma~\ref{thm-main-adj2}.

\begin{definition}			
A continuous function $\tilde v$ is said to be a normalized adjoint solution (n.a.s.) for $\mathcal P$ relative to $W$ in an open set $V\subset\bR^{n+1}$ if $\tilde v W$ is an adjoint solution for $\mathcal P$ in $V$, i.e., $\mathcal P^\ast (\tilde v W)=0$ in $V$.
\end{definition}

The study of adjoint and normalized adjoint solution has a rich history.
Properties of adjoint solutions were systemically studied by Sj\"ogren \cite{S1973} in the elliptic setting.
In particular, it was shown in \cite{S1973} that ``weak'' adjoint solutions are continuous when the coefficients are H\"older continuous.
When the coefficients are less regular, say just continuous, then it is no longer true.
They can then be unbounded \cite[p. 833]{Es00} or do not have good restrictions to nice lower dimensional interior sets as the counterexamples in \cite{Ba84b} and \cite{FabesKenig1} show.  However, it turned out normalized adjoint solutions still enjoy some nice properties.
The concept of normalized adjoint solution in the elliptic case were first studied in \cite{Ba84} and \cite{FGMS1988} while some integrability properties adjoint solutions in the parabolic setting were first studied in \cite{FS1984} and \cite{FabesGarofaloSalsa1}.

\subsection{Some known properties of normalized adjoint solutions}
				\label{sec5.1}
Here, we first summarize some known results for normalized adjoint solutions.
These results are quoted from \cite{Es00} but many ideas are gathered from other sources, such as \cite{FS1984, FGMS1988, FabesSafonov1, FSY1, Garofalo1984, SafonovYuan}.
Recall that we write $X=(t,x)$, $Y=(s,y)$, $Z=(\tau, z)$, etc. for points in $\bR^{n+1}$ and denote $\abs{X-Y}=\max(\sqrt{\abs{t-s}}, \abs{x-y})$.
In \cite[Theorem 1.3]{Es00} it is shown that $\mathcal P$ has a global non-negative adjoint solution $W$, i.e., a non-negative function $W$ in $L^1_{\rm loc}(\bR^{n+1})$ satisfying
\begin{equation}\label{E: 2 condidicndeunicidad}
W \left([0,1]\times B_1\right) = \int_{[0,1]\times B_1}W= \abs{B_1},
\end{equation}
and
\begin{equation*}
\mathcal P^{\ast} W =- \partial_s W  -D_{ij} (a^{ij}(Y)W) =0\ \text{ in }\ \bR^{n+1}
\end{equation*}
 in the sense of distributions, i.e.,
\begin{equation*}
\int_{\bR^{n+1}} W\mathcal{P} \varphi=0\quad \text{for all }\; \varphi\in C_c^\infty(\bR^{n+1}),
\end{equation*}
with the following additional properties:
\begin{enumerate}
\item
$W$ is a parabolic Muckenhoupt weight in the reverse H\"older class $\mathcal{B}_{\frac{n+1}{n}}(\bR^{n+1})$ with a constant $N = N(\lambda, n)$, i.e.,
\begin{equation}\label{E: Mackenhoupt properety}
\left(\fint_{C_r(Z)}W^{\frac{n+1}{n}}\right)^{\frac{n}{n+1}}\le N\fint_{C_r(Z)}W\quad \text{and}\quad W(C_{2r}(Z))\le NW(C_r(Z))
\end{equation}
for all $Z\in \bR^{n+1}$ and $r>0$.
\item
For every $\tau\in\bR$, $W(\tau, \cdot)$ must be interpreted as a non-negative measure $W(\tau, dy)$ on $\bR^n$ such that
\begin{equation}\label{E: condidebil}
\int_{(\tau,\infty)\times \bR^n}W\,\mathcal{P}\varphi =\int_{\bR^n}\varphi(\tau, y)W(\tau, dy)\;\text{ for all }\; \varphi\in C_c^\infty(\bR^{n+1}).
\end{equation}
\item There is  $N=N(\lambda,n)$ such that
\begin{equation}\label{E:4 condicionea adicionales}
\begin{split}
&W(\tau +\theta r^2, B_{2r}(z))\le NW(\tau, B_{r}(z)),\\
&W(\tau, B_{2r}(z))\le NW(\tau+\theta r^2, B_{r}(z))
\end{split}
\end{equation}
for all $Z \in \bR^{n+1}$, $r>0$ and $0\le \theta\le 1$.
\end{enumerate}

The first paragraph after \cite[Theorem 3.8]{Es00} shows the uniqueness of a global non-negative adjoint solution for $\mathcal P$ verifying \eqref{E: 2 condidicndeunicidad} and \eqref{E:4 condicionea adicionales}, when the coefficients matrix $\mathbf A$ of $\mathcal P$ is continuous or $\mathbf A$ is in parabolic $\mathsf{VMO_x}$ over $\bR^{n+1}$, i.e., $\lim_{r\to 0^+}\sigma_\mathbf A^{\textsf x}(r) = 0$, with
\begin{equation*}
\sigma_\mathbf A^{\textsf x}(r)= \sup_{\substack{Q_{\rho}^-(X)\\  0<\rho \le r}} \fint_{Q_\rho^-(X)} \,\Abs{\mathbf A(s,y)- \bar{\mathbf A}^{\textsf x}_{x,\rho}(s)},
\quad \left(\;\bar{\mathbf A}^{\textsf x}_{x,\rho}(s) :=\fint_{B_\rho(x)} \mathbf A(s,\cdot)\;\right).
\end{equation*}

When $\mathbf A$ is smooth in $\bR^{n+1}$, $W$ is smooth, $W >0$ everywhere, and for $\tilde v$ in $C^{1,2}(V)$,
\begin{equation*}
W^{-1}\mathcal P^\ast(\tilde v\, W) = -\partial_s\tilde v - a^{ij} D_{ij} \tilde v -2 \frac{D_i (a^{ij}W)}{W}\,D_j\tilde v
\end{equation*}
is a backward parabolic operator for which the standard maximum principle holds. In fact, \cite[Theorems 3.7 and 3.8]{Es00} show that n.a.s.'s satisfy a backward Harnack inequality, the strong maximum principle, and the interior H\"older continuity, as stated below.
\begin{lemma}\label{T:1 Theorem 3.7} {\rm (Harnack inequality for n.a.s.)} There is $N=N(\lambda, n)$ such that
\begin{equation*}
\sup_{(\tau + 2r^2,\tau + 3r^2)\times B_r(z)}{\tilde v}\le N(\lambda, n) \inf_{C^+_r(Z)}\tilde v,
\end{equation*}
when $\tilde v$ is a  non-negative normalized adjoint solution relative to
$W$ in $C_{2r}^+(Z)$, $r>0$ and $Z\in \bR^{n+1}$.
\end{lemma}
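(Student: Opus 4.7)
My plan is to reduce to the case of smooth coefficients by approximation and then run a Moser-type iteration adapted to the weighted measure $W\,dY$. By parabolic scaling and translation we may take $Z=0$ and $r=1$, so it suffices to establish
\[
\sup_{(2,3)\times B_1}\tilde v \;\lesssim_{\lambda,n}\; \inf_{C_1^+}\tilde v
\]
for every non-negative n.a.s.\ $\tilde v$ in $C_2^+$ relative to the global adjoint weight $W$. First I would mollify $\mathbf A$ to obtain smooth $\mathbf A_k$ satisfying \eqref{parabolicity} with the same constant $\lambda$, let $W_k$ be the corresponding global non-negative adjoint solutions normalized by \eqref{E: 2 condidicndeunicidad}, and use the uniform Muckenhoupt/reverse H\"older bounds \eqref{E: Mackenhoupt properety} together with \eqref{E:4 condicionea adicionales} to extract a limit weight (which must equal $W$ by its uniqueness) and transfer the inequality from $\mathbf A_k$ to $\mathbf A$. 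The smooth $W_k$ are strictly positive, so the task reduces to proving the Harnack bound with constants depending only on $\lambda$ and $n$ when $\mathbf A$ is smooth.

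In the smooth case, starting from $\mathcal P^\ast(\tilde v W)=0$ and using $\mathcal P^\ast W=0$, a direct computation (expanding $D_{ij}(a^{ij}\tilde v W)$ and invoking the symmetry $a^{ij}=a^{ji}$) rewrites the normalized adjoint equation as
\[
-W\,\partial_s\tilde v - D_j\!\bigl(a^{ij}W\, D_i\tilde v\bigr) = D_j(a^{ij}W)\,D_i\tilde v \quad\text{in } C_2^+.
\]
This is a backward parabolic equation in divergence form on $(C_2^+, W\,dY)$, whose drift $D(aW)/W$ is not pointwise bounded. The crucial input is that $W$ belongs to a parabolic Muckenhoupt class with constants depending only on $\lambda,n$ (by \eqref{E: Mackenhoupt properety}--\eqref{E:4 condicionea adicionales}), so the weighted measure $W\,dY$ is doubling and admits a weighted Sobolev--Poincar\'e inequality
\[
\Bigl(\fint_{C_r(Z)}|\phi|^{2\kappa}\,W\Bigr)^{\!1/(2\kappa)} \lesssim r\Bigl(\fint_{C_r(Z)}|D\phi|^2\,W\Bigr)^{\!1/2},\quad \kappa>1,
\]
for $\phi\in C_c^\infty(C_r(Z))$; this is what allows the Moser scheme to proceed entirely with respect to $W\,dY$ instead of Lebesgue measure.

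Testing against $\eta^2\tilde v^{p-1}$ and $\eta^2\tilde v^{-1}$ (for the logarithm), the divergence-form structure cleanly absorbs the apparently dangerous drift into the principal part, yielding a Caccioppoli inequality in the weighted norm. Iterating via the weighted Sobolev inequality gives, on one hand, a local boundedness estimate $\sup_{C_{r/2}^+}\tilde v\lesssim(\fint_{C_r^+}\tilde v^p W)^{1/p}$ for every $p>0$, and, on the other, a weak $L^1$ control of $\log\tilde v$ on time slices; combining the latter with a parabolic Bombieri--Giusti lemma produces a weak Harnack inequality $(\fint_{C_1^+}\tilde v^{p_0}W)^{1/p_0}\lesssim\inf_{C_1^+}\tilde v$ for some $p_0>0$. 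Applying the local boundedness on the later cylinder $(2,3)\times B_1$ and inserting the weak Harnack produces the desired Harnack bound with constants depending only on $\lambda$ and $n$; passing to the limit in $k$ via the stability of $W_k\to W$ concludes the proof. The principal obstacle throughout is the unbounded drift $D(aW)/W$: it is defeated not by treating it as a perturbation but by working in the correct weighted function space, an observation that is the heart of \cite{Es00}.
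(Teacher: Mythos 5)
The paper does not prove this lemma; it simply cites \cite[Theorems~3.7 and 3.8]{Es00} and records the statement. So your attempt can only be measured against what the literature actually does. The established route (Bauman in the elliptic case, \cite{FGMS1988}, and \cite{Es00} in the parabolic case) does \emph{not} run a Moser iteration. It uses the representation of adjoint solutions through the Green function and the Poisson kernel of the direct operator $\mathcal P$ --- precisely the formula that appears as \eqref{E:otraformula} in the proof of Lemma~\ref{L: acotanecesa} of this paper --- combined with the Krylov--Safonov Harnack inequality for nonnegative solutions of the forward non-divergence equation. The weighted measure $W\,dY$ never enters as a Sobolev weight there; only its doubling property and the Gaussian bounds for $\tilde G$ do. Your proposal, by contrast, tries to mimic the classical Moser scheme on $(C_2^+,\,W\,dY)$; this is a genuinely different strategy.

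The central gap in your route is the weighted Sobolev--Poincar\'e inequality you invoke. The only structural information available for $W$ from \eqref{E: Mackenhoupt properety}--\eqref{E:4 condicionea adicionales} is the reverse H\"older condition $W\in\mathcal B_{(n+1)/n}$ together with (parabolic) doubling. This places $W$ in $A_\infty$, but not in $A_2$. The Fabes--Kenig--Serapioni type weighted Sobolev inequalities on which Moser's iteration is built require $W\in A_2$ (or an explicit two-weight condition), and they do not follow from reverse H\"older plus doubling alone. Without a gain exponent $\kappa>1$ in the weighted Sobolev inequality, the iteration does not improve integrability and the local boundedness step collapses, so the proof cannot be completed as described. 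This is why the literature avoids Moser iteration here and instead routes the Harnack inequality through the Poisson kernel.

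Two secondary issues. First, the claim that testing with $\eta^2\tilde v^{p-1}$ ``cleanly absorbs'' the drift $D_j(a^{ij}W)D_i\tilde v$ is not accurate as written: after integrating by parts and using $\mathcal P^\ast W=0$ the bulk term $D_{ij}(a^{ij}W)$ does cancel against $\partial_s W$, but a residual term of the form $\int \eta\,D_i\eta\, D_j(a^{ij}W)\,\tilde v^p\,dY$ remains, which needs one more integration by parts (to land back on $a^{ij}W$ rather than its derivatives) followed by a Cauchy--Schwarz absorption into the principal energy. This is repairable, but as stated the Caccioppoli inequality is incomplete. Second, the approximation step needs more care: the lemma is stated for a \emph{given} global nonnegative adjoint solution $W$ with no regularity on $\mathbf A$ beyond \eqref{parabolicity}, and uniqueness of $W$ (which you use to identify the limit of $W_k$ with $W$) is only guaranteed in the paper when $\mathbf A$ is continuous or $\mathsf{VMO_x}$; for merely measurable $\mathbf A$, uniqueness can fail, as the paper points out in Remark~\ref{R:remrk10}, so the compactness argument would then only prove the statement for one limit weight rather than for the prescribed $W$.
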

\begin{lemma}\label{T: 3 Theorem 3.8} There are $N = N(\lambda, n)$ and $\alpha=\alpha(\lambda,n) \in (0,1]$ such that
\begin{equation*}
\abs{\tilde v(Y)-\tilde v(Z)} \le N \left(\frac{\abs{Y-Z}}{r}\right)^{\alpha}
\sup_{C_r^+(Z)} \abs{\tilde v},\ \ \text{when}\ Y\in C_{r}^+(Z),
\end{equation*}
and $\tilde v$ is a n. a. s. relative to $W$ in $C_r^+(Z)$, $r>0$ and $Z\in\bR^{n+1}$.
\end{lemma}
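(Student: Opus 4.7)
My plan is to run the classical oscillation-decay iteration, powered exclusively by the backward Harnack inequality in Lemma \ref{T:1 Theorem 3.7}. I shall prove that on the geometric sequence of cylinders $C_{r/2^k}^+(Z)$, the oscillation of $\tilde v$ decays by a fixed factor $\theta\in (0,1)$ at each step; this converts directly into H\"older continuity at the corner $Z$ with exponent $\alpha \in (0,1]$ determined by $\theta$ (and hence by $\lambda$ and $n$ alone), and no regularity of $\mathbf A$ beyond what is already built into the Harnack inequality will be needed.

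Write $Z = (\tau, z)$ and, for each $k\ge 0$, set
\[
M_k = \sup_{C_{r/2^k}^+(Z)} \tilde v, \qquad m_k = \inf_{C_{r/2^k}^+(Z)} \tilde v.
\]
The first key step is the observation that both $v_1 := \tilde v - m_k$ and $v_2 := M_k - \tilde v$ are non-negative normalized adjoint solutions relative to $W$ on $C_{r/2^k}^+(Z)$: the linearity of $\mathcal P^\ast$ combined with the global identity $\mathcal P^\ast W = 0$ yields $\mathcal P^\ast((\tilde v \pm c)W) = \mathcal P^\ast(\tilde v W) \pm c\,\mathcal P^\ast W = 0$ for any constant $c$. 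I then apply Lemma \ref{T:1 Theorem 3.7} at radius $\rho_k := r/2^{k+1}$, so that the ambient cylinder is $C_{2\rho_k}^+(Z) = C_{r/2^k}^+(Z)$, and evaluate at the single point $Y_k := (\tau + \tfrac{5}{2}\rho_k^2,\, z)$, which lies in $(\tau + 2\rho_k^2, \tau + 3\rho_k^2) \times B_{\rho_k}(z)$ and hence in the set over which the Harnack supremum is taken. Since $v_1(Y_k) + v_2(Y_k) = M_k - m_k$, at least one of them is $\ge (M_k - m_k)/2$; assuming $v_1(Y_k) \ge (M_k - m_k)/2$, Harnack gives
\[
\tfrac{1}{2}(M_k - m_k) \le v_1(Y_k) \le N \inf_{C_{\rho_k}^+(Z)} v_1 = N\,(m_{k+1} - m_k),
\]
so $m_{k+1} - m_k \ge (M_k - m_k)/(2N)$. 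Combined with $M_{k+1} \le M_k$, this produces the one-step decay
\[
M_{k+1} - m_{k+1} \le \bigl(1 - \tfrac{1}{2N}\bigr)(M_k - m_k),
\]
and the symmetric case $v_2(Y_k) \ge (M_k - m_k)/2$ gives the same bound.

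Setting $\theta := 1 - 1/(2N) \in (0,1)$ and $\alpha := \min\bigl(1,\,-\log_2\theta\bigr)\in (0,1]$, iterating the recursion together with $M_0 - m_0 \le 2\sup_{C_r^+(Z)}|\tilde v|$ yields $M_k - m_k \le 2\theta^k \sup_{C_r^+(Z)}|\tilde v|$. For $Y\in C_r^+(Z)$ with $|Y-Z|< r/2$, choosing the minimal $k\ge 0$ with $r/2^{k+1} \le |Y-Z|$ places $Y$ in $C_{r/2^k}^+(Z)$ and yields $\theta^k \le \theta^{-1}(|Y-Z|/r)^\alpha$, so
\[
|\tilde v(Y) - \tilde v(Z)| \le M_k - m_k \le N'\bigl(|Y-Z|/r\bigr)^\alpha \sup_{C_r^+(Z)}|\tilde v|,
\]
with $N' = 2\theta^{-1}$; the remaining range $|Y-Z|\ge r/2$ is handled from the trivial bound $|\tilde v(Y)-\tilde v(Z)|\le 2\sup|\tilde v|$.

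The only real subtlety in the whole argument is guaranteeing that $v_1$ and $v_2$ are genuine normalized adjoint solutions on every shrinking cylinder; this is precisely where the global identity $\mathcal P^\ast W = 0$ on $\bR^{n+1}$ enters decisively, since without it the shift $\tilde v\mapsto \tilde v\pm c$ would introduce a spurious source term $\pm c\,\mathcal P^\ast W$ into the equation satisfied by $(\tilde v\pm c)W$. Everything else in the argument is purely geometric, and makes no use of any property of $W$ or $\mathbf A$ beyond what is already absorbed into the statement of Lemma \ref{T:1 Theorem 3.7}.
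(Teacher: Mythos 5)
Your proof is correct; it is the standard oscillation-decay iteration by which a Harnack inequality yields interior H\"older continuity at the corner point, and you correctly flag the one genuinely delicate point, namely that the shift $\tilde v \mapsto \tilde v \pm c$ stays in the normalized-adjoint-solution class precisely because $\mathcal P^\ast W = 0$ holds globally, so constants are themselves n.a.s. The paper does not prove the lemma but quotes it from \cite[Theorem 3.8]{Es00}, where the argument is of this same kind, so your proposal matches the intended proof.
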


\subsection{Boundary H\"older continuity of normalized adjoint solutions}
The next result shows that n.a.s.\,are H\"older continuous up to Lipschitz lateral boundaries.
This result is not proved in the literature but it was claimed in \cite[Remark 4.1]{Es00}.

\begin{lemma}			\label{T: 4 Theorem 3.9}
Let $\Omega$ be a Lipschitz domain in $\bR^n$, $z\in\partial\Omega$ and $\tau\in\bR$. Let $\tilde v$ be a normalized adjoint
solution relative to $W$ in $Q_r^+(Z)=[\tau, \tau+r^2) \times (B_r(z)\cap \Omega)$ with $\tilde v =0$ on $\Delta_r^+(Z)=[\tau,\tau+r^2) \times ( B_r(z)\cap\partial\Omega)$.
Then, there is $\alpha=\alpha(n,\lambda, \Omega) \in (0,1]$ such that
\begin{equation*}
\abs{\tilde v(Y)} \le N \left(\frac{\abs{Y-Z}}{r}\right)^{\alpha}
\sup_{Q_r(Z)} \abs{\tilde v},\quad \text{when }\; Y\in Q_{r}^+(Z).
\end{equation*}
\end{lemma}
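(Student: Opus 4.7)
The plan is to establish an oscillation decay estimate at the lateral boundary and iterate. By translation we may take $Z=(0,0)$, and by the standard parabolic rescaling (which preserves the class of n.a.s.\ once $W$ is rescaled so as to still satisfy \eqref{E: 2 condidicndeunicidad}--\eqref{E:4 condicionea adicionales}) we may take $r=1$. Write $M_1:=\sup_{Q_1^+(Z)}\tilde v$ and $m_1:=\inf_{Q_1^+(Z)}\tilde v$. Since $\tilde v=0$ on $\Delta_1^+(Z)$ we have $m_1\le 0\le M_1$ and $\tilde v(Z)=0$, so once we prove $\sup_{Q_\rho^+(Z)}|\tilde v|\lesssim \rho^{\alpha}\sup_{Q_1^+(Z)}|\tilde v|$ for all $\rho\in(0,1)$ the conclusion follows.

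The main step is: there exist $\theta\in(0,1)$ and $\rho_0\in(0,\tfrac14)$, depending only on $n$, $\lambda$ and the Lipschitz character of $\Omega$, such that
\[
\osc_{Q_{\rho_0}^+(Z)} \tilde v \;\le\; \theta\,\osc_{Q_1^+(Z)}\tilde v.
\]
After possibly replacing $\tilde v$ by $-\tilde v$, we may assume $M_1\ge -m_1$, so $M_1\ge\tfrac12(M_1-m_1)$. Set $w:=M_1-\tilde v$. Because $\mathcal P^*W=0$, the function $w$ is a non-negative n.a.s.\ relative to $W$ in $Q_1^+(Z)$, and $w\equiv M_1$ on $\Delta_1^+(Z)$ by the boundary hypothesis on $\tilde v$. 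The Lipschitz character of $\Omega$ provides an interior corkscrew: a point $Y_*=(t_*,y_*)\in Q_{1/2}^+(Z)$ with $\dist(y_*,\partial\Omega)\ge c_0$, with $c_0>0$ depending only on the Lipschitz constant. We propagate the boundary value $w\equiv M_1$ from $\Delta_1^+(Z)$ to $Y_*$ by a Harnack chain: a finite sequence of forward cylinders $C_{r_k}^+(Z_k)\subset Q_1^+(Z)$ starting arbitrarily close to a point of $\Delta_1^+(Z)$ and ending at $Y_*$, with consecutive cylinders arranged so Lemma~\ref{T:1 Theorem 3.7} chains their sup/inf. The number of links is bounded solely in terms of the Lipschitz constant, and the continuity of $w$ up to $\Delta_1^+(Z)$ (which is part of the hypothesis) lets us pass to the limit at the first link. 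This yields
\[
w(Y_*)\;\ge\; c_1 M_1\;\ge\;\tfrac{c_1}{2}(M_1-m_1),
\]
for some $c_1>0$ depending only on $n,\lambda$ and the Lipschitz constant. A final application of Lemma~\ref{T:1 Theorem 3.7} to $w$ on a cylinder around $Y_*$ extends this lower bound to $w\ge c_2(M_1-m_1)$ on some $Q_{\rho_0}^+(Z)$, giving $\sup_{Q_{\rho_0}^+(Z)}\tilde v\le M_1-c_2(M_1-m_1)$ and hence the oscillation decay with $\theta=1-c_2$. Iterating at scales $\rho_0^k$ around the boundary point $Z$, using that dilations around $Z$ preserve the Lipschitz character of $\partial\Omega$ at $Z$, we obtain $\osc_{Q_{\rho_0^k}^+(Z)}\tilde v\le\theta^k\osc_{Q_1^+(Z)}\tilde v$, which together with $\tilde v(Z)=0$ yields the stated Hölder bound with $\alpha=-\log\theta/\log\rho_0^{-1}$.

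The main obstacle is the Harnack chain in the second paragraph. Unlike for classical subsolutions of uniformly parabolic equations, where a local barrier can be used at each boundary point, here the only tool available is Lemma~\ref{T:1 Theorem 3.7}, whose geometry is asymmetric in time: it controls the sup on a \emph{later} slab $(\tau+2r^2,\tau+3r^2)\times B_r$ by the inf on the \emph{earlier} cylinder $C_r^+(Z)$. The chain of cylinders must therefore be set up so that both the spatial location (staying inside $\Omega$ at controlled distance from $\partial\Omega$, which is precisely what the interior cone / corkscrew condition provides) and the time layering (each next cylinder being centered in the later slab of the previous one) match the hypotheses of Lemma~\ref{T:1 Theorem 3.7} at every step. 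A second, more minor, technical point is the justification that $w$ attains its boundary value $M_1$ continuously; this is granted by the standing hypothesis that $\tilde v$ is continuous in $Q_r^+(Z)\cup\Delta_r^+(Z)$ with $\tilde v=0$ on $\Delta_r^+(Z)$.
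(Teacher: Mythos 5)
There is a genuine gap in the key step of your argument, namely the claim $w(Y_*)\ge c_1 M_1$ via a Harnack chain. Lemma~\ref{T:1 Theorem 3.7} requires each cylinder $C_{2r_k}^+(Z_k)$ in the chain to lie inside the region where $w$ is a nonnegative n.a.s., which here is $Q_1^+(Z)\subset\Omega_T$. As the starting cylinder approaches the lateral boundary $\Delta_1^+(Z)$, its radius $r_1$ must shrink like $\dist(z_1,\partial\Omega)$, and the number of links needed to reach the corkscrew point $Y_*$ grows like $\log\bigl(1/\dist(z_1,\partial\Omega)\bigr)$ — it is \emph{not} bounded solely in terms of the Lipschitz constant. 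And the remark that ``continuity of $w$ up to $\Delta_1^+(Z)$ lets us pass to the limit at the first link'' is circular: the rate at which $w$ approaches $M_1$ near $\Delta_1^+(Z)$ is exactly the boundary modulus of continuity you are trying to prove, so any constant extracted this way would depend on $w$ itself.

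The paper avoids this by working with the \emph{exterior}, not interior, corkscrew. Since $\Omega$ is Lipschitz there are $\mu>0$ and $z_0$ with $B_{\mu\rho}(z_0)\subset B_\rho(z)\setminus\overline\Omega$. One lets $\tilde w$ be the n.a.s.\ in the \emph{full} cylinder $C_{2\rho}^+(Z)$ (not just in $Q_{2\rho}^+(Z)$) with $\tilde w=1$ on $\{\tau+4\rho^2\}\times B_{\mu\rho}(z_0)$ and $\tilde w=0$ elsewhere on $\partial_p^+C_{2\rho}^+(Z)$; extending $\tilde w\equiv1$ to later times over $B_{\mu\rho}(z_0)$, two applications of Lemma~\ref{T:1 Theorem 3.7} — applied in the full cylinder, hence unaffected by $\partial\Omega$ — give $\tilde w\ge N^{-1}$ on $Q_\rho^+(Z)$ with $N=N(\lambda,n,\Omega)$. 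The maximum principle then gives $\tilde u\ge\tilde w$ for the auxiliary n.a.s.\ $\tilde u$ equal to $1$ on $\Delta_{2\rho}^+(Z)$ and $0$ on the rest of $\partial_p^+Q_{2\rho}^+(Z)$, and the comparison $\pm\tilde v\le(\sup_{Q_{2\rho}^+(Z)}|\tilde v|)(1-\tilde u)$ yields the oscillation decay $\sup_{Q_\rho^+(Z)}|\tilde v|\le(1-N^{-1})\sup_{Q_{2\rho}^+(Z)}|\tilde v|$. Your oscillation-decay skeleton and the iteration at the end are fine; what needs to change is the mechanism that produces the uniform lower bound at a fixed fraction of the scale: replace the interior corkscrew / Harnack chain with this exterior-corkscrew barrier living in the unrestricted cylinder.
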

\begin{proof}
Let $\rho \in (0, r/2]$ and let $\tilde u$ be the n.a.s. satisfying
\begin{equation*}
\begin{cases}
\mathcal{P}^\ast\left(\tilde u\,W\right) =0&\text{in }\; Q_{2\rho}^+(Z),\\
\tilde u = 1&\text{in }\; \Delta_{2\rho}^+(Z),\\
\tilde u=0&\text{in }\; \partial_p^+Q_{2\rho}^+(Z)\setminus\Delta_{2\rho}^+(Z).
\end{cases}
\end{equation*}
It suffices to show that there exists $N>0$ independent of $\rho$ such that \begin{equation}\label{E: 5 locorieite}
\tilde u\ge N^{-1}\ \text{on}\ Q_\rho^+(z).
\end{equation}
This is because the following inequality
\begin{equation*}
\textstyle\pm\, \tilde v\le \left(\sup_{Q_{2\rho}^+(Z)}|\tilde v| \right) (1-\tilde u)
\end{equation*}
obviously holds on the backward parabolic boundary $\partial_p^+Q_{2\rho}^+(Z)$,
the maximum principle for n.a.s. shall show that the above inequality also holds inside $Q_{2\rho}^+(Z)$, and thus \eqref{E: 5 locorieite} implies that
\begin{equation*}
\sup_{Q_{\rho}^+(Z)}|\tilde v|\le \theta\sup_{Q_{2\rho}^+(Z)}|\tilde v|,
\end{equation*}
with $\theta = 1-N^{-1}$, for $0<\rho\le r/2$, which in turn implies Lemma \ref{T: 4 Theorem 3.9}.

To derive \eqref{E: 5 locorieite}, recall that there are $\mu >0$ and $z_0$ in $\bR^n$ such that $B_{\mu\rho}(z_0)\subset B_{\rho}(z)\setminus\overline\Omega$. Let then $\tilde w$ be the n.a.s. in $C_{2\rho}^+(Z)$ verifying
\begin{equation*}
\begin{cases}
\mathcal{P}^\ast\left(\tilde w\,W\right) =0\ &\text{in}\ C_{2\rho}^+(Z),\\
\tilde w = 1\ &\text{in}\ \{\tau +4\rho^2\}\times B_{\mu\rho}(z_0),\\
\tilde w=0\ &\text{elsewhere over}\ \partial_p^+C_{2\rho}^+(Z).
\end{cases}
\end{equation*}
The extension of $\tilde w$ to $C_{2\rho}(Z)\cup [\tau +4\rho^2,\tau+8\rho^2]\times B_{\mu\rho}(z_0)$, as $\tilde w\equiv 1$ over  $ [\tau +4\rho^2,\tau+8\rho^2]\times B_{\mu\rho}(z_0)$, makes out of the extended $\tilde w$ a non-negative n.a.s.\,relative to $W$ over $C_{2\rho}(Z)\cup [\tau +4\rho^2,\tau+8\rho^2]\times B_{\mu\rho}(z_0)$. Then, the Harnack inequality for n.a.s. implies there is $N$ such that, $\tilde w\ge N^{-1}$ over $ [\tau,\tau+8\rho^2]\times B_{\mu\rho/2}(z_0)$. A second application to $\tilde w$ of the same Harnack inequality gives that
\begin{equation}\label{E:5 casicasialli}
\tilde w\ge N^{-1}\ \text{over}\  Q_\rho^+(Z).
\end{equation}
 Finally, the maximum principle for n.a.s.\,and the fact that $0\le \tilde w\le 1$ over $C_{2\rho}^+(Z)$, show that $\tilde u\ge \tilde w$ over $Q_{2\rho}^+(Z)$, which combined with \eqref{E:5 casicasialli} gives \eqref{E: 5 locorieite}.
\end{proof}

\begin{lemma}\label{L: acotanecesa}
There is $N= N(\lambda, n)$ such that if $\tilde v$ is a n. a. s. with respect to $W$ in $C_{r}^+(Z)$, then
\begin{equation}\label{eq14.16tu}
\|\tilde v\|_{L^\infty(C^+_{r/2}(Z))} \le \frac{N}{W(C^+_{r}(Z))} \int_{C^+_{r}(Z)} |\tilde v| W\, .
\end{equation}
\end{lemma}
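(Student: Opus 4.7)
By the parabolic scaling $(s,y)\mapsto(r^{-2}(s-\tau),r^{-1}(y-z))$, I may reduce to $Z=0$, $r=1$, so the goal becomes $\|\tilde v\|_{L^\infty(C_{1/2}^+)}\le N\,W(C_1^+)^{-1}\int_{C_1^+}|\tilde v|\,W$ for a n.a.s.\,$\tilde v$ in $C_1^+$. The strategy has two parts: first I establish the estimate for non-negative n.a.s.\,using the Harnack inequality together with the doubling of $W$, and then I extend to the signed case by iteration on concentric cylinders.

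For the non-negative case, assuming $\tilde v\ge 0$, I claim
\[
\sup_{C_{1/2}^+}\tilde v \le \frac{N_0}{W(C_1^+)}\int_{C_1^+}\tilde v\,W. \quad (\star)
\]
For each $Y\in C_{1/2}^+$ I would connect $Y$ to a fixed reference cylinder $C^{\ast}\subset C_1^+$ via a finite chain of forward sub-cylinders, applying the Harnack inequality (Lemma~\ref{T:1 Theorem 3.7}) at each link. The number of links is bounded in terms of $n$ and $\lambda$ only, producing $\tilde v(Y)\le N'\inf_{C^{\ast}}\tilde v$. The doubling of $W$ given by \eqref{E: Mackenhoupt properety} together with \eqref{E:4 condicionea adicionales} yields $W(C^{\ast})\gtrsim W(C_1^+)$, so $\inf_{C^{\ast}}\tilde v\le W(C^{\ast})^{-1}\int_{C^{\ast}}\tilde v\,W\lesssim W(C_1^+)^{-1}\int_{C_1^+}\tilde v\,W$, proving $(\star)$.

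For the signed case I would exploit that $W$ is itself an adjoint solution, so every constant is a n.a.s., hence both $M_\sigma-\tilde v$ and $\tilde v-m_\sigma$ are non-negative n.a.s.\,on each $C_\sigma^+\subset C_1^+$, with $M_\sigma=\sup_{C_\sigma^+}\tilde v$ and $m_\sigma=\inf_{C_\sigma^+}\tilde v$. Applying the rescaled $(\star)$ to each on a pair $C_\rho^+\subset C_\sigma^+$ and summing yields
\[
\osc_{C_\rho^+}\tilde v + \osc_{C_\sigma^+}\tilde v \le \mathcal{N}(\sigma,\rho)\,\osc_{C_\sigma^+}\tilde v,
\]
i.e.\,$\osc_{C_\rho^+}\tilde v\le (\mathcal{N}(\sigma,\rho)-1)\,\osc_{C_\sigma^+}\tilde v$. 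Refining the Harnack chain in the spirit of Lemma~\ref{T: 3 Theorem 3.8}, the constant can be made strictly less than $2$ when $\sigma/\rho$ is suitably separated from $1$, producing oscillation decay. Iterating this decay on a telescoping sequence of radii converging to $1/2$, and combining with a final application of $(\star)$ to the non-negative n.a.s.\,$\tilde v-m_1\ge 0$ on $C_1^+$ where $|m_1|$ is controlled by the weighted $L^1$ norm of $\tilde v$ via a Chebyshev-type inequality based on the $A_\infty$-structure of $W$ in \eqref{E: Mackenhoupt properety}, would deliver \eqref{eq14.16tu}.

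The main obstacle will be the signed case. The pairing of the two $(\star)$-estimates naturally produces an oscillation inequality rather than the desired $L^\infty$--$L^1_W$ bound, so the argument hinges on two quantitative ingredients: a refined form of the Harnack inequality guaranteeing the contraction $\mathcal{N}(\sigma,\rho)<2$ for appropriate cylinder ratios, and a Chebyshev-type estimate exploiting the reverse-H\"older and doubling conditions for $W$ to bound $|m_1|$ by the weighted $L^1$ average of $\tilde v$. Both ingredients are within the framework set up in Section~\ref{sec5.1}, but their careful assembly is what drives the proof.
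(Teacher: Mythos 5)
Your argument for the non-negative case $(\star)$ does not close. The Harnack inequality of Lemma~\ref{T:1 Theorem 3.7} controls $\tilde v$ on the \emph{later} time-slab $(\tau'+2\rho^2,\tau'+3\rho^2)\times B_\rho(z')$ by its infimum on the \emph{earlier} slab $C_\rho^+(Z')=[\tau',\tau'+\rho^2)\times B_\rho(z')$, and each application needs the ambient cylinder $C^+_{2\rho}(Z')$ to lie inside $C_r^+(Z)$, which forces $\tau'\ge\tau$. Every link of your chain therefore moves \emph{strictly backward} in time toward the vertex $\tau$. Consequently, for $Y=(s,y)\in C^+_{r/2}(Z)$ with $s-\tau$ small, the available time budget is $s-\tau$ and the chain cannot reach a fixed reference cylinder $C^\ast\subset C_r^+(Z)$ with a bounded number of links; in the extreme case $Y=(\tau,y)\in C^+_{r/2}(Z)=[\tau,\tau+r^2/4)\times B_{r/2}(z)$ one cannot apply even a single step, since placing $Y$ in the sup slab would require $s>\tau'+2\rho^2\ge\tau$. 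The claim that ``the number of links is bounded in terms of $n$ and $\lambda$ only'' is therefore false, and $(\star)$ does not follow.

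This is not a technicality that can be patched by letting $C^\ast$ depend on $Y$: if $C^\ast(Y)$ must fit into the time slab below $s$, then $W(C^\ast(Y))\to 0$ as $s\downarrow\tau$ while $W(C_r^+(Z))$ stays fixed, and the bound degenerates. The estimate \eqref{eq14.16tu} asserts local boundedness of $\tilde v$ all the way up to the ``future'' face $\{s=\tau\}$ of $C^+_r(Z)$, where the adjoint backward problem carries no boundary data, and that information simply cannot be extracted from the interior comparison in Lemma~\ref{T:1 Theorem 3.7}. The paper's proof abandons the Harnack chain entirely: it writes the representation formula \eqref{E:otraformula} for $\tilde v(Y)W(Y)$ via the Green's function of $\mathcal P$ on the infinite cylinder $\bR\times B_\rho(z)$, whose boundary terms live only on the lateral boundary and the top slice $\{\tau+\rho^2\}\times B_\rho(z)$ — there is no contribution from $\{s=\tau\}$, so the formula is valid uniformly for $s\in[\tau,\tau+r^2/4)$. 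Gaussian and barrier bounds for the normalized Green's function, together with \eqref{E:4 condicionea adicionales}, give kernel estimates uniform in $Y\in C^+_{r/2}(Z)$ (leading to \eqref{E:6 average}), and averaging in $\rho\in[3r/4,r]$ converts boundary integrals into the solid integral in \eqref{eq14.16tu}. Your signed-case refinement inherits the same gap since it is built on $(\star)$; the missing ingredient throughout is a representation formula (or an equivalent local boundedness estimate adapted to double-divergence form equations), which is precisely what the Green's function machinery supplies here.
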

\begin{proof}
Without loss of generality we may assume $r=2$.
Let then, $G_{\bR\times B_\rho(z)}(X,Y)$ be the Green's function for $\mathcal P$ over the cylinder  $\bR\times B_\rho(z)$.
The facts that
\begin{equation*}
\mathcal PG_{\bR\times B_\rho(z)}(\cdot,Y) = -\delta_Y,
\end{equation*}
the Dirac delta function at $Y$ and $G_{\bR \times B_\rho(z)}(X,Y)= 0$, when $X$ is in the lateral boundary of the cylinder $\bR\times B_\rho(z)$ and $Y$ is in its interior, show that the following representation formula holds for $\tilde v\,W$, for $Y$ in $C^+_\rho(Z)$
\begin{equation}\label{E:otraformula}
\begin{split}
\tilde v(Y)W(Y) = &-\int_s^{\tau+\rho^2}\int_{\partial B_\rho(z)}\mathbf A D_x G_{\bR\times B_\rho(z)}(X,Y)\cdot\nu_x\, \tilde v(X)W(X)\,d\sigma_x\,dt\\
&+\int_{B_\rho(z)}G_{\bR\times B_\rho(z)}(\tau+\rho^2,x,Y)\tilde v(\tau+\rho^2,x)W(\tau+\rho^2,x)\,dx,
\end{split}
\end{equation}
with $\nu_x$ the exterior unit normal vector to $\partial B_\rho(z)$.
Thus,
\begin{equation*}
\begin{split}
\tilde v(Y) = &-\int_s^{\tau+\rho^2}\int_{\partial B_\rho(z)}\mathbf A D_x \tilde G_{\bR\times B_\rho(z)}(X,Y)\cdot \nu_x\,\tilde v(X)W(X)\,d\sigma_xdt\\
&+\int_{B_\rho(z)}\tilde G_{\bR\times B_\rho(z)}(\tau+\rho^2,x,Y)\tilde v(\tau+\rho^2,x)W(\tau+\rho^2,x)\,dx,
\end{split}
\end{equation*}
where $\tilde G_{\bR\times B_\rho(z)}(X,Y)$ is the normalized Green's function for $\mathcal P$ over $\bR\times B_\rho(z)$, i.e.,
\begin{equation*}
\tilde G_{\bR\times B_\rho(z)}(X,Y) = \frac{G_{\bR\times B_\rho(z)}(X,Y)}{W(Y)}.
\end{equation*}
The maximum principle shows that
\begin{equation*}
0\le G_{\bR\times B_\rho(z)}(X,Y)\le G(X,Y)\ \text{for}\ X, Y\in \bR\times B_\rho(z),
\end{equation*}
where $G(X,Y)$ stands for the global fundamental solution for $\mathcal P$ over $\bR^{n+1}$  \cite[Theorem 1.3]{Es00}, \cite[Chapter 1, \S 6]{Friedman1}. Also, the barrier arguments in \cite[Theorem 3.3]{Es00} and \cite[Theorem 3.1]{Garofalo1984}, the Gaussian bounds for the normalized global fundamental solution of $\mathcal P$,
\begin{equation*}
\tilde G(X,Y) = \frac{G(X,Y)}{W(Y)}
\end{equation*}
in \cite[Theorem 1.3]{Es00}, together with \eqref{E:4 condicionea adicionales} show that
\begin{equation*}
|\mathbf A D_x \tilde G_{\bR\times B_\rho(z)}(X,Y)\cdot\nu_x| +\tilde G_{\bR\times B_\rho(z)}(\tau+\rho^2,x,Y)\le \frac N{W(\tau,B_2(z))}\,
\end{equation*}
for $X\in [\tau,\tau+\rho^2]\times \partial B_\rho(z)$, $x\in B_\rho(z)$ and $Y\in C^+_{1}(Z)$, when $\frac32\le\rho\le 2$. Thus,
\begin{equation}\label{E:6 average}
\begin{split}
|\tilde v(Y)|\le\ &\frac N{W(\tau,B_2(z))}\int_{[\tau,\tau+\rho^2]\times \partial B_\rho(z)}|\tilde v|\,W\,d\sigma_x\,dt\\
+&\frac N{W(\tau,B_2(z))}\int_{B_\rho(z)}|\tilde v(\tau+\rho^2,x)|W(\tau+\rho^2,x)\,dx,
\end{split}
\end{equation}
when $\frac32\le\rho\le 2$ and $Y\in C^+_1(Z)$. After taking the average of \eqref{E:6 average} with respect to $\rho$ over the interval $[\frac32,2]$, we get \eqref{eq14.16tu}  for $r=2$ from \eqref{E:4 condicionea adicionales}. Other cases follow by rescaling.
  \end{proof}

In some of the next lemmas we need to deal with normalized adjoint solutions of $\mathcal P$ that are normalized with respect to a different non-negative adjoint solution of $\mathcal P$. In particular, normalized with respect to the unique weak adjoint solution $V$ in $L^p(C^+_{2})$, $1<p < \infty$, to the problem
  \begin{equation}\label{E: definition20}
  \mathcal P^\ast V = 0\ \text{ on }\ C^+_{2},\quad V=1\ \text{ on }\ \partial_p^+C^+_{2},
  \end{equation}
 i.e., the unique $V$ in $L^p(C^+_{2})$ such that
 \begin{equation}\label{eq13.53pder}
\int_{C^+_{2}} V\,\mathcal P \varphi=\int_{B_{2}} \varphi(T)-\int_{[0,4]\times\partial B_{2}}\mathbf A D \varphi\cdot\nu
\end{equation}
for any $v \in W^{1,2}_{p'}(C^+_{2})\cap \mathring{\mathcal H}^1_{p'}(C^+_{2})$, with $\frac{1}{p}+\frac{1}{p'}=1$. Then, if we let $G_{\bR\times B_{2}}(X,Y)$ denote the Green's function for $\mathcal P$ over $\bR\times B_{2}$, by the same reasons as in \eqref{E:otraformula}, we have the following representation formula for $V$ when $Y$ is in $C^+_{2}$
\begin{equation}\label{E:fmulila}
V(Y) = -\int_s^{4}\int_{\partial B_{2}}\mathbf A D_x G_{\bR\times B_{2}}(X,Y)\cdot\nu_x\, d\sigma_xdt+\int_{B_{2}}G_{\bR\times B_{2}}(4,x,Y)\, dx.
\end{equation}
The maximum principle shows that
\begin{equation}\label{E: funciones de gress}
0\le G_{\bR\times B_{2}}(X,Y)\le G(X,Y)\ \text{for}\ X, Y\in \bR\times B_{2},
\end{equation}
where $G(X,Y)$ stands for the global fundamental solution for $\mathcal P$ over $\bR^{n+1}$  \cite[Theorem 1.3]{Es00}, \cite[Chapter 1, \S 6]{Friedman1}. Also, the barrier arguments in \cite[Theorem 3.3]{Es00} and \cite[Theorem 3.1]{Garofalo1984}, the Gaussian bounds for the normalized global fundamental solution of $\mathcal P$
\begin{equation*}
\tilde G(X,Y) = \frac{G(X,Y)}{W(Y)}
\end{equation*}
in \cite[Theorem 1.3]{Es00}, together with \eqref{E: 2 condidicndeunicidad} and \eqref{E:4 condicionea adicionales}, show that there is $N=N(\lambda, n)$ such that
\begin{equation}\label{E: otudesi}
|\mathbf A D_x \tilde G_{\bR\times B_{2}}(X,Y)\cdot\nu_x| +\tilde G_{\bR\times B_{2}}(4,x,Y)\le N,
\end{equation}
when $X\in(0,4)\times \partial B_{2}$, $x\in B_{2}$ and $Y\in C^+_{1}$.
Then,  the combination of \eqref{E:fmulila}, \eqref{E: funciones de gress} and \eqref{E: otudesi} imply that
\begin{equation}\label{E: ddd1}
V(Y)\le NW(Y)\ \text{when}\  Y\in C^+_{1}.
\end{equation}
At the same time, the Harnack inequality for normalized adjoint solutions in Lemma \ref{T:1 Theorem 3.7}, the fact that the first integral in \eqref{E:fmulila} is non-negative, together with \eqref{E: 2 condidicndeunicidad} and \eqref{E:4 condicionea adicionales}, imply that
\begin{equation}\label{E: ddd3}
 N\inf_{C^+_1}\tilde V\,  \ge \frac{\int_{(2,3)\times B_1}V}{W((2,3)\times B_1)}
 \ge N^{-1}\int_{B_{1/2}} \int_{(2,3)\times B_1}G_{B_2}(4,x,Y)\, dY\,dx
\end{equation}
 for some $N= N(\lambda,n)$.
Now, for $2\le s\le 3$, the function
\begin{equation*}
u(t,x)= \int_{B_1}G_{\bR\times B_2}(X,s,y)\,dy
\end{equation*}
is the solution to
\begin{equation*}
\begin{cases}
\mathcal Pu =0\, &\text{in}\ (s,\infty)\times B_2,\\
u=0\ &\text{on}\ (s,\infty)\times\partial B_2,\\
u(s) = 1_{B_1}\ &\text{on}\ B_2
\end{cases}
\end{equation*}
and its extension as $u\equiv 1$ over $(0,s]\times B_1$, makes out of the extended $u$ a non-negative solution to $\mathcal Pw =0$ over $(0,4]\times B_1$. Then, the Harnack inequality for non-negative forward solutions to parabolic equations \cite{KrSaf81} imply that $Nu(x,4)\ge 1$, when $x\in B_{1/2}$ and with $N = N(\lambda,n)$. Then, the later combined with \eqref{E: ddd3} and \eqref{E: ddd1} show that
\begin{equation}\label{E: aproximacion}
N^{-1}W(Y)\le V(Y)\le NW(Y),\ \text{when}\ Y\in C^+_1,
\end{equation}
with $N = N(\lambda, n)$.

Now let $v\in L^1_{\rm loc}(C^+_{2r}(Z))$ be an adjoint solution for $\mathcal P$ over $C^+_{2r}(Z)\subset C_1^+$ . Then, after writing $v/V$ as $\tilde v/\tilde V$, we get
\begin{equation*}
\sup_{C^+_r(Z)} \Abs{\frac{v}{V}} \le \frac{\sup_{C^+_r(Z)}\abs{\tilde v}}{\inf_{C^+_r(Z)} \tilde V}.
\end{equation*}
From Lemma \ref{L: acotanecesa} and \eqref{E: aproximacion} we have
\begin{equation*}
\sup_{C^+_r(Z)}{|\tilde v|}\le \frac{N}{W(C^+_{2r}(Z))} \int_{C^+_{2r}(Z)}|v|
\end{equation*}
and the last two inequalities and \eqref{E: aproximacion} show that
\begin{equation*}
\sup_{C^+_r(Z)}{\left |\frac{v}{V}\right |}\le \frac{N}{V(C^+_{2r}(Z))} \int_{C^+_{2r}(Z)}|v|\, .
\end{equation*}
Also, \eqref{E: aproximacion} shows that when $v\ge 0$ over $C^+_{2r}(Z)$
\begin{equation*}
\sup_{(\tau+2r^2,\tau+3r^2)\times B_r(z)}\frac{v}{V}\simeq \sup_{(\tau+2r^2,\tau+3r^2)\times B_r(z)}{\tilde v},\quad \inf_{C^+_r(Z)}{\tilde v}\simeq\inf_{C^+_r(Z)}\frac{v}{V}
\end{equation*}
and from Lemma \ref{T:1 Theorem 3.7},
\begin{equation*}
\sup_{(\tau+2r^2,\tau+3r^2)\times B_r(z)}{\tilde v}\le N\inf_{C^+_r(Z)}{\tilde v},
\end{equation*}
which combined with the previous inequality gives that
\begin{equation*}
\sup_{(\tau+2r^2,\tau+3r^2)\times B_r(z)} \frac{v}{V}\le N\inf_{C^+_r(Z)}\frac{v}{V}
\end{equation*}
with $N$ as before.

\begin{remark}\label{R: una nota}
The latter shows that Lemmas \ref{T:1 Theorem 3.7}, \ref{T: 3 Theorem 3.8}, and \ref{L: acotanecesa} also hold when one replaces normalized adjoint solutions with respect to $W$ over $C^+_{2r}(Z)\subset C^+_1$ by normalized adjoint solutions with respect to $V$ over $C^+_{2r}(Z)$.
\end{remark}


\subsection{Proof of Lemma~\ref{thm-main-adj2} and related results}

As explained in \cite[Remark 4.1]{Es00}, a standard consequence of the backward Harnack inequality for normalized adjoint solutions in Lemma \ref{T:1 Theorem 3.7} and of the interior and boundary H\"older continuity in Lemmas \ref {T: 3 Theorem 3.8} and \ref{T: 4 Theorem 3.9} is that when the coefficients of $\mathcal P$ are $\mathsf{VMO_x}$ over $\bR^{n+1}$, $\Omega$ is a bounded Lipschitz domain in $\bR^n$ and $\tilde\rho\in C^{0,0}(\partial_p^+\Omega_T)$, by using an approximation argument there is always a unique n.a.s.\,$\tilde v$ in $C^{0,0}(\overbar\Omega_T)$ verifying $\tilde v=\tilde\rho$ on $\partial_p^+\Omega_T$ with a modulus of continuity determined by $n$, $\lambda$ and the modulus of continuity of $\tilde\rho$ over $\partial_p^+\Omega_T$.
On the other hand, Theorem \ref{thm:dd-wolt} shows that when $\mathbf A$ is $\mathsf{DMO_x}$ over $\bR^{n+1}$, the global (non-negative) adjoint solution $W$ of $\mathcal P$ is continuous over any compact set $K\subset\bR^{n+1}$  with a modulus of continuity which depends only on $K$, $n$, $\lambda$ and $\omega_{\mathbf A}^{\textsf x}$.

Also, Lemma \ref{lem8.06} below shows that $W$ is bounded above and below on any compact set $K$ of $\bR^{n+1}$, with lower and upper bounds depending on $\lambda$, $n$, $K$ and $\omega_{\mathbf A}^{\textsf x}$. Therefore, if $\rho\in C^{0,0}(\partial_p^+\Omega_T)$ is the continuous function defined over $\partial_p^+\Omega_T$ by the pair of functions $\psi$ and $\varphi$ in Lemma \ref{thm-main-adj2}, by \cite[Remark 4.1]{Es00}, there is a unique n.a.s.\,$\tilde v$ which satisfies
\begin{equation*}
\partial_s\left(\tilde vW\right)+D_{ij}  (a^{ij}(Y)\tilde v W ) = 0 \ \ \text{in}\ \ \Omega_T,\quad \tilde v =\tilde\rho=\rho/W\ \ \text{on}\ \ \partial_p^+\Omega_T.
\end{equation*}
Moreover, $\tilde v\in C^{0,0}(\overbar\Omega_T)$ with a modulus of continuity controlled by $n$, $\lambda$, the Lipschitz character of $\Omega$ and the modulus of continuity of $\tilde\rho$.
The latter in turn is controlled by $n$, $\lambda$, $\omega_{\mathbf A}^{\textsf x}$ and the modulus of continuity of $\rho$. It is then clear that $v = \tilde vW$ satisfies the required properties in Lemma \ref{thm-main-adj2}.
This completes the proof of Lemma~\ref{thm-main-adj2}.
\qed

\begin{lemma}\label{lem8.06}
Assume the coefficients $\mathbf{A}$ of $\mathcal P$ are $\mathsf{DMO_x}$ over $\bR^{n+1}$ and
let $W$ be the unique global non-negative adjoint solution associated to $\mathcal P$ right above. Then, there is $N(\lambda, n, \omega_{\mathbf A}^{\textsf x})$ such that
\begin{equation*}
N^{-1}\fint_{C^+_r(Z)}W\le W(Y)\le N\fint_{C^+_r(Z)}W\ ,\quad \text{when}\ Y\in C^+_r(Z)\ \text{and}\ 0<r\le 1.
\end{equation*}
\end{lemma}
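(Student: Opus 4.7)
The plan is to combine the quantitative modulus of continuity of $W$ afforded by Theorem~\ref{thm:dd-wolt} (applied to $W$ as a non-negative adjoint solution of $\mathcal P^{\ast}W=0$, i.e.\ with trivial data $\mathbf g\equiv 0$) with the doubling/Muckenhoupt properties recorded in \eqref{E: Mackenhoupt properety}--\eqref{E:4 condicionea adicionales}. The two-sided bound will be deduced by showing that, at a sufficiently small scale, the oscillation of $W$ is at most a small fraction of its local average, after which elementary arithmetic converts the oscillation bound into pointwise upper and lower bounds.

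First I will apply the oscillation estimate \eqref{eq14.00adj} (where $\mathbf g\equiv 0$ forces $\tilde\omega_{\mathbf g}^{\textsf x}\equiv 0$) to the rescaled adjoint solution $(\tilde t,\tilde x)\mapsto W(\tau+r^{2}\tilde t,z+r\tilde x)$ on $C_{1}^{+}$. Since the rescaled coefficients have $\mathsf{DMO_x}$ modulus $s\mapsto\omega_{\mathbf A}^{\textsf x}(rs)\le\omega_{\mathbf A}^{\textsf x}(s)$ for $r\le 1$, this yields, for every $Y,Y'\in C_{r}^{+}(Z)$ with $|Y-Y'|\le r/2$,
\[
\bigl|W(Y)-W(Y')\bigr|\le C\,\zeta\!\bigl(|Y-Y'|/r\bigr)\fint_{C_{4r}^{+}(Z)}W,
\]
where $\zeta(s):=s^{\beta}+\int_{0}^{2s}\tilde\omega_{\mathbf A}^{\textsf x}(t)/t\,dt$ depends only on $n,\lambda,\omega_{\mathbf A}^{\textsf x}$ and satisfies $\zeta(s)\to 0$ as $s\to 0^{+}$. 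Second I will establish a forward-cylinder doubling for $W$, namely $\fint_{C_{4r}^{+}(Z)}W\le N_{0}\fint_{C_{r}^{+}(Z)}W$ and $\fint_{C_{r}^{+}(Z)}W\le N_{1}\fint_{C_{\kappa r}^{+}(Y)}W$ for every $Y\in C_{r}^{+}(Z)$ and $\kappa\in(0,1)$, with $N_{0}=N_{0}(\lambda,n)$ and $N_{1}=N_{1}(\lambda,n,\kappa)$, by chaining the single-time-slice ball doubling \eqref{E:4 condicionea adicionales} with the centered cylinder doubling \eqref{E: Mackenhoupt properety} and iterating each roughly $\log_{2}(1/\kappa)$ times.

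Choosing $\kappa=\kappa(n,\lambda,\omega_{\mathbf A}^{\textsf x})\in(0,1/4]$ small enough that $CN_{0}N_{1}\,\zeta(\kappa)\le 1/2$, the two displays combine to yield $\osc_{C_{\kappa r}^{+}(Y)}W\le \tfrac12\fint_{C_{\kappa r}^{+}(Y)}W$ for all $Y\in C_{r}^{+}(Z)$ and $0<r\le 1$. Since any continuous non-negative $f$ on a cylinder $\mathcal C$ obeying $\osc_{\mathcal C}f\le\tfrac12\fint_{\mathcal C}f$ automatically satisfies $\tfrac12\fint_{\mathcal C}f\le f\le\tfrac32\fint_{\mathcal C}f$ on $\mathcal C$ (simply because $\inf_{\mathcal C}f\le\fint_{\mathcal C}f\le\sup_{\mathcal C}f$), this produces
\[
\tfrac12\fint_{C_{\kappa r}^{+}(Y)}W\le W(Y')\le \tfrac32\fint_{C_{\kappa r}^{+}(Y)}W,\qquad Y'\in C_{\kappa r}^{+}(Y);
\]
specializing $Y'=Y$ and then invoking the forward-cylinder doubling one last time to replace $\fint_{C_{\kappa r}^{+}(Y)}W$ by $\fint_{C_{r}^{+}(Z)}W$ delivers the desired bound with $N=N(\lambda,n,\omega_{\mathbf A}^{\textsf x})$.

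The main obstacle will be the forward-cylinder doubling step, in particular the lower bound $\fint_{C_{\kappa r}^{+}(Y)}W\ge N^{-1}\fint_{C_{r}^{+}(Z)}W$ uniformly in $Y\in C_{r}^{+}(Z)$: because \eqref{E: Mackenhoupt properety}--\eqref{E:4 condicionea adicionales} are stated for doubly-centered cylinders and for balls on a fixed time slice, delivering the estimate when $Y$ is close to the top time slice of $C_{r}^{+}(Z)$ — so that $C_{\kappa r}^{+}(Y)$ protrudes forward in time past $C_{r}^{+}(Z)$ — will require iterating the single-slice doubling $W(\tau,B_{2\rho})\le NW(\tau+\theta\rho^{2},B_{\rho})$ roughly $\log_{2}(1/\kappa)$ times and tracking the accumulated time shifts carefully.
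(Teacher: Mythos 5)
Your upper-bound argument is essentially the paper's: average the triangle inequality $W(X)\le|W(X)-W(Y)|+W(Y)$ over $Y$, invoke the rescaled oscillation estimate \eqref{eq14.00adjs}, and absorb $\norm{W}_{L^1(C_{4r}^+(Z))}$ into $\norm{W}_{L^1(C_{r}^+(Z))}$ via the fixed-ratio doubling in \eqref{E: Mackenhoupt properety}. That part is sound and matches the paper.

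The lower-bound argument, however, has a genuine gap at the step ``choose $\kappa$ small enough that $CN_0N_1(\kappa)\zeta(\kappa)\le\tfrac12$.'' You correctly note that $N_1=N_1(\lambda,n,\kappa)$ is obtained by chaining the doubling inequalities roughly $\log_2(1/\kappa)$ times, each costing a factor $N(\lambda,n)$; hence $N_1(\kappa)$ grows like a power $\kappa^{-\gamma}$ with $\gamma=\log_2 N-(n+2)$, and \eqref{E: Mackenhoupt properety}--\eqref{E:4 condicionea adicionales} give no reason why $\gamma$ should be less than one, let alone less than the available $\beta\in(0,1)$ in the oscillation estimate. Thus $N_1(\kappa)\kappa^\beta$ need not tend to zero. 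The Dini-tail term is just as bad: $\int_0^{2\kappa}\tilde\omega_{\mathbf A}^{\textsf x}(t)/t\,dt$ decays without any prescribed polynomial rate, so multiplied by $\kappa^{-\gamma}$ it can diverge. In short, the smallness of $\zeta(\kappa)$ is overwhelmed by the $\kappa$-dependence of $N_1(\kappa)$, and the oscillation cannot be shown to be at most half the average at a single fixed scale by doubling alone. (Put differently, the assertion that $\fint_{C_r^+(Z)}W\le C\fint_{C_{\kappa r}^+(Y)}W$ with $C$ not blowing up too fast as $\kappa\to0$ is itself a form of the lower density bound you are trying to prove, so the argument is close to circular.) A concrete warning sign: for a Muckenhoupt weight such as $|x|^{a}$ with $a>0$, the ratio $\fint_{B_r}w/\fint_{B_{\kappa r}}w$ grows like $\kappa^{-a}$, and $a$ is not controlled by a reverse-H\"older constant alone.

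The paper sidesteps this obstruction entirely. It proves the lower bound by contradiction: if it failed, a translation-scaling-compactness argument would produce a $\mathsf{DMO_x}$ operator whose normalized global adjoint solution vanished at the origin; feeding this into \eqref{eq14.00adjs} gives $\fint_{C_r^+}W\lesssim r^{\beta/2}$, and this is then contradicted by Lemma~\ref{lem8.0671}, which proves that $\log W$ is in parabolic $\mathsf{VMO}$ and hence $\fint_{C_r^+}W\ge N_\epsilon^{-1}r^\epsilon$ for every $\epsilon>0$. The $\mathsf{VMO}$ input for $\log W$, established through the perturbative argument in Lemma~\ref{L: unlemilla}, is precisely the extra structure needed to convert the oscillation estimate into a genuine two-sided bound; it is the ingredient your proposal is missing.
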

\begin{proof} First, according with Theorem \ref{thm:dd-wolt}, $W$ is continuous over $\bR^{n+1}$ and from the rescaled and translated version of \eqref{eq14.00adj} we find that for any $\beta \in (0,1)$, there are a constant $N>0$ depending only on $\lambda$, $n$, $\beta$, and $\omega_{\mathbf A}^{\textsf x}$ such that
\begin{equation}\label{eq14.00adjs}
\abs{W(X)-W(Y)} \le Nr^{-n-2}\norm{W}_{L^1(C^+_{4r}(Z))}\left[\left(\frac{\abs{X-Y}}{r}\right)^\beta+\int_0^{2\abs{X-Y}} \frac{\tilde \omega_{\mathbf A}^{\textsf x}(t)}t \,dt\right]
\end{equation}
for $X$, $Y \in C_r^+(Z)$, $0<r\le 1$, where $\tilde \omega_{\mathbf A}^{\textsf x}(t)$ is a Dini function given as in \eqref{eq14.27w}.

Taking the average with respect to $Y$ over $C_r^+(Z)$ of the triangle inequality
\[
W(X) \le \abs{W(X)-W(Y)} + W(Y)
\]
and using \eqref{eq14.00adjs} and recalling \eqref{E: Mackenhoupt properety}, we get
\begin{equation*}
\sup_{C_r^+(Z)}W \lesssim r^{-n-2} \norm{W}_{L^1(C^+_{r}(Z))}\ \text{for all}\ Z\in \bR^{n+1}
\end{equation*}
Assuming by contradiction that the lower bound is false, after translations and scalings, compactness  lead to the existence of a parabolic operator $\mathcal P$ with coefficients $\mathbf A$ in $\mathsf{DMO_x}$  over $\bR^{n+1}$  satisfying \eqref{parabolicity}, whose unique global adjoint solution $W$ verifying \eqref{E: 2 condidicndeunicidad} and \eqref{E:4 condicionea adicionales}, vanishes at the origin.  Let then $\delta \in (0,1/2)$ be a small constant to be specified later.
From \eqref{eq14.00adjs} with $Z=0$, for any $r\in (0,1]$, we have
\[
\fint_{C^+_{\delta R}}W\le N\left(\delta^\beta+\int_0^{\delta R}
\frac{\tilde\omega_{\mathbf A}^{\textsf x}(t)}{t} \,dt \right) \fint_{C^+_{4R}}W,
\]
where $N$ is independent of $\delta$.
We then fix $\delta$ small such that $2N\delta^\beta\le \delta^{\beta/2}$ and after we find $R_\delta$ such that
\[
2N\int_0^{\delta R}\frac{\tilde\omega_{\mathbf A}^{\textsf x}(t)}{t} \,dt\le \delta^{\beta/2},\ \ \text{when}\ R\le R_{\delta}.
\]
We obtain
\[
\fint_{C^+_{\delta R}}W\le \delta^{\beta/2} \fint_{C^+_{4R}}W \quad \text{for }\; R\le R_{\delta}.
\]
By iteration, we deduce $\fint_{C^+_{r}(Z)}W\le Nr^{\beta/2}$, when $r$ is small.
This, however, contradicts Lemma \ref{lem8.0671} which implies that for all $\epsilon >0$, there is $N_{\epsilon}$ such that
\[N_{\epsilon}\fint_{C_r^+(Z)}W \ge r^{\varepsilon} \quad\text{for all }\;r\in (0,1).\]
Indeed, without loss of generality we assume that $Z$ is the origin.
Denote \[a_k=\fint_{C_{e^{-k}}^+}\log W.\]
By the triangle inequality and because $\log W\in \textsf{VMO}$, we find $|a_k-a_{k+1}| \to 0$ as $k \to \infty$.
Then for any $\varepsilon>0$, there is $k_0\in \bN$ such that for any $k\ge k_0$, $a_{k+1}-a_k\ge -\varepsilon$, so that
\[
a_k-a_{k_0} \ge -(k-k_0)\varepsilon\quad \forall k\ge k_0.
\]
For any small $r>0$, we can find $k\ge k_0$ such that $e^{-k-1}< r\le e^{-k}$.
Then we have $\fint_{C^+_r}\log W \ge C_\varepsilon+\varepsilon \log r$.
By the convexity of the exponential function,
\[
\fint_{C^+_r}W=\fint_{C^+_r}e^{\log W}
\ge \exp \left(\fint_{C^+_r}\log W\right)\gtrsim r^{\varepsilon}.\qedhere
\]
\end{proof}

The following Lemma is an extension of \cite[Theorem 1.2]{Es94} to the parabolic setting.
\begin{lemma}\label{lem8.0671}
Assume the coefficients $\mathbf{A}$ of $\mathcal P$ are $\mathsf{VMO_x}$ over $\bR^{n+1}$ and
let $W$ be the unique global non-negative adjoint solution associated to $\mathcal P$. Then, $\log W$ is in parabolic $\mathsf{VMO}(C_T^+)$, for all $T>0$, i.e.,
\begin{equation*}
\lim_{r\to 0+} \sup_{C_r^+(X)\subset C^+_T} \fint_{C_r^+(X)} |\log{W}(Y) - \left(\log{W}\right)_{X,r}| =0,
\end{equation*}
where
$$
\left(\log{W}\right)_{X,r} :=\fint_{C_r^+(X)} \log{W}.
$$

\end{lemma}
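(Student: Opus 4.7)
Since $W$ is a Muckenhoupt $A_\infty$ weight by \eqref{E: Mackenhoupt properety}, we already have $\log W\in\mathsf{BMO}(C_T^+)$. By a standard equivalence for $A_\infty$ weights (via John--Nirenberg and reverse Jensen), $\log W\in\mathsf{VMO}(C_T^+)$ reduces to the vanishing relative oscillation
\begin{equation*}
\lim_{r\to 0^+}\sup_{C_r^+(X)\subset C_T^+}\fint_{C_r^+(X)}\Bigl|W/W_{X,r}-1\Bigr|=0,\qquad W_{X,r}:=\fint_{C_r^+(X)}W.
\end{equation*}
The plan is to prove this by a compactness/blow-up argument exploiting Liouville-type rigidity: the $\mathsf{VMO_x}$ hypothesis forces blown-up coefficients to become $x$-independent in the limit, and for such operators nonnegative adjoint solutions with the doubling property \eqref{E:4 condicionea adicionales} are constant.

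Assume the oscillation fails to vanish, so there exist $\epsilon_0>0$, $r_k\to 0$ and $X_k$ with $C_{r_k}^+(X_k)\subset C_T^+$ along which the ratio stays $\ge\epsilon_0$. Rescale: set $\mathcal P_k:=\partial_t - a_k^{ij}D_{ij}$ with $a_k^{ij}(Y):=a^{ij}(X_k+r_kY)$, and $W_k(Y):=W(X_k+r_kY)/c_k$, $c_k:=\fint_{C_1^+}W(X_k+r_k\cdot)$. Then $\mathcal P_k^\ast W_k=0$ on $C_{R_k}^+(0)$ with $R_k\to\infty$, $\fint_{C_1^+}W_k=1$, $\fint_{C_1^+}|W_k-1|\ge\epsilon_0$; the $W_k$ inherit the constants in \eqref{E: Mackenhoupt properety}--\eqref{E:4 condicionea adicionales} uniformly; and the rescaled modulus satisfies $\sigma_{\mathbf A_k}^{\mathsf x}(\rho)=\sigma_{\mathbf A}^{\mathsf x}(r_k\rho)\to 0$ for every fixed $\rho>0$. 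To upgrade the resulting weak $L^{(n+1)/n}_{\text{loc}}$-bound on $\{W_k\}$ to \emph{locally uniform} convergence, I introduce for a fixed large $R$ the weak adjoint solutions $V_k$ of $\mathcal P_k^\ast V_k=0$ on $C_R^+(0)$ with $V_k=1$ on $\partial_p^+ C_R^+(0)$; by \eqref{E: aproximacion} applied to the rescaled operators, $V_k$ is comparable to $W_k$ on $C_{R/2}^+$, hence bounded above and below by positive constants depending only on $n,\lambda,R$. The ratios $\widetilde W_k:=W_k/V_k$ are normalized adjoint solutions relative to $V_k$, so Lemma~\ref{T: 3 Theorem 3.8} with Remark~\ref{R: una nota} gives uniform H\"older continuity on $C_{R/4}^+$. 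After extracting subsequences, $a_k^{ij}\rightharpoonup \bar a^{ij}(t)$ weak-$\ast$ in $L^\infty$ (the limit is $x$-independent by the vanishing of $\sigma_{\mathbf A_k}^{\mathsf x}$), $V_k\to V_\infty$ locally uniformly with $V_\infty$ solving $-\partial_t V_\infty-\bar a^{ij}(t)D_{ij}V_\infty=0$ on $C_R^+(0)$ with constant boundary data (hence $V_\infty\equiv 1$), and Arzel\`a--Ascoli then yields $W_k\to W_\infty:=\widetilde W_\infty$ locally uniformly on $\bR^{n+1}$.

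Passing to the limit in the weak adjoint equation, $W_\infty\ge 0$ is a distributional solution of $-\partial_s W_\infty-\bar a^{ij}(t)D_{ij}W_\infty=0$ on $\bR^{n+1}$ with $\fint_{C_1^+}W_\infty=1$ and the doubling estimate \eqref{E:4 condicionea adicionales}. Because $\bar a^{ij}$ depends only on $t$, after time reversal this is a forward parabolic equation in non-divergence form with measurable, $x$-independent coefficients, so the Krylov--Safonov parabolic Harnack inequality (\cite{KrSaf81}) applied at every scale together with the doubling bound force $W_\infty\equiv\text{const}=1$. Local uniform convergence $W_k\to 1$ contradicts $\fint_{C_1^+}|W_k-1|\ge\epsilon_0$, completing the proof.

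\textbf{Main obstacle.} The decisive step is upgrading weak $L^p$ compactness of $\{W_k\}$ to local uniform convergence; this is achieved by comparing $W_k$ to the auxiliary reference $V_k$ via the normalized adjoint solution H\"older theory of \S\ref{sec5.1}---in particular \eqref{E: aproximacion} and Remark~\ref{R: una nota}---without which only integral convergence would be available and the pointwise Liouville contradiction could not be realized. A secondary delicate point is carrying the doubling property \eqref{E:4 condicionea adicionales} faithfully to the blow-up limit so that the Krylov--Safonov Harnack can be invoked at all scales.
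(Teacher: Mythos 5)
Your proposal takes a genuinely different route from the paper. The paper proves Lemma~\ref{lem8.0671} by a continuity method: it interpolates between the $x$-averaged operator and $\mathcal P$ via a parameter $\theta$, differentiates a reverse H\"older ratio of the associated adjoint solutions $V^\theta$ in $\theta$ (Lemma~\ref{L: unlemilla}), shows the reverse H\"older constant tends to $1$ as $r\to0$, and then invokes Sarason's lemma (Lemma~\ref{L: lemilla2}) plus John--Nirenberg. You instead propose a blow-up/Liouville argument. Conceptually the blow-up idea is attractive, but as written it has two concrete gaps.

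First, the opening reduction is not valid. Knowing $W\in A_\infty$ together with $\fint_{C^+_r}\abs{W/W_{X,r}-1}\to0$ does not imply $\log W\in\mathsf{VMO}$. Sarason's criterion requires control of $\fint W\cdot\fint W^{-1}$, hence of the \emph{negative} excursions of $\log(W/W_{X,r})$; the quantity $\fint\abs{W/W_{X,r}-1}$ is insensitive to $W$ being extremely small on a small set, precisely the scenario that can keep $\log W$ out of $\mathsf{VMO}$. The reverse H\"older class $\mathcal B_{(n+1)/n}$ in \eqref{E: Mackenhoupt properety} does not by itself give $\fint W^{-1}<\infty$. You could avoid this issue by running the contradiction directly on $\fint\abs{\log W - (\log W)_{X,r}}$ with geometric-mean normalization, but the step as stated is a gap.

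Second, and more seriously, the compactness step is not justified under the hypothesis of the lemma, which is only $\mathsf{VMO_x}$. You assert that $V_k\to V_\infty$ locally uniformly and then apply Arzel\`a--Ascoli to $\widetilde W_k=W_k/V_k$ to conclude $W_k\to W_\infty$ locally uniformly. But under $\mathsf{VMO_x}$, adjoint solutions (including $W$ and $V$) are only known to lie in $L^{(n+1)/n}_{\loc}$; continuity of $W$ is exactly what fails at this level of regularity and is recovered only under $\mathsf{DMO_x}$ (Theorem~\ref{thm:dd-wolt}, Lemma~\ref{lem8.06}). Lemmas~\ref{T:1 Theorem 3.7}--\ref{T: 4 Theorem 3.9} and Remark~\ref{R: una nota} give H\"older bounds for the \emph{ratio} $W_k/V_k$, but not for $V_k$ or $W_k$ individually, and Lemma~\ref{lem8.06} (which would give pointwise two-sided bounds and hence undo the normalization) both requires $\mathsf{DMO_x}$ and is itself proved using Lemma~\ref{lem8.0671}, so invoking it here would be circular. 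Thus locally uniform convergence of $W_k$ --- the decisive step in your argument --- is not established, and it is precisely the hard part of the lemma. The Liouville step is a secondary concern: rather than extracting it from Krylov--Safonov at all scales, it is cleaner to appeal to the uniqueness of the global adjoint solution verifying \eqref{E: 2 condidicndeunicidad} and \eqref{E:4 condicionea adicionales} for $\mathsf{VMO_x}$ (in particular $x$-independent) coefficients, as recorded in Section~\ref{sec5.1}; but this doesn't repair the compactness gap.
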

We recall that by the parabolic version of the John-Nirenberg inequality \cite{JohnNirenberg}, the later implies that any positive or negative power of $W$ is integrable over compact sets of $\bR^{n+1}$.
Lemma \ref{lem8.0671} follows from the following results. The second one is borrowed from \cite[Lemma 3]{Sarason1}.
\begin{lemma}\label{L: unlemilla}
Let $\mathcal P$ be as in Lemma \ref{lem8.0671}. Then, there are $N= N(\lambda, n, \sigma_\mathbf A^{\textsf x})$, $\alpha = \alpha (\lambda, n)$, $\delta = \delta(\lambda, n)$ and $\epsilon = \epsilon (\lambda, n)$ such that
\begin{equation*}
\left(\fint_{C^+_r(Z)} V^{\frac {n+1} n} \right)^{\frac{n}{n+1}}\le \left[1+N\left(r^{\alpha/2}+{\sigma_{\mathbf A}^{\textsf x}}(r^\varepsilon)^{\frac{\delta}{(n+1)(n+1-\delta)}}\right)\right]\fint_{C^+_r(Z)} V,
\end{equation*}
when $C^+_{r^\epsilon}(Z)\subset C^+_{1/2}$.
\end{lemma}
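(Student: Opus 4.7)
The plan is a perturbative Campanato-type argument: freeze the principal coefficients of $\mathcal P$ at their spatial averages on an enlarged cylinder $C^+_{r^\varepsilon}(Z)$, compare $V$ with the adjoint solution of the resulting frozen problem, and exploit the extra regularity of the latter. The reverse Hölder inequality is an equality when $V$ is constant, so it suffices to show $V$ is nearly constant on $C^+_r(Z)$ with the defect quantitatively controlled by $\sigma_{\mathbf A}^{\textsf x}(r^\varepsilon)$ and by a power of $r$.

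\textbf{Step 1 (freezing and comparison).} Set $\bar a^{ij}(s)=\fint_{B_{r^\varepsilon}(z)}a^{ij}(s,\cdot)\,dx$, $\bar{\mathbf A}=(\bar a^{ij})$, and $\mathcal P_0=\partial_t-\bar a^{ij}(t)D_{ij}$. Let $V_0$ be the weak adjoint solution to $\mathcal P_0^{\ast}V_0=0$ in $C^+_{r^\varepsilon}(Z)$ with $V_0=V$ on $\partial_p^+ C^+_{r^\varepsilon}(Z)$ (approximating the boundary trace of $V$ if needed via the representation formula analogous to \eqref{E:fmulila}). The difference $U=V-V_0$ then vanishes on $\partial_p^+ C^+_{r^\varepsilon}(Z)$ and satisfies $\mathcal P_0^{\ast}U=\nabla^2\bigl((\bar{\mathbf A}-\mathbf A)V\bigr)$ in the interior.

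\textbf{Step 2 (two-endpoint bound on $U$).} Lemma~\ref{lem-weak11-adj}, rescaled and translated to $C^+_{r^\varepsilon}(Z)$, yields the weak $(1,1)$ estimate
\[
\sup_{\alpha>0}\alpha\,\bigl|\{|U|>\alpha\}\cap C^+_{r^\varepsilon}(Z)\bigr|\lesssim \|(\mathbf A-\bar{\mathbf A})V\|_{L^1(C^+_{r^\varepsilon}(Z))}\lesssim \sigma_{\mathbf A}^{\textsf x}(r^\varepsilon)\,|C^+_{r^\varepsilon}(Z)|\,\|V\|_{L^\infty(C^+_1)},
\]
where the bound $\|V\|_{L^\infty(C^+_1)}\lesssim_{\lambda,n,\omega_{\mathbf A}^{\textsf x}}1$ follows from \eqref{E: aproximacion} combined with Lemma~\ref{lem8.06} applied to the non-negative adjoint solution $W$. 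At the other endpoint, a duality argument against $W^{1,2}_{p_0'}$ solutions of $\mathcal P_0$ together with the trivial pointwise bound $\|(\mathbf A-\bar{\mathbf A})V\|_{L^\infty}\lesssim\|V\|_{L^\infty(C^+_1)}$ provides an $L^{p_0}$ bound on $U$ with constant $O(1)$ in $\sigma_{\mathbf A}^{\textsf x}$, for some $p_0=p_0(\lambda,n)>(n+1)/n$. Marcinkiewicz interpolation between these two endpoints gives, for a $\delta=\delta(\lambda,n)\in (0,1)$,
\[
\left(\fint_{C^+_{r^\varepsilon}(Z)}|U|^{(n+1)/n}\right)^{n/(n+1)}\lesssim \sigma_{\mathbf A}^{\textsf x}(r^\varepsilon)^{\delta/((n+1)(n+1-\delta))}\,\|V\|_{L^\infty(C^+_1)}.
\]

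\textbf{Step 3 (regularity of $V_0$).} Because $\bar a^{ij}$ depends only on $t$, the adjoint equation collapses to the non-divergence backward parabolic equation $-\partial_s V_0-\bar a^{ij}(s)D_{ij}V_0=0$. The classical Krylov–Safonov Hölder estimate (or Lemma~\ref{T: 3 Theorem 3.8} in this $x$-homogeneous setting) yields $\osc_{C^+_r(Z)}V_0\lesssim (r/r^\varepsilon)^\alpha\sup_{C^+_{r^\varepsilon/2}(Z)}|V_0|$ for some $\alpha=\alpha(\lambda,n)\in (0,1)$; selecting $\varepsilon=1/2$ converts this to $\osc_{C^+_r(Z)}V_0\lesssim r^{\alpha/2}\|V\|_{L^\infty(C^+_1)}$.

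\textbf{Step 4 (assembly).} On $C^+_r(Z)\subset C^+_{r^\varepsilon}(Z)$, Minkowski's inequality together with $|V-V_0(Z)|\le |U|+\osc_{C^+_r(Z)}V_0$ gives
\[
\left(\fint_{C^+_r(Z)}|V-V_0(Z)|^{(n+1)/n}\right)^{n/(n+1)}\lesssim \bigl(r^{\alpha/2}+\sigma_{\mathbf A}^{\textsf x}(r^\varepsilon)^{\delta/((n+1)(n+1-\delta))}\bigr)\|V\|_{L^\infty(C^+_1)}.
\]
Finally, $V\simeq W$ on $C^+_1$ via \eqref{E: aproximacion}, together with \eqref{E: Mackenhoupt properety} and Lemma~\ref{lem8.06}, delivers a two-sided bound $\|V\|_{L^\infty(C^+_r(Z))}\lesssim \fint_{C^+_r(Z)}V$. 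Splitting $V^{(n+1)/n}=(V-V_0(Z)+V_0(Z))^{(n+1)/n}$ and combining these two facts yields the claimed reverse Hölder inequality with constant $1+N(r^{\alpha/2}+\sigma_{\mathbf A}^{\textsf x}(r^\varepsilon)^{\delta/((n+1)(n+1-\delta))})$.

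\textbf{Main obstacle.} The delicate point is Step 2: pinning down the precise interpolation exponent $\delta/((n+1)(n+1-\delta))$. The weak $(1,1)$ endpoint carries the small quantity $\sigma_{\mathbf A}^{\textsf x}(r^\varepsilon)$, while the $L^{p_0}$ endpoint does not; Marcinkiewicz interpolation then produces a sub-linear power of $\sigma_{\mathbf A}^{\textsf x}$ at the target integrability $(n+1)/n$. The exponent $p_0$ (hence $\delta$) comes from the $L^p$-theory for $\mathcal P_0^\ast$ on the backward cylinder with $\bar a^{ij}$ depending only on $t$, for which the sharp exponent range must be tracked carefully to ensure $p_0>(n+1)/n$ with constants depending only on $\lambda$ and $n$. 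A secondary technical issue is defining $V_0$ rigorously with boundary trace equal to $V$ on $\partial_p^+ C^+_{r^\varepsilon}(Z)$, since a priori $V\in L^p$ only; this is handled by approximation using the Green function representation discussed around \eqref{E:fmulila}.
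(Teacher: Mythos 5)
Your plan is a direct freezing-and-perturbation argument (decompose $V=V_0+U$, weak-$(1,1)$ for $U$, Krylov--Safonov for $V_0$, then Marcinkiewicz interpolation), whereas the paper proves the lemma by a continuity method: it introduces the $\theta$-family $\mathcal P^\theta$ with coefficients $\theta\mathbf A+(1-\theta)\bar{\mathbf A}$, tracks the reverse-H\"older ratio through the dual quantity $\gamma(\theta)$, differentiates in $\theta$, and uses the formula \eqref{E: unaformulaespecial} for $\dot V^\theta$ together with a decay estimate (Lemma~\ref{lem2.1}) and an \emph{improved} reverse H\"older inequality \eqref{E: Mackenhoupt properetymejorada} to close a triple H\"older pairing $D^2u^\theta\in L^{n+1}$, $\mathbf A-\bar{\mathbf A}\in L^{(n+1)(n+1-\delta)/\delta}$, $V^\theta\in L^{(n+1-\delta)/(n-\delta)}$. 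That structure is where the exponent $\delta/((n+1)(n+1-\delta))$ genuinely comes from.

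Your proposal, however, has a gap that I do not see how to bridge. In Step~2 and again in Step~4 you need the pointwise bounds $\|V\|_{L^\infty(C^+_1)}\lesssim 1$ and $\|V\|_{L^\infty(C^+_r(Z))}\lesssim\fint_{C^+_r(Z)}V$, and you justify these via Lemma~\ref{lem8.06}. But Lemma~\ref{lem8.06} is proved \emph{from} Lemma~\ref{lem8.0671}, which in turn is proved \emph{from} the statement you are trying to prove (Lemma~\ref{L: unlemilla}); this is circular. Moreover, the hypothesis of Lemma~\ref{L: unlemilla} is only $\mathbf A\in\mathsf{VMO_x}$ (it inherits the hypothesis of Lemma~\ref{lem8.0671}), not $\mathsf{DMO_x}$, so Theorem~\ref{thm:dd-wolt} and the continuity/boundedness of $W$ and $V$ are simply not available at this stage; the entire role of Lemma~\ref{L: unlemilla} is to \emph{establish} such boundedness a posteriori, via the John--Nirenberg inequality. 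The paper's proof is carefully arranged to avoid any $L^\infty$ bound on $V$ by working only with the Muckenhoupt/reverse-H\"older structure of $V^\theta$, which comes with constants depending on $(\lambda,n)$ alone.

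A second, more technical, problem: your interpolated estimate for $U$ lives on the large cylinder $C^+_{r^\varepsilon}(Z)$, but the claimed inequality averages over the small cylinder $C^+_r(Z)$. Passing from $\fint_{C^+_{r^\varepsilon}(Z)}$ to $\fint_{C^+_r(Z)}$ costs a factor $(r^{\varepsilon-1})^{n(n+2)/(n+1)}$, which blows up as $r\to 0$ and is not compensated by $\sigma_{\mathbf A}^{\textsf x}(r^\varepsilon)$ under $\mathsf{VMO_x}$ alone, where $\sigma_{\mathbf A}^{\textsf x}$ may decay arbitrarily slowly. The paper circumvents this by localizing the error to $C^+_r(Z)$ from the outset through the decay estimate of Lemma~\ref{lem2.1} and the doubling lower bound $V^\theta(C^+_R(Z))\ge R^\eta$, then summing over annuli and choosing $\epsilon$ so the geometric series converges. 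Without some analogue of that localization, the error term in your Step~4 does not stay subordinate to $\fint_{C^+_r(Z)}V$.
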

\begin{lemma}\label{L: lemilla2}
Let $(X,\mu)$ be a measure space, $\mu(X)=1$. Let $v\ge 0$ and assume that
\begin{equation*}
\int_Xv\,d\mu\int_Xv^{-1}\,d\mu\le 1+c^3,\quad c<1/2.
\end{equation*}
Then,
\begin{equation*}
\int_X\left |\log{v}-\int_X\log{v}\,d\mu\right |\,d\mu\le 16c.
\end{equation*}
\end{lemma}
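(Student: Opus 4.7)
The plan is to extract a quadratic control on $\log v - M$ from the hypothesis $\int v\,d\mu\cdot\int v^{-1}\,d\mu \le 1+c^3$, using the identity $\cosh t - 1 \ge t^2/2$, and then pass to $L^1$ by Cauchy--Schwarz. Throughout I write $M = \int_X \log v\,d\mu$, $a = \int_X v\,d\mu$, $b = \int_X v^{-1}\,d\mu$.

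First I would apply Jensen's inequality to the concave function $\log$: from $\int \log v\,d\mu = M$ and $\int \log v^{-1}\,d\mu = -M$ one gets $e^M \le a$ and $e^{-M} \le b$. Setting $u := ae^{-M}$ and $w := be^{M}$ this reads $u,w \ge 1$, while $uw = ab \le 1+c^3$. The elementary observation $(u-1)(w-1) \ge 0$ gives $u + w \le uw + 1 \le 2+c^3$, that is,
\[
ae^{-M} + be^{M} - 2 \le c^3.
\]

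Next I would use the pointwise bound $t^2 \le 2(\cosh t - 1) = e^{t} + e^{-t} - 2$, valid for all real $t$ by the Taylor expansion $\cosh t - 1 = \sum_{k\ge 1} t^{2k}/(2k)!$. Applying this with $t = \log v - M$ and noting that $e^{t} = ve^{-M}$, $e^{-t} = v^{-1}e^{M}$, integration over $\mu$ yields
\[
\int_X (\log v - M)^2\,d\mu \le ae^{-M} + be^{M} - 2 \le c^3.
\]

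Finally, Cauchy--Schwarz together with $\mu(X) = 1$ gives
\[
\int_X |\log v - M|\,d\mu \le \left(\int_X (\log v - M)^2\,d\mu\right)^{1/2} \le c^{3/2},
\]
which is bounded by $16c$ under the assumption $c < 1/2$ (in fact by any constant multiple of $c$, since $c^{1/2} < 1$). The conceptual content is that $\int v\int v^{-1} \ge 1$ always holds by Cauchy--Schwarz, and the excess over $1$ (here $\le c^3$) precisely measures the $L^2$-deviation of $\log v$ from its mean; the stated $16c$ is a very loose but convenient form of this sharper $c^{3/2}$ bound. There is no real obstacle — the only delicate point is picking the right ``centering'' $M$ so that the Jensen inequalities combine cleanly into $u + w - 2 \le c^3$.
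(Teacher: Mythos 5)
Your proof is correct, but it takes a different route from the paper: the paper does not prove Lemma \ref{L: lemilla2} at all, it simply imports it from Sarason \cite[Lemma 3]{Sarason1}, whose original argument normalizes $\int_X v\,d\mu=1$ and proceeds by splitting $X$ according to the size of $|v-1|$, using Chebyshev on $\int (v^{1/2}-v^{-1/2})^2\,d\mu\le c^3$, which yields the $O(c)$ bound with the explicit constant $16$. Your argument is a short, self-contained convexity proof and in fact gives the stronger intermediate estimate $\int_X(\log v-M)^2\,d\mu\le \int_X v\,d\mu\int_X v^{-1}\,d\mu-1\le c^3$ (with $M=\int_X\log v\,d\mu$): the chain $t^2\le 2(\cosh t-1)$ pointwise with $t=\log v-M$, Jensen giving $u=ae^{-M}\ge1$ and $w=be^{M}\ge1$, and $(u-1)(w-1)\ge0$ giving $u+w-2\le uw-1=ab-1$, is all valid, and Cauchy--Schwarz then yields the $L^1$ bound $c^{3/2}\le c\le 16c$ for $c<1/2$. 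So your route buys both simplicity and a sharper conclusion ($c^{3/2}$ versus $16c$), which is more than enough for the application in Lemma \ref{lem8.0671}. The only point worth making explicit in a writeup is the implicit nondegeneracy: the hypothesis forces $v>0$ a.e.\ with $v,v^{-1}\in L^1(\mu)$ (and, by Cauchy--Schwarz, $c\ge0$), whence $|\log v|\le v+v^{-1}$ shows $\log v\in L^1(\mu)$, so $M$ is finite and the two applications of Jensen's inequality are legitimate.
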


\begin{proof}[Proof of Lemma \ref{lem8.0671}] Lemma \ref{L: unlemilla} and H\"older's inequality show that we can apply Lemma \ref{L: lemilla2} to $X= C^+_r(Z)\subset C^+_{1/2}$, $d\mu = V\,dx/V(C^+_r(Z))$ and $v= V^{\frac 1{n}}$, with $c(r)$ tending to zero as $r$ tends to zero. The turn out is that $\log{V}$ is in parabolic $\mathsf{VMO}(C^+_{1/2})$ with respect to the measure $V\,dx$. From \eqref{E: Mackenhoupt properety} and \eqref{E: aproximacion}, $V$ is a parabolic Muckenhoupt weight inside $C^+_1$ and the parabolic version of John-Nirenberg's inequality \cite{JohnNirenberg} shows that $\log{V}$ is in parabolic $\mathsf{VMO}(C^+_{1/2})$ with respect to the measure $dX$. Finally, the local H\"older continuity of $\tilde V$ in Lemma \ref{T: 3 Theorem 3.8}, \eqref{E: aproximacion} and the identity $\log{\tilde{V}} = \log{V}-\log{W}$ imply Lemma \ref{lem8.0671} with $T=1/2$. Other cases follow by translation and compactness.
\end{proof}

\begin{proof}[Proof of Lemma \ref{L: unlemilla}]
Let $W^\theta$ and $V^\theta$ denote respectively the global non-negative adjoint solution defined at the beginning of Section \ref{sec5.1} and the non-negative adjoint solution defined in \eqref{E: definition20} and associated to the parabolic operators
\begin{equation*}
\mathcal P^\theta u =\partial_tu-\tr \left(\left(\theta\mathbf A+\left(1-\theta\right)\bar{\mathbf A}\right)D^2u\right)\quad\text{with}\quad\bar{\mathbf A}(t) =\fint_{B_r(z)}\mathbf A(t,x)\,dx.
\end{equation*}
It is simple to derive from \cite[Theorem 2.1]{Kr07} and \eqref{eq13.53pder} that $\dot{V}^{\theta} =\frac d{d\theta}V^{\theta}$ is well defined in a weak sense,
\begin{equation}\label{E: jotramas}
\|V^\theta\|_{L^{\frac{n+1}n}(C^+_2)}+\|\dot V^\theta\|_{L^{\frac{n+1}n}(C^+_2)}\le N(\lambda, n, \omega_{\mathbf A}^{\textsf x})
\end{equation}
and
\begin{equation}\label{E: unaformulaespecial}
\int_{C^+_2} \dot{V}^\theta\mathcal P^\theta u = \int_{C^+_2} V^\theta \tr \left(\left(\mathbf A-\bar{\mathbf A}\right)D^2 u\right)+\int_{(0,4)\times\partial B_2}\left(\bar{\mathbf A}-\mathbf A\right) D u\cdot\nu_x
\end{equation}
for all $u$ in $W^{1,2}_{n+1}(C^+_2)\cap \mathring{\mathcal H}^1_{n+1}(C^+_2)$. We also recall that $V^\theta$ is a parabolic Muckenhoupt weight in the reverse H\"older class $\mathcal{B}_{\frac{n+1}{n}}(C^+_1)$ with a constant $N = N(\lambda, n)$, i.e.,
\begin{equation}\label{E: Mackenhoupt properety5}
\left(\fint_{C_r^+(Y)}(V^\theta)^{\frac{n+1}{n}}\right)^{\frac{n}{n+1}}\le N\fint_{C_r(Y)}V^\theta\quad \text{and}\quad  \ V^\theta(C_{2r}^+(Y))\le NV^\theta(C^+_r(Y))
\end{equation}
for all $C^+_{2r}(Y)\subset C_1^+$.
Now let $C^+_r(Z)$, $Z= (\tau,z)$,  be a fixed parabolic cube contained in $C^+_{1/2}$ and for $f\ge 0$ in $L^{n+1}(C^+_r(Z))$ with $\|f\|_{L^{n+1}(C^+_r(Z))}\le 1$, set
\begin{equation*}
\gamma(\theta) = \frac {|C_r^+|^{1/(n+1)}}{V^\theta(C^+_r(Z))}\int_{C^+_r(Z)} fV^\theta\quad\text{for}\ 0\le\theta\le1.
\end{equation*}
 Differentiating in $\theta$ gives
\begin{equation}\label{eq4.25}
\dot{\gamma}(\theta)=\frac {|C_r^+|^{1/(n+1)}}{V^\theta(C^+_r(Z))}\int_{C^+_r(Z)} \left(f-f_{Z,r}\right)\dot{V}^\theta\quad \text{with}\  f_{Z,r}=\frac{1}{V^\theta(C_r^+(Z))}\int_{C_r^+(Z)}fV^\theta,
\end{equation}
while \eqref{E: Mackenhoupt properety5} implies that
\begin{equation*}
|f_{Z,r}|\le \frac 1 {V^\theta(C^+_r(Z))}\|f\|_{L^{n+1}(C^+_r(Z))}\|V^\theta\|_{L^{\frac{n+1}{n}}(C^+_r(Z))}\le Nr^{-\frac{n+2}{n+1}}
\end{equation*}
and
\begin{equation}\label{eq4.18}
\|f-f_{Z,r}\|_{L^{n+1}(C^+_r(Z))}\le N
\end{equation}
with $N = N(\lambda,n)$.
Let $u^\theta$ in $W^{1,2}_{n+1}((\tau, 4)\times B_2)\cap \mathring{\mathcal H}^1_{n+1}((\tau,4)\times B_2)$ be the unique strong solution to
\begin{equation*}
\mathcal P^{\theta}u^\theta=1_{C^+_r(Z)}\left(f-f_{Z,r}\right)\ \text{in}\ (\tau,4)\times B_2,\quad u^\theta = 0\ \text{on}\ \partial_p^- ((\tau,4)\times B_2).
\end{equation*}
By the $W^{1,2}_{n+1}$-estimate for parabolic equations with parabolic $\mathsf{VMO_x}$ coefficients and \eqref{eq4.18}
\begin{equation}\label{eq4.22}
\|u^{\theta}\|_{W^{1,2}_{n+1}((\tau,4)\times B_2)}\le  N
\end{equation}
with $N=N(\lambda, n, \sigma_\mathbf A^{\textsf x})$.
It then follows from \eqref{E: unaformulaespecial} and \eqref{eq4.25} that
\begin{equation}	\label{E: unasuma}
\dot{\gamma}(\theta)=\frac {|C_r^+|^{1/(n+1)}}{V^\theta(C^+_r(Z))}
\left[ \int_{(\tau,T)\times B_2} \tr \left(\left(\mathbf A-\bar{\mathbf A}\right)D^2 u^{\theta}\right)V^\theta +\int_{(\tau,4)\times\partial B_2}\left(\bar{\mathbf A}-\mathbf A\right)\nabla u\cdot\nu_x\right].
\end{equation}
Now, with the purpose to estimate the above two integrals we need the following lemma.
\begin{lemma}\label{lem2.1}
There is $N=N(\lambda, n, \sigma_\mathbf A^{\textsf x})$ such that for all $R\ge 2r$ with $C^+_{R}(Z)\subset C_1^+$, we have
\begin{equation*}
\|u^\theta\|_{W^{1,2}_{n+1}((\tau,4)\times B_2\setminus C^+_{R}(Z))}\le N\left(\frac rR\right)^{\alpha-\frac{n+2}{n+1}}\frac {V^\theta(C^+_r(Z))}{V^\theta(C^+_{R}(Z))}.
\end{equation*}
\end{lemma}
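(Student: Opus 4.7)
The plan is to prove the bound by duality against the formal adjoint $(\mathcal P^\theta)^*$. Since $\mathcal P^\theta u^\theta = 1_{C_r^+(Z)}(f-f_{Z,r})$ vanishes outside $C_R^+(Z)\supset C_r^+(Z)$, the $\partial_t u^\theta$-part of the $W^{1,2}_{n+1}$-norm outside $C_R^+(Z)$ is controlled pointwise by the $D^2 u^\theta$-part via the equation, while the $u^\theta$ and $Du^\theta$ parts can be obtained by the analogous dualities (using the $f$-slot of the data in \eqref{eq13.53pde}) combined with the parabolic $W^{1,2}_{n+1}$ estimate applied to $\eta u^\theta$ for a cutoff $\eta$ vanishing on $C_R^+(Z)$. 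Thus I focus on bounding, for each matrix field $\mathbf g\in L^{(n+1)/n}$ compactly supported in $(\tau,4)\times B_2\setminus C_R^+(Z)$ away from the lateral boundary with $\|\mathbf g\|_{L^{(n+1)/n}}\le 1$, the pairing $\int\tr(\mathbf g\,D^2 u^\theta)$. Let $w\in L^{(n+1)/n}((\tau,4)\times B_2)$ be the unique weak adjoint solution of
\[
(\mathcal P^\theta)^*w=\nabla^2\mathbf g\;\text{in}\;(\tau,4)\times B_2,\quad w(4,\cdot)=0,\quad w=0\;\text{on the lateral boundary};
\]
the parabolic analog of \cite[Lemma~2]{EM2016} gives $\|w\|_{L^{(n+1)/n}}\le N$, and testing \eqref{eq13.53pde} with $v=u^\theta\in W^{1,2}_{n+1}\cap\mathring{\mathcal H}^1_{n+1}$ together with the equation for $u^\theta$ yields
\[
\int\tr(\mathbf g\,D^2 u^\theta)=\int w\,\mathcal P^\theta u^\theta=\int_{C_r^+(Z)}(f-f_{Z,r})\,w.
\]

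Since $\mathbf g$ vanishes on $C_R^+(Z)$, $w$ is an adjoint solution of $\mathcal P^\theta$ there, and because the parabolic $\mathsf{VMO_x}$ modulus of the coefficients of $\mathcal P^\theta$ is dominated by $\sigma_{\mathbf A}^{\textsf x}$ uniformly in $\theta\in[0,1]$, the entire Section~\ref{sec5.1} applies to $\mathcal P^\theta$: the global non-negative adjoint solution $W^\theta$ exists with all the listed properties, and by \eqref{E: aproximacion} we have $W^\theta\simeq V^\theta$ on $C_1^+$. In particular $w/V^\theta$ is a normalized adjoint solution (in the sense of Remark~\ref{R: una nota}) on $C_R^+(Z)$, so the H\"older estimate of Lemma~\ref{T: 3 Theorem 3.8} applied on the cube $C_{R/2}^+(Z)$ gives, for every $Y\in C_r^+(Z)\subset C_{R/2}^+(Z)$ and $c_0:=(w/V^\theta)(Z)$,
\[
|w(Y)-c_0 V^\theta(Y)|\le N(r/R)^\alpha V^\theta(Y)\sup_{C_{R/2}^+(Z)}|w/V^\theta|.
\]
The defining choice of $f_{Z,r}$ in \eqref{eq4.25} ensures $\int_{C_r^+(Z)}(f-f_{Z,r})V^\theta=0$, so I can subtract $c_0 V^\theta$ from $w$ for free in the previous displayed identity; H\"older's inequality together with \eqref{eq4.18} and the reverse H\"older bound \eqref{E: Mackenhoupt properety5} for $V^\theta$ then gives
\[
\Bigl|\int_{C_r^+(Z)}(f-f_{Z,r})\,w\Bigr|\le N(r/R)^\alpha\,r^{-(n+2)/(n+1)}\,V^\theta(C_r^+(Z))\sup_{C_{R/2}^+(Z)}|w/V^\theta|.
\]
Finally, the $V^\theta$-version of Lemma~\ref{L: acotanecesa} (Remark~\ref{R: una nota}) bounds $\sup_{C_{R/2}^+(Z)}|w/V^\theta|$ by $N V^\theta(C_R^+(Z))^{-1}\int_{C_R^+(Z)}|w|$, while H\"older's inequality combined with $\|w\|_{L^{(n+1)/n}}\le N$ gives $\int_{C_R^+(Z)}|w|\le N R^{(n+2)/(n+1)}$. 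Chaining these three inequalities and taking the supremum over admissible $\mathbf g$ yields
\[
\|D^2 u^\theta\|_{L^{n+1}((\tau,4)\times B_2\setminus C_R^+(Z))}\le N(r/R)^{\alpha-(n+2)/(n+1)}\frac{V^\theta(C_r^+(Z))}{V^\theta(C_R^+(Z))}.
\]

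The main technical difficulties will be: (i) setting up the adjoint problem for $w$ so that the boundary term $-\mathbf g\nu\cdot\nu/\mathbf A^\theta\nu\cdot\nu$ in \eqref{eq13.53pde} is genuinely zero (which is why $\mathbf g$ is restricted to have compact support away from the lateral boundary; a standard density argument then recovers the full $L^{n+1}$ bound); (ii) checking that the constants $N$ and $\alpha$ in the results from Section~\ref{sec5.1} (Lemmas~\ref{T:1 Theorem 3.7}, \ref{T: 3 Theorem 3.8}, and \ref{L: acotanecesa}, together with Remark~\ref{R: una nota}) are uniform in $\theta\in[0,1]$, which holds because they depend only on $\lambda$, $n$ and the parabolic $\mathsf{VMO_x}$ modulus $\sigma_{\mathbf A}^{\textsf x}$ shared by $\mathbf A$ and all its tangential averages $\bar{\mathbf A}$; and (iii) justifying the pointwise value $c_0=(w/V^\theta)(Z)$, which follows from the continuity of normalized adjoint solutions inside $C_R^+(Z)$ guaranteed by Lemma~\ref{T: 3 Theorem 3.8} and the H\"older bound stated above.
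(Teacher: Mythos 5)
Your proof is correct and follows essentially the same duality route as the paper: dualize $D^2u^\theta$ against the adjoint problem for $w$ with homogeneous boundary data, exploit the cancellation $\int_{C_r^+(Z)}(f-f_{Z,r})V^\theta=0$ to subtract $c_0V^\theta$ for free, and then combine the interior H\"older estimate for the normalized adjoint solution (Lemma~\ref{T: 3 Theorem 3.8} via Remark~\ref{R: una nota}) with the $L^\infty$ bound of Lemma~\ref{L: acotanecesa}, the reverse H\"older property of $V^\theta$, and $\|w\|_{L^{(n+1)/n}}\le N$ to arrive at the stated power of $r/R$.

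The only organizational difference is that the paper runs the duality against the full triple $(\mathbf g,\vec F,f)\in C_c^\infty\big((\tau,4)\times B_2\setminus C_R^+(Z)\big)$ at once, getting $u^\theta$, $Du^\theta$ and $D^2u^\theta$ outside $C_R^+(Z)$ from the same identity $\int fu^{\theta}-\vec F\cdot Du^\theta+\tr(\mathbf g\,D^2u^\theta)=\int_{C_r^+(Z)}(f-f_{Z,r})v^\theta$, whereas you spell out only the $\mathbf g$-slot and sketch the rest. The ``$f$-slot'' gives $u^\theta$ directly, but for $Du^\theta$ the natural choice is the $\vec F$-slot (duality with a source $\dv\vec F$), which is precisely what the paper does and is cleaner than the cutoff-$\eta u^\theta$ detour you propose; either works, and $\partial_t u^\theta$ is then read off from the equation as you say. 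Your two-step splitting of the paper's single displayed inequality \eqref{E: acota20} into the H\"older step on $C_{R/2}^+(Z)$ and the $L^\infty$ bound over $C_R^+(Z)$ is a harmless and slightly more explicit presentation. Your technical remarks about $\theta$-uniformity of the constants and about the vanishing of the adjoint boundary term when $\mathbf g$ is compactly supported are both correct and match the paper's setup.
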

\begin{proof}
We use a duality argument. For $\mathbf g$, $\vec F$, and $f$ in $C_c^\infty((\tau,4)\times B_2\setminus C^+_{R}(Z))$, let $v^\theta$ be the unique weak adjoint solution in $L^{\frac{n+1}{n}}((\tau,4)\times B_2)$ to
\begin{equation*}
{{\mathcal P}^\theta}^\ast v^\theta=\nabla^2\mathbf g +\dv \vec F+f\ \text{in}\ (\tau,4)\times B_2,\quad v^\theta= 0\ \text{on}\ \partial_p^+\left((\tau,4)\times B_2 \right),
\end{equation*}
so that
\begin{equation}\label{eq7.26}
\|v\|_{L^{\frac{n+1}{n}}((\tau,4)\times B_2)}\le N\left[ \|\mathbf g\|_{L^{\frac{n+1}{n}}((\tau,4)\times B_2)} + \|\vec F\|_{L^{\frac{n+1}{n}}((\tau,4)\times B_2)}+\|f\|_{L^{\frac{n+1}{n}}((\tau,4)\times B_2)}\right]
\end{equation}
with $N =N(\lambda, n,\sigma_\mathbf A^{\textsf x})$. Then, by duality,
\begin{equation*}
\int_{(\tau,4)\times B_2\setminus C^+_{R}(Z)} fu^{\theta} -\vec F\cdot D u^\theta + \tr (\mathbf g\,D^2u^\theta)
=\int_{C^+_r(Z)}\left(f-f_{Z,r}\right) v
\end{equation*}
and recalling the cancellation property of $1_{C^+_r(Z)}\left(f-f_{Z,r}\right)$ with respect to the measure $V^\theta\,dx$, we can write the last integral as
\begin{equation}\label{E: formulanara}
\int_{C^+_r(Z)}\left(f-f_{Z,r}\right)(\tilde v^\theta-\tilde v^\theta(Z))V^\theta
\end{equation}
with $\tilde v^\theta =v^\theta/V^\theta$. Because $v^\theta$ satisfies the homogeneous adjoint equation in $C^+_{R}(Z)$, it follows from Remark \ref{R: una nota} that
\begin{equation}\label{E: acota20}
\sup_{C^+_r(Z)}|\tilde v^\theta-\tilde v^\theta(Z)|\le N\left(\frac rR\right)^\alpha \frac{1}{V^\theta(C^+_R(Z))}\int_{C^+_R(Z)}|v^\theta |.
\end{equation}
Then, H\"older's inequality, \eqref{E: Mackenhoupt properety5}, \eqref{eq4.18}, \eqref{eq7.26}, \eqref{E: formulanara}, and \eqref{E: acota20} imply the Lemma.
\end{proof}

Then, proceeding with \eqref {E: unasuma}, by H\"older's inequality, the improved reverse H\"older's inequalities \cite{Stredulinsky1} satisfied by $V^\theta$, i.e., there is $0<\delta <n$, $\delta =\delta(\lambda, n)$ and $N = N(\lambda,n)$ such that
\begin{equation}\label{E: Mackenhoupt properetymejorada}
\left(\fint_{C^+_r(Y)} {V^\theta}^{\frac{n+1-\delta}{n-\delta}}\right)^{\frac{n-\delta}{n+1-\delta}}\le N\fint_{C^+_r(Y)}V^\theta\ \text{for all}\ C^+_{2r}(Y)\subset C^+_1
\end{equation}
and the doubling properties of $V^\theta$ \eqref{E: Mackenhoupt properety5}, we have
\begin{align}\label{eq5.06}
&\Abs{\int_{C^+_{2r}(Z)}\tr \left(\left(\mathbf A-\bar{\mathbf A}\right)D^2 u^{\theta}\right)V^\theta} \nonumber\\
&\qquad \qquad \le N\|D^2u^\theta\|_{L^{n+1}(C^+_{2r}(Z))}\|\mathbf A-\bar{\mathbf A}\|_{L^{\frac{(n+1)(n+1-\delta)}{\delta}}(C^+_{2r}(Z))}\|V^\theta\|_{L^{\frac{n+1-\delta}{n-\delta}}(C^+_{2r}(Z))}\nonumber\\
&\qquad \qquad \le Nr^{-\frac{n+2}{n+1}}\omega_{\mathbf A}^{\textsf x}(2r)^{^{\frac{\delta}{(n+1)(n+1-\delta)}}}V^\theta(C^+_{r}(Z)),
\end{align}
where we also used \eqref{eq4.22} in the last inequality. Next, for any integer $k\ge 1$ such that $C^+_{2^{k+1} r}(Z)\subset C_1^+$, we have
\begin{multline*}
\Abs{\int_{C^+_{2^{k+1}r}(Z)\setminus C^+_{2^{k}r}(Z)} \tr \left(\left(\mathbf A-\bar{\mathbf A}\right)D^2 u^{\theta}\right)V^\theta}\\
\le N\|D^2u^\theta\|_{L^{n+1}(C^+_{2^{k+1}r}(Z)\setminus C^+_{2^{k}r}(Z))}\|\mathbf A-\bar{\mathbf A}\|_{L^{\frac{(n+1)(n+1-\delta)}{\delta}}(C^+_{2^{k+1}r}(Z))}\|V^\theta\|_{L^{\frac{n+1-\delta}{n-\delta}}(C^+_{2^{k+1}r}(Z))},
\end{multline*}
which together with Lemma \ref{lem2.1} and \eqref{E: Mackenhoupt properetymejorada} imply
\begin{multline}\label{eq7.53}
\Abs{\int_{C^+_{2^{k+1}r}(Z)\setminus C^+_{2^{k}r}(Z)}\tr \left(\left(\mathbf A-\bar{\mathbf A}\right)D^2 u^{\theta}\right)V^\theta}\\
\le N2^{-\alpha k}r^{-\frac{n+2}{n+1}}\,((k+2)\sigma_{\mathbf A}^{\textsf x}(2^{k+1}r))^{\frac{\delta}{(n+1)(n+1-\delta)}}V^\theta(C^+_{r}(Z)),
\end{multline}
where we used the fact that $\mathbf A$ and $\bar{\mathbf A}$ and bounded, and
\[
\fint_{C^+_{2^{k+1}r}(Z)}|\mathbf A-\bar{\mathbf A}|
\le N\sum_{j=0}^{k+1}\omega_{\mathbf A}^{\textsf x}(2^{j}r)\le N(k+2)\sigma_{\mathbf A}^{\textsf x}(2^{k+1}r).
\]
Finally, by Lemma \ref{lem2.1} with $R\ge 2r$, \eqref{E: jotramas}, \eqref{E: Mackenhoupt properety5}, \eqref{E: 2 condidicndeunicidad} and \eqref{E: aproximacion}, we have
\begin{equation}\label{eq5.06d}
\Abs{\int_{(\tau, 4)\times B_2\setminus C^+_{R}(Z)} \tr \left(\left(\mathbf A-\bar{\mathbf A}\right)D^2 u^{\theta}\right)V^\theta}
\le N\left(\frac rR\right)^{\alpha-\frac{n+2}{n+1}}\frac{V^\theta(C^+_r(Z))}{V^\theta(C^+_R(Z))}\, .
\end{equation}
Also, by the trace theorem, Lemma \ref{lem2.1} with $R=1/2$, \eqref{E: Mackenhoupt properety5}, \eqref{E: 2 condidicndeunicidad} and \eqref{E: aproximacion}
\begin{equation*}
\Abs{\int_{(\tau,4)\times\partial B_2}\left(\bar{\mathbf A}-\mathbf A\right) D u \cdot\nu_x} \le Nr^{\alpha-\frac{n+2}{n+1}}V^\theta(C^+_r(Z)).
\end{equation*}
Write then
\begin{multline}\label{E: 222}
\int_{(\tau,T)\times B_2} \tr \left(\left(\mathbf A-\bar{\mathbf A}\right)D^2 u^{\theta}\right)V^\theta\\
= \left(\int_{C^+_{2r}(Z)}+\sum_{k=1}^{k_0}
\int_{C^+_{2^{k+1}r}(Z)\setminus C^+_{2^{k}r}(Z)}+\int_{(\tau, 4)\times B_2\setminus C^+_{2^{k_0+1}r}(Z)}\right)\
\tr \left(\left(\mathbf A-\bar{\mathbf A}\right)D^2 u^{\theta}\right)V^\theta.
\end{multline}
Recall that the doubling properties of $V^\theta$ and \eqref{E: 2 condidicndeunicidad} imply that $V^\theta (C^+_R(Z))\ge R^\eta$, $0 < R\le 1$, for some $\eta = \eta(\lambda, n)$. Then, choose $\epsilon >0$ with $\frac\alpha 2 + \left(\frac{n+2}{n+1}-\alpha-\eta\right)\epsilon\ge  0$ and $k_0\ge 1$ with  $r^\epsilon/2\le 2^{k_0+1}r\le r^\epsilon$, to find from \eqref{E: unasuma}, \eqref{E: 222}, \eqref{eq5.06}, \eqref{eq7.53} and \eqref{eq5.06d} that
\begin{equation*}
|\dot{\gamma}(\theta)|\le N\left[r^{\alpha/2}+\sigma_\mathbf A^{\textsf x}(r^\epsilon)^{^{\frac{\delta}{(n+1)(n+1-\delta)}}}\right].
\end{equation*}
Note that by duality
\begin{equation*}
\left(\fint_{C^+_r(Z)} V^{\frac{n+1}{n}} \right)^{n/(n+1)}\left(\fint_{C^+_r(Z)} V\right)^{-1}=
\sup_{\|f\|_{L^{n+1}(C^+_r(Z))}=1}{\frac {|C_r^+|^{1/(n+1)}}{V^\theta(C^+_r(Z))}\int_{C^+_r(Z)} fV\, .}
\end{equation*}
Also, because $V^0$ is identically equal to $1$, we have $\gamma(0)\le 1$. Then, Lemma \ref{L: unlemilla} follows from the fundamental theorem of Calculus and the fact that $V^1=V$.
\end{proof}
\begin{remark}\label{R:remrk10}
Lemma \ref{lem8.06} combined with \eqref{E:4 condicionea adicionales} and \cite[Theorem 1.3]{Es00} show that the following Gaussian bounds hold for the fundamental solution $G(X,Y)$ of $\mathcal P$, when $\mathbf A$ is in $\mathsf{DMO_x}$ over $\bR^{n+1}$: there are
$N_1= N_1(\lambda, n, \omega_{\mathbf A}^{\textsf x})$ and $N_2=N_2(\lambda,n)$ such for all $X, Y$ in $\bR^{n+1}$ with $t > s$, the following holds
\begin{equation*}
N_1^{-1}\left(t-s\right)^{-n/2}e^{-N_2\left(t-s\right)^{1/2}-N_2|x-y|^2/(t-s)}\le G(X,Y)\le N_1\left(t-s\right)^{-n/2}e^{N_2\left(t-s\right)^{1/2}-|x-y|^2/N_2(t-s)}.
\end{equation*}

It is interesting to compare the above two sided Gaussian bounds  with the results proved in \cite{FS1984} and \cite[Theorem 1.2]{Es00}, the main result in \cite{FabesKenig1} and \cite[Theorem 1]{FJK1985}, along with their somehow hidden analysis of the mutual absolute continuity of the measures $G(t,x,\tau,dy)$ and $dy$ over $\{\tau\}\times\bR^n$. Thanks to \cite[Theorems 1.2 and 1.3]{Es00}), the analysis is equivalent to the study of the mutual absolute continuity of Lebesgue measure and the measure $W(\tau,dy)$ over $\bR^n$, where
the measure $W(\tau,dy)$ is the restriction of $W(Y)$ to the hyperplane $s=\tau$. See (b) and \eqref{E: condidebil} on Page~\pageref{E: condidebil} for its existence and relation with $W(Y)$.

Finally, the last claim can be justified via standard methods, when $\mathbf A$ is in $\mathsf{VMO_x}$ over $\bR^{n+1}$, though in fact, the same result holds when $\mathbf A$ is only measurable over $\bR^{n+1}$ and satisfies \eqref{parabolicity}. Regarding the last statement, one must be careful because of issues related with the possible lack of uniqueness of the global adjoint solution $W$ and of the ``parabolic''  measures $G(t,x,\tau,dy)$, which can arise in those cases (See \cite[Theorem 1.4 and Remark 4.1]{{Es00}} and \cite{Na97, Na97b, Safonov99, Kr92}).
\end{remark}


\begin{thebibliography}{m}
\bibitem{Ac92} Acquistapace, Paolo.
\textit{On BMO regularity for linear elliptic systems}.
Ann. Mat. Pura Appl. (4) 161 (1992), 231–269.

\bibitem{Ba84} Bauman, Patricia. \textit{Positive solutions of elliptic equations in non-divergence form and their adjoints.} Ark. Mat. \textbf{22} (1984), no. 2, 153-173.

\bibitem{Ba84b} Bauman, Patricia. \textit{Equivalence of the Green's functions for diffusion operators in $\bR^d$: a counterexample.} Proc. Amer. Math. Soc. \textbf{91} (1984), no. 1, 64-68.

\bibitem{BKR2001}
Bogachev V. I.; Krylov, N. V.; R\"ockner, M. \textit{On regularity of transition probabilities and invariant measures of singular diffusions under minimal conditions}.
Comm. Partial Differential Equations \textbf{26},  11-12 (2001) 2037-2080.

\bibitem{BS17}
Bogachev, V. I.; Shaposhnikov, S. V.
\textit{Integrability and continuity of solutions to double divergence form equations.}
Ann. Mat. Pura Appl. (4) \textbf{196}, 5  (2017) 1609--1635.

\bibitem{Ch90}
Christ, Michael.
\textit{A $T(b)$ theorem with remarks on analytic capacity and the Cauchy integral.}
Colloq. Math. \textbf{60/61} (1990), no. 2, 601-628.

\bibitem{Dong2012}
Dong, Hongjie.
\textit{Gradient estimates for parabolic and elliptic systems from linear laminates}.
Arch. Ration. Mech. Anal. \textbf{205} (2012), no. 1, 119-149.

\bibitem{DK11}
Dong, Hongjie; Kim, Doyoon.
\textit{On the $L^p$-solvability of higher order parabolic and elliptic systems with BMO coefficients}.
Arch. Ration. Mech. Anal. \textbf{199} (2011), no. 3, 889-941.

\bibitem{DK11b}
Dong, Hongjie; Kim, Doyoon.
\textit{$L_p$ solvability of divergence type parabolic and elliptic systems with partially BMO coefficients}.
Calc. Var. Partial Differential Equations \textbf{40} (2011), no. 3-4, 357–389.

\bibitem{DK17}
Dong, Hongjie; Kim, Seick.
\textit{On $C^1$, $C^2$, and weak type-$(1,1)$ estimates for linear elliptic operators}.
Comm. Partial Differential Equations. \textbf{42} (2017), no. 3, 417-435.

\bibitem{DEK18}
Dong, Hongjie; Escauriaza, Luis; Kim, Seick.
\textit{On $C^1$, $C^2$, and weak type-$(1,1)$ estimates for linear elliptic operators: part II}.
Math. Ann. \textbf{370} (2018), no. 1-2, 447-489 .

\bibitem{DL20}
Dong, Hongjie; Li, Zongyuan.
\textit{Classical solutions of oblique derivative problem in nonsmooth domains with mean Dini coefficients}.
Trans. Amer. Math. Soc. DOI: https://doi.org/10.1090/tran/8042

\bibitem{Es94}
Escauriaza, Luis.
\textit{Weak type-$(1, 1)$ inequalities and regularity properties of adjoint and normalized adjoint solutions to linear nondivergence form operators with $VMO$ coefficients}.
Duke Math. J. \textbf{74} (1994), no. 1, 177-201.

\bibitem{Es00}
Escauriaza, Luis.
\textit{Bounds for the fundamental solution of elliptic and parabolic equations in nondivergence form}. Comm. Partial Differential Equations \textbf{25} (2000), no. 5-6, 821-845.

\bibitem{EM2016}
Escauriaza, Luis; Montaner, Santiago.
\textit{Some remarks on the $L^p$ regularity of second derivatives of solutions to non-divergence elliptic equations and the Dini condition}. Rend. Lincei Mat. Appl. \textbf{28} (2017), 49-63.

\bibitem{FS1984} Fabes, Eugene B.; Stroock, Daniel W. \textit{The $L^p$-integrability of Green's functions and fundamental solutions for elliptic and parabolic equations}. Duke Math. J. \textbf{51} (1984), no. 4, 997-1016.

\bibitem{FGMS1988} Fabes, Eugene B.; Garofalo, Nicola; Mar\'{\i}n-Malave, Santiago; Salsa, Sandro. \textit{Fatou theorems for some nonlinear elliptic equations}. Rev. Mat. Iberoamericana \textbf{4}  (1988), no. 2, 227-251.

\bibitem{FJK1985} Fabes, Eugene B.; Jerison, David S.; Kenig, Carlos E. \textit{Multilinear square functions and partial differential equations}. Amer. J. Math. \textbf{107}, 6 (1985) 1325-1368.

\bibitem{Friedman1} Friedman, Avner. \textit{Partial Differential Equations of Parabolic Type}. Dover Publications, Inc.

\bibitem{FabesGarofaloSalsa1} Fabes, Eugene; Garofalo, Nicola, Salsa, Sandro.
\textit{A control on the set where a Green's function vanishes}. Colloq. Math. \textbf{60/61}, 2 (1990) 637-647.

\bibitem{FabesKenig1} Fabes, Eugene; Kenig, Carlos. \textit{Examples of singular parabolic measures and singular transition probability densities}. Duke Math. J. \textbf{48}, 4 (1981) 845-856.

\bibitem{FabesSafonov1} Fabes, Eugene; Safonov, Mikhail. \textit{Behavior near the boundary of positive solutions of second order parabolic equations}. Proceedings of El Escorial 96. CRC Press.

\bibitem{FSY1} Fabes, Eugene; Safonov, Mikhail;  Yuan, Yu. \textit{Behavior near the boundary of positive solutions of second order parabolic equations II}. Trans. Amer. Math. Soc. \textbf{351}, 12  (1999) 4947-4961.

\bibitem{Garofalo1984}
Garofalo, Nicola.
\textit{Second order parabolic equations in nonvariational forms: boundary Harnack principle and comparison theorems for non-negative solutions}. Ann. Mat. Pura Appl. 4 \textbf{138}  (1984) 267-296.

\bibitem{Giaq83} Giaquinta, Mariano.
\textit{Multiple integrals in the calculus of variations and nonlinear elliptic systems}. Princeton University Press:Princeton, NJ, 1983.

\bibitem{JohnNirenberg}
John, Fritz, Nirenberg, Louis.
\textit{On functions of bounded mean oscillation}. Commun. Part. Diff. Eq. \textbf{14} (1961) 415-426.

\bibitem{Kr07}
Krylov, Nicolai V.
\textit{Parabolic and Elliptic Equations with $VMO$ Coefficients}. Comm. Partial Differential Equations. \textbf{32} (2007) 453-475.

\bibitem{Kr07b}
Krylov, Nicolai V.
\textit{Parabolic equations with VMO coefficients in Sobolev spaces with mixed norms.}
J. Funct. Anal. \textbf{250} (2007), no. 2, 521--558.

\bibitem{Kr96}
Krylov, Nicolai V.
\textit{Lectures on elliptic and parabolic equations in H\"older spaces. Graduate Studies in Mathematics} \textbf{12}. American Mathematical Society, Providence, RI, 1996.

\bibitem{Kr08}
Krylov, Nicolai V.
\textit{Lectures on elliptic and parabolic equations in Sobolev spaces. Graduate Studies in Mathematics} \textbf{96}. American Mathematical Society, Providence, RI, 2008.

\bibitem{Kr92}
Krylov, Nicolai V.
\textit{On One-Point Weak Uniqueness for Elliptic Equations}. Comm. Partial Differential Equations \textbf{17} 11-12 (1992) 405-443.

\bibitem{KrSaf81}
Krylov, Nicolai V., Safonov, Mikhail V.
\textit{A certain property of solutions of parabolic equations with measurable coefficients}. Math USSR Izv. \textbf{16} 1. (1981) 151-164.

\bibitem{KM2012}
Kuusi, Tuomo; Mingione, Giuseppe.
\textit{Universal potential estimates}
 J. Funct. Anal. \textbf{262} (2012), no. 10, 4205-4269.


\bibitem{LSU}
Lady\v{z}enskaja, O. A.; Solonnikov, V. A.; Ural'ceva, N. N.
\textit{Linear and quasilinear equations of parabolic type}.
American Mathematical Society: Providence, RI, 1967.


\bibitem{Y.Li2016} Li, Yanyan. \textit{On the $C^1$ regularity of solutions to divergence form elliptic systems with Dini-continuous coefficients}. Chin. Ann. Math. Ser. B \textbf{38} (2017), no. 2, 489-496.

\bibitem{Lieb87}
Lieberman, Gary M.
\textit{H\"older continuity of the gradient of solutions of uniformly parabolic equations with conormal boundary conditions}.
Ann. Mat. Pura Appl. (4) \textbf{148} (1987), 77--99.

\bibitem{Mamedov92} Mamedov, F.I. \textit{On the Harnack inequality for the equation formally adjoint to a linear elliptic differential equation}. Sib. Matem. Zh. \textbf{33}, 5 (1992) 100-106  (in Russian). English transl.: Sib. Math. J.  \textbf{33}, 5 (1992) 835-841.


\bibitem{ME65}
Mati{\u\i}{\v{c}}uk, M. I.; {\`E}{\u\i}del$'$man, S. D.
\textit{On parabolic systems with coefficients satisfying Dini's condition}.
Dokl. Akad. Nauk SSSR \textbf{165} (1965), 482-485.

\bibitem{MM11}
Maz'ya, Vladimir; McOwen, Robert.
\textit{Differentiability of solutions to second-order elliptic equations via dynamical systems.}
J. Differential Equations \textbf{250} (2011), no. 2, 1137--1168.

\bibitem{Na97}
Nadirashvili, N. S. \textit{Nonuniqueness in the martingale problem and the Dirichlet problem for uniformly elliptic operators}. Ann. Scuola Norm. Sup. Pisa Cl. Sci. (4) \textbf{24} (1997) no. 3, 537-549.

\bibitem{Na97b}
Nadirashvili, N. S.
\textit{On the problem of the uniqueness of a martingale}. (Russian) Dokl. Akad. Nauk \textbf{357} (1997), no. 2, 172-175.

\bibitem{SafonovYuan}
Safonov, Mikhail V.; Yuan, Yu.
\textit{Doubling properties for second order parabolic equations}. Ann. of Math. \textbf{150} (1999), no. 1, 313-327.

\bibitem{Safonov99}
Safonov, Mikhail V.
\textit{Nonuniqueness for second-order elliptic equations with measurable coefficients}. SIAM J. Math. Anal. \textbf{30} (1999), no. 4, 879-895.

\bibitem{Sarason1}
Sarason, Donald.
\textit{Functions of vanishing mean oscillation}. Trans. Amer. Math. Soc. \textbf{207} (1975)  391-405.

\bibitem{S1973} Sj\"ogren, Peter. \textit{On the adjoint of an elliptic linear differential operator and its potential theory}. Ark. Mat. \textbf{11} (1973), 153-165.

\bibitem{S1975} Sj\"ogren, Peter. \textit{Harmonic spaces associated with adjoints of linear elliptic operators}. Ann. Int. Fourier \textbf{25} 3-4 (1975), 509-518.

\bibitem{Stein93}
Stein, Elias M.
\textit{Harmonic analysis: real-variable methods, orthogonality, and oscillatory integrals}.
Princeton Mathematical Series, \textbf{43}. Monographs in Harmonic Analysis, III. Princeton University Press, Princeton, NJ, 1993.

\bibitem{Stredulinsky1}
Stredulinsky, Edward W.
\textit{Higher integrability from inverse H\"older inequalities}. Indiana Univ. Math. J.  \textbf{29} (1980), 407-413.
\end{thebibliography}
\end{document}